\definecolor{lightgrey}{rgb}{0.9,0.9,0.9}
\definecolor{lightblue}{rgb}{.8,.9,.95}
\definecolor{lightgreen}{rgb}{.7,1,.8}
\definecolor{lightorange}{rgb}{1,.9,.7}
\title{Hyper-K\"ahler manifolds from Riemann-Hilbert problems I: Ooguri-Vafa-like model  geometries}
\author[1]{Laura Fredrickson\thanks{lfredric@uoregon.edu}}
\author[2]{Max Zimet\thanks{maxzimet@gmail.com}}
\affil[1]{Department of Mathematics,

University of Oregon, Eugene, OR 97403 USA}
\affil[2]{The Voleon Group,

Berkeley, CA 94704 USA}
\date{}
\definecolor{coolcolor}{rgb}{0.6,0,1}
\definecolor{greencolor}{rgb}{0,0.6,0.5}
\newcommand{\hqnoxi}{\mathord{/ \!\! / \! \! /}}
  \theoremstyle{plain}
\newtheorem{thm}{Theorem}[section]
 \theoremstyle{definition}
 \newtheorem*{thm*}{Theorem}
 \theoremstyle{remark}
\def\beq{\begin{eqnarray}}
\def\eeq{\end{eqnarray}}
 \newcommand{\bp}{\begin{proof}[Proof]}
 \newcommand{\ep}{\end{proof}}
\newcommand\nc{\newcommand}
 \DeclareFontFamily{U}{wncy}{}
    \DeclareFontShape{U}{wncy}{m}{n}{<->wncyr10}{}
    \DeclareSymbolFont{mcy}{U}{wncy}{m}{n}
    \DeclareMathSymbol{\Sh}{\mathord}{mcy}{"58} 
\begin{document}

\maketitle

\begin{abstract}
We construct model hyper-K\"ahler geometries that include and generalize the multi-Ooguri-Vafa model using the formalism of Gaitto, Moore, and Neitzke. 

This is the first paper in a series of papers making rigorous Gaiotto--Moore--Neitzke's formalism for constructing hyper-K\"ahler metrics near semi-flat limits. In that context, this paper describes the assumptions we will make on a sequence of lattices $0 \to \Gamma_{f} \to \widehat{\Gamma} \to \Gamma \to 0$ over a complex manifold $\B'=\B - \B''$ near the singular locus, $\B''$, in order to define a smooth manifold $\M \to \B$ and hyper-K\"ahler model geometries on neighborhoods of points of the singular locus. In follow-up papers, we will use a modified version of Gaiotto--Moore--Neitzke's iteration scheme starting at these model geometries to produce true global hyper-K\"ahler metrics on $\M$.
\end{abstract}

\hypersetup{linkcolor=blue}

\section{Introduction}\label{sec:introduction}

This paper is the first in a series of papers making rigorous (a generalization and modification of) the formalism of Gaiotto, Moore, and Neitzke \cite{GMN:walls} for constructing hyper-K\"ahler manifolds near semi-flat limits.  
From the differential geometric standpoint, this formalism is a machine for translating enumerative data (as well as data describing the semi-flat limit) into explicit smooth hyper-K\"ahler manifolds near semi-flat limits. Ultimately, this  provides a general framework for proving results about Gromov-Hausdorff collapse of hyper-K\"ahler manifolds to semi-flat limits and completes the Strominger-Yau-Zaslow \cite{strominger:mirrorT} conception of mirror symmetry for hyper-K\"ahler manifolds at the level of hyper-K\"ahler geometry (as opposed to only constructing one complex structure).

\bigskip

This first paper is largely preparatory in nature. The main result is a construction of local hyper-K\"ahler model geometries that are multi-Ooguri-Vafa-like via the Gaiotto, Moore, Neitzke formalism and twistor theorem.  This paper expands on \cite[\S4]{GMN:walls}. There, Gaiotto--Moore--Neitzke describe the Ooguri-Vafa model geometry in their formalism. However, rather than showing \emph{directly} via the twistor theorem \cite{hitchin:hkSUSY} that their formalism for making a $\CC^\times$ family 
\be \varpi(\zeta) = - \frac{i}{2\zeta} \omega_+ + \omega_3 - \frac{i}{2} \zeta \omega_- \ee
of closed $2$-forms gives a hyper-K\"ahler metric, they show that the shape of $\varpi^{\rm model}(\zeta)$ matches known expression for the Ooguri-Vafa model in the Gibbons-Hawking Ansatz\cite{gross:OV}. In \cite[\S4.7]{GMN:walls}---only 2 pages!---, Gaiotto--Moore--Neitzke write down higher rank generalizations of Ooguri-Vafa and multi-Ooguri-Vafa geometries, but only in the Gibbons-Hawking Ansatz. We note that for these higher rank models they never prove that there is a corresponding hyper-K\"ahler structure, and in particular do not mention nondegeneracy. We describe these same local models in their formalism and use a concrete variant of the twistor theorem \cite{FZ:twistor} to prove their formalism gives rise to a hyper-K\"ahler structure.  (Technically, our models are slightly more general, since Gaiotto--Moore--Neitzke restrict to unimodular lattices (see \S\ref{sec:lattices}). However, they coincide for unimodular lattices.)

\bigskip
\noindent\textbf{Models. } In four real dimensions, these local models are simply the Ooguri-Vafa and multi-Ooguri-Vafa models (with both a single $I_N$ singular fiber or perturbations thereof $I_N = I_{N_1} + \cdots I_{N_k}$ for $N=N_1 + \cdots + N_k$), with the option to turn on some background local holomorphic function on $\B$. 

Multi-Ooguri-Vafa models are ubiquitious in degenerations of 4d hyper-K\"ahler manifolds to semi-flat limits. Ooguri-Vafa model is a key part of Gross--Wilson's description in \cite{gross:OV}  of K3 metrics near semi-flat metrics, under the assumption that $\pi: K3 \to \B=\mathbb{P}^1$ has twenty-four $I_1$ singular fibers.  In \cite{chen:k3sing}, Chen--Viaclovsky--Zhang complete the story by allowing all singular fibers, including $I_N$; their paper includes a detailed treatment of multi-Ooguri-Vafa models with exactly one $I_N$ singular fiber. However, under small perturbations the single $I_N$ fiber can decompose to $I_{N_1} + \cdots + I_{N_k}$ for $N=N_1 + \cdots + N_k$.  In their paper, it was sufficient to treat $I_{N_1} + \cdots + I_{N_k}$ as $k$ distinct $I_{N_1}, \cdots, I_{N_k}$ models. However, once one starts talking about small fiber degenerations in the moduli space of $K3$s in which the fibration $\pi:K3 \to \B$ changes, it is  necessary to treat $I_N$ and its perturbations in a uniform way. The Gaiotto--Moore--Neitzke formalism gives some complicated geometric package that produces these 4d multi-Ooguri-Vafa metrics, and naturally treats the above degenerations or collisions of singular fibers uniformly.  Consequently, our estimates are sharper than previous estimates.

Moreover, in Gaiotto--Moore--Neitzke's formalism, there is a natural higher rank generalization of multi-Ooguri-Vafa metrics. Like all hyper-K\"ahler metrics conjecturally produced by Gaiotto--Moore--Neitzke's formalism, these manifolds are fibered over a half-dimensional complex base, $\B$: \be\pi: \M \to \B,\ee
and the fibers are generically abelian varieties. 
 When the lattice $\widehat{\Gamma} \to \B$ in Gaiotto--Moore--Neitzke's formulation is unimodular, these higher rank generalizations include products of multi-Ooguri-Vafa models and flat $\mathbb{R}^4$, with the option, again, to turn on some background holomorphic data. Local models associated to non-unimodular lattices are some quotient-like generalization thereof.  While these are not qualitatively new hyper-K\"ahler manifolds, for any singular fiber $\pi^{-1}(u)$ and choice of fiber length scale $\frac{1}{\pi}$ we are able to \emph{quantity} the size of the neighborhood of $u$ in $\B$ admitting a model hyper-K\"ahler metric. This will be useful for us in future papers in this series, and hopefully will be useful for work of others as well.

More broadly, just as the four-dimensional Ooguri-Vafa model has a triholomorphic $S^1$-action hence is a hypertoric manifold\footnote{We note that the $\mathbb{Z}$-cover of Ooguri-Vafa model is more standard in algebraic geometry than Ooguri-Vafa itself. It is of infinite-topological type with a chain of $\mathbb{P}^1$s.  It does not arise via a finite-dimensional hyper-K\"ahler quotient.}, some of these higher multi-Ooguri-Vafa models are also hypertoric manifolds\footnote{Our local models of real dimension $4r$ have a $T^s$ action, but $s$ is not always equal to $r$.}.  Some higher-dimenisonal verions of Ooguri-Vafa are discussed in \cite[\S7]{Dancer}, whose $\mathbb{Z}^n$ cover is described---in the higher-dimensional Gibbons-Hawking framework---via a $T^n$ bundle over $\mathbb{R}^{3n}$ degenerating over a union of real codimension planes invariant under some $\mathbb{Z}^n$ action. Our models include all of these.

 \bigskip

 \noindent\textbf{Method.} 
 We choose to construct these multi-Ooguri-Vafa-like models via Gaiotto--Moore--Neitzke's integral relation, rather than via the simpler approach via the higher-dimensional analogue of the Gibbons-Hawking ansatz (though we do give this perspective in \S\ref{sec:GH}), because it relates better to our ultimate aim of making rigorous the formalism of Gaiotto, Moore, and Neitzke. After appropriate modification, Gaiotto--Moore--Neitzke's formalism produces a $\mathbb{P}^1_\zeta$-family of closed $2$-forms on $\M'$, an open dense subset of the desired manifold $\M$. For these model geometries, no modification is required. We will appeal to a concrete variant of Hitchin--Karlhede--Lindstr\"om--Ro\v{c}ek's twistor theorem (see \cite[Theorem 3.16b]{FZ:twistor}) in order to produce a pseudo-hyper-K\"ahler structure.
 \begin{theorem}[Theorem 3.16b of \cite{FZ:twistor}]\label{thm:twistor}
    Let \be \varpi(\zeta) = - \frac{i}{2\zeta} \omega_+ + \omega_3 - \frac{i}{2} \zeta \omega_- \ee be a family of holomorphic symplectic forms on a real manifold $\M$ such that $\omega_- = \bar \omega_+$ and $\omega_3=\bar \omega_3$. Moreover, assume that $\omega_+$ is a holomorphic symplectic form on $\M$.
    Then,
    $\M$ is a pseudo-hyper-K\"ahler manifold with pseudo-K\"ahler forms $(\omega_1,\omega_2,\omega_3)$ corresponding to the complex structures $(J_1,J_2,J_3)$, where $\omega_\pm = \omega_1 \pm i \omega_2$.
    \end{theorem}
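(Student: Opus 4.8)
The plan is to reconstruct from the $\zeta$-family the full almost-hyper-Hermitian data $(g,J_1,J_2,J_3)$ on $\M$, with $g$ allowed to be indefinite, and then to invoke the classical fact that an almost-hyper-Hermitian structure whose three fundamental $2$-forms are closed is automatically (pseudo-)hyper-K\"ahler. Write $\dim_{\mathbb{R}}\M=4n$. First I would record that $d\varpi(\zeta)=0$ for all $\zeta\in\CC^\times$ forces $d\omega_+=d\omega_3=d\omega_-=0$ by comparing Laurent coefficients in $\zeta$. The hypothesis that $\omega_+$ is a holomorphic symplectic form on $\M$ provides an integrable complex structure $J_3$ for which $\omega_+\in\Omega^{2,0}_{J_3}$ is closed and non-degenerate; note $\ker\omega_+=T^{0,1}_{J_3}$, and via $\omega_-=\bar\omega_+$ one gets $\omega_-\in\Omega^{0,2}_{J_3}$. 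In the same way, for each $\zeta\in\CC^\times$ the holomorphic symplectic form $\varpi(\zeta)$ provides an integrable complex structure $I(\zeta)$ with $\varpi(\zeta)\in\Omega^{2,0}_{I(\zeta)}$, while $\omega_-$ handles $\zeta=\infty$; and the reality conditions give $\overline{\varpi(\zeta)}=\varpi(-1/\bar\zeta)$, hence $I(-1/\bar\zeta)=-I(\zeta)$. Thus $\zeta\mapsto I(\zeta)$ is a family of complex structures over all of $\PP^1$, equivariant for the antipodal map.

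The heart of the argument is to show that this family is the standard twistor-sphere family of a quaternionic triple and that it carries a common compatible metric. Concretely, I would work in a local frame adapted to $J_3$ in which $\omega_+$ is the constant Darboux form and write $\omega_3=\omega_3^{1,1}+\omega_3^{2,0}+\omega_3^{0,2}$; imposing that $\varpi(\zeta)$ have rank $2n$ with kernel transverse to its conjugate for all $\zeta$ near $0$ forces $\omega_3^{2,0}=\omega_3^{0,2}=0$, so $\omega_3$ is of type $(1,1)$ for $J_3$ and the tensor $g$ determined by $\omega_3=g(J_3\cdot,\cdot)$ is symmetric. Non-degeneracy of $\omega_+$ then forces $g$ to be non-degenerate, though a priori of indefinite signature. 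Setting $\omega_1=\tfrac12(\omega_++\omega_-)$, $\omega_2=\tfrac{1}{2i}(\omega_+-\omega_-)$ and defining $J_1,J_2$ by $\omega_i=g(J_i\cdot,\cdot)$, one checks---using exactly the $\zeta$-dependence of $\varpi(\zeta)$ together with the reality conditions---that $(J_1,J_2,J_3)$ satisfy the quaternion relations, that $g$ is compatible with all three, and that $I(\zeta)$ is the corresponding M\"obius combination $\tfrac{(1-|\zeta|^2)J_3+\dots}{1+|\zeta|^2}$.

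At this point $(g,J_1,J_2,J_3)$ is an almost-hyper-Hermitian structure with $g$ pseudo-Riemannian, and $\omega_1,\omega_2,\omega_3$ are all closed. I would finish by invoking the pseudo-Riemannian analogue of the classical lemma---whose proof uses no definiteness of $g$---that such a structure is hyper-K\"ahler: each $J_a$ is then integrable and parallel for the Levi-Civita connection of $g$, equivalently $g$ is pseudo-K\"ahler for $J_3$ with parallel holomorphic symplectic form $\omega_+$, so the holonomy reduces to the real form of $Sp(n,\CC)$ matching the signature. Hence $\M$ is pseudo-hyper-K\"ahler with pseudo-K\"ahler forms $(\omega_1,\omega_2,\omega_3)$ and complex structures $(J_1,J_2,J_3)$, as claimed.

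The main obstacle is the second step above: neither the non-degeneracy of $g$ nor the quaternion relations follow formally from the reality conditions, and symplecticness of $\varpi(\zeta)$ for $\zeta\neq0$ alone is insufficient---non-degeneracy of $g$ genuinely requires the separate hypothesis that $\omega_+$ is non-degenerate, and both conclusions emerge only after carefully tracking how $\ker\varpi(\zeta)$ moves with $\zeta$ near $0$ and $\infty$. This bookkeeping is the ``concrete'' content behind the twistor theorem \cite{FZ:twistor,hitchin:hkSUSY}.
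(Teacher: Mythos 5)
First, a point of bookkeeping: this paper never proves Theorem \ref{thm:twistor} --- it is imported as a black box from the companion paper \cite{FZ:twistor} --- so there is no in-paper argument to compare yours against. Judged on its own, your outline follows the standard Hitchin--Karlhede--Lindstr\"om--Ro\v{c}ek route, and the steps you do carry out are correct: closedness of $\omega_\pm,\omega_3$ from Laurent coefficients; integrability of $J_3$ and of each $I(\zeta)$ from Definition \ref{def:holoSympDef}; the equivariance $I(-1/\bar\zeta)=-I(\zeta)$; the deduction that $\omega_3$ is of type $(1,1)$ for $J_3$ from constancy of the rank of $2i\zeta\varpi(\zeta)=\omega_++2i\zeta\omega_3+\zeta^2\omega_-$ near $\zeta=0$; and the final appeal to the signature-independent lemma that an almost hyper-Hermitian structure with closed fundamental forms is parallel.

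The gap is that the decisive step is never executed: you write ``one checks'' for the quaternion relations, assert that ``non-degeneracy of $\omega_+$ then forces $g$ to be non-degenerate'' (which is false as a standalone implication, and contradicts your own closing caveat), and then in the last paragraph correctly identify this as ``the main obstacle'' without resolving it. So the proposal is an outline of the theorem rather than a proof. The missing content is a single pointwise identity. In the splitting $T_\CC\M=T^{1,0}_{J_3}\oplus T^{0,1}_{J_3}$ write
\begin{equation}
\omega_+=\begin{pmatrix}\Omega&0\\0&0\end{pmatrix},\qquad
\omega_-=\begin{pmatrix}0&0\\0&\bar\Omega\end{pmatrix},\qquad
\omega_3=\begin{pmatrix}0&H\\-H^T&0\end{pmatrix},
\end{equation}
so that
\begin{equation}
2i\zeta\,\varpi(\zeta)=\begin{pmatrix}\Omega&2i\zeta H\\-2i\zeta H^T&\zeta^2\bar\Omega\end{pmatrix},
\end{equation}
whose rank, by the Schur complement with respect to the invertible block $\Omega$, is $2n+\mathrm{rank}\bigl(\zeta^2(\bar\Omega-4H^T\Omega^{-1}H)\bigr)$. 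The hypothesis $T_\CC\M=\ker\varpi(\zeta)\oplus\ker\overline{\varpi(\zeta)}$ forces $\dim_\CC\ker\varpi(\zeta)=2n$ at some (hence every) $\zeta\neq0$, which is exactly the identity $H^T\Omega^{-1}H=\frac14\bar\Omega$. This identity is what you are missing: it immediately gives $|\det H|^2=4^{-2n}|\det\Omega|^2\neq0$, hence $\omega_3$ and $g$ are non-degenerate, and upon setting $J_a=g^{-1}\omega_a$ it is equivalent to the quaternion relations $J_1^2=J_2^2=-1$, $J_1J_2=J_3$ (and to $I(\zeta)$ being the M\"obius combination you wrote down). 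Until that computation, or an equivalent one, appears, the proof is not complete.
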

At the end, we will check the signature. Here, 
\begin{definition}\label{def:holoSympDef}
A \emph{holomorphic symplectic form on a real manifold} $\M$ is a closed 2-form $\Omega$, which we regard as a map $\Omega: T_\CC\M\to T^*_\CC\M$, such that $T_\CC \M = \ker\Omega \oplus \ker \bar\Omega$.
\end{definition}
Such a holomorphic symplectic form on a real manifold $\M$ uniquely determines a complex structure $J$ on $\M$ such that $\Omega$ is a holomorphic symplectic form on the complex manifold $(\M,J)$. A convenient means to prove that a $2$-form is holomorphic symplectic is given by 
\begin{proposition}[Proposition 3.4 of \cite{FZ:twistor}] \label{prop:holoVol}
    A closed 2-form $\Omega$ on a real $4r$-manifold $\M$ is a holomorphic symplectic form if and only if $\Omega^r\wedge\bar\Omega^r$ vanishes nowhere and $\dim_\CC \ker\Omega \ge 2r$.
    \end{proposition}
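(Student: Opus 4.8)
The plan is to reduce the statement to pointwise linear algebra on the complexified tangent space. Fix $p\in\M$ and set $V=(T_\CC\M)_p$, a complex vector space of dimension $4r$ carrying the conjugate-linear involution $c\colon V\to V$ coming from the real form $T_p\M$; I will write $\bar v=c(v)$. The key elementary facts I would set up first are: (i) since $\bar\Omega(\bar u,\bar v)=\overline{\Omega(u,v)}$, the map $c$ restricts to a conjugate-linear bijection $\ker\Omega\to\ker\bar\Omega$, so $\dim_\CC\ker\bar\Omega=\dim_\CC\ker\Omega$; and (ii) a skew form descends to a nondegenerate form on the quotient of $V$ by its radical $\ker\Omega$, so $\dim_\CC(V/\ker\Omega)$ is even, say $2m$, and $\Omega^m$ is the top nonvanishing power of $\Omega$ (equivalently, $\Omega^j=0$ for $j>m$ and $\Omega^m\neq 0$). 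Definition \ref{def:holoSympDef} then reads: $\Omega$ is holomorphic symplectic iff $V=\ker\Omega\oplus\ker\bar\Omega$ at every point, and by (i) this forces $\dim_\CC\ker\Omega=\dim_\CC\ker\bar\Omega=2r$ and $\ker\Omega\cap\ker\bar\Omega=0$.

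For the forward implication, assume $V=\ker\Omega\oplus\ker\bar\Omega$. Then $\dim_\CC\ker\Omega=2r\ge 2r$ is immediate. Choose bases $f_1,\dots,f_{2r}$ of $\ker\Omega$ and $g_1,\dots,g_{2r}$ of $\ker\bar\Omega$, so $f_1,\dots,f_{2r},g_1,\dots,g_{2r}$ is a basis of $V$; evaluating $\Omega^r\wedge\bar\Omega^r$ on it and using that any term which contracts an $f_i$ into a factor of $\Omega^r$ or a $g_j$ into a factor of $\bar\Omega^r$ vanishes, one is left with $\pm\,\Omega^r(g_1,\dots,g_{2r})\,\bar\Omega^r(f_1,\dots,f_{2r})$, and both factors are nonzero because (ii) applied to $\Omega|_{\ker\bar\Omega}\cong\Omega|_{V/\ker\Omega}$ (resp.\ to $\bar\Omega|_{\ker\Omega}$) shows these are nondegenerate $2r$-dimensional symplectic spaces. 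Hence $\Omega^r\wedge\bar\Omega^r$ vanishes nowhere.

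For the converse, assume $\Omega^r\wedge\bar\Omega^r$ vanishes nowhere and $\dim_\CC\ker\Omega\ge 2r$. Writing $\dim_\CC(V/\ker\Omega)=2m$ as in (ii), the bound $\dim_\CC\ker\Omega=4r-2m\ge 2r$ gives $m\le r$; if $m<r$ then $\Omega^r=0$, hence $\Omega^r\wedge\bar\Omega^r=0$, a contradiction, so $m=r$, i.e.\ $\dim_\CC\ker\Omega=2r$, and by (i) also $\dim_\CC\ker\bar\Omega=2r$. It remains to see the two kernels meet only in $0$: if $v\in\ker\Omega\cap\ker\bar\Omega$ then $\iota_v\Omega=0$ and $\iota_v\bar\Omega=0$, so $\iota_v(\Omega^r\wedge\bar\Omega^r)=r\,(\iota_v\Omega)\wedge\Omega^{r-1}\wedge\bar\Omega^r+r\,\Omega^r\wedge(\iota_v\bar\Omega)\wedge\bar\Omega^{r-1}=0$; but $\Omega^r\wedge\bar\Omega^r$ is a nonzero element of the top exterior power $\Lambda^{4r}V^*$, and such a form is contracted to $0$ only by the zero vector, so $v=0$. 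Counting dimensions gives $V=\ker\Omega\oplus\ker\bar\Omega$ pointwise, and since $\dim_\CC\ker\Omega\equiv 2r$ is constant in $p$ this upgrades to a decomposition of bundles, so $\Omega$ is holomorphic symplectic in the sense of Definition \ref{def:holoSympDef}.

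The argument is essentially pure multilinear algebra and I do not anticipate a serious obstacle; the one place demanding care is the bookkeeping in the forward direction --- making precise why feeding a radical vector into the appropriate wedge factor annihilates a term, and invoking the standard structure theory of skew forms (even rank, location of the top nonvanishing power) for the induced form on $V/\ker\Omega$. There is also a minor global remark, namely that the pointwise splitting is a smooth bundle splitting, which is immediate once one observes that the two hypotheses together pin the rank of $\Omega$ to the constant value $2r$.
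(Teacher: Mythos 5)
Your argument is correct and complete. Note that the paper does not actually prove this proposition --- it is quoted verbatim as Proposition 3.4 of \cite{FZ:twistor} and used as a black box --- so there is no internal proof to compare against; your write-up supplies exactly the pointwise linear algebra one would expect that reference to contain. The two directions are both sound: in the forward direction the shuffle expansion of $\Omega^r\wedge\bar\Omega^r$ on a basis adapted to $\ker\Omega\oplus\ker\bar\Omega$ collapses to the single nonzero term because $\iota_{f}\Omega^r=0$ for $f$ in the radical, and in the converse the rank count $m=r$ together with the contraction argument (a nonzero element of $\Lambda^{4r}V^*$ is annihilated only by $v=0$) forces the kernels to be transverse $2r$-dimensional subspaces. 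The closing remark about constancy of rank giving a smooth bundle splitting is the right thing to say to pass from pointwise to global.
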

    The condition $\dim_\CC \ker \Omega\ge 2r$ is particularly convenient because the nullity of $\Omega$ is upper semicontinuous.

    The nondegeneracy of $\Omega^r \wedge \bar \Omega^r$ is the biggest challenge throughout this paper. 

    We must to separately prove that $\varpi(\zeta)$ is a holomorphic symplectic form for $\zeta \in \CC^\times$ and that $\omega_+$ is a holomorphic symplectic form. When $\zeta \in \CC^\times$, $\varpi(\zeta)$ will naturally be written
\begin{equation}
\varpi(\zeta) = \sum_{i=1}^r d \mathcal{Y}_{e_i}(\zeta) \wedge d \mathcal{Y}_{m_i}(\zeta)
\end{equation}
in certain open set $O$ contained in $\M'$, an open dense set of $\M$. By analogy with usual symplectic forms, we refer to these coordinates as holomorphic Darboux coordinates. For $\omega_+$, we will have to work a little harder to produce holomorphic Darboux coordinates.  These holomorphic Darboux coordinates provide us with a convenient way to prove that $\varpi(\zeta), \omega_+$ are holomorphic symplectic forms on $O$: 
\begin{corollary}\label{cor:twistor}
The $2$-form \be \Omega = \sum_{i=1}^r d \alpha_i \wedge d \beta_i \ee is a holomorphic symplectic form an open set $O$ if 
the Jacobian determinant of \be \{\Real \alpha_i, \Imag \alpha_i, \Real \beta_i, \Imag \beta_i\}_{i=1}^r\ee with respect to the natural smooth coordinates on $O$ is non-vanishing.
\end{corollary}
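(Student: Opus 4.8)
The plan is to verify the two hypotheses of Proposition~\ref{prop:holoVol} directly. Closedness of $\Omega=\sum_{i=1}^r d\alpha_i\wedge d\beta_i$ is immediate, since it is a sum of exact $2$-forms. It then suffices to check, at each point $p\in O$, that $\dim_\CC\ker\Omega_p\ge 2r$ and that $(\Omega^r\wedge\bar\Omega^r)_p\neq 0$; both reduce to finite-dimensional linear algebra in $T^*_{\CC,p}O$.

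For the kernel bound, I would observe that for every $v\in T_{\CC,p}O$ the contraction $\iota_v\Omega$ lies in the $\CC$-span of $\{d\alpha_i,d\beta_i\}_{i=1}^r$, so $\mathrm{rank}_\CC\Omega_p\le 2r$ and hence $\dim_\CC\ker\Omega_p\ge 4r-2r=2r$. No hypothesis on the Jacobian is needed for this half.

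For the volume condition, I would expand $\Omega^r$: since each $d\alpha_i\wedge d\beta_i$ squares to zero, the multinomial expansion collapses to a single term, $\Omega^r=r!\,\bigwedge_{i=1}^r(d\alpha_i\wedge d\beta_i)$, and likewise $\bar\Omega^r=r!\,\bigwedge_{i=1}^r(d\bar\alpha_i\wedge d\bar\beta_i)$. Hence
\[
\Omega^r\wedge\bar\Omega^r=(r!)^2\,\bigwedge_{i=1}^r\bigl(d\alpha_i\wedge d\beta_i\bigr)\wedge\bigwedge_{i=1}^r\bigl(d\bar\alpha_i\wedge d\bar\beta_i\bigr).
\]
Reordering the $4r$ factors introduces only a sign, and the substitutions $d\alpha_i=d\Real\alpha_i+i\,d\Imag\alpha_i$, $d\bar\alpha_i=d\Real\alpha_i-i\,d\Imag\alpha_i$ (so $d\alpha_i\wedge d\bar\alpha_i=-2i\,d\Real\alpha_i\wedge d\Imag\alpha_i$), and the analogous ones for $\beta_i$, turn this into a nonzero constant multiple of $\bigwedge_{i=1}^r\bigl(d\Real\alpha_i\wedge d\Imag\alpha_i\wedge d\Real\beta_i\wedge d\Imag\beta_i\bigr)$. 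This last $4r$-form equals the Jacobian determinant of $\{\Real\alpha_i,\Imag\alpha_i,\Real\beta_i,\Imag\beta_i\}_{i=1}^r$ with respect to the natural smooth coordinates on $O$ times the coordinate volume form, so it is nowhere vanishing exactly when that Jacobian is nonvanishing, which is the hypothesis. Proposition~\ref{prop:holoVol} then gives the claim.

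I do not expect any genuine obstacle here: the substantive content lies in Proposition~\ref{prop:holoVol}, which we may assume, and the only care required is the bookkeeping of the constant $(-2i)^{2r}$ and the permutation sign when passing from the basis $\{d\alpha_i,d\bar\alpha_i,d\beta_i,d\bar\beta_i\}$ to $\{d\Real\alpha_i,d\Imag\alpha_i,d\Real\beta_i,d\Imag\beta_i\}$. As a consistency check, nonvanishing of the Jacobian also forces $\{d\alpha_i,d\beta_i\}_{i=1}^r$ to be $\CC$-linearly independent, so in fact $\mathrm{rank}_\CC\Omega_p=2r$ and $\dim_\CC\ker\Omega_p=2r$ on the nose, although only the inequality is used.
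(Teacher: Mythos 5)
Your proposal is correct and follows essentially the same route as the paper: both verify the two hypotheses of Proposition~\ref{prop:holoVol}, with the identical expansion $\Omega^r\wedge\bar\Omega^r=(-2i)^{2r}(r!)^2\bigwedge_i(\Real d\alpha_i\wedge\Imag d\alpha_i)\wedge\bigwedge_i(\Real d\beta_i\wedge\Imag d\beta_i)$ tying nondegeneracy to the Jacobian. The only (harmless) variation is in the kernel bound, where the paper exhibits the explicit kernel vectors $\partial/\partial\bar\alpha_i,\partial/\partial\bar\beta_i$ using that the $\alpha_i,\beta_i$ are complex coordinates, whereas you obtain $\dim_\CC\ker\Omega\ge 2r$ from the rank estimate $\mathrm{rank}_\CC\,\Omega\le 2r$, which as you note requires no hypothesis at all.
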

\begin{proof}
 Our proof is based on Proposition \ref{prop:holoVol}.
The functions $\{\alpha_i, \beta_i\}_{i=1}^r$ are non-degenerate complex coordinates on $O$ if the determinant of the Jacobian of the coordinates $$\{\Real \alpha_i, \Imag \alpha_i, \Real \beta_i, \Imag \beta_i\}_{i=1}^r$$ with respect to smooth coordinates on $\M'$ is non-vanishing. Correspondingly, $ \dim_{\CC}\ker \Omega =2r$ on $O$ since it contains the linearly independent vectors 
\be  
\Big\{
\frac{\partial}{\partial \overline{\alpha_i}}, \frac{\partial}{\partial \overline{\beta_i}} 
\Big\}_{i=1}^r.
\ee

 We compute that 
\begin{align}
\Omega^r \wedge \overline{\Omega}^r &=r!r! (d\alpha_1  \wedge d\beta_1) \wedge \cdots \wedge  (d\alpha_r \wedge d\beta_r)  \wedge (d\bar \alpha_1 \wedge d\bar \beta_1)  \wedge \cdots \wedge (d\bar\alpha_r \wedge d\bar\beta_r) \nonumber \\
&= r!r!(d\alpha_1  \wedge d\bar \alpha_1) \wedge \cdots \wedge  (d\alpha_r \wedge \bar d\alpha_r)  \wedge (d\beta_1 \wedge d\bar \beta_1)  \wedge \cdots  \wedge (d\beta_r \wedge d\bar\beta_r) \nonumber\\
&= (-2i)^{2r} r!r!(\Real d\alpha_1  \wedge \Imag d\alpha_1) \wedge \cdots \wedge  (\Real d\alpha_r \wedge \Imag d\alpha_r) \nonumber \\
& \qquad    \wedge (\Real d\beta_1 \wedge \Imag d\beta_1) \wedge \cdots  \wedge (\Real d\beta_r \wedge \Imag d\beta_r)
\end{align}
Thus, we see that this is non-vanishing if the Jacobian determinant above is non-vanishing.
\end{proof}

In the cases in this paper it is straightforward to prove that $\varpi(\zeta), \omega_+$ is a holomorphic symplectic on $\bar{O}$ by first showing that $\varpi(\zeta), \omega_+$ are smooth when extended to $\bar{O}$ and then showing that $\varpi(\zeta)^r \wedge \overline{\varpi(\zeta)}^r$ and $\omega_+^r \wedge \bar \omega_+^r$ are non-vanishing using the limit of the Jacobian with respect to the smooth coordinates on $\M$. (See the proof of Theorem \ref{thm:modelgeom}.)

\bigskip

\noindent\textbf{Fixing $R$.} The Gaiotto--Moore--Neitzke formalism \cite{GMN:walls} has a parameter $R>0$, a negative power of which governs the area of a torus fiber in the manifold of interest. The semi-flat limit is then described as the $R\to\infty$ limit. However, $R$ is a dimensionful quantity, whereas mathematics must be conducted with dimensionless ratios. (Skeptical readers are invited to consider the expression ``$\sin(7\, {\rm meters})$".) And, indeed, this is reflected in the fact that $R$ only ever appears multiplying other quantities, such as the central charge, in such a way that one may rescale all of them in order to set $R$ equal to any value of interest. We choose to use this freedom to set $R=1/\pi$, as this simplifies many expressions. (Physicists would say that we work in units where $R=1/\pi$.) Fixing $R$ is related to our earlier discussion about treating $I_N$ and its perturbations $I_{N_1}+ \cdots I_{N_k}$ uniformly. This helps clarify that what is relevant for the model geometry near a point $u \in \B$ is not the area of the torus fiber, but rather the ratio of (an appropriate) power of this area to a measure of the distance to the nearest ``conflicting'' singular fiber. 

\subsubsection*{Outline/Key Results}

An outline of the remainder of this paper is as follows. 
\begin{itemize} 
\item In \S\ref{sec:first}, we explain all of the data in that is required as input 
for the construction of hyper-K\"ahler structures in Gaiotto--Moore--Neitzke's formalism and construct from this data many additional 
ingredients that will be needed. 
\begin{itemize}
\item [(\S\ref{sec:semiflat})] The basic data includes a local system of 
lattices $0 \to \Gamma_f \to \widehat{\Gamma} \to \Gamma \to 0$  over the regular locus of the Coulomb branch 
$\B'=\B\backslash \B''$, a connected complex manifold of complex dimension $r$, 
equipped with an integer-valued antisymmetric pairing $\avg{, }$  and 
map $Z: \widehat{\Gamma} \to \CC$ that is constant on $\Gamma_f$.  
 In Theorem \ref{prop:semiflathyperkahler} we construct a hyper-K\"ahler structure $\omega^{\semif}(\zeta)$ on $\mathcal{M}'$.

  An additional piece of data is a map 
 $\Omega: \widehat{\Gamma} \to \mathrm{Q}$, though this is not important for the semi-flat geometry. 
 \item[(\S\ref{sec:strategy})] Lastly, we are in the position to explain Gaiotto--Moore--Neitzke's proposal for constructing a hyper-K\"ahler structure on $\M'$ that extends smoothly over $\B''$ to a complete hyper-K\"ahler structure. This will serve as our guide in future papers, though we will make some modifications.
\end{itemize}
\item The formalism of Gaiotto, Moore, and Neitzke is most interesting near the singular locus $\B''$. In  \S\ref{sec:ass}, we introduce additional qualitative and quantitive assumptions near $\B''$.   These assumptions will imply that the local model geometries are multi-Ooguri-Vafa like.  As mentioned before,the non-degeneracy of $\varpi(\zeta)$ is the biggest problem. One of these assumptions, \ref{it:vPosAss}, and accompanying Figure \ref{fig:setup} is particularly novel. With this assumption, we are able to not just say "there is \emph{some} neighborhood $U \subset \B$ of $u \in \B''$ over which $\M_U$ admits a model hyper-K\"ahler metric", but to actually quantify how big it is. These preliminary estimates are essential for follow-up papers in this series. 
\item In \S\ref{sec:smooth}, we extend $\M'$ over $\B'$ to a smooth manifold $\M$. This is non-trivial. We do this in two step, first as $\widetilde{\M'}$ and then as $\M$. For Ooguri-Vafa, these two steps are shown in Figure \ref{fig:manifolds}. 
\begin{figure}[h!]
\begin{centering} 
\includegraphics[height=1.2in]{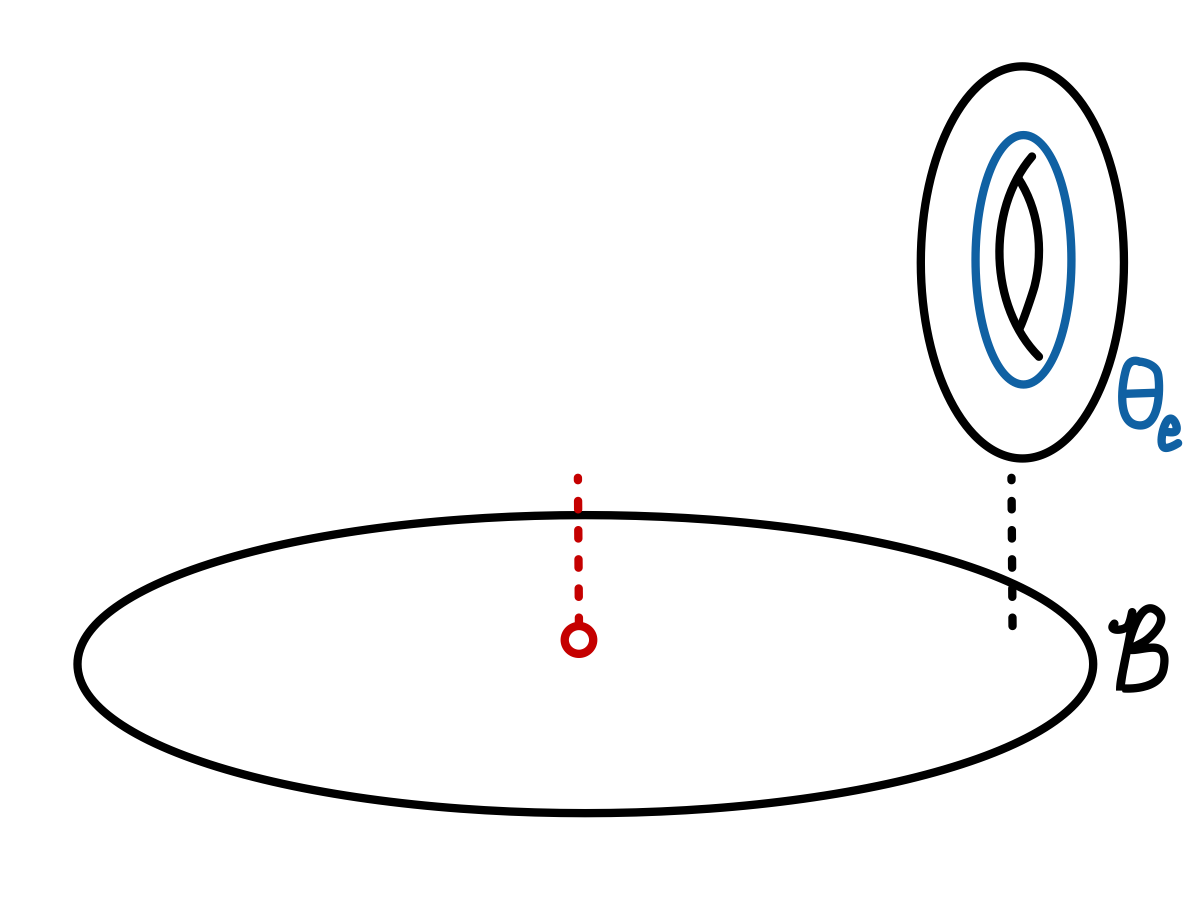} 
\includegraphics[height=1.2in]{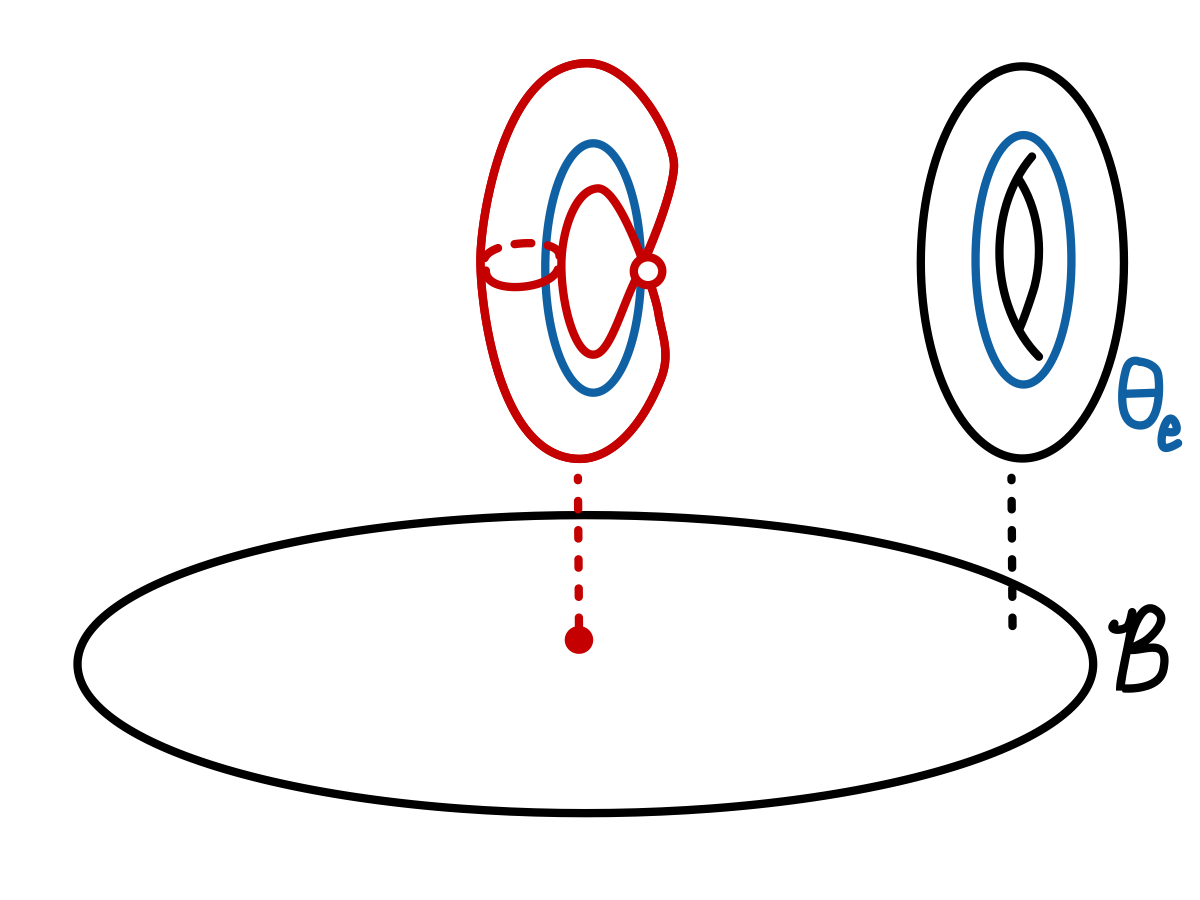} 
\includegraphics[height=1.2in]{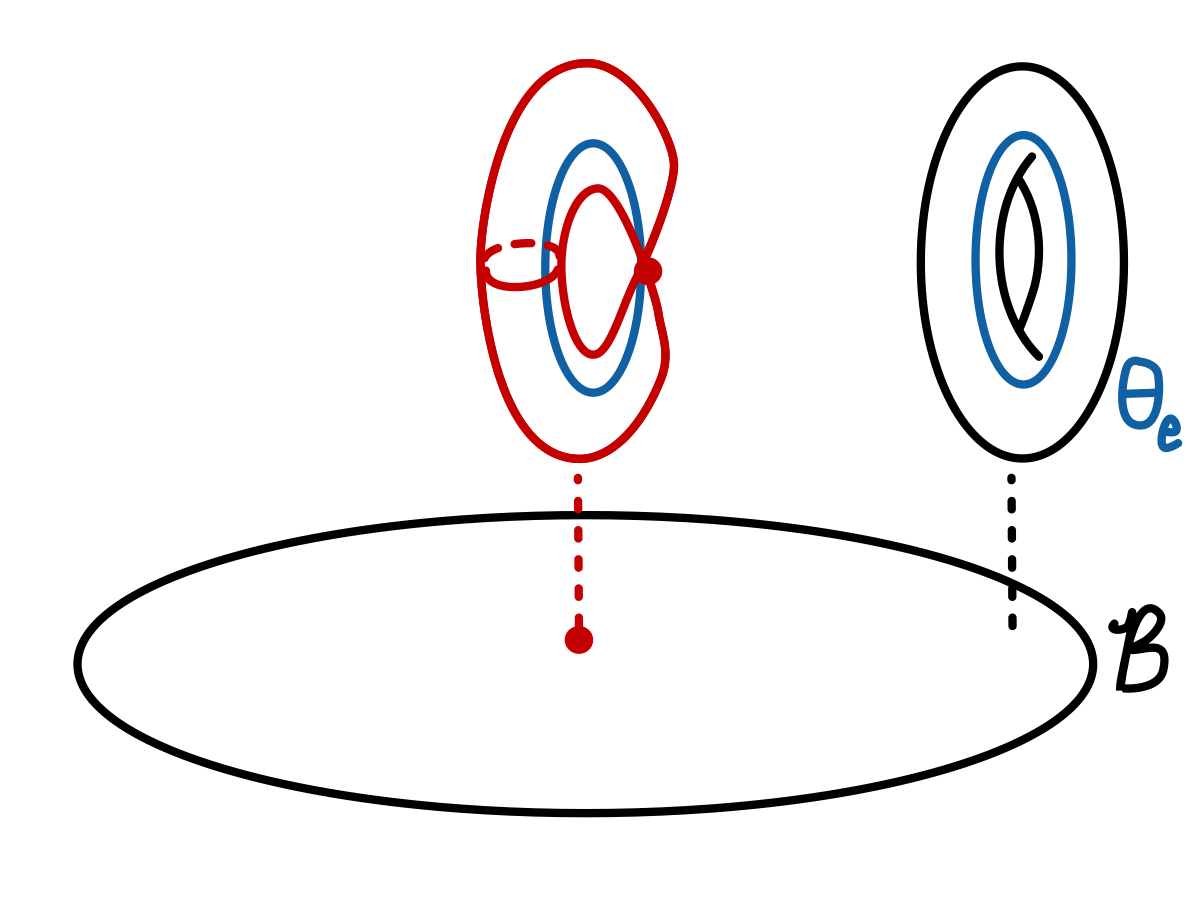}
\caption{\label{fig:manifolds} For Ooguri-Vafa, here is an image showing the difference of:\\
\textsc{(Left)} $\mathcal{M}'$, in which the singular fiber is missing;\\
\textsc{(Center)} $\widetilde{\mathcal{M}'}$, in which one point of the singular fiber over $0$ is missing;\\ \textsc{(Right)} $\mathcal{M}$, which is smooth and complete in a neighborhood of the singular fiber.}
\end{centering}
\end{figure}
This depends on some technical results developed in \S\ref{sec:GH}, in which we describe these model geometries in the higher-dimensional analogue of the Gibbons-Hawking Ansatz. 
\item In \S\ref{sec:modelgeom}, we describe the model hyper-K\"ahler Ooguri-Vafa-like geometries, culminating in the main theorem of the paper, Theorem \ref{thm:modelgeom}.
\end{itemize}

\subsubsection*{Acknowledgements.} We thank A. Tripathy for his collaboration at early stages in this series of papers, N. Addington, D. Allegretti and A. Neitzke for helpful and enjoyable conversations on related matters, and S. Kachru, R. Mazzeo, and A. Vasy for many enjoyable conversations and their enduring confidence that this project would reach completion.  
Laura is partially supported by NSF grant DMS-2005258.
This material is based upon work supported by the National Science
Foundation under Grant No. DMS-1928930, while the authors were in
residence at the Simons Laufer Mathematical Sciences Institute
(formerly MSRI) in Berkeley, California, during the Fall 2022
semester.

\section{Set up} \label{sec:first}

In this section, we introduce the inputs for Gaiotto--Moore--Neitzke's conjectural construction, as well as a number of useful notions which may be defined using these inputs. 
\begin{itemize}
\item 
In \S\ref{sec:semiflat}, we introduce Data \ref{item:D1}-\ref{item:thetaf} $(\mathcal{B}, \widehat{\Gamma}, \avg{,},  Z, \theta_f)$.  We then construct a smooth manifold $\mathcal{M}' \to \mathcal{B}'$ from this data in Construction \ref{constr:M'}. The main result is the construction of a hyper-K\"ahler structure $\omega^{\semif}(\zeta)$ on $\mathcal{M}'$ in Theorem \ref{prop:semiflathyperkahler}.
\item 
In \S\ref{sec:semiflat}, we also introduce the final Datum \ref{item:Omega} of a function $\Omega: \widehat{\Gamma} \to \mathbb{Q}$.
\item Lastly, in \S\ref{sec:strategy}, we describe Gaiotto--Moore--Neitzke's approach in \cite{GMN:walls} to construct a hyper-K\"ahler structure from this data. This will serve as our guide in follow up papers in this series, though will make some modifications in future papers in order to carefully state and prove our results. In the present paper, we follow their approach on the nose (up to rotating the ray we integrate on) in order to construct simple model hyper-K\"ahler geometries.
\end{itemize}
To the reader familiar with Gaiotto--Moore--Neitzke's construction, \footnote{For a comparison with \cite{neitzke:hkReview}, our (D1) is his ``Data 1'' and ``Data 2'';(D2), (D3) is ``Data 3abc''; (D4) is  ``Data 4''  together with ``Condition 1'', ``Condition 2'',  ``Condition 4.'' (We omit ``Condition 3'' since it is implied by ``Condition 4''); (D5) is ``Data 5''; (D6) is ``Data 6'' together with ``Condition 5''. We don't discuss the additional conditions on $\Omega$ in this paper. 
}.
we highlight the new:
\begin{itemize}
\item Throughout, we do not assume that the antisymmetric $\ZZ$-valued pairing  $\avg{,}$ on $\widehat{\Gamma}$ induces a unimodular pairing on $\Gamma$; consequently, rather than a symplectic basis, we must work with a Frobenius basis (see Definition  \ref{def:Frobeniusbasis} of Appendix \ref{sec:lattices}); this causes moderate headaches throughout.  
\item We give a direct proof that $\PP^1$-family of $\varpi^{\mathrm{sf}}(\zeta)$ of closed $2$-forms give rise to a pseudo-hyper-K\"ahler structure; as discussed in Corollary \ref{cor:twistor}, this ammounts to showing the non-degeneracy of the holomorphic Darboux coordinates for $\varpi^{\rm sf}(\zeta)$ for all $\zeta \in \CC^\times$ and for $\omega_+^{\rm sf}$. (Of course, the semi-flat geometry is already described in \cite{Freed}.)

\end{itemize}

\subsection{Basic data for semi-flat geometry}\label{sec:semiflat}

Our construction of hyper-K\"ahler manifolds begins with so-called `semi-flat' manifolds. We begin by explaining our construction thereof.

\subsubsection{From basic data to the semi-flat manifold $\mathcal{M}'$} \label{sec:constructSF}

In this subsection, we describe the fixed data (D1-D5) and the resulting real manifold $\mathcal{M}'$.

\bigskip

Fix the following data:
\begin{enumerate}[(D1)]
    \item \label{item:D1}$\mathcal{B}$ a connected complex manifold of dimension $\mathrm{dim}_{\mathbb{C}}\mathcal{B}=r$, a divisor $\B''$ called the \emph{singular} or \emph{discriminant} locus, and its complement $\B' = \B\setminus \B''$ called the \emph{regular} or \emph{smooth} locus. We will sometimes call $\mathcal{B}$ the \emph{Coulomb branch}.
    \item An exact sequence of local systems of lattices over $\mathcal{B}'$
    \[0 \rightarrow \Gamma_f \rightarrow \widehat{\Gamma} \rightarrow \Gamma \to 0 \ .\]
The lattice $\Gamma_f$ is a trivial fibration, and we refer to a fiber thereof (which we also denote by $\Gamma_f$) as the \emph{flavor charge lattice}. We refer to the fiber of $\Gamma$ over $u\in \B'$ as $\Gamma_u$; this lattice, which we term the \emph{gauge charge lattice}, has dimension $2r$. Similarly, we refer to the fiber of $\widehat\Gamma$ over $u$ as $\widehat\Gamma_u$, which we call the \emph{charge lattice}. Points in any of these lattices may be referred to as \emph{charges}. We may write expressions such as $\gamma\in \Gamma$ to indicate that $\gamma$ is a flat local section of $\Gamma$.

    \item A flat antisymmetric pairing 
    \[\avg{\; , \;}: \widehat{\Gamma}_u \times \widehat{\Gamma}_u \to \mathbb{Z} \]
such that for all $u$ we have $\Gamma_f = \{\gamma \in \widehat{\Gamma}_u: \avg{\gamma, \gamma'}=0 \mbox{ for all $\gamma' \in \widehat{\Gamma}_u$}\}$.
\end{enumerate}

By choosing arbitrary lifts from $\Gamma_u$ to $\hat\Gamma_u$, this induces a well-defined symplectic pairing on $\Gamma_u$. A good basis for $\Gamma_u$ is a Frobenius\footnote{In physically-realizable situations, the symplectic pairing is always unimodular (i.e. $p_i=1$), so that the inverse pairing is integral \cite{s:lines}. However, this is not important for the formalism of this series of papers, and it is convenient to omit this assumption in order to be able to construct moduli spaces of $PU(2)$ Higgs bundle with regular singularities, since the fibers of the Hitchin fibration in this case are polarized, but need not be principally polarized. In contrast, we are generally unable to construct moduli spaces of $SU(2)$ Higgs bundle with regular singularities because the fibers in this case need not be polarized. See the discussions in Footnote \ref{ft:polarization}, as well as \S\S6.3, 7.3.1, 7.5, and 8.4 of \cite{GMN:framed}, for more details. We stress that there is no real loss in requiring the pairing to be integral and unimodular if one wishes --- the issues we are alluding to in this footnote only lead to quotients by finite groups or finite covers. Also, imposing this requirement yields the existence of a symplectic basis for $\Gamma_u$, which is often convenient.}   basis (see Definition \ref{def:Frobeniusbasis}) of $\Gamma_u$, i.e. a basis $\gamma_{e_i}, \gamma_{m_i}$ with $\avg{\gamma_{m_i},\gamma_{e_j}}=p_i\delta_{ij}$, $\avg{\gamma_{m_i},\gamma_{m_j}}=\avg{\gamma_{e_i},\gamma_{e_j}}=0$ such that the elementary divisors $p_i$ satisfy $p_1|p_2| \cdots |p_r$. Note that since $p_1, \cdots, p_r: \mathcal{B}' \to \ZZ$ are continuous, they are locally constant.  
The dual lattice $\Gamma_u^*$ inherits a symplectic pairing. If $\gamma^*_{m_i},\gamma^*_{e_i}$ denote the corresponding dual vectors, then (see Lemma \ref{lem:dualsymp} and the paragraph following), we have $\avg{\gamma^*_{m_i},\gamma^*_{e_j}}=p_i^{-1} \delta_{ij}$, $\avg{\gamma^*_{m_i},\gamma^*_{m_j}}=\avg{\gamma^*_{e_i},\gamma^*_{e_j}}=0$. Note that if one rescaled the symplectic pairing on $\Gamma_u^*$ by $p_r$, one would get an integral symplectic pairing with dual elementary divisors $1| \cdots |p_{i}^{-1} p_r| \cdots |p_1^{-1}p_r$.

\begin{enumerate}[resume*]
    \item \label{item:Z} A family of homomorphisms $Z(u): \widehat{\Gamma}_u \to \mathbb{C}$ whose restriction to $\Gamma_f$ is constant and such that $Z_\gamma$ is holomorphic for each $\gamma\in \hat\Gamma$. Regarding $dZ$ and $d\bar Z$ as $\Gamma^*$-valued 1-forms, using the constancy of $Z|_{\Gamma_f}$, we also require that $\avg{dZ\wedge dZ}=0$ and that $\avg{dZ\wedge d\bar Z}$ be a K\"ahler form on $\B'$.

    \underline{Notation}: $Z_\gamma(u)$ is the \emph{central charge} of $\gamma$ at $u$.
    \item \label{item:thetaf} A homomorphism $\theta_f: \Gamma_f \to \mathbb{R}/2 \pi \mathbb{Z}$.

    \item \label{item:Omega} A map $\Omega: \widehat{\Gamma} \to \mathbb{Q}$ satisfying $\Omega(\gamma,u)=\Omega(-\gamma,u)$ for all $\gamma,u$,  as well as the \emph{support condition}, \emph{Wall Crossing Formula}, and \emph{growth conditions}. We defer the discussion of these three conditions to the follow-up paper. 

\underline{Notation}: We denote the values of this map as $\Omega(\gamma, u)$, where $u \in \mathcal{B}'$ and $\gamma \in \widehat{\Gamma}_u$. We refer to these values as \emph{BPS state counts}, \emph{BPS indices}, or \emph{BPS invariants}. The former terminology is motivated by the fact that for a given $\gamma$, $\Omega(\gamma,u)$ is expected to generically (i.e., away from real codimension 1 walls at which it is discontinuous) be a locally constant integer. Even then, it can be a negative integer, and so the `index' terminology is somewhat more precise. We stressed `for a given $\gamma$' because walls can be dense in $\B'$, and so the overall map $\Omega(\cdot,u)$ --- which we term the \emph{BPS spectrum} --- can be discontinuous or have rational values at dense subsets of $\B'$. Anyways, while these are the general expectations, this integrality will play no role in our formalism, and so we do not assume it. (In fact, even rationality, as opposed to reality, will not be important, but we assume it because $\Omega(\gamma,u)$ is always expected to be rational.)

We term a charge $\gamma\in \hat\Gamma_u$ to be \emph{active} if $\Omega(\gamma,u)\not=0$ and \emph{inactive} otherwise.
\end{enumerate}

At the expense of making some arbitrary choices, we can make (D4) a bit more concrete.

    \begin{lemma}[\ref{item:Z} revisited] \label{lem:D4}
    At some $u\in \B'$, let $\tilde\gamma_{m_i},\tilde\gamma_{e_i}$ be a Frobenius basis of $\Gamma_u$, and let $\gamma_{m_i},\gamma_{e_i}$ be a lift thereof to $\hat\Gamma_u$. Extend these to local sections of $\hat\Gamma$. Define $a_i = Z_{\gamma_{e_i}}$ and $a^D_i = p_i^{-1} Z_{\gamma_{m_i}}$. Then,
\begin{enumerate}[(i)]\item 
both $\{da_i\}$ and $\{da_i^D\}$ span $T^*_{(1,0)}\B'$;
\item locally, we may take the $a_i$ to be our complex coordinates on $\B'$;
\item 
$\avg{dZ\wedge dZ}=0$ is equivalent to the statement that,
locally, there is a holomorphic function $\F(\{a_i\})$, called the prepotential, with $a_i^D = \partial_{a_i} \F$;
\item \label{it:Imtau}
in terms of the prepotential, the K\"ahler form is
\be \label{eq:Kahlersemif}\avg{dZ\wedge d\bar Z}=
2i \sum_{i,j} \Imag(\underbrace{\partial_{a_i} \partial_{a_j}\F}_{\tau_{ij}}) da_i \wedge d\bar a_j ;  \ee 
the symmetric matrix \begin{equation}\label{eq:tau}\tau_{ij} = \partial_{a_i} \partial_{a_j} \mathcal{F}\end{equation} has a positive-definite imaginary part;
an associated K\"ahler potential is
\be K = 4 \Imag \sum_i \bar a_i \partial_{a_i} \F \ . \ee
\end{enumerate}
    \end{lemma}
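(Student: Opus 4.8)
The plan is to run all four parts off a single computation of $\avg{dZ\wedge dZ}$ and $\avg{dZ\wedge d\bar Z}$ in the given Frobenius basis. Since $Z|_{\Gamma_f}$ is constant, $dZ$ is $\Gamma^*$-valued, and expanding in the (flat) dual basis $\gamma_{e_i}^*,\gamma_{m_i}^*$ of $\Gamma_u^*$ gives $Z=\sum_i a_i\,\gamma_{e_i}^*+\sum_i p_i a_i^D\,\gamma_{m_i}^*$, hence $dZ=\sum_i da_i\otimes\gamma_{e_i}^*+\sum_i p_i\,da_i^D\otimes\gamma_{m_i}^*$ and similarly for $d\bar Z$. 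Contracting the $\Gamma_u^*$-factors with the dual symplectic pairing of Lemma \ref{lem:dualsymp} (for which $\avg{\gamma_{e_i}^*,\gamma_{e_j}^*}=\avg{\gamma_{m_i}^*,\gamma_{m_j}^*}=0$ and $\avg{\gamma_{m_i}^*,\gamma_{e_j}^*}=p_i^{-1}\delta_{ij}$), the ``diagonal'' contributions vanish and the $p_i$-weights cancel, leaving
\[ \avg{dZ\wedge dZ}=-2\sum_i da_i\wedge da_i^D,\qquad \avg{dZ\wedge d\bar Z}=\sum_i\bigl(da_i^D\wedge d\bar a_i-da_i\wedge d\bar a_i^D\bigr). \]
These two identities, valid before knowing the $a_i$ are coordinates, are the engine of the proof.

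For (i) I would argue by contradiction using the second identity together with the positivity half of the hypothesis that $\avg{dZ\wedge d\bar Z}$ is K\"ahler. If $\{da_i\}$ failed to span $T^*_{(1,0)}\B'$ at $u$, there would be a nonzero $(1,0)$-tangent vector $\xi$ at $u$ annihilated by every $da_i$; feeding $(\xi,\bar\xi)$ into the displayed formula (and using that $dZ$ is of type $(1,0)$) gives $\avg{dZ\wedge d\bar Z}(\xi,\bar\xi)=\sum_i da_i^D(\xi)\,\overline{da_i(\xi)}=0$, contradicting positive-definiteness. Exchanging the roles of $a_i$ and $a_i^D$ gives the same contradiction if $\{da_i^D\}$ fails to span. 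Part (ii) is then immediate: the $a_i=Z_{\gamma_{e_i}}$ are holomorphic on $\B'$ with $\CC$-linearly independent differentials at $u$, so by the holomorphic inverse function theorem they are a holomorphic coordinate system near $u$.

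For (iii), work in the coordinates $a_i$. The first identity says $\avg{dZ\wedge dZ}=2\,d\bigl(\sum_i a_i^D\,da_i\bigr)$, so the assumption $\avg{dZ\wedge dZ}=0$ is precisely the statement that the holomorphic $1$-form $\sum_i a_i^D\,da_i$ is closed; by the holomorphic Poincar\'e lemma this is equivalent to the local existence of a holomorphic $\F$ with $\sum_i a_i^D\,da_i=d\F$, i.e.\ $a_i^D=\partial_{a_i}\F$. For (iv), substitute $da_i^D=\sum_j\tau_{ij}\,da_j$ with $\tau_{ij}=\partial_{a_i}\partial_{a_j}\F$ symmetric into the second identity; symmetrizing with the symmetry of $\tau$ yields $\avg{dZ\wedge d\bar Z}=\sum_{i,j}(\tau_{ij}-\bar\tau_{ij})\,da_i\wedge d\bar a_j=2i\sum_{i,j}\Imag(\tau_{ij})\,da_i\wedge d\bar a_j$, which is \eqref{eq:Kahlersemif}. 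Since $\Imag\tau$ is real symmetric and, in these coordinates, $\avg{dZ\wedge d\bar Z}$ is a positive $(1,1)$-form, $\Imag\tau$ is positive-definite. Finally, $K=4\Imag\sum_i\bar a_i\,\partial_{a_i}\F$ being a K\"ahler potential is a direct computation: $\partial_{\bar a_j}K$ is linear in the $a_i^D$ and the $\bar a_i$, and applying $\partial$ while using $\partial_{a_k}a_j^D=\tau_{jk}$ and the symmetry of $\tau$ recovers a fixed positive multiple of $\avg{dZ\wedge d\bar Z}$.

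I expect the main obstacle to be bookkeeping rather than conceptual: one must track the $p_i$-weights in $a_i^D=p_i^{-1}Z_{\gamma_{m_i}}$ against the normalization of the dual symplectic pairing from Lemma \ref{lem:dualsymp} so that the two key identities — and hence \eqref{eq:Kahlersemif} and the stated $K$ — come out with exactly the advertised constants. The one genuinely non-formal input is the use of positivity of $\avg{dZ\wedge d\bar Z}$ in part (i): this is what forces both $\{da_i\}$ and $\{da_i^D\}$ individually (not merely their union) to span $T^*_{(1,0)}\B'$.
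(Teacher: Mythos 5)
Your proposal is correct and follows essentially the same route as the paper's proof: expand $dZ$ in the dual Frobenius basis to get $\avg{dZ\wedge dZ}=2\,d\bigl(\sum_i a_i^D\,da_i\bigr)$ and $\avg{dZ\wedge d\bar Z}=\sum_i(da_i^D\wedge d\bar a_i-da_i\wedge d\bar a_i^D)$, use positive-definiteness of the K\"ahler form for (i), the local exactness of the closed holomorphic $1$-form $\sum_i a_i^D\,da_i$ for (iii), and symmetrization with $\tau_{ij}=\partial_{a_i}\partial_{a_j}\F$ for (iv). Your write-up is, if anything, slightly more explicit than the paper's (e.g.\ in spelling out why $da_i(\xi)=0$ forces the Hermitian form to vanish on $\xi$), but there is no substantive difference in method.
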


\begin{rem}[Period matrix $\tau$]\label{rem:periodmatrix}
    In the semi-flat geometry we are about to describe, the symmetric matrix $\tau_{ij}$
    will play the role of a period matrix of a complex torus (see Lemma \ref{lem:semiflat}).
\end{rem}

    \begin{proof}\hfill
\begin{enumerate}[(i):]
   \item The positive-definiteness of the K\"ahler metric $g_{\B'}$ on $\B'$ implies that both $\{da_i\}$ and $\{da_i^D\}$ span $T^*_{(1,0)}\B'$.  Suppose there is a $v\in T\B'$ such that $da_i(v)=0$ for all $i$ then $g_{\B'}(v,v)=0$ shows that $v=0$ (and similarly for $da_i^D$).  Hence,  $\{da_i\}$  (and similarly for $da_i^D$) spans $T^*_{(1,0)}\B'$.
    \item (ii)  follows from (i)
\item We compute
\begin{align} 0=\avg{dZ\wedge dZ} &= 2 \sum_{i=1}^r dZ_{\gamma_{e_i}} \wedge dZ_{\gamma_{m_i}} \avg{\gamma_{e_i}^*, \gamma_{m_i}^*} \nonumber \\
&= 2 \sum_{i=1}^r da_i \wedge p_i da_i^D \cdot -p_i^{-1}\nonumber\\
&=-2\sum_{i=1}^r da_i \wedge da_i^D  \nonumber \\
&= -2\sum_{i,j} \frac{\partial a_i^D}{\partial a_j} da_i\wedge da_j \nonumber\\&= 2 d \sum_i a_i^D da_i.\end{align}
Locally, the closed $1$-form $\sum_i a_i^D da_i$ is exact, hence it is equal to $d\mathcal{F}$. 
\item We compute that $\avg{d Z \wedge d \overline{Z}}$ is
\be \avg{dZ\wedge d\bar Z} = -\Real \sum_i da_i\wedge d\bar a^D_i = -\Real \sum_{i,j} \overline{\partial_{a_i}\partial_{a_j}\F} da_i\wedge d\bar a_j = 2i \sum_{i,j} \Imag(\partial_{a_i} \partial_{a_j}\F) da_i \wedge d\bar a_j \ . \ee
It is automatically closed.
The associated Riemannian metric is positive-definite if, and only,
if the imaginary part of $\tau$ is positive-definite. One can check that function $\mathcal{F}$ satisfies $\avg{d Z \wedge d \overline{Z}}= \frac{i}{2} \partial \overline{\partial} \mathcal{F}$.
\end{enumerate}
\end{proof}
Finally, we note that the above lemma implies the following global results:
\begin{corollary}[global consequence of \ref{item:Z}] \label{cor:global} \hfill
\begin{enumerate}[(i)]
\item $dZ(u): 
(T^{(1,0)}\B')_u\to \Gamma_u^*\otimes\CC$ is injective
\item  $dZ(u): \Gamma_u\otimes\CC\to (T^*_{(1,0)}\B')_u$ is surjective. 
\end{enumerate}
 Introduce Lagrangian decompositions $\Gamma_u\otimes \CC \cong \Lambda_m\oplus \Lambda_e$ and $\Gamma_u^*\otimes \CC \cong \Lambda_m^* \oplus \Lambda_e^*$ and let $\pi_e: \Gamma_u^*\otimes\CC\to \Lambda_e^*$ be the projection with kernel $\Lambda_m^*$. Then, 

\begin{enumerate}
\item[(iii)] $\pi_e\circ dZ(u): (T^{(1,0)}\B')_u\to \Lambda_e^*$ is bijective 
\item[(iv)] $dZ(u): \Lambda_e \to (T^*_{(1,0)}\B')_u$ is bijective
\end{enumerate}
\end{corollary}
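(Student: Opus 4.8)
The plan is to derive everything from Lemma \ref{lem:D4}(i), which already tells us that $\{da_i\}$ and $\{da_i^D\}$ each span $T^*_{(1,0)}\B'$, together with simple linear algebra relating $dZ$ on $\Gamma_u$, $\Gamma_u^*$, and the dual pairings recorded just above Lemma \ref{lem:D4}. Throughout I fix a Frobenius basis $\tilde\gamma_{m_i},\tilde\gamma_{e_i}$ of $\Gamma_u$ with lifts $\gamma_{m_i},\gamma_{e_i}$ to $\hat\Gamma_u$ as in Lemma \ref{lem:D4}, so that $a_i=Z_{\gamma_{e_i}}$ and $a_i^D=p_i^{-1}Z_{\gamma_{m_i}}$; note $\dim_\CC T^{(1,0)}\B' = r$ while $\dim_\CC(\Gamma_u\otimes\CC)=\dim_\CC(\Gamma_u^*\otimes\CC)=2r$.

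For (i): $dZ(u)\colon (T^{(1,0)}\B')_u \to \Gamma_u^*\otimes\CC$ sends $v\mapsto (\gamma\mapsto dZ_\gamma(v))$. If $dZ(u)v=0$ then in particular $da_i(v) = dZ_{\gamma_{e_i}}(v)=0$ for all $i$; since $\{da_i\}$ spans $T^*_{(1,0)}\B'$ by Lemma \ref{lem:D4}(i) this forces $v=0$, so $dZ(u)$ is injective. For (ii): the transpose of $dZ(u)$ (under the natural pairing between $\Gamma_u^*\otimes\CC$ and $\Gamma_u\otimes\CC$) is exactly the map $dZ(u)\colon \Gamma_u\otimes\CC \to (T^*_{(1,0)}\B')_u$, $\gamma\mapsto dZ_\gamma$; since a linear map is injective iff its transpose is surjective, (ii) follows immediately from (i). Alternatively, (ii) is direct: the image of $dZ(u)$ contains all $dZ_{\gamma_{e_i}}=da_i$, which already span $T^*_{(1,0)}\B'$.

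For (iii) and (iv): here I use that both $\{da_i\}$ and $\{da_i^D\}$ are bases (not just spanning sets) of the $r$-dimensional space $T^*_{(1,0)}\B'$. For (iv), pick the Lagrangian decomposition and consider $\Lambda_e$; the point is that $dZ|_{\Lambda_e}$ is a map between two $r$-dimensional spaces, so bijectivity is equivalent to injectivity, and injectivity will follow because, having chosen $\Lambda_e$ to be spanned by (the complexifications of) a set of $r$ vectors on which $Z$ restricts to an isomorphism onto its image — for the standard choice $\Lambda_e = \mathrm{span}_\CC\{\gamma_{e_i}\}$ this is literally $dZ_{\gamma_{e_i}}=da_i$, a basis. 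The mild subtlety worth flagging is that the statement is asserted for an \emph{arbitrary} Lagrangian decomposition, so one should either restrict to the statement that \emph{some} such decomposition works (the electric one built from the Frobenius basis) or note that (iii),(iv) are really assertions about the particular Lagrangian splittings relevant in the sequel; I would phrase the proof for the Frobenius-basis splitting and remark that this is the case that is used. For (iii), $\pi_e\circ dZ(u)\colon (T^{(1,0)}\B')_u \to \Lambda_e^*$ is again a map between $r$-dimensional spaces, so it suffices to check injectivity: if $\pi_e(dZ(u)v)=0$ then $dZ_\gamma(v)=0$ for all $\gamma\in\Lambda_e$, i.e. $da_i(v)=0$ for all $i$, hence $v=0$ by Lemma \ref{lem:D4}(i); thus $\pi_e\circ dZ(u)$ is bijective.

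The main obstacle is essentially bookkeeping rather than mathematics: getting the duality/transpose identifications right so that ``$dZ$ as a map out of $T^{(1,0)}\B'$'' and ``$dZ$ as a map into $T^*_{(1,0)}\B'$'' are correctly recognized as transposes, and being careful about whether (iii)--(iv) are claimed for all Lagrangian splittings or only the adapted one. Everything else reduces to the already-established fact that $\{da_i\}$ is a basis of $T^*_{(1,0)}\B'$ together with the equality of dimensions ($r$ versus $2r$) that makes ``injective $\Leftrightarrow$ bijective'' available in (iii) and (iv).
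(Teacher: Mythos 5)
Your proof is correct and follows the same route as the paper, which states this corollary without proof as an immediate consequence of Lemma \ref{lem:D4}(i); parts (i)--(iv) are exactly the linear algebra you describe. Regarding the subtlety you flag: you are right that (iii)--(iv) would fail for an arbitrary \emph{complex} Lagrangian decomposition --- e.g.\ $\Lambda_e = \ker\bigl(dZ(u):\Gamma_u\otimes\CC\to (T^*_{(1,0)}\B')_u\bigr)$ is itself Lagrangian, since $\avg{dZ\wedge dZ}=0$ --- but you do not need to retreat all the way to the single adapted splitting: the statement holds for any decomposition induced by a Lagrangian \emph{sublattice} of $\Gamma_u$ (which is what the paper uses), because by Lemma \ref{lem:latBasis} such a $\Lambda_e$ is spanned by the $\gamma_{e_i}$ of some Frobenius basis, and Lemma \ref{lem:D4}(i) applies verbatim to that basis. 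Equivalently, positivity of $\avg{dZ\wedge d\bar Z}$ makes the Hermitian form $w\mapsto i\avg{w,\bar w}$ definite on $\ker dZ$, while it vanishes identically on the complexification of any real isotropic subspace, so the latter meets $\ker dZ$ only in $0$.
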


\bigskip

Both to illustrate some consequences of these definitions and because we will use this in a follow up paper, we now prove the following lemma:
\begin{lemma} \label{lem:coorRat}
If $u\in \B'$ and $\gamma,\gamma'\in \hat\Gamma_u$ are linearly independent then the following four conditions cannot simultaneously hold:
\be Z_\gamma(u)\not=0 \ , \quad Z_{\gamma'}(u)\not=0 \ , \quad \Imag(Z_{\gamma'}/Z_\gamma)|_u = 0 \ , \quad d \arg(Z_{\gamma'}/Z_\gamma)|_u = 0 \ . \ee
\end{lemma}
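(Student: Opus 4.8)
The plan is to argue by contradiction: suppose all four conditions hold at $u$. The first two conditions say $Z_\gamma(u), Z_{\gamma'}(u) \neq 0$, and the third says $Z_{\gamma'}/Z_\gamma$ is real at $u$; write $\lambda = Z_{\gamma'}(u)/Z_\gamma(u) \in \RR^\times$. The fourth condition, $d\arg(Z_{\gamma'}/Z_\gamma)|_u = 0$, together with the third, will be repackaged into a statement about the $1$-forms $dZ_\gamma$ and $dZ_{\gamma'}$ at $u$. Indeed, writing $w = Z_{\gamma'}/Z_\gamma$, we have $d\arg w = \Imag(dw/w)$, so at $u$, where $w = \lambda$ is real, $d\arg(Z_{\gamma'}/Z_\gamma)|_u = 0$ becomes $\Imag\!\big(Z_\gamma\, dZ_{\gamma'} - Z_{\gamma'}\, dZ_\gamma\big)\big|_u = 0$ (after clearing the nonzero real factor). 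Combined with reality of $\lambda$, I expect this to force $dZ_{\gamma'}|_u$ and $dZ_\gamma|_u$ to be proportional over $\CC$ — in fact $dZ_{\gamma'}|_u = \lambda\, dZ_\gamma|_u$. So the conclusion to aim for is: the holomorphic $1$-forms $dZ_\gamma$ and $dZ_{\gamma'}$ are $\CC$-linearly dependent at $u$, with the same ratio $\lambda$ as the central charges themselves.

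Next I would derive a contradiction from this linear dependence using Corollary \ref{cor:global}, specifically part (i): the map $dZ(u) : (T^{(1,0)}\B')_u \to \Gamma_u^* \otimes \CC$ is injective. Consider the element $\gamma' - \lambda\gamma \in \Gamma_u \otimes \RR$ (or rather its image in $\Gamma_u^* \otimes \CC$ via the pairing — one must be a little careful about whether to work in $\hat\Gamma$ or $\Gamma$, using the flavor lattice constancy of $Z$ to pass to $\Gamma$). Pairing $dZ(u)(v)$ against $\gamma' - \lambda\gamma$ for $v \in (T^{(1,0)}\B')_u$ gives $dZ_{\gamma'}|_u(v) - \lambda\, dZ_\gamma|_u(v) = 0$ for all $v$. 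Since $\gamma, \gamma'$ are linearly independent in $\hat\Gamma_u$ and $\lambda \in \RR$, the combination $\gamma' - \lambda\gamma$ is a nonzero element of $\Gamma_u^* \otimes \CC$ (here one uses that the pairing is nondegenerate modulo $\Gamma_f$, and handles the case where $\gamma' - \lambda\gamma$ might land in $\Gamma_f$ by a separate argument — but if it lies in $\Gamma_f$ then $Z_{\gamma' - \lambda\gamma}$ is constant, hence $Z_{\gamma'} - \lambda Z_\gamma$ is constant; evaluating a compatible normalization at $u$ shows this constant is $0$, contradicting linear independence of $\gamma, \gamma'$ and $\lambda$ real unless... — this subcase needs the most care). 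Assuming $\gamma'-\lambda\gamma$ maps to something nonzero under $\Gamma_u \hookrightarrow \Gamma_u^*\otimes\CC$, it is annihilated by the image of $dZ(u)$, so the image of $dZ(u)$ is a proper subspace of $\Gamma_u^* \otimes \CC$; but that is automatic ($\dim (T^{(1,0)}\B')_u = r < 2r$), so this alone is not a contradiction. The real contradiction must instead come from combining injectivity of $dZ(u)$ on the source with surjectivity (Corollary \ref{cor:global}(ii)) — or more directly, from the fact that $dZ_\gamma|_u$ and $dZ_{\gamma'}|_u$ being proportional with real ratio $\lambda$ means that the $\RR$-span of these two real linear functionals... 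Actually, cleaner: the hypothesis says the holomorphic function $Z_{\gamma'} - \lambda Z_\gamma$ has a critical point at $u$ and vanishes at $u$; since it is holomorphic this is consistent, so the contradiction genuinely needs the global structure — I would push on $\avg{dZ \wedge d\bar Z}$ being a K\"ahler form.

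The cleanest route, which I would pursue as the main argument: the reality and critical-point conditions together say that for the holomorphic function $f = Z_{\gamma'} - \lambda Z_\gamma$ we have $f(u) = 0$ and $df|_u = \lambda'\, dZ_\gamma|_u$ for the appropriate bookkeeping — let me instead directly use that $\Imag(Z_{\gamma'}\overline{Z_\gamma})$ and its differential both vanish at $u$, and that $dZ_{\gamma'}|_u = \lambda dZ_\gamma|_u$. Feed $\gamma$ and $\gamma'$ into the K\"ahler form: $\avg{dZ \wedge d\bar Z}$ being positive-definite means that for suitable test elements the associated Hermitian pairing on $dZ$-images is definite. Evaluating the form $\avg{dZ \wedge d\bar Z}$, or rather its associated bilinear form, on the pair coming from $\gamma, \gamma'$ and using $dZ_{\gamma'}|_u = \lambda dZ_\gamma|_u$ produces a degenerate direction, contradicting positive-definiteness — unless $\gamma' - \lambda\gamma \in \Gamma_f \otimes \RR$, which is the subcase flagged above, to be dispatched by the constancy of $Z|_{\Gamma_f}$ and linear independence. \textbf{The main obstacle} I anticipate is precisely the bookkeeping: translating the two analytic conditions (reality of the ratio and vanishing of $d\arg$) into the clean statement "$dZ_{\gamma'}|_u = \lambda\, dZ_\gamma|_u$ with $\lambda\in\RR$", and then correctly handling the flavor-lattice subcase where $\gamma' - \lambda\gamma$ could be a flavor charge (for which $\lambda$ would have to be rational, and one uses $Z|_{\Gamma_f} = \text{const}$ plus $f(u)=0$ to conclude $\gamma,\gamma'$ are dependent after all). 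Everything else is a direct appeal to Corollary \ref{cor:global} and the positive-definiteness in Lemma \ref{lem:D4}(\ref{it:Imtau}).
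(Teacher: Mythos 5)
You correctly reduce the four conditions to the single statement $dZ_{\gamma'}|_u = \lambda\, dZ_\gamma|_u$ with $\lambda = Z_{\gamma'}(u)/Z_\gamma(u)\in\RR^\times$, which is the paper's first step (though your intermediate claim that one can ``clear the nonzero real factor'' in $\Imag(dw/w)=0$ is off: the denominator $Z_\gamma Z_{\gamma'}=\lambda Z_\gamma^2$ is not real; the correct reason is that $dw/w$ is a $(1,0)$-form, and a $(1,0)$-form whose imaginary part vanishes is zero). You also correctly diagnose that injectivity of $dZ(u)$ alone gives nothing and that the contradiction must come from the positivity of $\avg{dZ\wedge d\bar Z}$. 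But the bridge from ``$dZ_{\gamma'}|_u=\lambda\,dZ_\gamma|_u$ with $\lambda$ real'' to that positivity --- which is the entire content of the lemma --- is never supplied. ``Evaluating $\avg{dZ\wedge d\bar Z}$ on the pair coming from $\gamma,\gamma'$'' is not a defined operation (the K\"ahler form eats tangent vectors of $\B'$, not charges), and the assertion that proportionality of the two differentials with real ratio ``produces a degenerate direction'' is precisely the claim that needs proof; your proposal ends by naming the target rather than reaching it.

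The paper closes this gap with a Frobenius basis adapted to $\gamma$: take $\tilde\gamma_{e_1}=\tilde\gamma$ and coordinates $a_i=Z_{\gamma_{e_i}}$, so the hypothesis becomes $\partial_{a_1}Z_{\gamma'}|_u=\lambda\in\RR$ and $\partial_{a_j}Z_{\gamma'}|_u=0$ for $j>1$. Writing $\tilde\gamma'=\sum_i(c_i\tilde\gamma_{e_i}+d_i\tilde\gamma_{m_i})$, the electric part contributes only real constants to each $\partial_{a_j}Z_{\gamma'}$, while the magnetic part contributes $\sum_i d_i\,\partial_{a_j}Z_{\gamma_{m_i}}=\sum_i d_i p_i\tau_{ij}$; reality of all components therefore puts the vector of magnetic coefficients in the kernel of $\Imag\tau$, forcing $d=0$ by positive-definiteness. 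The remaining case $\avg{\gamma,\gamma'}=0$ (which is where your worry about $\gamma'-\lambda\gamma$ landing in the flavor lattice lives) is dispatched by choosing $\tilde\gamma_{e_2}=\tilde\gamma'$, whence $\partial_{a_1}Z_{\gamma'}=0\neq\lambda$. If you prefer to stay coordinate-free, your own route can be completed as follows: the image $W=dZ(u)\bigl((T^{(1,0)}\B')_u\bigr)\subset\Gamma_u^*\otimes\CC$ is isotropic because $\avg{dZ\wedge dZ}=0$, and the Hermitian form $-i\avg{w,\bar w}$ is definite on it because $\avg{dZ\wedge d\bar Z}$ is K\"ahler; hence $W\cap\bar W=0$ and $W\oplus\bar W=\Gamma_u^*\otimes\CC$. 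Since every element of $W$, and therefore of $\bar W$, annihilates the \emph{real} vector $\tilde\gamma'-\lambda\tilde\gamma$, that vector must vanish, reducing to the degenerate case above. Without one of these two arguments the proof is incomplete.
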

\begin{proof}
Suppose, toward a contradiction, that all of these conditions hold. 

Let $f=Z_{\gamma'}/Z_\gamma$. Then, $\arg(f)=\half\log(f/\bar f)$ gives $d\arg(f)=\frac{\partial f}{2f}-\frac{\bar\partial f}{2f}$, which shows that the last condition is equivalent to $d(Z_{\gamma'}/Z_\gamma)|_u=0$, i.e. \be dZ_{\gamma'}|_u = \frac{Z_{\gamma'}}{Z_\gamma} dZ_\gamma |_u.\ee

Let $\tilde\gamma,\tilde\gamma'$ be the respective images of $\gamma,\gamma'$ under $\hat\Gamma_u\to \Gamma_u$. Let $\tilde\gamma_{e_1},\ldots,\tilde\gamma_{e_r},\tilde\gamma_{m_1},\ldots,\tilde\gamma_{m_r}$ be a Frobenius basis for $\Gamma_u\otimes\RR$ such that $\tilde\gamma_{e_1}=\tilde\gamma$, $\avg{\tilde\gamma_{e_i},\tilde\gamma_{e_j}}=\avg{\tilde\gamma_{m_i},\tilde\gamma_{e_j}}=0$, and $\avg{\tilde\gamma_{m_i},\tilde\gamma_{e_j}}=p_i\delta_{ij}$. If $\avg{\tilde\gamma,\tilde\gamma'}=0$, choose $\tilde\gamma_{e_2} = \tilde\gamma'$. Finally, choose lifts $\gamma_{e_i},\gamma_{m_i}$ of $\tilde\gamma_{e_i},\tilde\gamma_{m_i}$ to $\hat\Gamma_u$ such that if any of the $\tilde\gamma_{e_i}$ is either of $\tilde\gamma,\tilde\gamma'$ then its lift is, respectively, $\gamma$ or $\gamma'$. We choose $a_i=Z_{\gamma_{e_i}}$ to be our local holomorphic coordinates near $u$. 

Then, the last two conditions imply that $\partial_{a_1} Z_{\gamma'}|_u = \frac{Z_{\gamma'}(u)}{Z_\gamma(u)} \in \RR$ and $\partial_{a_j} Z_{\gamma'}|_u = 0$ for $j>1$. This is clearly a contradiction if $\gamma_{e_2}=\gamma'$, so we must have $\avg{\gamma,\gamma'}\not=0$. In this case, write $\tilde\gamma'=\sum_i(c_i \tilde\gamma_{e_i}+d_i \tilde\gamma_{m_i})$, and note $d_1 \neq 0$.  Then, define a non-zero vector $\gamma'' = \sum_i d_i \gamma_{m_i}$ and note that $\Imag(\partial_{a_i} Z_{\gamma''}|_u)=0$. That is, the matrix $\Imag(\partial_{a_i} Z_{\gamma_{m_j}})= \Imag(\tau_{ij})$ has $(d_1,\ldots,d_r)^T$ in its kernel, which contradicts the fact that $\Imag(\tau_{ij})$ is positive-definite.
\end{proof}

\bigskip

We will now describe $\mathcal{M}'$.

\begin{defn}
A \emph{twisted character} on $\hat\Gamma_u$ is a map $\phi: \hat\Gamma_u\to \CC^\times$ with $$\phi(\gamma+\gamma') = (-1)^{\avg{\gamma,\gamma'}} \phi(\gamma) \phi(\gamma');$$ a \emph{twisted unitary character} is a twisted character valued in the unit circle. 
    We will often represent such a twisted unitary character in the form $\phi(\gamma) = e^{i \theta_\gamma}$, where $\theta: \hat\Gamma_u\to \RR/2\pi\ZZ$ satisfies $\theta_{\gamma+\gamma'} = \theta_\gamma + \theta_{\gamma'} + \pi \avg{\gamma,\gamma'}$.
\end{defn}

\begin{constr}[$\mathcal{M}'$] Given the data (D1-D5), we construct a local system of tori $\pi: \M'\to \B'$ whose fiber $\M'_u$ over $u$ is the space of twisted unitary characters $\theta: \hat\Gamma_u\to \RR/2\pi\ZZ$ whose restriction to $\Gamma_f$ is $\theta_f$ in \ref{item:thetaf}.
\label{constr:M'}
\end{constr}

\begin{rem}[Parameterization]\label{rem:fiberparam}
We equip $\M'$ with a smooth structure by declaring these coordinates to be smooth.  
Each fiber may be conveniently parametrized by choosing a basis $\tilde\gamma_i$ of $\Gamma_u$ and a lift $\gamma_i$ thereof to $\hat\Gamma_u$; then, the values $\theta_{\gamma_i}$ provide coordinates on the fiber over $u$ and manifest the fact that this fiber is a $2r$-torus.
\end{rem}

We note that $d\theta$ may be regarded as a $\Gamma^*$-valued 1-form on $\M'$, by the constancy of $\theta|_{\Gamma_f}$ (and the fact that the constants $\pi \avg{\gamma,\gamma'}$ are annihilated by the exterior derivative).

\begin{rem}[Untwisted Unitary Characters] \label{rem:untwisted}
It is sometimes preferable to work with untwisted unitary characters instead of twisted ones. At least locally, this is always possible. To do so, we choose\footnote{In particular, pick a Frobenius basis  $\tilde\gamma_{m_i}, \tilde\gamma_{e_i}$ of $\Gamma_u$ where $p_i=\avg{\tilde\gamma_{m_i}, \tilde\gamma_{e_i}}$. Then, a flat quadratic refinement is determined by the values of $\sigma(\tilde\gamma_{m_i}), \sigma(\tilde\gamma_{e_i})$ which are unconstrained. Then the value of 
\[\sigma(\sum_i (a_i \tilde\gamma_{e_i} + b_i \tilde\gamma_{m_i}))= \prod_{i=1}^r (-1)^{a_i b_i p_i} \sigma(\tilde\gamma_{e_i})^{a_i} \sigma(\tilde\gamma_{m_i})^{b_i}.\]
This can be extended to a neighborhood $U \ni u$ over which $\Gamma$ is trivial.

} a flat quadratic refinement $\sigma: \Gamma_u\to \{\pm 1\}$ of the pairing $(-1)^{\avg{\gamma,\gamma'}}$, i.e. a map obeying
\be \sigma(\gamma)\sigma(\gamma') = (-1)^{\avg{\gamma,\gamma'}} \sigma(\gamma+\gamma') \ . \ee

We can then map a twisted character $e^{i\theta_\gamma}$ to an untwisted one $e^{i\tilde\theta_\gamma}$ via
\be e^{i\theta_\gamma} = e^{i\tilde\theta_\gamma} \sigma(\gamma) \ . \label{eq:untwist} \ee
(Since $\gamma\in\hat\Gamma_u$ here, $\sigma(\gamma)$ is defined by first using the map $\hat\Gamma_u\to \Gamma_u$.) 
However, globally, this may not be invariant under the symplectic monodromies of $\Gamma_u$.

We note that twisted characters and untwisted characters both provide coordinates parameterizing the fibers $\M'_u$. Since $\theta_\gamma = \widetilde{\theta}_\gamma - i \log \sigma(\gamma)$ formally, the partial derivative operators are equal:
\begin{equation}\frac{\partial}{\partial \theta_\gamma} = \frac{\partial}{\partial \widetilde{\theta}_\gamma}.\end{equation}

There are some circumstances in which a global quadratic refinement exists (and so, in particular, there is a global $\tilde\theta=0$ section when $\theta_f=0$). Gaiotto, Moore, and Neitzke explain in \cite{GMN:classS} that this is the case for $SO(3)$ Hitchin systems. It was noted in \cite{mz:K3HK3} that this is also the case for a class of examples with $r=1$. Here, we note that the same choice made therein actually works for all examples with $r=1$ so long as the monodromies of the $\Gamma$ local system are valued in $SL(2,\ZZ)$, so that the $\Gamma$ local system is contained in a local system of unimodular superlattices $\Lambda_u$. For some $u\in \B'$, fix such a $\Lambda_u$. Representing an element therein via a pair $(p,q)\in \ZZ^2$, where $\avg{(p,q),(p',q')} = pq'-qp'$, we define a quadratic refinement $\sigma((p,q)) = (-1)^{p-pq+q}$ (which we can then restrict to $\Gamma_u$ if we wish). We verify that this is invariant under all $SL(2,\ZZ)$ transformations $\twoMatrix{a}{b}{c}{d}$ of $\Lambda_u$:
\be (-1)^{(ap+bq)-(ap+bq)(cp+dq)+(cp+dq)} = (-1)^{p(a+ac+c)+q(b+bd+d)-pq(ad-bc)} = (-1)^{p+q-pq} \ , \ee
where the last equality used $ad-bc=1$ and the fact that both $a+ac+c$ and $b+bd+d$ are odd, since $a$ and $c$ are not both even (and similarly for $b,d$). So, there is no obstruction to extending this to all of $\B'$. Furthermore, this choice is canonical, in the sense that it is the unique quadratic refinement which is invariant under all of $SL(2,\ZZ)$.

For the sake of clarity, we stress that quadratic refinements and untwisted characters play almost no role in the formalism of this series of papers; they are only necessary in some applications. The exception is that we employ them for some results in the present section. 
\end{rem}

\begin{rem}[Passing to a unimodular superlattice]\label{rem:unimodular}
Given a Frobenius basis $\gamma_{m_1}, \cdots, \gamma_{m_r}$, $\gamma_{e_1}, \cdots, \gamma_{e_r}$ of $(\Gamma_u, \avg{,})$, one can view $\Gamma_u$ as a sublattice of a unimodular lattice $(\hat\Gamma_u, \avg{,})$ 
generated by $\tilde\gamma_{m_i}=\frac{1}{p_i} \gamma_{m_i}, \gamma_{e_i}$. 
Then, $\tilde \M'_u$ is a $|\tilde \Gamma_u : \Gamma_u|= \prod_{i=1}^r p_i$-fold cover of $\M'_u$. 
While $\M'_u$ and $\tilde \M'_u$ are different, there is a natural identification of the tangent spaces $T_m \M'_u$ and $T_{\hat m} \M'_u$ where the twisted unitary character $\hat m$ on $\tilde \Gamma_u$ restricts to $m$ on $\Gamma_u$.  In many places we can effectively work with the tangent space to the (simpler) unimodular superlattice, so going forward, the formulas that will appear will often look as if we are working with $\tilde\gamma_{m_i}=\frac{1}{p_i}\gamma_{m_i}$ and $\gamma_{e_i}$.
\end{rem}
\subsubsection{Properties of semi-flat holomorphic Darboux coordinates}

\begin{definition}[$\mathcal{X}^{\mathrm{sf}}$]
    For each $\zeta\in\CC^\times$ and $m\in \M'$, we now define the twisted character
\be \X^{\rm sf}_\gamma(\zeta)(m) = \exp\parens{\frac{1}{\zeta} Z_\gamma(\pi(m)) + i \theta_\gamma(m) + \zeta \overline{Z_\gamma(\pi(m))}} \ . \label{eq:xSF} \ee
(Henceforth, we will generally just write $Z_\gamma(m)$.)
\end{definition}

 We then have:
\begin{lemma}[Nondegeneracy] \label{lem:nondegen}
 Let $\{\gamma_i\}$ be a lift to $\hat\Gamma$ of a basis of sections of $\Gamma$ over a contractible open set $U\subset \B'$. Then, for all $\zeta\in\CC^\times$, $\{\X^{\rm sf}_{\gamma_i}(\zeta)\}$ provide valid coordinates on $\pi^{-1}(U)\subset \mathcal{M'}$.
\end{lemma}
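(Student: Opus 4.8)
The plan is to show that the map $\pi^{-1}(U)\to U\times(\CC^\times)^{2r}$ sending $m\mapsto(\pi(m),\{\X^{\rm sf}_{\gamma_i}(\zeta)(m)\})$ is, fiberwise over $U$ and then in the base directions, a local diffeomorphism onto its image; since $\M'$ has real dimension $4r$ and we are supplying $2r$ complex (hence $4r$ real) functions, it suffices to check that their real and imaginary parts have nonvanishing Jacobian with respect to the natural smooth coordinates on $\pi^{-1}(U)$. Using the parameterization of Remark \ref{rem:fiberparam}, take coordinates $(a_k,\bar a_k,\theta_{\gamma_j})$ on $\pi^{-1}(U)$, where $a_k$ are the holomorphic coordinates on $\B'$ from Lemma \ref{lem:D4} (after possibly refining $U$) and $\theta_{\gamma_j}$ the fiber angles. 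It is cleaner to work with $\log\X^{\rm sf}_{\gamma_i}(\zeta) = \frac1\zeta Z_{\gamma_i} + i\theta_{\gamma_i} + \zeta\overline{Z_{\gamma_i}}$, which is a local biholomorphism-equivalent change of target coordinates away from the branch locus, and whose differential is $d\log\X^{\rm sf}_{\gamma_i}(\zeta) = \frac1\zeta dZ_{\gamma_i} + i\,d\theta_{\gamma_i} + \zeta\,d\overline{Z_{\gamma_i}}$.

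The key computation is then to evaluate the determinant of the $4r\times 4r$ real Jacobian, or equivalently to show that the $2r$ complex-valued $1$-forms $d\log\X^{\rm sf}_{\gamma_i}(\zeta)$ together with their complex conjugates are linearly independent over $\CC$ at each point. Split the test covector $v = \sum_k(v_k\partial_{a_k} + \bar v_k\partial_{\bar a_k}) + \sum_j w_j\partial_{\theta_{\gamma_j}}$ with $v_k\in\CC$, $w_j\in\RR$, and suppose $d\log\X^{\rm sf}_{\gamma_i}(\zeta)(v) = 0$ for all $i$. Because the $\theta$-derivative contributes only the purely imaginary term $iw_i$ to the $i$-th equation and the $Z$-terms depend only on the base directions, one separates real and imaginary parts: the base part gives $\frac1\zeta dZ_{\gamma_i}(v^{1,0}) + \zeta\,\overline{dZ_{\gamma_i}(v^{1,0})} = 0$ — here I use that $Z_{\gamma_i}$ is holomorphic so $dZ_{\gamma_i}$ annihilates $\partial_{\bar a}$ — i.e. $\Real\!\big(\zeta^{-1}dZ_{\gamma_i}(v^{1,0})\big)$ and (after using the $w_i$ equation) $\Imag(\cdots)$ both vanish, forcing $dZ_{\gamma_i}(v^{1,0}) = 0$ for all $i$. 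By Corollary \ref{cor:global}(i), $dZ(u):(T^{1,0}\B')_u\to\Gamma_u^*\otimes\CC$ is injective, so $v^{1,0}=0$, whence $v_k=0$ for all $k$; feeding this back into the imaginary part of the $i$-th equation gives $w_i=0$. Thus $v=0$ and the Jacobian is nonsingular.

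The main obstacle — really the only nontrivial point — is the base-direction argument: one must use that $\{dZ_{\gamma_i}\}$ spans a $2r$-dimensional space inside the $2r$-dimensional $\Gamma_u^*\otimes\CC$ (equivalently injectivity of $dZ$ on $T^{1,0}\B'$), which is exactly the content of Corollary \ref{cor:global}(i), itself a consequence of the positive-definiteness of the semi-flat K\"ahler metric $\avg{dZ\wedge d\bar Z}$ in \ref{item:Z}. Everything else is bookkeeping: checking that twisted-character multiplicativity makes $\X^{\rm sf}_{\gamma_i}(\zeta)$ well defined, that the $\theta$-part of the differential is block-triangular with respect to the base/fiber splitting, and that passing from $\X^{\rm sf}$ to $\log\X^{\rm sf}$ is harmless since we only test the differential at a point. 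One should also remark that contractibility of $U$ guarantees the lift $\{\gamma_i\}$ and the coordinates $a_k$ exist globally on $U$, so the local statement patches to all of $\pi^{-1}(U)$; and that the argument is uniform in $\zeta\in\CC^\times$ because $\zeta$ only enters through the invertible substitution $dZ_{\gamma_i}\mapsto \zeta^{-1}dZ_{\gamma_i}+\zeta\,\overline{dZ_{\gamma_i}}$, whose vanishing still forces $dZ_{\gamma_i}=0$.
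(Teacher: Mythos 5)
Your overall strategy (show the kernel of the real Jacobian is trivial by testing the complex differentials $d\log\X^{\rm sf}_{\gamma_i}(\zeta)$ against a real tangent vector) is sound and close in spirit to the paper's block-determinant computation, but the key step contains a genuine error. For a real tangent vector $v$ the $i$-th equation reads $\zeta^{-1}c_i+iw_i+\zeta\bar c_i=0$ with $c_i=dZ_{\gamma_i}(v^{1,0})$ and $w_i\in\RR$. Since $\Real(\zeta\bar c_i)=\Real(\bar\zeta c_i)$ and $\Imag(\zeta\bar c_i)=-\Imag(\bar\zeta c_i)$, the real part of this equation is $\Real\big((\zeta^{-1}+\bar\zeta)c_i\big)=0$, while the imaginary part is $\Imag\big((\zeta^{-1}-\bar\zeta)c_i\big)+w_i=0$, which merely \emph{solves for} the free fiber component $w_i$ and imposes no constraint whatsoever on $c_i$. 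So you do not get ``$\Real$ and $\Imag$ both vanish, forcing $dZ_{\gamma_i}(v^{1,0})=0$''; all that survives is the system of $2r$ real conditions $\Real\big((\zeta^{-1}+\bar\zeta)\,dZ_{\gamma_i}(v^{1,0})\big)=0$ on the $2r$ real unknowns in $v^{1,0}$, and injectivity of $dZ$ on $T^{(1,0)}\B'$ (Corollary \ref{cor:global}(i)) is not the right tool to conclude from there.

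The gap can be closed, but it requires the positivity in \ref{item:Z}, not just injectivity of $dZ$. Put $w=(\zeta^{-1}+\bar\zeta)\,v^{1,0}$ (note $\zeta^{-1}+\bar\zeta\ne0$); the surviving conditions say $dZ_\gamma(w)$ is purely imaginary for every $\gamma$. In a Frobenius basis this gives $da_i(w)\in i\RR$ and $\sum_j\tau_{ij}\,da_j(w)\in i\RR$, whence $da(w)=iy$ with $y\in\RR^r$ and $(\Imag\tau)y=0$, so $w=0$ by positive-definiteness of $\Imag\tau$; only then do $v^{1,0}=0$ and, feeding back into the imaginary-part equations, $w_i=0$. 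This is precisely the ingredient the paper's proof isolates: its block-determinant computation evaluates the Jacobian as $\abs{(\zeta^{-1}+\bar\zeta)/2}^{2r}(-2i)^r\det\Imag\tau$, so nondegeneracy hinges on $\det\Imag\tau\ne0$. (The twisted-character bookkeeping you defer is handled in the paper by untwisting via a quadratic refinement and lifting $\tilde\theta$ to $\RR$; that part is indeed routine.)
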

\begin{proof}

Since $U$ is contractible, we trivialize all of the local systems and omit most $u$ subscripts in this proof. We also pick a quadratic refinement $\sigma: \Gamma\to \{\pm 1\}$ and use it to define untwisted characters $e^{i\tilde\theta_\gamma} = e^{i\theta_\gamma} \sigma(\gamma)$ and
\be \tilde \X^{\rm sf}_\gamma(\zeta) = \X^{\rm sf}_\gamma(\zeta) \sigma(\gamma) = \exp\parens{\frac{1}{\zeta} Z_\gamma(\pi(m)) + i \tilde \theta_\gamma(m) + \zeta \overline{Z_\gamma(\pi(m))}} \ . \ee
Locally in $\pi^{-1}(U)$, we can lift $\tilde\theta$ to a homomorphism $\tilde\theta: \hat\Gamma\otimes\RR\to \RR$. Having done so, it suffices to check that $\tilde\Y^{\rm sf}_{\gamma_i}(\zeta):=\log \tilde\X^{\rm sf}_{\gamma_i}(\zeta)$ provide valid coordinates. Since $\tilde\Y^{\rm sf}(\zeta)$ is a homomorphism $\hat\Gamma\otimes\RR\to \CC$, this will be the case if and only if it holds for the lift to $\hat\Gamma\otimes\RR$ of any basis of $\Gamma\otimes\RR$. Similarly, $\tilde\theta_{\gamma_i}$ are good fiber coordinates, and hence the same is true for any lift to $\hat\Gamma\otimes\RR$ of a basis of $\Gamma\otimes\RR$. We hence replace $\gamma_i$ with a lift to $\hat\Gamma\otimes\RR$ of a  Frobenius basis $\gamma_{m_i},\gamma_{e_i}$ of $\Gamma\otimes\RR$. 

As above, we write $a_i = Z_{\gamma_{e_i}}$ and $a^D_i = p_i^{-1} Z_{\gamma_{m_i}}$ and take the $a_i$ to be our local coordinates on $U$. We then want to compute the Jacobian 
\be \abs{\frac{
\partial(\{\Imag \tilde\Y^{\rm sf}_{\gamma_{e_i}}\}_{i=1}^r,
\{\frac{1}{p_i}\Imag \tilde\Y^{\rm sf}_{\gamma_{m_i}}\}_{i=1}^r, 
\{ \Real \tilde\Y^{\rm sf}_{\gamma_{e_i}}\}_{i=1}^r,  
\{ \frac{1}{p_i}\Real \tilde\Y^{\rm sf}_{\gamma_{m_i}}\}_{i=1}^r
)}{\partial(\tilde\theta_{\gamma_{e_1}},\ldots,\tilde\theta_{\gamma_{e_r}},\frac{1}{p_1}\tilde\theta_{\gamma_{m_1}},\ldots,\frac{1}{p_r}\tilde\theta_{\gamma_{m_r}},a_1,\ldots,a_r,\bar a_1,\ldots,\bar a_r)}} \ . \ee
(As discussed in Remark \ref{rem:unimodular}, the presence of the $p_i$'s in the above formula is because we are effectively working with a chart on the unimodular superlattice.)
We note
\begin{align}
\Imag \tilde\Y^{\rm sf}_{\gamma_{e_i}} &= \frac{1}{2i}(\zeta^{-1} - \bar \zeta) a_i  + \tilde \theta_{\gamma_{e_i}} + \frac{1}{2i}(\zeta - \bar \zeta^{-1}) \bar a_i  \nonumber\\
\Real \tilde\Y^{\rm sf}_{\gamma_{e_i}} &= \frac{1}{2}(\zeta^{-1} +\bar \zeta) a_i  + \frac{1}{2}(\zeta + \bar \zeta^{-1}) \bar a_i  \nonumber\\
\frac{1}{p_i} \Imag \tilde\Y^{\rm sf}_{\gamma_{m_i}} &= \frac{1}{2i}(\zeta^{-1} - \bar \zeta) a_i^D + \frac{1}{p_i} \tilde\theta_{\gamma_{m_i}} + \frac{1}{2i}(\zeta - \bar \zeta^{-1})  \bar a_i^D \nonumber\\
\frac{1}{p_i} \Real \tilde \Y^{\rm sf}_{\gamma_{m_i}} &= \frac{1}{2}(\zeta^{-1} +\bar \zeta) a_i^D  + \frac{1}{2}(\zeta + \bar \zeta^{-1})  \bar a_i^D.
\end{align}
In block form, using the symmetric $r\times r$ matrix $\tau$ with positive-definite imaginary part introduced in \eqref{eq:tau} as well as the $r\times r$ identity and zero matrices $I_r$ and $0_r$, this Jacobian matrix is
\begin{align}
\begin{pmatrix}
I_{r} & 0_r & \frac{1}{2i}(\zeta^{-1}-\bar\zeta) I_{r} & \frac{1}{2i}(\zeta-\bar\zeta^{-1}) I_{r} \\
0_r & I_r & \frac{1}{2i} (\zeta^{-1}-\bar\zeta) \tau & \frac{1}{2i}(\zeta-\bar\zeta^{-1}) \bar{\tau} \\
0_r & 0_r & \half (\zeta^{-1}+\bar\zeta) I_r & \half(\zeta+\bar\zeta^{-1}) I_r \\
0_r & 0_r & \half (\zeta^{-1}+\bar\zeta) \tau & \half (\zeta+\bar\zeta^{-1}) \bar\tau
\end{pmatrix} \ . \label{eq:sfJacobianMat}
\end{align}
By twice employing the identity
\be \det\twoMatrix{A}{B}{C}{D} = \det A \det(D-CA^{-1}B) \label{eq:blockDet} \ee
for the determinant of a block matrix with $A$ invertible, and noting that $\zeta^{-1}+\bar\zeta$ is always non-zero, the Jacobian determinant is found to be
\be
\begin{vmatrix} \half (\zeta^{-1}+\bar\zeta) I_r & \half(\zeta+\bar\zeta^{-1}) I_r \\
\half (\zeta^{-1}+\bar\zeta) \tau & \half (\zeta+\bar\zeta^{-1}) \bar\tau \end{vmatrix} = \abs{\frac{\zeta^{-1}+\bar\zeta}{2}}^{2r} (-2i)^r \det \Imag \tau \ .
\ee
Since this is nonzero, the change of variables is nonsingular.
\end{proof}

Besides the nondegeneracy condition above, the $\X$'s satisfy the following reality condition:
\begin{lemma}[Reality condition]\label{lem:realitysf}
$\X^{\rm sf}_{\gamma(\zeta)}$ satisfy
\be \overline{\X^{\rm sf}_\gamma(\zeta)} = 1/\X^{\rm sf}_\gamma(-1/\bar \zeta) \ . \label{eq:xSFReality} \ee
\end{lemma}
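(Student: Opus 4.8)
The plan is to verify the identity by direct computation, starting from the explicit formula for $\X^{\rm sf}_\gamma(\zeta)$ in \eqref{eq:xSF}. The statement is essentially a formal consequence of the structure of the exponent
\be
\log \X^{\rm sf}_\gamma(\zeta)(m) = \frac{1}{\zeta} Z_\gamma(m) + i \theta_\gamma(m) + \zeta \overline{Z_\gamma(m)},
\ee
together with the fact that $\theta_\gamma$ is real-valued (as $m$ ranges over $\M'$, which consists of \emph{unitary} twisted characters) and $Z_\gamma$ is a fixed complex number once $m$ is fixed.

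First I would fix $m\in\M'$ and $\zeta\in\CC^\times$, and set $w = -1/\bar\zeta$, which is again in $\CC^\times$. Then I would compute $\log\X^{\rm sf}_\gamma(w)$ by substituting $\zeta \mapsto -1/\bar\zeta$ into the exponent: the term $\frac{1}{\zeta}Z_\gamma$ becomes $-\bar\zeta\, Z_\gamma$, the term $\zeta\,\overline{Z_\gamma}$ becomes $-\frac{1}{\bar\zeta}\overline{Z_\gamma}$, and the middle term $i\theta_\gamma$ is unchanged. Next I would take the complex conjugate of this expression. Using that $\theta_\gamma$ is real, $\overline{i\theta_\gamma} = -i\theta_\gamma$; the term $\overline{-\bar\zeta Z_\gamma} = -\zeta\,\overline{Z_\gamma}$; and $\overline{-\frac{1}{\bar\zeta}\overline{Z_\gamma}} = -\frac{1}{\zeta}Z_\gamma$. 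Adding these up gives exactly $-\left(\frac{1}{\zeta}Z_\gamma + i\theta_\gamma + \zeta\overline{Z_\gamma}\right) = -\log\X^{\rm sf}_\gamma(\zeta)$. Exponentiating yields $\overline{\X^{\rm sf}_\gamma(-1/\bar\zeta)} = 1/\X^{\rm sf}_\gamma(\zeta)$, which is \eqref{eq:xSFReality} after relabeling (or one can run the substitution in the other direction to land precisely on the stated form).

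There is no serious obstacle here; the only point requiring a moment's care is making sure the reality of $\theta_\gamma$ is invoked correctly — this is where the unitarity of the characters parametrizing $\M'$ enters — and keeping track of the conjugation of $\zeta$ versus the conjugation of the coefficients. One could also phrase the proof slightly more conceptually: the map $\zeta \mapsto -1/\bar\zeta$ is the antipodal map on $\PP^1_\zeta$, and $\X^{\rm sf}(\zeta)$ is built so that the ``$1/\zeta$'' and ``$\zeta$'' coefficients are complex conjugates of each other while the ``$\zeta^0$'' coefficient is purely imaginary, which is exactly the data that makes the section equivariant for the combined antipodal-plus-inversion involution. I would include a one- or two-line version of the explicit computation above and leave it at that.
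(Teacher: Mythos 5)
Your computation is correct and is exactly the (omitted) verification the paper has in mind: the paper states this lemma without proof, as it follows immediately from conjugating the exponent of \eqref{eq:xSF} under $\zeta\mapsto -1/\bar\zeta$ and using that $\theta_\gamma$ is real-valued because the characters parametrizing $\M'$ are unitary. Since $\zeta\mapsto-1/\bar\zeta$ is an involution, your relabeling at the end is harmless, and no further argument is needed.
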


\subsubsection{Semi-flat hyper-K\"ahler structure}

\begin{definition}[semi-flat holomorphic symplectic forms]
For every $\zeta\in\CC^\times$, we use the symplectic pairing on $\Gamma^*$ to define a closed 2-form
\begin{align}
\varpi^{\rm sf}(\zeta) &= \frac{1}{8\pi} \avg{d\log \X^{\rm sf}(\zeta) \wedge d\log \X^{\rm sf}(\zeta)} \nonumber \\
&= \frac{i}{4\pi \zeta} \avg{dZ \wedge d\theta} + \frac{1}{8\pi} (2 \avg{dZ\wedge d\bar Z} - \avg{d\theta\wedge d\theta}) + \frac{i \zeta}{4\pi} \avg{d\theta\wedge d\bar Z} \ .
\end{align}
\end{definition}
Note that we have used here the fact that $\avg{dZ\wedge dZ} = 0$ in order to eliminate the terms multiplying $\zeta^{\pm 2}$. 

\begin{thm}[semi-flat hyper-K\"ahler structure]\label{prop:semiflathyperkahler}
The $\CC^\times$-family of closed $2$-forms $\varpi^{\mathrm{sf}}(\zeta)$ 
produces a hyper-K\"ahler structure on $\M'$, called the semi-flat structure. 
\end{thm}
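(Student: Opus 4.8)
The plan is to apply the twistor theorem (Theorem~\ref{thm:twistor}) to the family $\varpi^{\rm sf}(\zeta)$. Reading off the $\zeta$-expansion in the definition of $\varpi^{\rm sf}(\zeta)$, we identify
\[
\omega_+^{\rm sf} = \frac{1}{2\pi}\avg{dZ\wedge d\theta}, \qquad
\omega_3^{\rm sf} = \frac{1}{8\pi}\bigl(2\avg{dZ\wedge d\bar Z} - \avg{d\theta\wedge d\theta}\bigr), \qquad
\omega_-^{\rm sf} = \frac{1}{2\pi}\avg{d\theta\wedge d\bar Z},
\]
so that $\varpi^{\rm sf}(\zeta) = -\frac{i}{2\zeta}\omega_+^{\rm sf} + \omega_3^{\rm sf} - \frac{i}{2}\zeta\,\omega_-^{\rm sf}$. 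Each of these is closed since $Z_\gamma$ is holomorphic and $d\theta$ is a closed $\Gamma^*$-valued 1-form (the constants $\pi\avg{\gamma,\gamma'}$ being annihilated by $d$). The reality conditions $\omega_-^{\rm sf} = \overline{\omega_+^{\rm sf}}$ and $\omega_3^{\rm sf} = \overline{\omega_3^{\rm sf}}$ follow directly from the reality condition of Lemma~\ref{lem:realitysf} applied to $\varpi^{\rm sf}(\zeta) = \frac{1}{8\pi}\avg{d\log\X^{\rm sf}(\zeta)\wedge d\log\X^{\rm sf}(\zeta)}$, or can be checked by hand since $Z_\gamma$ is holomorphic, $\theta_\gamma$ is real, and $\avg{,}$ is real-valued.

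The substantive content is then to verify the remaining hypotheses of Theorem~\ref{thm:twistor}: that $\varpi^{\rm sf}(\zeta)$ is a holomorphic symplectic form on the real manifold $\M'$ for each $\zeta\in\CC^\times$, and that $\omega_+^{\rm sf}$ is a holomorphic symplectic form. For the $\zeta\in\CC^\times$ case, this is exactly what Lemma~\ref{lem:nondegen} and Corollary~\ref{cor:twistor} give us: working over a contractible $U\subset\B'$, choose a lift $\gamma_i = (\gamma_{e_i},\gamma_{m_i})$ of a Frobenius basis of $\Gamma$, and set $\Y^{\rm sf}_{\gamma}(\zeta) = \log\X^{\rm sf}_\gamma(\zeta)$. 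Then, using the symplectic pairing on $\Gamma^*$ with $\avg{\gamma_{m_i}^*,\gamma_{e_j}^*} = p_i^{-1}\delta_{ij}$, one checks that $\varpi^{\rm sf}(\zeta) = \frac{1}{8\pi}\avg{d\Y^{\rm sf}(\zeta)\wedge d\Y^{\rm sf}(\zeta)} = \sum_i d\alpha_i\wedge d\beta_i$ with $\alpha_i, \beta_i$ suitable multiples of $\Y^{\rm sf}_{\gamma_{e_i}}(\zeta), \Y^{\rm sf}_{\gamma_{m_i}}(\zeta)$ (absorbing the $\frac{1}{8\pi}$ and the $p_i$'s); Lemma~\ref{lem:nondegen} is precisely the statement that the Jacobian of $\{\Real\alpha_i,\Imag\alpha_i,\Real\beta_i,\Imag\beta_i\}$ with respect to smooth coordinates is nonvanishing, so Corollary~\ref{cor:twistor} applies. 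Since holomorphic symplecticity is a pointwise/local condition and $\M'$ is covered by such $\pi^{-1}(U)$, it holds globally on $\M'$.

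The main obstacle is the case of $\omega_+^{\rm sf} = \frac{1}{2\pi}\avg{dZ\wedge d\theta}$, for which the $\X^{\rm sf}$-coordinates degenerate (they are singular as $\zeta\to 0$) and so a separate set of holomorphic Darboux coordinates must be produced "by hand", as the introduction warns. The natural candidates are the complex base coordinates $a_i = Z_{\gamma_{e_i}}$ together with fiber-type coordinates built from $\theta$; concretely, one expects $\omega_+^{\rm sf} = \sum_i d a_i\wedge d w_i$ for suitable $w_i$ (some combination of $\theta_{\gamma_{m_i}}$, $\theta_{\gamma_{e_i}}$, and $a^D_i = p_i^{-1}Z_{\gamma_{m_i}}$, possibly shifted by terms involving $\tau$), after using $\avg{dZ\wedge dZ}=0$ to write $\avg{dZ\wedge d\theta}$ in the requisite form. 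One then computes the Jacobian of $\{\Real a_i, \Imag a_i, \Real w_i, \Imag w_i\}$ against the natural smooth coordinates $\{a_i,\bar a_i, \theta_{\gamma_{e_i}},\theta_{\gamma_{m_i}}\}$ on $\pi^{-1}(U)$; because $\theta$ enters $w_i$ linearly and invertibly (up to the $a$-dependent shifts, which form a triangular block), the Jacobian reduces to a nonzero constant times a power of $\det\Imag\tau$, which is nonzero by Lemma~\ref{lem:D4}\eqref{it:Imtau}. Corollary~\ref{cor:twistor} then finishes this case. Assembling the three pieces — closedness, reality, and holomorphic symplecticity of $\varpi^{\rm sf}(\zeta)$ for $\zeta\in\CC^\times$ and of $\omega_+^{\rm sf}$ — Theorem~\ref{thm:twistor} yields a pseudo-hyper-K\"ahler structure on $\M'$; a final check that the signature is definite (using, e.g., the explicit form of $\omega_3^{\rm sf}$ and positive-definiteness of $\Imag\tau$, together with the length scale built into the semi-flat torus fibers) upgrades this to an honest hyper-K\"ahler structure.
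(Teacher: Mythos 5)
Your proposal is correct and follows essentially the same route as the paper: Lemma~\ref{lem:nondegen} plus Corollary~\ref{cor:twistor} handles $\varpi^{\rm sf}(\zeta)$ for $\zeta\in\CC^\times$, the Darboux coordinates you anticipate for $\omega_+^{\rm sf}$ are exactly those of Lemma~\ref{lem:omega+sf} (namely $z_i = \frac{p_i^{-1}\tilde\theta_{\gamma_{m_i}}-\sum_j\tau_{ij}\tilde\theta_{\gamma_{e_j}}}{2\pi}$, with the Jacobian reducing to $\det\Imag\tau$ via the triangular block structure you describe), and the signature check via positive-definiteness of $\Imag\tau$ matches. The only slip is a sign: matching the $\zeta^{-1}$ coefficient against $-\frac{i}{2\zeta}\omega_+$ gives $\omega_+^{\rm sf}=-\frac{1}{2\pi}\avg{dZ\wedge d\theta}$, not $+\frac{1}{2\pi}\avg{dZ\wedge d\theta}$; this does not affect the argument.
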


By construction, the coordinates $\log \mathcal{X}^{\rm sf}_{\gamma_{e_i}}$ and $p_i^{-1} \log \mathcal{X}^{\rm sf}_{\gamma_{m_i}}$ are holomorphic Darboux coordinates for $\varpi^{\rm sf}(\zeta)$. Before the proof of the above proposition, we similarly give
 holomorphic Darboux coordinates for $\omega^{\semif}_+$:

\begin{lem}\label{lem:omega+sf} The  closed $2$-form 
    \begin{align}\omega^{\rm sf}_+ &= -\frac{1}{2\pi} \avg{dZ\wedge d\theta}
        =\sum_{i=1}^r da_i \wedge dz_i
    \end{align}
    for 
    \be z_i = \frac{p_i^{-1} \tilde\theta_{\gamma_{m_i}}-\sum_{j=1}^r \tau_{ij} \tilde\theta_{\gamma_{e_j}}}{2\pi}.\ee
    Moreover, $a_i, z_i$ provide valid coordinates on $\pi^{-1}(U)$. 
\end{lem}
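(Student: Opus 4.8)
The plan is to establish Lemma \ref{lem:omega+sf} in two parts: first the identity $\omega^{\rm sf}_+ = \sum_i da_i \wedge dz_i$, and then the nondegeneracy statement that $a_i, z_i$ are valid coordinates on $\pi^{-1}(U)$.

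\textbf{Step 1: the closed form expression.} Starting from $\omega^{\rm sf}_+ = -\frac{1}{2\pi}\avg{dZ \wedge d\theta}$, I would expand the pairing in a Frobenius basis. Regarding $dZ$ and $d\theta$ as $\Gamma^*$-valued $1$-forms, we have $dZ = \sum_i (dZ_{\gamma_{e_i}} \gamma_{e_i}^* + dZ_{\gamma_{m_i}}\gamma_{m_i}^*)$ and similarly for $d\theta$, and the pairing on $\Gamma^*$ satisfies $\avg{\gamma_{m_i}^*,\gamma_{e_j}^*} = p_i^{-1}\delta_{ij}$ with the other pairings vanishing (as recalled after (D3) in the excerpt). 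Using $a_i = Z_{\gamma_{e_i}}$, $a_i^D = p_i^{-1}Z_{\gamma_{m_i}}$, and $\tilde\theta_\gamma = \theta_\gamma + (\text{const})$ so that $d\theta_\gamma = d\tilde\theta_\gamma$, this produces
\begin{align}
-\frac{1}{2\pi}\avg{dZ\wedge d\theta} = \frac{1}{2\pi}\sum_i da_i \wedge \left(p_i^{-1} d\tilde\theta_{\gamma_{m_i}} - \sum_j \tau_{ij} d\tilde\theta_{\gamma_{e_j}}\right),
\end{align}
after using $dZ_{\gamma_{m_i}} = p_i\, da_i^D = p_i\sum_j \tau_{ij}da_j$ from Lemma \ref{lem:D4}(iii) and a symmetrization/relabeling of indices together with $da_i \wedge da_j$ being antisymmetric so the symmetric part of $\tau$ survives only in the mixed $da \wedge d\tilde\theta$ terms. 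One must be slightly careful that the $\sum_j \tau_{ij}d\tilde\theta_{\gamma_{e_j}}$ piece genuinely equals the exterior derivative of $z_i$ only up to the extra term $\sum_j d\tau_{ij}\,\tilde\theta_{\gamma_{e_j}}$; however, wedging with $da_i$ kills this since $da_i \wedge d\tau_{ij} \wedge$ --- actually $\sum_i da_i \wedge (d\tau_{ij})\tilde\theta_{\gamma_{e_j}} = \sum_{i,j,k} \partial_{a_k}\tau_{ij}\, da_i \wedge da_k\, \tilde\theta_{\gamma_{e_j}}$, which vanishes because $\partial_{a_k}\tau_{ij} = \partial_{a_k}\partial_{a_i}\partial_{a_j}\F$ is symmetric in $i,k$ while $da_i \wedge da_k$ is antisymmetric. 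Hence $\sum_i da_i \wedge dz_i = \sum_i da_i \wedge (p_i^{-1}d\tilde\theta_{\gamma_{m_i}} - \sum_j \tau_{ij}d\tilde\theta_{\gamma_{e_j}})/(2\pi)$, matching the above.

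\textbf{Step 2: coordinates.} To show $a_i, z_i$ (equivalently $\Real a_i, \Imag a_i, \Real z_i, \Imag z_i$) form a coordinate system on $\pi^{-1}(U)$, I would compute the Jacobian with respect to the standard smooth coordinates $a_i, \bar a_i$ on $U$ together with the fiber coordinates $p_i^{-1}\tilde\theta_{\gamma_{m_i}}, \tilde\theta_{\gamma_{e_i}}$ (valid by Lemma \ref{lem:D4}(ii) and Remark \ref{rem:fiberparam}), just as in the proof of Lemma \ref{lem:nondegen}. Since $a_i$ depends only on the base coordinates and bijectively so, the only content is whether the $z_i$ complete $a_i$ to a full system; writing $z_i$ in terms of the chosen fiber coordinates, $2\pi z_i = (p_i^{-1}\tilde\theta_{\gamma_{m_i}}) - \sum_j \tau_{ij}\tilde\theta_{\gamma_{e_j}}$, the relevant sub-block of the Jacobian (derivatives of $\Real z_i, \Imag z_i$ with respect to $p_j^{-1}\tilde\theta_{\gamma_{m_j}}, \tilde\theta_{\gamma_{e_j}}$, at fixed base point) is, up to an overall constant, the $2r\times 2r$ real matrix built from the blocks $I_r$ (coupling $z_i$ to $p_j^{-1}\tilde\theta_{\gamma_{m_j}}$) and $-\tau$ (coupling $z_i$ to $\tilde\theta_{\gamma_{e_j}}$); its determinant, after separating real and imaginary parts, is proportional to $\det\Imag\tau \neq 0$ by the positive-definiteness established in Lemma \ref{lem:D4}(\ref{it:Imtau}). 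I expect Step 1 --- in particular tracking the $p_i$ factors, the dual pairing conventions, and verifying that the non-closed correction terms drop out after wedging --- to be the main obstacle, whereas Step 2 is essentially a transcription of the nondegeneracy computation already carried out for $\varpi^{\rm sf}(\zeta)$ and should go through with no new difficulty.
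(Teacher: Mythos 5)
Your proposal is correct and follows essentially the same route as the paper: expand $\avg{dZ\wedge d\theta}$ in a Frobenius basis using the dual pairing $\avg{\gamma_{m_i}^*,\gamma_{e_j}^*}=p_i^{-1}\delta_{ij}$, pull $\tau_{ij}$ inside the exterior derivative by observing that $\sum_{i,k}\partial_{a_k}\tau_{ij}\,da_i\wedge da_k=0$ since $\partial_{a_k}\partial_{a_i}\partial_{a_j}\F$ is symmetric, and then verify nondegeneracy via the block-triangular Jacobian whose determinant is proportional to $\det\Imag\tau\neq 0$. No gaps; the only cosmetic difference is your aside about symmetric parts of $\tau$ in $da\wedge da$ terms, which never actually arise since $d\theta$ has no base legs.
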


\begin{proof}[Proof of Lemma \ref{lem:omega+sf}]
In the notation of Lemma \ref{lem:nondegen}, we write 
\begin{align} \omega_+^{\rm sf} &= - \frac{1}{2 \pi} \avg{dZ \wedge d \theta} \nonumber \nonumber  \\
&=-\frac{1}{2\pi} \left(\sum_{i=1}^r \avg{\gamma_{e_i}^*, \gamma_{m_i}^*} dZ_{\gamma_{e_i}} \wedge d \widetilde{\theta}_{\gamma_{m_i}}  + 
\sum_{i=1}^r dZ_{\gamma_{m_i}} \wedge d \widetilde{\theta}_{\gamma_{e_i}} \avg{\gamma_{m_i}^*, \gamma_{e_i}^*} 
\right) \nonumber  \\
&=-\frac{1}{2\pi} \left(\sum_{i=1}^r -p_i^{-1} da_i \wedge d \widetilde{\theta}_{\gamma_{m_i}}  + 
\sum_{i=1}^r p_i d a_i^D \wedge d \widetilde{\theta}_{\gamma_{e_i}} p_i^{-1}
\right)\nonumber  \\
&=
\frac{1}{2\pi} \sum_{i=1}^r (p_i^{-1} da_i\wedge d\tilde\theta_{\gamma_{m_i}} - \sum_{j=1}^r \tau_{ij} da_i \wedge d\tilde\theta_{\gamma_{e_j}})\nonumber  \\
&
= \sum_{i=1}^r da_i \wedge d \parens{\frac{p_i^{-1}\tilde\theta_{\gamma_{m_i}}-\sum_{j=1}^r \tau_{ij} \tilde\theta_{\gamma_{e_j}}}{2\pi}} \ . \end{align}
We can bring $\tau_{ij}$ inside of the exterior derivative because
\be -\sum_{i=1}^r da_i \wedge d \tau_{ij} = \sum_{i,k=1}^r \partial_{a_k} \tau_{ij} da_i\wedge da_k = \sum_{i, k=1}^r \partial_{a_k} \partial_{a_i} \partial_{a_j} \F da_i \wedge da_k = 0 \ . \ee
Now, we want to check that $a_i$ and $z_i = \frac{p_i^{-1} \tilde\theta_{\gamma_{m_i}}-\sum_{j=1}^r \tau_{ij} \tilde\theta_{\gamma_{e_j}}}{2\pi}$ comprise a set of good local complex coordinates on $\M'$. The relevant Jacobian matrix this time is
\be
\frac{\partial(a_1,\ldots,a_r,\bar a_1,\ldots,\bar a_r, z_1, \ldots, z_r, \bar z_1, \ldots,\bar z_r)}{\partial(a_1,\ldots,a_r,\bar a_1,\ldots,\bar a_r,\frac{1}{p_1}\tilde\theta_{\gamma_{m_1}},\ldots,\frac{1}{p_r} \tilde\theta_{\gamma_{m_r}},\tilde\theta_{\gamma_{e_1}},\ldots,\tilde\theta_{\gamma_{e_r}})} \ . \ee
Using the same notation as before, as well as the notation $\partial_a^2 H$ to denote the Hessian matrix with entries $\partial_{a_i} \partial_{a_j} H$, this is
\be \begin{pmatrix}
I_r & 0 & 0 & 0 \\
0 & I_r & 0 & 0 \\
-\frac{1}{2\pi} \partial_a^2\sum_j \partial_{a_j} \F \tilde\theta_{\gamma_{e_j}} & 0 & \frac{1}{2\pi} I_r & -\frac{1}{2\pi} \tau \\
0 & -\frac{1}{2\pi} \partial_{\bar a}^2 \sum_j \partial_{\bar a_j} \bar \F \tilde \theta_{\gamma_{e_j}} & \frac{1}{2\pi} I_r & -\frac{1}{2\pi} \bar\tau
\end{pmatrix} \ ,
\ee
where $\F$ is the prepotential of Lemma \ref{lem:D4}(ii). 
As before, the determinant is found to be
\be \begin{vmatrix}
\frac{1}{2\pi} I_r & -\frac{1}{2\pi} \tau \\
\frac{1}{2\pi} I_r & -\frac{1}{2\pi} \bar\tau
\end{vmatrix} = (2\pi)^{-2r} (-2i)^r \det \Imag\tau \ ,
\ee
which is nonzero. 
\end{proof}

\begin{proof}[Proof of Theorem \ref{prop:semiflathyperkahler}]
Using the same Frobenius basis as Lemma \ref{lem:nondegenModel}, we
write
\begin{equation}
\varpi^{\rm sf}(\zeta)= \frac{1}{8 \pi} \sum_{i=1}^r d \mathcal{Y}_{e_i}^{\rm sf} \wedge d \left(p_i^{-1}\mathcal{Y}_{m_i}^{\rm sf}\right),
\end{equation}
and observe that the nondegeneracy
result in Lemma \ref{lem:nondegenModel} implies that $\varpi^{\semif}(\zeta)$ is holomorphic symplectic. Similarly, the nondegeneracy result in Lemma \ref{lem:omega+sf} implies that $\omega_+$ (and hence $\omega_-$) is holomorphic symplectic. Finally, Theorem \ref{thm:twistor} gives us a pseudo-hyper-K\"ahler structure on $\M'$. Lastly,  the signature is as claimed since $\Imag \tau$ is positive-definite.
\end{proof}
The ``semi-flat'' terminology is introduced in \cite{Freed}.  The pseudo-hyper-K\"ahler structure on $\M'$ is called ``semi-flat'' because  the restriction of the metric to each torus fiber is flat, as we show in the following lemma:
\begin{lemma}\label{lem:semiflat}
Let $(\Imag \tau)^{-1/2}$ be the unique symmetric square root of $(\Imag \tau)^{-1}$, and define $(w_1, \cdots, w_r) =((\Imag \tau)^{-1/2}(z_1, \cdots, z_r)$. Then for $u \in \mathcal{B}'$, the restriction of $\omega_3$ to $\M'_u$ is
\be \omega_3|_u = \frac{i}{2}  \cdot \frac{1}{4 \pi} \sum_{i=1}^r dw_i \wedge d \bar w_i, \ee
the standard (flat) K\"ahler form on $\CC^r$.
The normalized period matrix is \be
\left(\mathrm{diag}(p_rp_1^{-1}, \cdots, p_rp_r^{-1})|p_r\tau \right).\ee
\end{lemma}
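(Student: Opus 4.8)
The plan is to exploit the holomorphic Darboux coordinates $a_i, z_i$ for $\omega_+^{\rm sf}$ produced in Lemma \ref{lem:omega+sf}, together with the formula \eqref{eq:Kahlersemif} for the base K\"ahler form, to compute $\omega_3 = \Imag \omega_+$... actually, $\omega_3$ is the middle component of $\varpi^{\rm sf}(\zeta)$, so I would first extract it explicitly. From the expansion of $\varpi^{\rm sf}(\zeta)$ in powers of $\zeta$, one reads off
\be \omega_3 = \frac{1}{8\pi}\left(2\avg{dZ\wedge d\bar Z} - \avg{d\theta\wedge d\theta}\right),\ee
and the first term restricts to zero on a fiber $\M'_u$ since $Z$ is constant along fibers. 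So the first step is to compute $-\frac{1}{8\pi}\avg{d\theta\wedge d\theta}$ restricted to $\M'_u$ in terms of the fiber coordinates.

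The second step is to rewrite this in terms of the $z_i$. Restricting to $\M'_u$, the coordinates $a_i$ are frozen, so $dz_i|_u = \frac{1}{2\pi}(p_i^{-1} d\tilde\theta_{\gamma_{m_i}} - \sum_j \tau_{ij} d\tilde\theta_{\gamma_{e_j}})$ with $\tau_{ij}$ constant on the fiber. Expanding $\avg{d\theta\wedge d\theta}$ in the Frobenius basis exactly as in the proof of Lemma \ref{lem:omega+sf} (the pairing contributes $\avg{\gamma_{e_i}^*,\gamma_{m_i}^*} = -p_i^{-1}$ and its transpose), I expect $-\frac{1}{8\pi}\avg{d\theta\wedge d\theta}|_u$ to organize into something proportional to $\sum_i d\tilde\theta_{\gamma_{e_i}} \wedge (p_i^{-1} d\tilde\theta_{\gamma_{m_i}})$, and then I want to massage $\frac{i}{2}\sum_i dz_i \wedge d\bar z_i$ and check it matches. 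Concretely, $\frac{i}{2}\sum_i dz_i\wedge d\bar z_i = \frac{i}{2(2\pi)^2}\sum_i (p_i^{-1}d\tilde\theta_{\gamma_{m_i}} - \sum_j\tau_{ij}d\tilde\theta_{\gamma_{e_j}}) \wedge (p_i^{-1}d\tilde\theta_{\gamma_{m_i}} - \sum_k\bar\tau_{ik}d\tilde\theta_{\gamma_{e_k}})$; the $d\tilde\theta_m \wedge d\tilde\theta_m$ and $d\tilde\theta_e\wedge d\tilde\theta_e$ pieces should cancel or recombine (using symmetry of $\tau$), leaving cross terms with coefficient $\frac{i}{2(2\pi)^2}\cdot(-2i\,\Imag\tau_{ij})$ acting on $d\tilde\theta_{\gamma_{e_j}}\wedge p_i^{-1}d\tilde\theta_{\gamma_{m_i}}$-type expressions — and the $\Imag\tau$ here is exactly what makes the change of variables to $w_i = (\Imag\tau)^{-1/2} z_i$ turn this into the flat form $\frac{i}{2}\cdot\frac{1}{4\pi}\sum_i dw_i \wedge d\bar w_i$. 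The factor of $\frac{1}{4\pi}$ should drop out of reconciling the $\frac{1}{8\pi}$ prefactor on $\varpi^{\rm sf}$ with the $\frac{i}{2}$ convention relating $\omega_3$ to the K\"ahler form of a complex structure; I would track constants carefully here since that is the one place bookkeeping can go wrong.

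For the period matrix claim, the approach is to identify the complex structure on $\M'_u$ in which $\omega_+^{\rm sf}|_u$ is of type $(2,0)$: since $\omega_+^{\rm sf}|_u = \sum_i da_i \wedge dz_i$ and $da_i|_u = 0$, actually on the fiber $\omega_+^{\rm sf}$ restricts to zero, so the relevant holomorphic structure on the fiber is the one for $\omega_3$, i.e. the $z_i$ (equivalently $w_i$) are holomorphic coordinates on $\M'_u$. Then I would write the lattice of periods: the fiber is $\hat\Gamma_u^* \otimes \RR / (2\pi (\text{image of }\hat\Gamma_u))$ parametrized by the $\tilde\theta$'s valued in $\RR/2\pi\ZZ$, and express the generators $\tilde\theta_{\gamma_{e_i}} \in \{0, 2\pi\}$, $\tilde\theta_{\gamma_{m_i}}\in\{0,2\pi\}$ in the $z$-coordinates: shifting $\tilde\theta_{\gamma_{e_j}}$ by $2\pi$ shifts $z_i$ by $-\tau_{ij}$, while shifting $\tilde\theta_{\gamma_{m_i}}$ by $2\pi$ shifts $z_i$ by $p_i^{-1}$ (so shifts the $i$-th coordinate by $p_i^{-1}$, which after clearing denominators by the overall normalization $p_r$ gives the diagonal $\mathrm{diag}(p_r p_1^{-1},\ldots,p_r p_r^{-1})$ block and the $p_r\tau$ block). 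The main obstacle I anticipate is purely the constant/normalization bookkeeping — getting the $\frac{1}{8\pi}$, the $\frac{i}{2}$, and the $\frac{1}{4\pi}$ to reconcile, and making sure the lattice normalization $p_r$ is applied consistently so that the period matrix has integral first block — rather than any conceptual difficulty; the flatness itself is essentially immediate once the $w_i$ are in hand, because $\Imag\tau$ is constant on each fiber.
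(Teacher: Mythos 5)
Your overall route is the same as the paper's: extract $\omega_3=\frac{1}{8\pi}\left(2\avg{dZ\wedge d\bar Z}-\avg{d\theta\wedge d\theta}\right)$ from the $\zeta^0$ term, note the first term dies on a fiber, compute $-\frac{1}{8\pi}\avg{d\theta\wedge d\theta}|_u=\frac{1}{4\pi}\sum_i p_i^{-1}d\tilde\theta_{\gamma_{e_i}}\wedge d\tilde\theta_{\gamma_{m_i}}$, re-express this via $dz_i|_u=\frac{1}{2\pi}\left(p_i^{-1}d\tilde\theta_{\gamma_{m_i}}-\sum_j\tau_{ij}d\tilde\theta_{\gamma_{e_j}}\right)$, and read the period lattice off the shifts of $z_i$ under $\tilde\theta\mapsto\tilde\theta+2\pi$. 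The first step and the period-matrix part of your argument are fine and match the paper.

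However, the central identity you propose to verify is wrong as written: $\frac{i}{2}\sum_i dz_i\wedge d\bar z_i$ is \emph{not} proportional to $\omega_3|_u$, and the cancellation you invoke fails for it. In that unweighted sum the $d\tilde\theta_{e}\wedge d\tilde\theta_{e}$ block has antisymmetrized coefficient $\sum_i(\tau_{ij}\bar\tau_{ik}-\tau_{ik}\bar\tau_{ij})$, i.e.\ the antisymmetric part of $\tau\bar\tau$, whose imaginary part is the commutator $[\Imag\tau,\Real\tau]$ and is generically nonzero; symmetry of $\tau$ alone does not kill it. Moreover the surviving cross terms carry the weight $\Imag\tau_{ij}$ and so do not collapse to $\sum_i p_i^{-1}d\tilde\theta_{\gamma_{e_i}}\wedge d\tilde\theta_{\gamma_{m_i}}$. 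The object you must expand is instead $\sum_i dw_i\wedge d\bar w_i=\sum_{i,j}\left((\Imag\tau)^{-1}\right)_{ij}dz_i\wedge d\bar z_j$ (legitimate fiberwise since $\Imag\tau$ is constant on $\M'_u$). With the $(\Imag\tau)^{-1}$ weight inserted, the $ee$ block is governed by $\tau(\Imag\tau)^{-1}\bar\tau=\Real\tau(\Imag\tau)^{-1}\Real\tau+\Imag\tau$, which \emph{is} symmetric, so those terms cancel, and the cross terms give exactly $-2i\sum_i p_i^{-1}d\tilde\theta_{\gamma_{e_i}}\wedge d\tilde\theta_{\gamma_{m_i}}$ with no residual $\tau$-dependence; this is precisely the computation in the footnote to the paper's proof of Lemma \ref{lem:semiflat}. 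The gap is therefore local and repairable --- you are expanding the wrong intermediate quantity --- but as proposed that step would not go through.
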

\begin{proof}
In the $\zeta=0$ complex structure, the projection map $\pi$ is a holomorphic submersion. 
For, the pseudo-K\"ahler form
\be \omega_3 = \frac{1}{8\pi}(2\avg{dZ\wedge d\bar Z} - \avg{d\theta\wedge d\theta}) \ee
shows that $\pi$ is, in fact, a Riemannian submersion, and the restriction of $\omega_3$ to a fiber is
\be \omega_3|_u = \frac{1}{4\pi} \sum_i p_i^{-1} d\tilde\theta_{\gamma_{e_i}}\wedge d\tilde\theta_{\gamma_{m_i}} \ . \ee 

In terms of $z_i, \bar z_j$, the expression for $\omega_3$ is\footnote{c.f. \cite[(3.19)]{GMN:walls}}\footnote{
An efficient way to compute this is to relate the column vector $dz=(dz_1, \cdots, dz_r)$ and the column vector 
 $d\tilde\theta = (d \tilde\theta_{\gamma_{e_1}}, \cdots, d \tilde \theta_{\gamma_{e_r}}, d \tilde \theta_{\gamma_{m_1}}, \cdots, d\tilde\theta_{\gamma_{m_r}})$ via the matrix $dz = M d\tilde\theta$ for $M= \frac{1}{2\pi}\left( -\tau \; | \; P^{-1} \right)$ where $P=\mathrm{diag}(p_1, \cdots, p_r)$ and, then compute \begin{align*}
 \sum_{i,j}(\Imag \tau)^{-1}_{ij} dz_i \wedge d \bar z_j 
 &= dz^T \otimes (\Imag \tau)^{-1} d \bar{z} - d\bar z^T \otimes (\Imag \tau)^{-1} dz \\
 &=d\tilde\theta^T \otimes \begin{pmatrix} 0 & - (\tau - \bar \tau) (\Imag \tau)^{-1} P^{-1} \\ -P^{-1}(\Imag \tau)^{-1} (\bar \tau - \tau) &^ 0 \end{pmatrix} d \tilde\theta\\
  &=d\tilde\theta^T \otimes \begin{pmatrix} 0 & - 2i P^{-1} \\ 2i P^{-1} &^ 0 \end{pmatrix} d \tilde\theta\\
  &=-2i \sum_{i=1}^r p_i^{-1} d\tilde \theta_{\gamma_{e_i}} \wedge d\tilde \theta_{\gamma_{m_i}}.
\end{align*}
}
\be \omega_3|_u = \frac{i}{2}  \cdot \frac{1}{4 \pi} \sum_{i,j} \left((\mathrm{Im} \tau)^{-1}\right)_{ij} dz_i \wedge d \bar z_j \ee
so, the metric is positive-definite. 
Let $(\Imag \tau)^{-1/2}$ be the unique symmetric square root of $(\Imag \tau)^{-1}$, and define $(w_1, \cdots, w_r) =((\Imag \tau)^{-1/2}(z_1, \cdots, z_r)$, so that we get the standard (flat) K\"ahler form on $\CC^r$:
\be \omega_3|_u = \frac{i}{2}  \cdot \frac{1}{4 \pi} \sum_{i=1}^r dw_i \wedge d \bar w_i. \ee
Finally, we note that shifting $\theta \mapsto \theta + 2 \pi$,  
$z_i=\frac{p_i^{-1}\widetilde\theta_{\gamma_{m_i}}-\sum_{j=1}^r \tau_{ij} \widetilde\theta_{\gamma_{e_j}}}{2\pi}$ are complex coordinates defined modulo the addition of a column of $\mathrm{diag}(p_1^{-1}, \cdots, p_r^{-1})$ 
or $\tau$; $w_i$ are complex coordinates defined modulo the addition of a column of $(\Imag \tau)^{-1/2} \mathrm{diag}(p_1^{-1}, \cdots, p_r^{-1})$ or $(\Imag \tau)^{-1/2} \tau$, so that we find that the normalized period matrix (see \cite[p. 306]{GriffithsHarris}) is \be
\left(\mathrm{diag}(p_rp_1^{-1}, \cdots, p_rp_r^{-1})|p_r\tau \right),\ee
as promised in Remark \ref{rem:tau}. Note that $(p_r p_1^{-1}, \cdots, p_r p_r^{-1})$ are the dual elementary divisors for $\Gamma_u^*$ (see \cite[p. 315]{GriffithsHarris}).
This complex torus is an abelian variety precisely because $\tau$ is symmetric and $\mathrm{Im}\; \tau$ is positive definite \cite[p. 306]{GriffithsHarris}. It is principally polarized if, and only if, $p_i=1$. 
\end{proof}

\begin{rem}[Complex torus fibration]
We note that 
$\M'$ is a complex torus fibration in complex structure $\zeta=0$, but it need not be an abelian fibration (or a complex integrable system) --- i.e., $\pi$ need not admit a section. (When $r=1$, one sometimes says that $\M'$ is a genus 1 fibration which may or may not be an elliptic fibration.) In orthogonal complex structures, this translates to the fact that $\M'$ is a special Lagrangian torus fibration, but it need not be a real integrable system. The existence of a section is often determined by the choice of $\theta_f$. We say `often' because even when $\theta_f=0$ it is not evident that a section exists, since $\theta$ is a twisted unitary character, not a unitary character, and so the existence of a global $\theta=0$ section can be obstructed. As described in Remark \ref{ref:untwisted}, the better description for looking for such a global section often involves untwisted characters.
\end{rem}

\bigskip

Lastly, for future use we note the following expression, which is written in terms of terms appearing in the higher-dimensional analog of the Gibbons-Hawking Ansatz (see \S\ref{sec:GH}): 
\begin{lemma}\label{lem:GHsf} The family of holomorphic symplectic forms  
    \begin{align}
        \varpi^{\rm sf}(\zeta) 
        &= \frac{1}{4\pi i} \sum_{i=1}^r d\log \X^{\rm sf}_{\gamma_{e_i}}(\zeta)\wedge \brackets{ p_i^{-1} d\theta_{\gamma_{m_i}}+A_i^{\mathrm{sf}} + \sum_{j=1}^r V^{\mathrm{sf}}_{ij}(\zeta^{-1} da_j-\zeta d\bar a_j) } \ ,
        \end{align}
        where
        \begin{align*}
        A_i^{\mathrm{sf}} &= -\sum_{j=1}^r \Real\tau_{ij} d\theta_{\gamma_{e_j}}\\
        V_{ij}^{\semif} &=\Imag\tau_{ij}
        \end{align*}
\end{lemma}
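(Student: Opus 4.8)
~\textbf{Proof proposal for Lemma~\ref{lem:GHsf}.} The plan is to start from the already-established expression
\[
\varpi^{\rm sf}(\zeta) = \frac{1}{8\pi}\sum_{i=1}^r d\mathcal{Y}^{\rm sf}_{e_i}(\zeta)\wedge d\bigl(p_i^{-1}\mathcal{Y}^{\rm sf}_{m_i}(\zeta)\bigr),
\]
where $\mathcal{Y}^{\rm sf}_\gamma(\zeta) = \log\mathcal{X}^{\rm sf}_\gamma(\zeta) = \zeta^{-1}Z_\gamma + i\theta_\gamma + \zeta\bar Z_\gamma$, and simply to rewrite the second factor of each wedge in terms of the Gibbons--Hawking data. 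First I would expand $d\bigl(p_i^{-1}\mathcal{Y}^{\rm sf}_{m_i}(\zeta)\bigr) = \zeta^{-1} p_i^{-1} dZ_{\gamma_{m_i}} + i\, p_i^{-1} d\theta_{\gamma_{m_i}} + \zeta\, p_i^{-1} d\bar Z_{\gamma_{m_i}}$, and use $p_i^{-1}Z_{\gamma_{m_i}} = a^D_i = \partial_{a_i}\F$ (Lemma~\ref{lem:D4}), so that $p_i^{-1} dZ_{\gamma_{m_i}} = da^D_i = \sum_j \tau_{ij}\, da_j$ and $p_i^{-1} d\bar Z_{\gamma_{m_i}} = \sum_j \bar\tau_{ij}\, d\bar a_j$. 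Writing $\tau_{ij} = \Real\tau_{ij} + i\,\Imag\tau_{ij}$ I would collect
\[
\zeta^{-1}\sum_j\tau_{ij}\,da_j + \zeta\sum_j\bar\tau_{ij}\,d\bar a_j
= \sum_j \Real\tau_{ij}\,(\zeta^{-1}da_j + \zeta\, d\bar a_j) + i\sum_j\Imag\tau_{ij}\,(\zeta^{-1}da_j - \zeta\, d\bar a_j).
\]
The term $\sum_j\Real\tau_{ij}(\zeta^{-1}da_j+\zeta\,d\bar a_j)$ can be rewritten using the first factor: since $d\mathcal{Y}^{\rm sf}_{e_i}(\zeta) = \zeta^{-1}da_i + i\,d\theta_{\gamma_{e_i}} + \zeta\, d\bar a_i$, we have $\zeta^{-1}da_j + \zeta\, d\bar a_j = d\mathcal{Y}^{\rm sf}_{e_j}(\zeta) - i\, d\theta_{\gamma_{e_j}}$. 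Hence the second factor becomes, up to overall normalization,
\[
p_i^{-1}d\theta_{\gamma_{m_i}} - \sum_j\Real\tau_{ij}\,d\theta_{\gamma_{e_j}} + \sum_j\Imag\tau_{ij}\,(\zeta^{-1}da_j - \zeta\, d\bar a_j) + \bigl(\text{multiple of }i\bigr)\Bigl[\,\text{a linear combination of }d\mathcal{Y}^{\rm sf}_{e_j}(\zeta)\,\Bigr],
\]
and the bracketed piece wedges to zero against $d\mathcal{Y}^{\rm sf}_{e_i}(\zeta)$ after summing over $i$ because the coefficient matrix $\Real\tau_{ij}$ is symmetric (this is the same symmetry argument, $\sum_{i,j}\Real\tau_{ij}\,d\mathcal{Y}^{\rm sf}_{e_i}\wedge d\mathcal{Y}^{\rm sf}_{e_j} = 0$, that is used in the proof of Lemma~\ref{lem:omega+sf}). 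Recognizing $A^{\rm sf}_i = -\sum_j\Real\tau_{ij}\,d\theta_{\gamma_{e_j}}$ and $V^{\rm sf}_{ij} = \Imag\tau_{ij}$, and noting that $d\log\mathcal{X}^{\rm sf}_\gamma(\zeta)$ differs from $d\mathcal{Y}^{\rm sf}_\gamma(\zeta)$ only by the locally-constant shift absorbing $\sigma$, I would track the numerical prefactor: the $\tfrac{1}{8\pi}$ together with the factor of $i$ pulled out of $i\,d\theta_{\gamma_{m_i}}$-type terms versus the $\tfrac{1}{4\pi i}$ claimed should match after using $\tfrac{1}{8\pi} = \tfrac{1}{4\pi i}\cdot\tfrac{i}{2}$ and the fact that one factor of $\tfrac12$ or so is distributed by the reorganization; I would verify this carefully against a direct check at $r=1$.

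The one genuine subtlety — and the step I expect to be the main obstacle — is bookkeeping the overall constant $\tfrac{1}{4\pi i}$ and confirming that the first wedge factor $d\log\mathcal{X}^{\rm sf}_{\gamma_{e_i}}(\zeta)$ (rather than $\tfrac12$ of it, or $d\theta$-shifted versions) is exactly what appears; the antisymmetrization that kills the spurious $d\mathcal{Y}^{\rm sf}_{e_j}$ cross-terms must be done after the $\tfrac18\pi\avg{\cdot\wedge\cdot}$ is fully expanded into the $e$--$m$ pairing, so I would be careful that the symmetry being exploited is that of $\tau$ and not an accidental cancellation. Everything else is routine substitution using Lemma~\ref{lem:D4}(iii)--(iv) and the definition of $\mathcal{X}^{\rm sf}$. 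A clean way to organize the write-up is: (1) expand $\varpi^{\rm sf}(\zeta)$ in the Frobenius basis into $e$--$m$ terms; (2) substitute $a^D_i = \partial_{a_i}\F$, $\tau_{ij} = \partial_{a_i}\partial_{a_j}\F$; (3) split $\tau$ into real and imaginary parts and re-express the real part via $d\mathcal{Y}^{\rm sf}_{e_j}(\zeta)$; (4) discard the symmetric cross-terms; (5) collect into the stated form and read off $A^{\rm sf}_i$, $V^{\rm sf}_{ij}$.
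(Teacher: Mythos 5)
Your proposal follows essentially the same route as the paper's proof: expand $\varpi^{\rm sf}(\zeta)$ in the Frobenius basis, substitute $p_i^{-1}dZ_{\gamma_{m_i}}=\sum_j\tau_{ij}\,da_j$, split $\tau$ into real and imaginary parts, rewrite $\zeta^{-1}da_j+\zeta\,d\bar a_j = d\log\X^{\rm sf}_{\gamma_{e_j}}-i\,d\theta_{\gamma_{e_j}}$, and discard the cross-term by symmetry of $\Real\tau$. The only loose end you flag, the overall prefactor, resolves because the antisymmetric pairing contributes both $(e_i,m_i)$ and $(m_i,e_i)$ terms, so $\frac{1}{8\pi}\avg{\cdot\wedge\cdot}=\frac{1}{4\pi}\sum_i d\log\X^{\rm sf}_{\gamma_{e_i}}\wedge p_i^{-1}d\log\X^{\rm sf}_{\gamma_{m_i}}$, and pulling out the $i$ from $i\,d\theta_{\gamma_{m_i}}$ then yields the stated $\frac{1}{4\pi i}$.
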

\begin{proof}
    We compute: 
\begin{align}
\varpi^{\rm sf}(\zeta) &= \frac{1}{8 \pi} \avg{ d \log \X^{\rm sf}(\zeta) \wedge d \log \X^{\rm sf}} \nonumber \\
&= \frac{1}{4 \pi} \sum_{i=1}^r d \log \X^{\rm sf}_{\gamma_{e_i}}(\zeta) \wedge p_i^{-1} d \log \X^{\rm sf}_{\gamma_{m_i}}(\zeta) \nonumber \\
&= \frac{1}{4 \pi} \sum_{i=1}^r d \log \X^{\rm sf}_{\gamma_{e_i}}(\zeta) \wedge  \left[p_i^{-1}i d \theta_{\gamma_{m_i}}+ \sum_{j=1}^r (\zeta^{-1} \partial_{a_j}a_i^D da_j + \zeta  \partial_{\bar a_j} \bar a_i^D d \bar a_j) \right] \nonumber \\
&=
-\frac{1}{4\pi} \sum_{i=1}^r d\log\X^{\rm sf}_{\gamma_{e_i}}(\zeta)\wedge \brackets{ p_i^{-1}id\theta_{\gamma_{m_i}}+\sum_{j=1}^r (\zeta^{-1} \tau_{ij} da_j + \zeta \bar\tau_{ij} d\bar a_j)} \nonumber \\
&= \frac{1}{4\pi i} \sum_{i=1}^r d\log \X^{\rm sf}_{\gamma_{e_i}}(\zeta)\wedge \brackets{  p_i^{-1}d\theta_{\gamma_{m_i}}+ \sum_{j=1}^r \parens{- i \Real\tau_{ij} ( d\log\X^{\rm sf}_{\gamma_{e_j}} - i d\theta_{\gamma_{e_j}}) + \Imag\tau_{ij}(\zeta^{-1} da_j-\zeta d\bar a_j) }} \nonumber \\
&= \frac{1}{4\pi i} \sum_{i=1}^r d\log \X^{\rm sf}_{\gamma_{e_i}}(\zeta)\wedge \brackets{  p_i^{-1}d\theta_{\gamma_{m_i}}+A_i^{\mathrm{sf}} + \sum_{j=1}^r V^{\mathrm{sf}}_{ij}(\zeta^{-1} da_j-\zeta d\bar a_j) } \ ,
\end{align}
where
\begin{align}
A_i^{\mathrm{sf}} &= -\sum_{j=1}^r \Real\tau_{ij} d\theta_{\gamma_{e_j}}\nonumber\\
V_{ij}^{\semif} &=\Imag\tau_{ij}.
\end{align}

\end{proof}
\subsubsection{The space of twisted characters $\mathcal{T}' \to \mathcal{B}'$}\label{sec:twistedcharacters}

For later convenience, we now say a bit more about the space in which $\X^{\rm sf}(\zeta)$ is valued. See Figure \ref{fig:diagram} throughout this section. There is a local system $\T'\to \B'$ whose fiber $\T'_u$ over $u$ is the space of twisted characters $X: \hat\Gamma_u \to \CC^\times$. This is a local system of complex algebraic tori, and it also is naturally a local system of holomorphic Poisson manifolds. The Poisson bracket on $\T'_u$ 
\be \{\; , \;\}: C^\infty (\T'_u) \times C^\infty (\T'_u) \to C^\infty(\T'_i)\ee is determined by the bracket on the coordinates functions labeled by $\gamma \in \widehat{\Gamma}_u$: \begin{align} \mathrm{eval}_\gamma:\T'_u \to \CC^\times \nonumber \\ X \mapsto X_{\gamma};\end{align} 
namely 
\footnote{\label{ft:polarization}The integrality of the symplectic pairing is crucial for this result. If we attempted to allow it to assume rational values, or even real values, and defined twisted characters to be functions satisfying $X_{\gamma+\gamma'}=e^{i\pi\avg{\gamma,\gamma'}} X_\gamma X_{\gamma'}$, then for either choice of sign in $\{X_{\gamma},X_{\gamma'}\} = 4\pi e^{\pm i\pi \avg{\gamma,\gamma'}} \avg{\gamma,\gamma'} X_{\gamma+\gamma'}$ we would not have anticommutativity, the Leibniz rule, or the Jacobi identity.}
\be \{\mathrm{eval}_\gamma,\mathrm{eval}_{\gamma'}\} = 4\pi (-1)^{\avg{\gamma,\gamma'}} \avg{\gamma,\gamma'} \mathrm{eval}_{\gamma+\gamma'} \ . \ee
We will write this as
\be \{X_\gamma,X_{\gamma'}\} = 4\pi (-1)^{\avg{\gamma,\gamma'}} \avg{\gamma,\gamma'} X_{\gamma+\gamma'} \ . \ee
The symplectic leaves of these Poisson manifolds are labelled by the restriction of $X$ to $\Gamma_f$, and each such restriction $X_f$ defines a local system of holomorphic symplectic complex algebraic tori $\T'_{X_f}$. (That is, the fiber $\T'_{X_f,u}$ is the space of twisted characters of $\hat\Gamma_u$ whose restriction to $\Gamma_f$ is the homomorphism $X_f$.) The holomorphic symplectic form on a fiber $\T'_{X_f,u}$ is $\varpi_{X_f} = \frac{1}{8\pi} \avg{d\log X\wedge d\log X}$, where we regard $d\log X$ as a $\Gamma^*$-valued 1-form. 

In particular, for each $\zeta\in\CC^\times$ we obtain a local system of tori with holomorphic symplectic structure $\T'_\zeta := \T'_{X^{\rm sf}_f(\zeta)} \to \mathcal{B}'$ (whose fibers we denote by $\T'_{\zeta,u}$\footnote{Due to the interest of corank one Poisson manifolds, we note that $\cup_{\zeta\in\CC^\times} \T'_{\zeta,u}$ is a natural complete Poisson submanifold --- i.e., an immersed submanifold which is a union of symplectic leaves --- of $\T'_u$.}), where the homomorphism $X^{\rm sf}_f(\zeta)$ is given by \eqref{eq:xSF}, i.e. for constants 
$\{Z_{\gamma}, \theta_\gamma\}_{\gamma \in \Gamma_f}$ from \ref{item:Z} and \ref{item:thetaf} 
\be X_{f,\gamma}^{\semif}(\zeta)= \mathcal{X}_\gamma^{\semif}(\zeta)=
\exp\left( \frac{1}{\zeta} Z_{\gamma} + i \theta_\gamma + \zeta \overline{Z_\gamma}\right), \qquad \gamma \in \Gamma_f. \ee We denote the natural  holomorphic symplectic form on $\mathcal{T}'_\zeta$ by $\varpi_\zeta$.

\begin{figure}[h!]
\begin{centering} 
\includegraphics[height=2.5in]{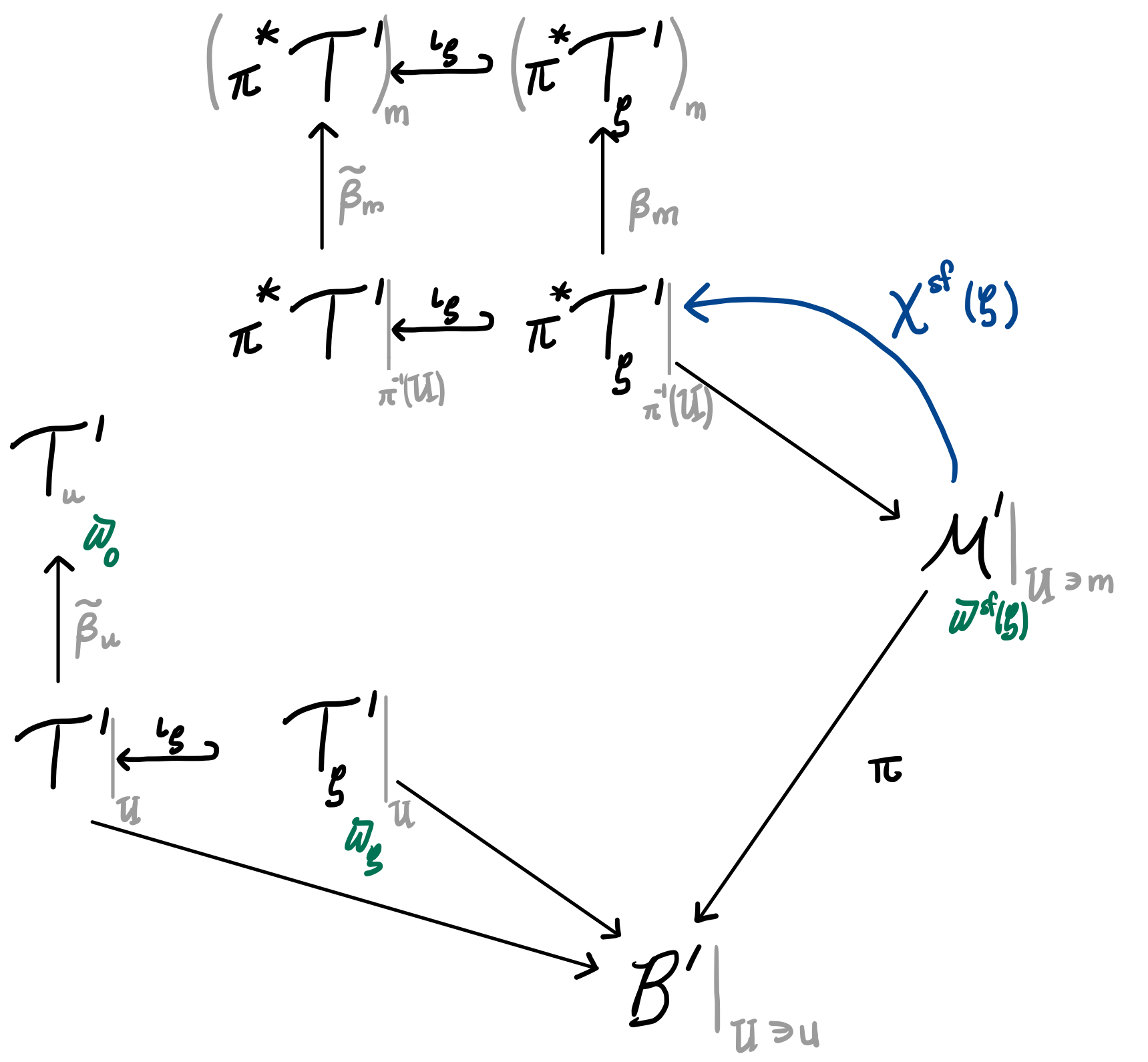} 
\caption{\label{fig:diagram} 
The map $\mathcal{X}^{\semif}(\zeta)$ is naturally a section of $\pi^* \mathcal{T}'_\zeta$.
Lemmata \ref{lem:interp1}, \ref{lem:interp2}, \ref{lem:interp3} relate the holomorphic symplectic form $\varpi^\semif(\zeta)$ on $\mathcal{M}'$ to the holomorphic symplectic forms $\varpi_0$ on $\mathcal{T}'_u$ and $\varpi_\zeta$ on $\mathcal{T}'_\zeta$ (all shown in green) via various restrictions and trivializations (shown in gray) and pullbacks.
}
\end{centering}
\end{figure}

We then see that $\X^{\rm sf}(\zeta)$ is a local section of $\pi^* \T'_\zeta$. If we consider such a section over a set of the form $\pi^{-1}(U)$, where $U$ is a contractible open subset of $\B'$, and we let $\beta_m$ denote the map which parallel transports the points of $(\pi^* \T'_\zeta)|_{\pi^{-1}(U)}$ to a fiber $(\pi^* \T'_\zeta)_{m}$ for some fixed $m\in \pi^{-1}(U)$, then we see that\footnote{c.f.\cite[Footnote 2 on p. 6]{neitzke:hkReview}.}
\begin{lemma}\label{lem:interp1}The holomorphic symplectic form $\varpi^{\rm sf}(\zeta)|_{\pi^{-1}(U)}$ is the pullback via $\beta_m\circ\X^{\rm sf}(\zeta)$ of the holomorphic symplectic form $\pi^* \varpi_\zeta$ on $(\pi^*\T'_{\zeta})_m$.
\end{lemma}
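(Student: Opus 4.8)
\textbf{Proof proposal for Lemma \ref{lem:interp1}.}

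The plan is to unwind the definitions of all the maps appearing in Figure \ref{fig:diagram} and check that the claimed pullback equality holds pointwise in $\zeta$ and in $m$. First I would fix a contractible open set $U\subset\B'$ and trivialize the local system $\T'_\zeta$ over $U$, so that the bundle $(\pi^*\T'_\zeta)|_{\pi^{-1}(U)}$ is identified with the product $\pi^{-1}(U)\times \T'_{\zeta,u_0}$ for some reference point $u_0$; under this trivialization the parallel transport map $\beta_m$ is simply the projection to the fiber over $m$ followed by the (flat, hence canonical) identification with $\T'_{\zeta,m}$. The composite $\beta_m\circ\X^{\rm sf}(\zeta)$ is then, in coordinates, the map $\pi^{-1}(U)\to \T'_{\zeta,m}$ sending a point $m'$ to the twisted character $\gamma\mapsto \X^{\rm sf}_\gamma(\zeta)(m')$, where we use the flat structure to compare twisted characters of $\hat\Gamma_{\pi(m')}$ with twisted characters of $\hat\Gamma_m$. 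The key point is that the holomorphic symplectic form $\varpi_\zeta$ on $\T'_{\zeta,m}$ is, by its very definition in \S\ref{sec:twistedcharacters}, the form $\frac{1}{8\pi}\avg{d\log X\wedge d\log X}$ built from the tautological coordinates $X_\gamma=\mathrm{eval}_\gamma$.

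Next I would simply pull back: since $(\beta_m\circ\X^{\rm sf}(\zeta))^* \mathrm{eval}_\gamma = \X^{\rm sf}_\gamma(\zeta)$ as a function on $\pi^{-1}(U)$, we get
\begin{equation}
(\beta_m\circ\X^{\rm sf}(\zeta))^*\, \varpi_\zeta = \frac{1}{8\pi}\avg{d\log \X^{\rm sf}(\zeta)\wedge d\log\X^{\rm sf}(\zeta)} = \varpi^{\rm sf}(\zeta)|_{\pi^{-1}(U)},
\end{equation}
which is exactly the definition of $\varpi^{\rm sf}(\zeta)$. (The pullback via $\pi$ in the statement is just the observation that $\varpi_\zeta$, being a form on a fiber of $\pi^*\T'_\zeta$, is more precisely $\pi^*\varpi_\zeta$ on the total space restricted to that fiber; the identity is insensitive to this.) The two technical points to verify carefully are that the exterior derivative commutes with the flat trivialization used to define $\beta_m$ — so that $d(\beta_m\circ\X^{\rm sf}(\zeta))^*\mathrm{eval}_\gamma = d\X^{\rm sf}_\gamma(\zeta)$, with no extra connection term, which holds precisely because the local system is flat and $U$ is contractible — and that regarding $d\log X$ as a $\Gamma^*$-valued $1$-form is compatible on both sides, which follows from the constancy of the restriction of twisted characters to $\Gamma_f$ (already used to make sense of $d\theta$ and $dZ$ as $\Gamma^*$-valued forms).

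The main obstacle is bookkeeping rather than mathematics: one must be scrupulous about the distinction between a twisted character on $\hat\Gamma_{\pi(m')}$ and its parallel transport to $\hat\Gamma_m$, and about the fact that $\varpi_\zeta$ lives on the fiber $\T'_{\zeta,m}$ while $\varpi^{\rm sf}(\zeta)$ lives on $\M'$; the content of the lemma is exactly that the section $\beta_m\circ\X^{\rm sf}(\zeta)$ intertwines them. Once the trivialization is fixed and one observes that $\varpi_\zeta$ is tautologically the $\avg{d\log X\wedge d\log X}$ form, there is nothing left to prove beyond matching the two coordinate expressions. I would therefore keep the proof to a few lines, emphasizing the flatness of the trivialization and citing the definition of $\varpi_{X_f}$ from \S\ref{sec:twistedcharacters}.
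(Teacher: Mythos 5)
Your proposal is correct and follows essentially the same route as the paper, which simply observes that both $\varpi^{\rm sf}(\zeta)$ and $\varpi_{X_f}$ are defined by the identical formula $\frac{1}{8\pi}\avg{d\log(\cdot)\wedge d\log(\cdot)}$, so the pullback identity is immediate. Your additional care about the flat trivialization and the tautological coordinates $\mathrm{eval}_\gamma$ is a more explicit spelling-out of the same observation.
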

\begin{proof}
    This is immediate since 
    \be \varpi^\semif(\zeta) = \frac{1}{8\pi} \avg{d \log \X^{\semif} \wedge d \log \X^{\semif}}, \qquad \varpi_{X_f} = \frac{1}{8 \pi} \avg{d \log X \wedge d \log X}.\ee
\end{proof}

A final observation will be useful for comparing the various 2-forms $\varpi_{X_f}$: pick any basis $\tilde\gamma_i$ for $\Gamma_u$ and any lift thereof $\gamma_i$ to $\hat\Gamma_u$. Then, for every $X_f$ we have $\varpi_{X_f} = \frac{1}{8\pi} \sigma^{ij} d\log X_{\gamma_i} \wedge d\log X_{\gamma_j}$ for some antisymmetric matrix $\sigma^{ij}$ which does not depend on $X_f$. That is, we may define a 2-form $\varpi_0 = \frac{1}{8\pi} \sigma^{ij} d\log X_{\gamma_i} \wedge d\log X_{\gamma_j}$ on $\T'_u$, and then $\varpi_{X_f}=i_{X_f}^* \varpi_0$, where $i_{X_f}$ is the inclusion of $\T'_{X_f,u}$ into $\T'_u$. Then, in particular, 
\begin{lemma}\label{lem:interp2}The holomorphic symplectic form $\varpi^{\rm sf}(\zeta)|_{\pi^{-1}(U)} = (i_{\zeta}\circ \beta_m \circ \X^{\rm sf}(\zeta))^* \varpi_0$, where $i_\zeta = i_{X^{\rm sf}_f(\zeta)}$.\end{lemma}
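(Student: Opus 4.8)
The plan is to deduce this directly from Lemma \ref{lem:interp1} together with the identity $\varpi_{X_f}=i_{X_f}^*\varpi_0$ recorded immediately before the statement, applied to the particular restriction $X_f=X^{\rm sf}_f(\zeta)$. Lemma \ref{lem:interp1} already presents $\varpi^{\rm sf}(\zeta)|_{\pi^{-1}(U)}$ as $(\beta_m\circ\X^{\rm sf}(\zeta))^*(\pi^*\varpi_\zeta)$, so the only task is to factor $\pi^*\varpi_\zeta$ through the inclusion $i_\zeta$ and then invoke functoriality of pullback.

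First I would observe that $\beta_m\circ\X^{\rm sf}(\zeta)$ takes values in the single fiber $(\pi^*\T'_\zeta)_m$, which is canonically $\T'_{\zeta,\pi(m)}=\T'_{X^{\rm sf}_f(\zeta),\pi(m)}$; hence, for the purpose of pulling back along this map, $\pi^*\varpi_\zeta$ may be replaced by its restriction to that fiber, which by the definition of $\pi^*$ and of $\varpi_\zeta$ is exactly the holomorphic symplectic form $\varpi_{X^{\rm sf}_f(\zeta)}$ on $\T'_{X^{\rm sf}_f(\zeta),\pi(m)}$ discussed in \S\ref{sec:twistedcharacters}. Then, since $\varpi_{X^{\rm sf}_f(\zeta)}=i_{X^{\rm sf}_f(\zeta)}^*\varpi_0=i_\zeta^*\varpi_0$, functoriality of pullback gives
\[
(\beta_m\circ\X^{\rm sf}(\zeta))^*(\pi^*\varpi_\zeta)=(\beta_m\circ\X^{\rm sf}(\zeta))^*(i_\zeta^*\varpi_0)=(i_\zeta\circ\beta_m\circ\X^{\rm sf}(\zeta))^*\varpi_0,
\]
and combining with Lemma \ref{lem:interp1} yields the claim.

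There is no genuine obstacle here; the lemma is a bookkeeping statement stitching together the previous results. The only points demanding a little care are tracking which space each of $\X^{\rm sf}(\zeta)$, $\beta_m$, and $i_\zeta$ maps into, so that the composite $i_\zeta\circ\beta_m\circ\X^{\rm sf}(\zeta)$ is a well-defined map $\pi^{-1}(U)\to\T'_{\pi(m)}$, and checking that restricting $\pi^*\varpi_\zeta$ to the fiber over $m$ returns $\varpi_\zeta$ there. Both follow immediately from the definitions, and no nondegeneracy or analytic input is required.
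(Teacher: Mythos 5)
Your proof is correct and takes essentially the same route as the paper, which states Lemma \ref{lem:interp2} as an immediate consequence (``Then, in particular, \ldots'') of Lemma \ref{lem:interp1} combined with the identity $\varpi_{X_f}=i_{X_f}^*\varpi_0$ recorded just before it. You have simply written out the functoriality-of-pullback bookkeeping that the paper leaves implicit.
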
 
Alternatively, since $\pi^* \mathcal{T}'_\zeta \subset \pi^* \mathcal{T}'$, if we regard $\iota_\zeta \circ \X^{\rm sf}(\zeta)$ as a local section of $\pi^* \T'$ 
and let $\tilde\beta_m$ denote the parallel transport map taking $(\pi^* \T')|_{\pi^{-1}(U)}$ to $(\pi^* \T')_m$, 
then \begin{lemma}\label{lem:interp3}The holomorphic symplectic form $\varpi^{\rm sf}(\zeta)|_{\pi^{-1}(U)} = (\tilde\beta_m\circ \iota_\zeta  \circ \X^{\rm sf}(\zeta))^* \varpi_0$.\end{lemma}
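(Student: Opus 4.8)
The plan is to deduce Lemma \ref{lem:interp3} directly from Lemma \ref{lem:interp2} by unwinding the definitions of the maps involved and observing that the two compositions of maps in question agree as maps $\pi^{-1}(U)\to \T'_u$ (up to the identification of fibers of $\pi^*\T'$ that parallel transport provides). The key point is purely formal: $\iota_\zeta\colon \T'_{X^{\rm sf}_f(\zeta),u}\hookrightarrow \T'_u$ is the inclusion of a single symplectic leaf, and $\beta_m$ (parallel transport in $\pi^*\T'_\zeta$) and $\tilde\beta_m$ (parallel transport in $\pi^*\T'$) are compatible in the sense that $\tilde\beta_m\circ\iota_\zeta = i_\zeta\circ \beta_m$ after we identify the fiber $(\pi^*\T'_\zeta)_m$ with its image $i_\zeta\big((\pi^*\T'_\zeta)_m\big)$ inside $(\pi^*\T')_m$. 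This is because parallel transport in the subbundle $\pi^*\T'_\zeta$ is by construction the restriction of parallel transport in $\pi^*\T'$ (the flat connection on $\T'$ preserves the local subsystem $\T'_\zeta$, since the monodromy of $\T'$ acts by automorphisms respecting the symplectic foliation and fixes the leaf through $X^{\rm sf}_f(\zeta)$ because $Z|_{\Gamma_f}$ and $\theta_f$ are flat).

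Concretely, I would proceed as follows. First, recall from Lemma \ref{lem:interp2} that $\varpi^{\rm sf}(\zeta)|_{\pi^{-1}(U)} = (i_\zeta\circ\beta_m\circ\X^{\rm sf}(\zeta))^*\varpi_0$. Second, observe that $\iota_\zeta\circ\X^{\rm sf}(\zeta)$, viewed as a section of $\pi^*\T'$, is literally the same map as $\X^{\rm sf}(\zeta)$ viewed as a section of $\pi^*\T'_\zeta$ followed by the fiberwise inclusion $\T'_\zeta\hookrightarrow\T'$; so composing with $\tilde\beta_m$ and using the compatibility $\tilde\beta_m\circ\iota_\zeta = i_\zeta\circ\beta_m$ (as maps into $(\pi^*\T')_m$) gives $\tilde\beta_m\circ\iota_\zeta\circ\X^{\rm sf}(\zeta) = i_\zeta\circ\beta_m\circ\X^{\rm sf}(\zeta)$. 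Third, pull back $\varpi_0$ along both sides and invoke Lemma \ref{lem:interp2} to conclude. One should remark that the pullback $(\tilde\beta_m\circ\iota_\zeta\circ\X^{\rm sf}(\zeta))^*\varpi_0$ makes sense even though $\varpi_0$ is only Poisson (not symplectic) on $\T'_u$: the composite map lands in the single leaf $i_\zeta\big((\pi^*\T'_\zeta)_m\big)$, on which $\varpi_0$ restricts to the symplectic form $\varpi_\zeta$, so the computation reduces to that of Lemma \ref{lem:interp1}/\ref{lem:interp2}.

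The main obstacle — really the only subtlety — is making precise the compatibility of the two parallel transports, i.e. that the flat connection on the Poisson local system $\T'\to\B'$ restricts to the flat connection on the symplectic subsystem $\T'_\zeta\to\B'$. This is where one uses that $\T'_\zeta = \T'_{X^{\rm sf}_f(\zeta)}$ is cut out by the flat (monodromy-invariant) condition ``restriction to $\Gamma_f$ equals $X^{\rm sf}_f(\zeta)$'', together with the flatness of $Z|_{\Gamma_f}$ from \ref{item:Z} and of $\theta_f$ from \ref{item:thetaf}. Once this is granted, everything else is a formal diagram chase through Figure \ref{fig:diagram}, and indeed the statement is ``immediate'' in the same sense that Lemma \ref{lem:interp1} was. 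I would therefore keep the proof to a couple of sentences: state the compatibility of parallel transports as the content, note it follows from flatness of the data restricted to $\Gamma_f$, and then cite Lemma \ref{lem:interp2}.

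\begin{proof}[Proof of Lemma \ref{lem:interp3}]
Since $Z|_{\Gamma_f}$ is flat (by \ref{item:Z}) and $\theta_f$ is flat (by \ref{item:thetaf}), the homomorphism $X^{\rm sf}_f(\zeta)$ is a flat section of the local system of characters of $\Gamma_f$; hence the symplectic leaf $\T'_{X^{\rm sf}_f(\zeta),u}\subset \T'_u$ is preserved by the monodromy of $\T'$, and the local subsystem $\T'_\zeta\subset\T'$ is flat. Consequently parallel transport $\tilde\beta_m$ in $\pi^*\T'$ restricts to parallel transport $\beta_m$ in the subbundle $\pi^*\T'_\zeta$; that is, $\tilde\beta_m\circ\iota_\zeta = i_\zeta\circ\beta_m$ as maps $(\pi^*\T'_\zeta)|_{\pi^{-1}(U)}\to (\pi^*\T')_m$. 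Since the section $\iota_\zeta\circ\X^{\rm sf}(\zeta)$ of $\pi^*\T'$ is the composite of the section $\X^{\rm sf}(\zeta)$ of $\pi^*\T'_\zeta$ with the fiberwise inclusion, we obtain
\be \tilde\beta_m\circ\iota_\zeta\circ\X^{\rm sf}(\zeta) = i_\zeta\circ\beta_m\circ\X^{\rm sf}(\zeta) \ . \ee
The image of this map lies in the single leaf $i_\zeta\big((\pi^*\T'_\zeta)_m\big)$, on which $\varpi_0$ restricts to $\varpi_\zeta$, so pulling $\varpi_0$ back along both sides and applying Lemma \ref{lem:interp2} gives $\varpi^{\rm sf}(\zeta)|_{\pi^{-1}(U)} = (\tilde\beta_m\circ\iota_\zeta\circ\X^{\rm sf}(\zeta))^*\varpi_0$.
\end{proof}
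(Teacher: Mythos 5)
Your proposal is correct and fills in exactly the argument the paper leaves implicit: the paper states Lemma \ref{lem:interp3} with no proof, treating it as immediate from the inclusion $\pi^*\T'_\zeta\subset\pi^*\T'$ and Lemma \ref{lem:interp2}, which is precisely the reduction you carry out via the compatibility $\tilde\beta_m\circ\iota_\zeta=i_\zeta\circ\beta_m$. Your justification of that compatibility through the flatness of $Z|_{\Gamma_f}$ and $\theta_f$ is sound and is a worthwhile detail to record.
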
 We stress that while $\varpi_{X_f}$ is independent of the choice of the charges $\gamma_i$, $\varpi_0$ is not.

\subsection{Gaiotto--Moore--Neitzke's Prescription}
\label{sec:strategy}
Fix $u \in B'$. For each $\gamma \in \widehat{\Gamma}_u$, let $\ell_\gamma(u) = -Z_{\gamma}(u)  \RR^{+}$.
The goal in \cite{GMN:walls} is to produce a map $\mathcal{X}: \M'_u \times \CC^\times \to \mathcal{T}'_u$ with the following properties:
\begin{enumerate}[(1)]
\item $\X$ depends piecewise holomorphically on $\zeta \in \CC^\times$, with discontinuities only at the rays $\ell_\gamma(u)$ for active $\gamma \in \widehat{\Gamma}_u$.
\item Suppose furthermore that at $u \in \B'$  there is no pair of active charges $\gamma_1, \gamma_2 \in \widehat{\Gamma}_u$ (i.e. $\Omega(\gamma_i, u) \neq 0$) such that $\ell_{\gamma_1} =\ell_{\gamma_2}$ and $\avg{\gamma_1, \gamma_2} \neq 0$. 
Then, the limits $\X^\pm$ of $\X$ as $\zeta$ approaches $\ell$ from both sides exist and are 
related by\footnote{It is because $\X$ has such prescribed jumps that such problems are called Riemann--Hilbert problems. } 
$\X^{+} = S_\ell^{-1} \circ \X^{-}$ where
 $$S_\ell(u)= \prod_{\gamma : \ell_{\gamma}(u) = \ell} \mathcal{K}_\gamma^{\Omega(\gamma, u)}$$
where $\mathcal{K}_\gamma$ is a birational Poisson automorphism of $\T'_u$ defined by 
\be \mathcal{K}_\gamma X_{\gamma'} = (1- \X_\gamma)^{\avg{\gamma', \gamma}}X_{\gamma'} \ee 
when the indexing set is finite. 
\item $\X$ obeys the reality condition $\X(-1/\bar \zeta) = \rho^* \X(\zeta)$ where $\rho: \T'_u \to \T'_u$ is an antiholomorphic involution of $\T'_u$ defined by $\rho^* X_\gamma = \bar X_{-\gamma}$.
\item For any $\gamma$, $\lim_{\zeta \to 0} \X_{\gamma}(\zeta)/ \X_{\gamma}^{\semif}(\zeta)$ exists.
\end{enumerate}
The functions $\X_\gamma(\zeta)$ will be holomorphic Darboux coordinates for a $\CC^\times$ family of closed $2$-forms $\varpi(\zeta)$.  They conjecture that $\varpi(\zeta)$ give a hyper-K\"ahler structure on $\M'$ and its natural completition. 

At $u \in \B'$ satisfying the same condition as (2), such a function $\X$ with properties (1)-(4) will obey the integral relation\footnote{As discussed in \cite[p. 42]{GMN:walls} in the bullet point beginning ``Recall that the solution\ldots'', there are other integral relations that produce different maps $\mathcal{X}$ satisfying the properties above. This particular integral relation is simple and there is some physical motivation for it. However, as Gaiotto--Moore--Neitzke mention, there might be a different integral relation that produces $\mathcal{X}(\zeta)$ such that it is clear that the associated holomorphic symplectic form $\omega_+$ (from the $\zeta^{-1}$ term of $\varpi(\zeta)$) is equal to $\omega_+^{\rm sf}$.}: 
\be
\X_{\gamma}(m, \zeta) = \X_{\gamma}^{\semif}(m, \zeta) \exp \left[ - \frac{1}{4 \pi i} \sum_{\gamma'} \Omega(\gamma; u) \avg{\gamma, \gamma'} \int_{\ell_{\gamma'}(u)} \frac{d \zeta'}{\zeta'} \frac{\zeta' + \zeta}{\zeta'-\zeta} \log\left(1- \X_{\gamma'}(m, \zeta') \right) \right].
\ee
(The appropriate modification required when $\avg{\gamma_1, \gamma_2}\neq 0$ but the indexing set is finite is discussed in \cite[Appendix C, (C.16)]{GMN:walls} using the Wall Crossing Formula. When the indexing set it infinite, one must be careful.)
Consequently, Gaiotto--Moore--Neitzke propose seeking a function $\X$ satisfying the above integral relation\footnote{As Neitzke notes on \cite[p. 8]{neitzke:hkReview}, Gaiotto--Moore--Neitzke did not give a complete proof that being a solution of the integral relation implied (4).}. A natural way to produce a solution is by iteration.   When one is sufficiently far away from $\B''$, they propose iterating from the semiflat holomorphic Darboux coordinates $\X^{\semif}$.   When one is near $\B''$ and other components of $\B''$ are sufficiently far away, one should iterate from a different model geometry.

\bigskip

\emph{One cannot make this approach analytically rigorous as it is written, as Gaiotto--Moore--Neitzke themselves make clear.}  The set of rays in $\CC_\zeta$ at which $\X$ has discontinuities could be dense. What then does ``piecewise holomorphic'' mean? The collection of $u \in \B'$ that do not satisfy the restriction in (2) could, in general, also be dense in $\B'$. 
 Consequently, following their suggestion, in the follow-up papers in this series of papers, we will make a few modifications to their prescription.
 
 However, in the remainder of this paper, we will rigorously construct simple model geometries on pieces of $\M$ using their prescription.

\section{Assumptions near $\B''$}  \label{sec:ass}

In this section, we describe a final set of assumptions that we will make that constrain the behavior of all of our data near the excised divisor $\B''$. Looking forward,
these assumptions ensure that the appropriate modification of the Gaiotto--Moore--Neitzke integral relation converges after one iteration.

\bigskip

The assumptions are as follows.
\begin{enumerate}[({A}1)]
\item \label{it:A1}Let $u$ be a point in $\B''$. Then, there is an open neighborhood $U\subset\B$ of $u$ such that the restriction of the local system $\hat\Gamma$ to $U\cap \B'$ participates in a morphism of short exact sequences of local systems of lattices:
\be \begin{tikzcd}
0 \ar[r] & \hat\Gamma_{\rm light} \ar[r] \ar[d,hookrightarrow] & \hat\Gamma \ar[r] \ar[d,"{\rm id}"] & \Gamma_{\rm heavy} \ar[r] \ar[d] & 0 \\
0 \ar[r] & \hat \Gamma_{\rm local} \ar[r] & \hat\Gamma \ar[r] & \Gamma_{\rm nonlocal} \ar[r] & 0 \makebox[0pt][l]{\ ,}
\end{tikzcd} \ee
where $\hat\Gamma_{\rm light},\Gamma_{\rm heavy},\hat\Gamma_{\rm local},$ and $\Gamma_{\rm nonlocal}$ are all trivial, and $\hat\Gamma_{\rm local} = \{\gamma\in \hat\Gamma \bigm| \avg{\gamma,\gamma'}=0 \ \forall \gamma'\in \hat\Gamma_{\rm light}\}$.

\noindent \underline{Notation:} We denote the image of the composition $\hat\Gamma_{\rm light}\hookrightarrow \hat\Gamma\to \Gamma$ by $\Gamma_{\rm light}$ and the image of $\hat\Gamma_{\rm local}\hookrightarrow \hat\Gamma\to \Gamma$ by $\Gamma_{\rm local}$, and we define $\ell := \rank \Gamma_{\rm light}$. (Lemma \ref{lem:latBasis} then implies that $\rank \Gamma_{\rm local} = 2r-\ell$.)

\item \label{it:omega} The restriction of $\Omega$ to $\hat\Gamma_{\rm light}$ is constant, integral, and non-negative and $\tilde\Gamma_{\rm light} := \{\gamma\in \hat\Gamma_{\rm light} \bigm| \Omega(\gamma)\not=0\}$ is finite.

\item \label{it:coords} If $\gamma\in \hat\Gamma_{\rm local}$ then $Z_\gamma$ extends to a holomorphic function on $U$, $dZ|_{\hat\Gamma_{\rm local}}:\hat\Gamma_{\rm local}\otimes\CC\to T^*_{(1,0)}\B$ is surjective, and $\B''\cap U = \{u'\in U \bigm| Z_\gamma(u')=0\mbox{ for some } \gamma\in \tilde\Gamma_{\rm light} \}$.
\end{enumerate}

The following is a nice set of coordinates on $U$ in a neighborhood of $u$: 

\begin{lemma}\label{lem:basisnearsingular} Near $u$, there are 
$\gamma_{e_1}, \cdots, \gamma_{e_\ell} \in \widehat{\Gamma}_{\rm light}$
and $\gamma_{e_{\ell+1}}, \cdots, \gamma_{e_r} \in \widehat{\Gamma}_{\mathrm{local}}$ such that 
   such that the images of $\gamma_{e_1}, \cdots, \gamma_{e_r}$ in $\Gamma_u$ generate a primitive Lagrangian  
   sublattice $\Lambda_e$ of $\Gamma_{\mathrm{local}}\subset \Gamma_u$,
   and $Z_{\gamma_{e_1}}, \cdots, Z_{\gamma_{e_r}}$ comprise a set of holomorphic coordinates near $u$.
\end{lemma}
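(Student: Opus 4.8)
The plan is to build the desired basis by first choosing an appropriate Lagrangian sublattice $\Lambda_e$ of $\Gamma_u$ adapted to the filtration $\Gamma_{\rm light}\subset\Gamma_{\rm local}\subset\Gamma_u$, and then verifying that the central charges of a lift of a basis of $\Lambda_e$ serve as holomorphic coordinates near $u$. First I would recall the linear-algebraic facts about the three lattices: by \ref{it:A1}, $\hat\Gamma_{\rm local}=\{\gamma\mid\avg{\gamma,\gamma'}=0\ \forall\gamma'\in\hat\Gamma_{\rm light}\}$, so $\Gamma_{\rm light}$ is an isotropic sublattice of $\Gamma_u$ of rank $\ell$, and $\Gamma_{\rm local}$ is its symplectic-orthogonal complement, of rank $2r-\ell$ (this is the content of Lemma~\ref{lem:latBasis}, which I would invoke). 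Note also $\Gamma_{\rm light}\subseteq\Gamma_{\rm local}$ since $\Gamma_{\rm light}$ is isotropic. The quotient $\Gamma_{\rm local}/\Gamma_{\rm light}$ carries the induced symplectic form, which is nondegenerate of rank $2(r-\ell)$.

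Next I would construct $\Lambda_e$. Since $\Gamma_{\rm light}$ is isotropic of rank $\ell$ and we want $\gamma_{e_1},\dots,\gamma_{e_\ell}\in\hat\Gamma_{\rm light}$ whose images generate a \emph{primitive} sublattice of $\Gamma_u$, I would take the images of a basis of $\Gamma_{\rm light}$ and pass, if necessary, to the saturation inside $\Gamma_u$; primitivity of the light directions should follow from the fact that $\Gamma_{\rm light}$ is already defined as the image of $\hat\Gamma_{\rm light}$ (one may need to replace it by its saturation, or argue it is already saturated using \ref{it:A1}). Then, inside the nondegenerate symplectic lattice $\Gamma_{\rm local}/\Gamma_{\rm light}$, choose a Lagrangian sublattice and lift a basis of it to $\gamma_{e_{\ell+1}},\dots,\gamma_{e_r}\in\hat\Gamma_{\rm local}$; together with $\gamma_{e_1},\dots,\gamma_{e_\ell}$ this spans an isotropic sublattice $\Lambda_e\subset\Gamma_{\rm local}$ of rank $r$, i.e.\ Lagrangian in $\Gamma_u$, and one arranges it to be primitive by taking saturations at each stage. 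The containment $\Lambda_e\subseteq\Gamma_{\rm local}$ is automatic by construction.

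Finally I would verify that $Z_{\gamma_{e_1}},\dots,Z_{\gamma_{e_r}}$ are holomorphic coordinates near $u$. By \ref{it:coords}, each $Z_\gamma$ for $\gamma\in\hat\Gamma_{\rm local}$ (in particular for each of the chosen $\gamma_{e_i}$, all of which lie in $\hat\Gamma_{\rm local}$ since $\hat\Gamma_{\rm light}\subseteq\hat\Gamma_{\rm local}$) extends holomorphically to $U$, and $dZ|_{\hat\Gamma_{\rm local}}:\hat\Gamma_{\rm local}\otimes\CC\to T^*_{(1,0)}\B$ is surjective. Since $\dim_\CC\B=r$ and $\Lambda_e$ is a Lagrangian (hence rank-$r$) sublattice, it suffices to check that $dZ_{\gamma_{e_1}},\dots,dZ_{\gamma_{e_r}}$ are linearly independent at $u$, i.e.\ span $T^*_{(1,0)}\B$. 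This is exactly the kind of statement proved in Lemma~\ref{lem:D4}(i) / Corollary~\ref{cor:global} over $\B'$: the relation $\avg{dZ\wedge dZ}=0$ together with positive-definiteness of $\avg{dZ\wedge d\bar Z}$ forces the restriction of $dZ$ to any Lagrangian sublattice to be an isomorphism onto $T^*_{(1,0)}\B'$. I would either quote Corollary~\ref{cor:global}(iv) directly (applied with $\Lambda_m\oplus\Lambda_e$ a Lagrangian decomposition extending our $\Lambda_e$) on $\B'\cap U$ and then extend across $\B''$ by continuity of the holomorphic extensions, or reprove the identical linear-algebra step: if $v\in T_u\B$ were killed by all $dZ_{\gamma_{e_i}}$, pick a complementary Lagrangian and use $\avg{dZ\wedge d\bar Z}(v,\bar v)>0$ to derive a contradiction, which requires knowing the Kähler form extends nondegenerately — or, more cleanly, run the argument on $\B'\cap U$ where nondegeneracy holds and note the set where $dZ_{\gamma_{e_1}}\wedge\cdots\wedge dZ_{\gamma_{e_r}}$ vanishes is a closed analytic subset not meeting $\B'\cap U$, hence empty.

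The main obstacle I anticipate is the last point: the arguments in Lemma~\ref{lem:D4} and Corollary~\ref{cor:global} are stated over $\B'$, where the Kähler metric $\avg{dZ\wedge d\bar Z}$ is positive-definite, but here we need the coordinate property to hold \emph{across} $\B''$, where the metric on $\B$ degenerates (indeed the torus fibers degenerate there). The resolution is that holomorphicity of $Z_{\gamma_{e_i}}$ on all of $U$ is guaranteed by \ref{it:coords}, and nonvanishing of the Jacobian $\det(\partial Z_{\gamma_{e_i}}/\partial(\text{coords}))$ is a holomorphic (hence open \emph{and} analytic) condition: it holds on the dense open $\B'\cap U$, and its vanishing locus is analytic, so if it met $u$ it would have to meet $\B'\cap U$ — contradiction. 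A secondary, more bookkeeping-type obstacle is arranging primitivity of $\Lambda_e$ simultaneously with the containment $\gamma_{e_i}\in\hat\Gamma_{\rm light}$ for $i\le\ell$ and $\gamma_{e_i}\in\hat\Gamma_{\rm local}$ for $i>\ell$; this is handled by working with saturations and the splitting of $\Gamma_{\rm local}$ as (a finite-index extension of) $\Gamma_{\rm light}\oplus(\text{a Lagrangian of the quotient})$, using that all the lattices in \ref{it:A1} are trivial local systems so these choices extend over all of $U$.
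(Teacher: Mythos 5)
Your overall route---build $\Lambda_e$ from $\Gamma_{\rm light}$ together with a Lagrangian of $\Gamma_{\rm local}/\Gamma_{\rm light}$ via Lemma \ref{lem:latBasis}, then get the coordinate property from \ref{it:coords} and Corollary \ref{cor:global}---is the same as the paper's. But the step you yourself identify as the main obstacle is resolved incorrectly. You argue that the vanishing locus of $dZ_{\gamma_{e_1}}\wedge\cdots\wedge dZ_{\gamma_{e_r}}$ is a closed analytic subset of $U$ disjoint from $\B'\cap U$ and hence empty. That inference is false: a nonempty analytic subset of $U$ can perfectly well be contained in the divisor $\B''\cap U$, which is itself an analytic hypersurface. (Already for $r=1$, $\B=\{|u|<1\}$, $\B''=\{0\}$: the holomorphic function $Z=u^2/2$ has $dZ=u\,du$, nonvanishing exactly on $\B'$.) The same objection defeats your first alternative, ``extend across $\B''$ by continuity'': a continuous function nonvanishing on a dense open set can still vanish on the complement.

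The correct source of nondegeneracy at $u\in\B''$ is assumption \ref{it:coords} itself, whose surjectivity clause $dZ|_{\hat\Gamma_{\rm local}}:\hat\Gamma_{\rm local}\otimes\CC\to T^*_{(1,0)}\B$ is imposed on all of $U$, in particular \emph{at} $u$. One should therefore choose the $\gamma_{e_i}$ so that $dZ_{\gamma_{e_1}}(u),\ldots,dZ_{\gamma_{e_r}}(u)$ already span $T^*_{(1,0)}\B|_u$; independence is an open condition and so persists on a neighborhood. What then remains---and what neither you nor the paper's own terse proof spells out---is that such a spanning choice can be made compatibly with the lattice constraints ($\gamma_{e_1},\ldots,\gamma_{e_\ell}\in\hat\Gamma_{\rm light}$, images generating a primitive Lagrangian sublattice); this is where Corollary \ref{cor:global} on a nearby open subset of $\B'$ and Lemma \ref{lem:latBasis} are actually used. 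On the lattice side your bookkeeping is essentially right, with one caveat: you cannot freely replace $\Gamma_{\rm light}$ by its saturation, since the first $\ell$ generators are required to lift into $\hat\Gamma_{\rm light}$; primitivity must instead be read off from \ref{it:A1}, where the quotients in the diagram are required to be lattices.
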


\begin{proof}
Suppose if $\gamma_{e_1},\ldots,\gamma_{e_r}\in \hat\Gamma_{\rm local}$ are such that $Z_{\gamma_{e_1}},\ldots,Z_{\gamma_{e_r}}$ comprise a set of holomorphic coordinates near $u$, then in particular they are holomorphic coordinates on an open set in $U\cap\B'$ with $u$ in its boundary. So, Corollary \ref{cor:global} applies. 
Lemma \ref{lem:latBasis} shows that we may always assume that $\gamma_{e_1},\ldots,\gamma_{e_\ell}\in \hat\Gamma_{\rm light}$.
\end{proof}

\begin{enumerate}[resume*]
\item \label{it:charge} For any local section $\gamma\in \hat\Gamma$, the central charge is of the form
\be Z_\gamma = \tilde Z_\gamma + \frac{1}{4\pi i} \sum_{\gamma'\in \tilde\Gamma_{\rm light}} \avg{\gamma,\gamma'} \Omega(\gamma') Z_{\gamma'} (\log(Z_{\gamma'}/\pi) - 1) \ , \ee
where $\tilde Z_\gamma$ is a holomorphic function on $U$.
\end{enumerate}
\begin{rem}[Monodromy]\label{rem:monodromy}
As we wind around a cycle in $\B'\cap U$, this corresponds to a monodromy
\be \gamma \mapsto \gamma - \sum_{\gamma'\in \tilde\Gamma_{\rm light}} \frac{n_{\gamma'} \avg{\gamma,\gamma'}\Omega(\gamma')}{2}\gamma' \ , \label{eq:gammaMono} \ee
where the cycle winds $n_{\gamma'}$ times around the divisor $Z_{\gamma'}=0$ in the direction of increasing $\arg Z_{\gamma'}$. We note that the $\half$ at first seems to pose a problem for integrality, but since $\Omega(\gamma')=\Omega(-\gamma')$ and $n_{\gamma'}=n_{-\gamma'}$, the contributions from $\gamma'$ and $-\gamma'$ pair up to cancel out the $\half$.
\end{rem}

\begin{definition}[$\tau, \widetilde{\tau}$]\label{def:tau}
Choose a primitive Lagrangian sublattice $\Lambda_e$ as in Lemma \ref{lem:basisnearsingular}. 
Consider flat local sections $\gamma_{m_1},\ldots,\gamma_{m_r}$,$\gamma_{e_1},\ldots,\gamma_{e_r}\in \hat\Gamma$ whose images in $\Gamma$ comprise a Frobenius basis and such that $\gamma_{e_1},\ldots,\gamma_{e_r}\in \Lambda_e$. As in the semi-flat case,  $a_i = Z_{\gamma_{e_i}}$, $a_i^D = p_i^{-1} Z_{\gamma_{m_i}}$, and \begin{equation}\tau_{ij} = 
\partial_{a_j} a_i^D.\end{equation} 
For any $\gamma\in \hat\Gamma_{\rm light}$ with image $\tilde\gamma\in \Gamma_{\rm light}$, we write $\tilde\gamma=\sum_i p_i^{-1} c_{\gamma,i}\tilde\gamma_{e_i}$, where $c_{\gamma,i}=\avg{\gamma_{m_i},\gamma} \in p_i\ZZ \subset \ZZ$ for all $i$. We also define 
\be \tilde\tau_{ij} = \tau_{ij} - \frac{1}{4\pi i} \sum_{\gamma\in \tilde\Gamma_{\rm light}}\Omega(\gamma) p_i^{-1} p_j^{-1} c_{\gamma,i} c_{\gamma,j} \log(Z_\gamma/\pi) \ . \label{eq:tauLog} \ee
\end{definition}
\begin{rem}[Properties of $\tau, \widetilde{\tau}$]\label{rem:tau}
 Like $\tau$, $\tilde \tau$ is symmetric. 
Note that just as the function $\tau_{ij}$ satisfies $\tau_{ij} = p_i^{-1} \partial_{a_j} Z_{\gamma_{m_i}}$, the function $\tilde\tau_{ij}$ satisfies $\tilde\tau_{ij} = p_i^{-1}\partial_{a_j} \tilde Z_{\gamma_{m_i}}$; these last equalities make it clear that $\tilde\tau_{ij}$ is holomorphic on $U$. We note that
\be \Real\tau_{ij} = \Real\tilde\tau_{ij} + \frac{1}{4\pi} \sum_{\gamma\in\tilde\Gamma_{\rm light}} \Omega(\gamma) p_i^{-1} p_j^{-1} c_{\gamma,i} c_{\gamma,j} \arg(Z_\gamma) \ , \label{eq:realTildeTau} \ee
and
\be \Imag\tau_{ij} = \Imag\tilde\tau_{ij} - \frac{1}{4\pi} \sum_{\gamma\in\tilde\Gamma_{\rm light}} \Omega(\gamma) p_i^{-1} p_j^{-1} c_{\gamma,i} c_{\gamma,j} \log(|Z_\gamma|/\pi) \ . \label{eq:imTildeTau} \ee
The choice of branch cut used to define $\arg(Z_\gamma)$ is not important (changing this just redefines $\tilde Z_\gamma$), but it is important that we be consistent in said choice throughout this section. 
\end{rem}

Before the next assumption, 
we reminder the reader that 
since $\widehat{\Gamma}_{\mathrm{light}}$ and $\widehat{\Gamma}_{\mathrm{local}}$  and  are trivial lattices on $U$, they can be extended to the singular locus $\mathcal{B}'' \cap U$. 
\begin{defn}
    Let $\Theta_{\mathrm{local}}$ be the set of unitary characters on $\widehat{\Gamma}_{\mathrm{local}}$ which restrict to $\theta_f$ on $\widehat{\Gamma}_{\mathrm{local}} \cap \Gamma_f$. 
\end{defn}
For $u' \in U \cap \mathcal{B}$, $\Theta_{\mathrm{local}}$ is a $T^{2r-\ell}$ subtorus of $\mathcal{M}'_{u'}$. However, again, since $\widehat{\Gamma}_{\mathrm{local}}$ is trivial, this subtorus extends over $U \cap \mathcal{B}''$. The subtorus $\Theta_{\mathrm{local}}$ contains a $T^{\ell}$ subtorus $\Theta_{\mathrm{light}}$:
\begin{defn}Let $\Theta_{\mathrm{light}}$ be the set of unitary characters on $\widehat{\Gamma}_{\mathrm{light}}$ which restrict to $\theta_f$ on $\widehat{\Gamma}_{\mathrm{light}} \cap \Gamma_f$.
\end{defn}

\begin{enumerate}[({A}5)] 
\item 
\label{it:smooth} 
\begin{enumerate}[(i)]\item \label{it:light}
Let $\Theta_{\rm light}$ be the set of unitary characters on $\hat\Gamma_{\rm light}$ which restrict to $\theta_f$ on $\hat\Gamma_{\rm light}\cap \Gamma_f$. For any $u'\in U\cap\B''$ and $\theta\in \Theta_{\rm light}$, let $\tilde S_{u',\theta}$ be the subset of those $\gamma\in \tilde\Gamma_{\rm light}$ such that $Z_\gamma(u')=\theta_\gamma=0$, and define a subset $S_{u',\theta}$ of $\tilde S_{u',\theta}$ by arbitrarily choosing a representative of each element of $\tilde S_{u',\theta}/\avg{-1}$. Then, the image of $S_{u',\theta}$ in $\Gamma_{\rm light}$ is a basis for a primitive sublattice thereof. 

\item \label{it:Om1} Finally, $\Omega(\gamma)=1$ for all $\gamma\in \tilde\Gamma_{\rm light}$.
\end{enumerate}
\end{enumerate}

We emphasize that for most $\theta \in \Theta_{\mathrm{light}}$, the set $\widetilde{S}_{u', \theta}$ will be empty; hence, the condition in \ref{it:smooth}
\ref{it:light} is trivial.

\begin{rem}
We strangely have the redundant assumption that $\Omega$ is non-negative on $\tilde\Gamma_{\rm light}$ in assumption \ref{it:omega}. The reason for this is that the present assumption in \ref{it:smooth}\ref{it:Om1} can be dispensed with at the expense of constructing orbifolds instead of manifolds.
To clarify this fact, we will only employ \ref{it:smooth} when it is needed, and will otherwise work in the more general setting where it need not hold.
\end{rem}

\begin{enumerate}[({A}6)]
\item \label{it:vPosAss} There exists an open subset $U'\subset U\cap\B'$ on which 
\be \omega_{U'}:=\avg{dZ\wedge d\bar Z}-\frac{i}{2\sqrt{\pi}} \sum_{\gamma\in\tilde\Gamma_{\rm light}} \frac{\Omega(\gamma)}{\sqrt{|Z_\gamma|}(e^{2|Z_\gamma|}-1)} dZ_\gamma\wedge d\bar Z_\gamma \ee
is a K\"ahler form, and any $u'\in U$ is contained in a closed holomorphic submanifold $D\subset U$ with boundary contained in $U'$ and on which the restriction of the homomorphism $Z$ to a corank 1 sublattice of $\hat\Gamma_{\rm light}$ which contains $\hat\Gamma_{\rm light}\cap \Gamma_f$ is constant. (See Figure \ref{fig:setup}.)

\begin{figure}[h!]
\begin{centering} 
\includegraphics[height=3.0in]{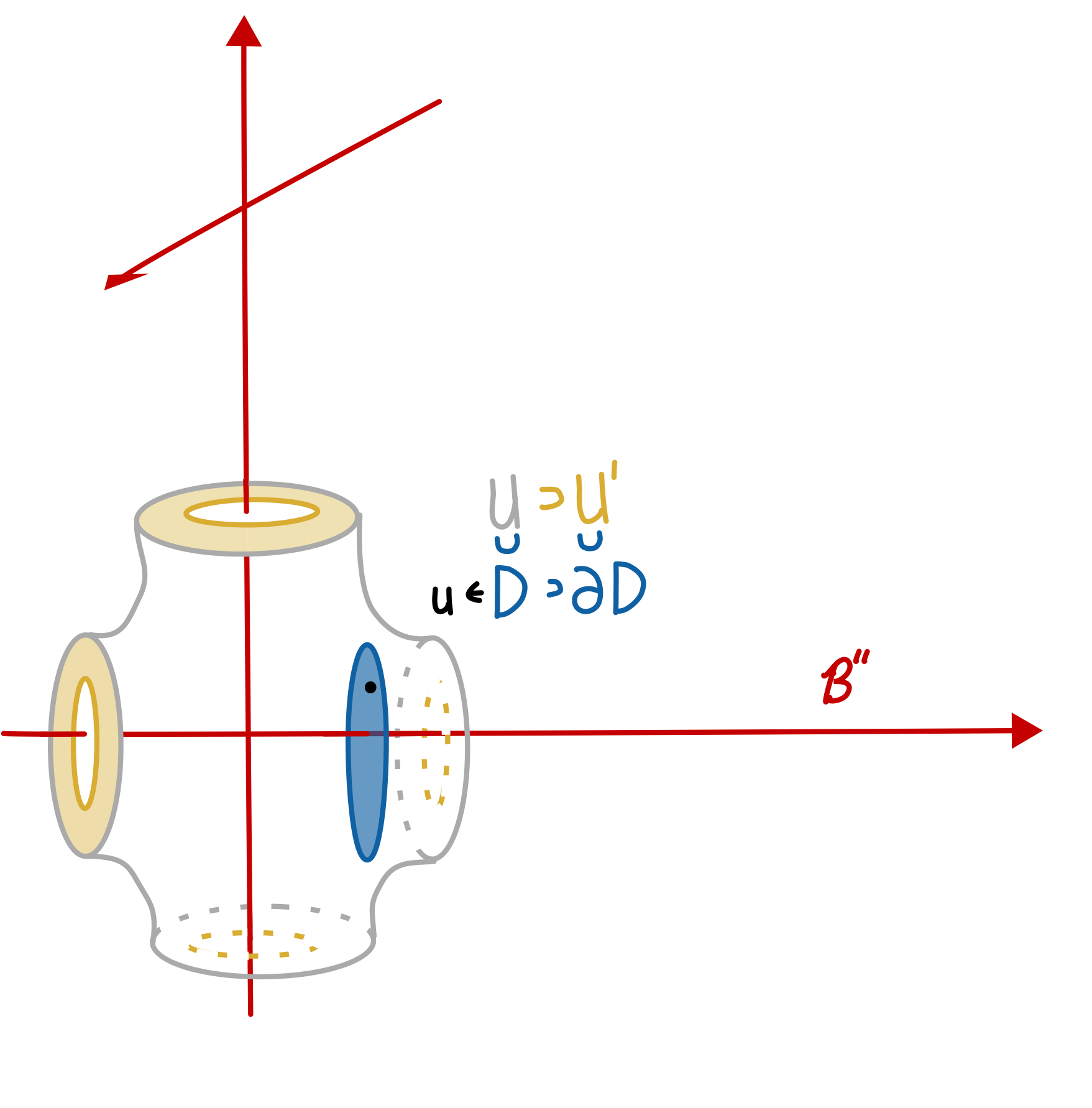} 
\caption{\label{fig:setup} The open sets in Assumption \ref{it:vPosAss} are as follows. The set $U$ is the interior of the indicated gray region. The subset $U'$ is the indicated gold shell within the regular locus $\mathcal{B}'$.
}
\end{centering}
\end{figure}
\end{enumerate}
This assumption is particular novel. 
We can unpack this assumption more explicitly as we did in 
Lemma \ref{lem:D4}\ref{it:Imtau}:
\begin{lemma}\label{lem:A6}
    $\omega_{U'}$ is a K\"ahler form on $U'$ if, and only if, 
\be \Imag\tau_{ij} - \frac{1}{4\sqrt{\pi}} \sum_{\gamma\in\tilde\Gamma_{\rm light}} \frac{\Omega(\gamma) p_i^{-1} p_j^{-1} c_{\gamma,i} c_{\gamma,j}}{\sqrt{|Z_\gamma|}(e^{2|Z_\gamma|}-1)} \ee
is positive-definite.
\end{lemma}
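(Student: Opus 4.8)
The plan is to compute $\omega_{U'}$ explicitly in the holomorphic coordinates $a_i = Z_{\gamma_{e_i}}$ furnished by Lemma~\ref{lem:basisnearsingular} and Definition~\ref{def:tau}, reducing the K\"ahler condition to positive-definiteness of a Hermitian matrix, exactly as in the proof of Lemma~\ref{lem:D4}\ref{it:Imtau}. First I would recall from \ref{item:Z} (as unpacked in Lemma~\ref{lem:D4}) that $\avg{dZ\wedge d\bar Z} = 2i\sum_{i,j}\Imag\tau_{ij}\, da_i\wedge d\bar a_j$. Then I would handle the correction term: for $\gamma\in\tilde\Gamma_{\rm light}$ with image $\tilde\gamma = \sum_i p_i^{-1} c_{\gamma,i}\tilde\gamma_{e_i}$ in $\Gamma_{\rm light}$, we have $Z_\gamma = \sum_i p_i^{-1} c_{\gamma,i} a_i$ (using that $Z$ restricted to $\Gamma_{\rm light}$ is determined by its values on $\Lambda_e$, and $Z_{\gamma_{e_i}} = a_i$), so that $dZ_\gamma = \sum_i p_i^{-1} c_{\gamma,i}\, da_i$ and hence
\[
dZ_\gamma\wedge d\bar Z_\gamma = \sum_{i,j} p_i^{-1} p_j^{-1} c_{\gamma,i} c_{\gamma,j}\, da_i\wedge d\bar a_j.
\]
Substituting these into the definition of $\omega_{U'}$ gives
\[
\omega_{U'} = 2i\sum_{i,j}\left(\Imag\tau_{ij} - \frac{1}{4\sqrt\pi}\sum_{\gamma\in\tilde\Gamma_{\rm light}}\frac{\Omega(\gamma)\, p_i^{-1} p_j^{-1} c_{\gamma,i} c_{\gamma,j}}{\sqrt{|Z_\gamma|}(e^{2|Z_\gamma|}-1)}\right) da_i\wedge d\bar a_j,
\]
where the factor of $2i$ combines with the $-\frac{i}{2\sqrt\pi}$ prefactor in the definition to produce the claimed $-\frac{1}{4\sqrt\pi}$.

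Having written $\omega_{U'}$ in this diagonalized-in-$da\wedge d\bar a$ form, I would invoke the standard fact that a real $(1,1)$-form $i\sum h_{ij} da_i\wedge d\bar a_j$ (with $h$ Hermitian, here in fact real symmetric since all the $\Imag\tau_{ij}$ and the correction terms are real) is a K\"ahler form precisely when the matrix $h$ is positive-definite; closedness is automatic since the coefficients, being built from $\Imag\tau$ (real part of a holomorphic Hessian argument) and functions of $|Z_\gamma|$ alone, still yield a $d$-closed form — indeed $\avg{dZ\wedge d\bar Z}$ is closed by Lemma~\ref{lem:D4}, and each $dZ_\gamma\wedge d\bar Z_\gamma$ is exact, while the scalar coefficient $\bigl(\sqrt{|Z_\gamma|}(e^{2|Z_\gamma|}-1)\bigr)^{-1}$ times $dZ_\gamma\wedge d\bar Z_\gamma$ is of the form $dZ_\gamma\wedge \overline{g(|Z_\gamma|^2)\,dZ_\gamma}$ up to reality, hence still closed. (One should double-check this closedness point carefully, but it is routine: any form $f(|w|^2)\, dw\wedge d\bar w$ on $\CC$ is automatically closed for dimension reasons, and the same holds fiberwise here after pulling back along $Z_\gamma: U\to\CC$.) This reduces the statement to the asserted positive-definiteness, completing both directions of the "if and only if."

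The only genuine subtlety — and the step I would be most careful about — is the coefficient bookkeeping: matching the $-\frac{i}{2\sqrt\pi}$ and $2i$ factors to get exactly $-\frac{1}{4\sqrt\pi}$, and confirming that $c_{\gamma,i} = \avg{\gamma_{m_i},\gamma}$ and the relation $\tilde\gamma = \sum_i p_i^{-1} c_{\gamma,i}\tilde\gamma_{e_i}$ from Definition~\ref{def:tau} indeed give $Z_\gamma = \sum_i p_i^{-1} c_{\gamma,i} a_i$ with no stray factor of $p_i$. Since $a_i = Z_{\gamma_{e_i}}$ and $Z$ is linear, this follows immediately from expanding $\tilde\gamma$ in the basis and using that the $p_i^{-1}$ are absorbed into the coefficients; there is no real obstacle, just care. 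Everything else is a direct transcription of the argument in Lemma~\ref{lem:D4}\ref{it:Imtau}.
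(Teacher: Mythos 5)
Your proposal is correct and follows the same route as the paper's (very terse) proof: expand both terms of $\omega_{U'}$ in the basis $da_i\wedge d\bar a_j$ using $dZ_\gamma=\sum_i p_i^{-1}c_{\gamma,i}\,da_i$ and read off positive-definiteness of the coefficient matrix; your extra remarks on closedness are sound (each $f(|Z_\gamma|)\,dZ_\gamma\wedge d\bar Z_\gamma$ is pulled back from a top-degree form on $\CC$). The only nit is that $Z_\gamma$ equals $\sum_i p_i^{-1}c_{\gamma,i}a_i$ only up to the additive constant $Z|_{\Gamma_f}$ of the flavor part, which is irrelevant here since only $dZ_\gamma$ enters the identity you use.
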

\begin{proof}With the notation introduced above, this K\"ahler form can be written as
\be 2i \sum_{i,j}\parens{\Imag\tau_{ij} - \frac{1}{4\sqrt{\pi}} \sum_{\gamma\in\tilde\Gamma_{\rm light}} \frac{\Omega(\gamma) p_i^{-1} p_j^{-1}c_{\gamma,i} c_{\gamma,j}}{\sqrt{|Z_\gamma|}(e^{2|Z_\gamma|}-1)}} da_i\wedge d\bar a_j \ . \ee
So, the matrix above
must be positive-definite.
\end{proof}
\begin{rem}[\ref{it:vPosAss} intuition]
Intuitively, this requirement means that $U$ must extend far enough away from $\B''$ so that the negative-semi-definite second term is negligible compared to the positive-definite first term on $U'$. (Physically, this is the statement that the UV cutoff of an effective abelian gauge theory with light hypermultiplets is large enough compared to the parameter $1/R=\pi$ mentioned in the introduction.)(See Proposition \ref{prop:OVex}(b) for a concrete computation illustrating the requirements of \ref{it:vPosAss}.)

This assumption did not appear in earlier related papers since those papers stated their theorems ``for $R$ sufficiently large'' rather than ``for $R$ fixed''. By making $R$ larger, one make the positive-definite first term arbitrarily large, hence, the negative-semi-definite second term comparatively neglible.
\end{rem}

\subsection*{Extended example: multi-Ooguri-Vafa}\label{sec:multiOV}
One motivating family of examples are the multi-Ooguri-Vafa spaces. After presenting the multi-Ooguri-Vafa space, we give sufficient conditions such that \ref{it:A1}-\ref{it:vPosAss} are satisfied at $u \in \mathcal{B}''$.
\begin{constr}[multi-Ooguri-Vafa space]\label{constr:multiOV} 
Let $\mathbf{m}=(m_1, \cdots, m_N) \in \mathbb{C}^N$ and $\mathbf{y}=(y_1, \cdots, y_N) \in \mathbb{R}/ 2\pi \mathbb{Z}$ be vectors such that $\sum m_i = 0$ and $\sum y_i \in 2 \pi \mathbb{Z}$.
The multi-Ooguri-Vafa space $\mathcal{M}=\mathcal{M}(\mathbf{m}, \mathbf{y})$ is constructed as follows:
\begin{itemize}
\item The Coulomb base is an open disk $\mathcal{B}=\{|u|<\pi\} \subset \mathbb{C}_u$ and the singular locus is $\mathcal{B}''=\{m_1, \cdots, m_N\} \subset \mathcal{B}$.  Let $\mathcal{B}'_{\mathrm{triv}}$ be a simply-connected subset of $\mathcal{B}'$ obtained by removing rays $\mathrm{cut}_i$ from each distinct point $m_i$ of $\mathcal{B}''$ (see Figure \ref{fig:multiOVcuts})
  \begin{figure}[h!]
\begin{centering} 
\includegraphics[height=1.5in]{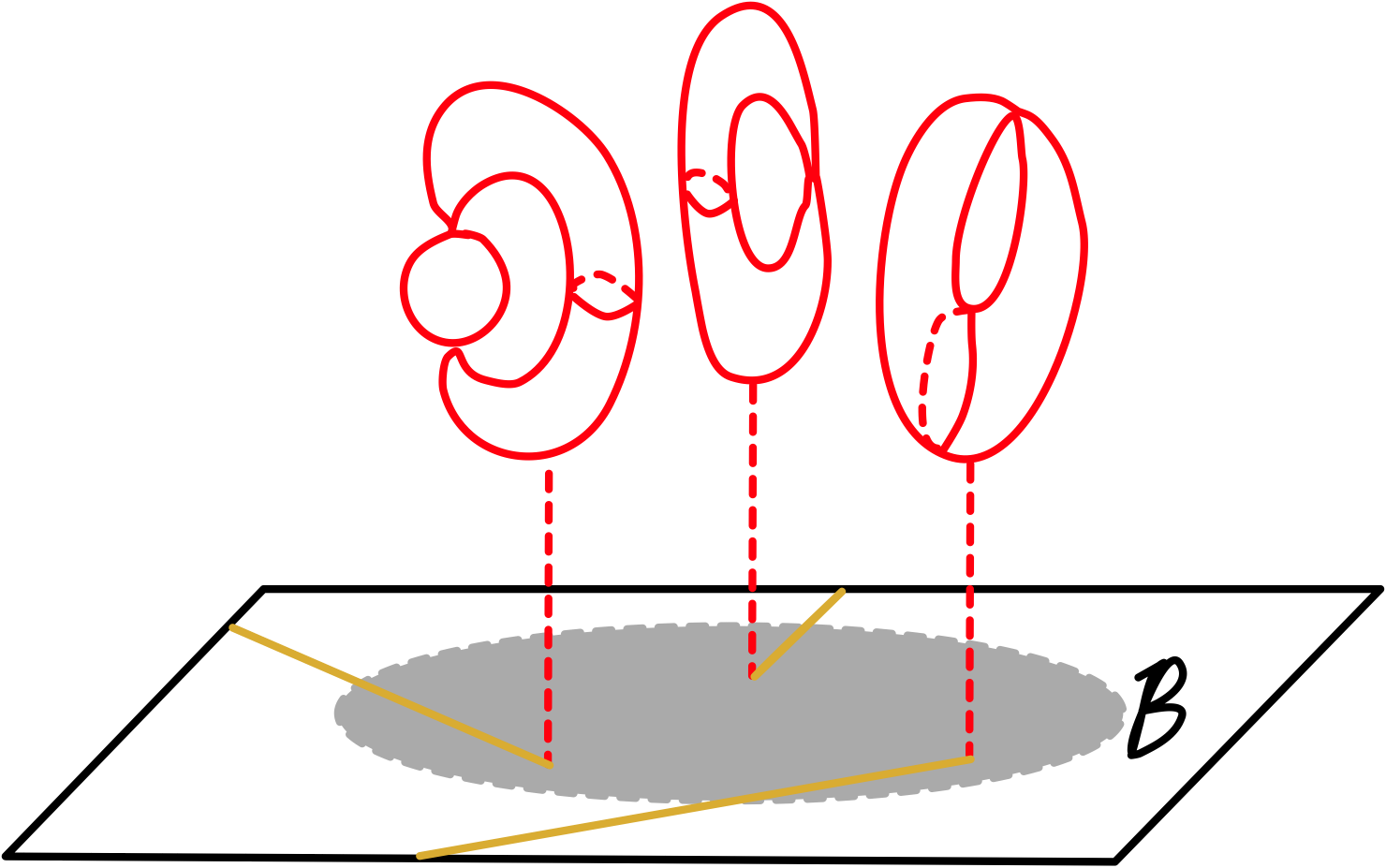}
\caption{\label{fig:multiOVcuts} A multi-Ooguri-Vafa space that is a perturbation of an $I_4$ singular fiber into three singular fibers: $I_2, I_1, I_1$. The cuts in $\mathcal{B}$ are shown in gold.}
\end{centering}
\end{figure}
\item We describe the local system of free abelian groups over $\mathcal{B}'=\mathcal{B}-\mathcal{B}''$ 
\[0 \to \Gamma_f \to \widehat{\Gamma} \to \Gamma \to 0\]
by giving a basis on $\mathcal{B}'_{\mathrm{triv}}$ and the holonomy as one goes around the point $u=m_i$ counterclockwise, crossing the ray $r_i$.
The fibers of the flavor lattice are $\Gamma_{\rm f} = \{\mathbf{v}\in \ZZ^N \bigm| \sum_i v_i = 0\}$ with basis $\{\gamma_i = v_i - v_{i+1}: i=1, \cdots, N-1\}$. We will let $\gamma_N=v_N-v_1= - \sum_{i=1}^{N-1} \gamma_i$. 
As one goes around the point $u=m_i$ counterclockwise 
\begin{align*}
    \rho_{i}: \qquad \gamma_e &\mapsto \gamma_e  \\
    \gamma_m &\mapsto \gamma_m + \sum_{j: m_j =m_i} (\gamma_e -\gamma_j)\\
   \gamma_j&\mapsto \gamma_j
\end{align*}
 
\item The charges\footnote{
One can check that indeed $Z$ is valued in $\mathrm{Hom}(\widehat{\Gamma}, \mathbb{C})$ by checking that
\begin{equation}\label{eq:glue} \left.Z_{\gamma}\right|_{r_i^-} =\left.Z_{\rho_i(\gamma)}\right|_{r_i^+} \end{equation}
for all $\gamma \in \widehat{\Gamma}$, where $r_i^\pm$ denotes the value at $r_i$ if one analytically continues from the clockwise (-) or counterclockwise (+) sides of $r_i$. We demonstrate that \eqref{eq:glue} holds for $\gamma_m$. Note that the value of $\log \frac{u -m_i}{\pi}$ satisfies 
\[\left.\log  \left(\frac{u-m_i}{\pi} \right) \right|_{r_i^-}=\left.\log  \left(\frac{u-m_i}{\pi} \right) \right|_{r_i^+} + 2 \pi i,\]
so 
\begin{align*}
    \left.Z_{\gamma_m}\right|_{r_i^-} &=\left.Z_{\gamma_m}\right|_{r_i^+} + \#\{j:m_j=m_i\}(u-m_i)\\
&=\left.Z_{\gamma_m} + \sum_{j: m_j=m_i} Z_{\gamma_e} - Z_{\gamma_j}  \right|_{r_i^+}\\
&= \left.Z_{\rho_i(\gamma_m)}\right|_{r_i^+}
    \end{align*}
Consequently, 
\begin{align*}
    \left.Z_{\gamma}\right|_{r_i^-} =\left.Z_{\rho_i(\gamma)}\right|_{r_i^+}
\end{align*}} $Z: \widehat{\Gamma} \to \mathbb{C}$ are
\begin{align*}
Z_{\gamma_{i}} &= -m_i\\
    Z_{\gamma_e}(u)&=u\\
    Z_{\gamma_m}(u) &= \frac{1}{2\pi i} \sum_{i=1}^N (u-m_i) \left(\log \frac{u-m_i}{\pi} - 1 \right)
\end{align*}
where $\log \frac{u-m_i}{\pi}$ has a branch cut at $\mathrm{cut}_i$. 
\item The symplectic pairing on $\widehat{\Gamma}$ is determined by $\avg{\gamma_m, \gamma_e}=1$. (Hence, $p_1=1$.)
\item 
The homomorphism $\theta_f: \Gamma_f \to \mathbb{R}/2 \pi \mathbb{Z}$ is given by 
\be
\theta_{\gamma_{i+1}-\gamma_i}=2 \pi y_i.
\ee
Lastly, $\Omega(\gamma,u)$ are zero except for
$$\Omega(\pm (\gamma_e + \gamma_i), u) = 1 \qquad u \in \mathcal{B}'.$$ 
In particular, note that there are no walls.
\end{itemize}
\end{constr}
\begin{proposition}\label{prop:OVex}
Consider $u=m_{i_0}$ in $\mathcal{B}''$ with open neighborhood $\mathcal{B}$.  
Let $J_i=\{j:m_j=m_i\}$.  Suppose  that for each $m_i \in \mathcal{B}''$, the sums in \begin{equation} \left\{\sum_{k=1}^{j-1} 2 \pi y_k \right \}_{j \in J_i} \label{eq:OVsums},\end{equation}
are all distinct in $\RR/2 \pi \ZZ$ and moreover $|m_i|< \frac{\pi}{2}$, then \ref{it:A1}-\ref{it:vPosAss} are satisfied at $u=m_{i_0}$.
\end{proposition}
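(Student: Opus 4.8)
The plan is to verify the six assumptions (A1)--(A6) directly for the given data. I would fix the neighbourhood $U=\{|u|<R_0\}\subset\mathcal B$, centred at the origin (which is the barycentre of $\mathcal B''$, since $\sum m_i=0$), with $\max_i|m_i|<R_0<\pi$, so that $\mathcal B''\subset U$; note that $r=1$ here, so $\mathcal B'$ has complex dimension one, $\Gamma$ has rank $2$, and $p_1=1$. I would take $\widehat\Gamma_{\mathrm{light}}=\widehat\Gamma_{\mathrm{local}}=\langle\gamma_e,\gamma_1,\dots,\gamma_{N-1}\rangle$, i.e.\ the saturation in $\widehat\Gamma$ of the span of the active charges $\gamma_e+\gamma_j$, with $\Gamma_{\mathrm{heavy}}=\Gamma_{\mathrm{nonlocal}}=\widehat\Gamma/\widehat\Gamma_{\mathrm{light}}\cong\mathbb Z[\gamma_m]$ and both vertical arrows of the diagram the identity. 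From $\avg{\gamma_m,\gamma_e}=1$ and the vanishing of the flavour pairing one gets $\widehat\Gamma_{\mathrm{local}}=(\widehat\Gamma_{\mathrm{light}})^\perp$ and $\ell=\operatorname{rank}\Gamma_{\mathrm{light}}=1=2r-\ell$. The point for (A1) is that each monodromy $\rho_i$ fixes $\gamma_e$ and every $\gamma_j$ and sends $\gamma_m\mapsto\gamma_m+\sum_{j:m_j=m_i}(\gamma_e-\gamma_j)\in\widehat\Gamma_{\mathrm{light}}$, so $\rho_i$ acts trivially on $\widehat\Gamma_{\mathrm{light}}$, on $\widehat\Gamma_{\mathrm{local}}$, and on $\widehat\Gamma/\widehat\Gamma_{\mathrm{light}}$: all four outer local systems are trivial over $U\cap\mathcal B'$. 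For (A2), $\Omega$ is $u$-independent, integral, and non-negative on $\widehat\Gamma$, and $\widetilde\Gamma_{\mathrm{light}}=\{\gamma\in\widehat\Gamma_{\mathrm{light}}:\Omega(\gamma)\neq0\}=\{\pm(\gamma_e+\gamma_j):1\le j\le N\}$ is finite. For (A3), $Z_{\gamma_e}=u$ and $Z_{\gamma_j}=-m_j$ are holomorphic on $\mathcal B$, $dZ_{\gamma_e}=du$ spans $T^*_{(1,0)}\mathcal B$, and $\{u'\in U:Z_\gamma(u')=0\text{ for some }\gamma\in\widetilde\Gamma_{\mathrm{light}}\}=\{m_1,\dots,m_N\}=\mathcal B''\cap U$; in Lemma~\ref{lem:basisnearsingular} one takes $\gamma_{e_1}=\gamma_e$.

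Assumption (A4) follows by linearity once it is checked for $\gamma_m$ (it is automatic for $\gamma_e$ and the $\gamma_j$, which pair trivially with $\widetilde\Gamma_{\mathrm{light}}$): pairing the $\gamma'$ and $-\gamma'$ contributions and fixing branch cuts consistently so that $\log\tfrac{u-m_j}{\pi}+\log\tfrac{m_j-u}{\pi}=2\log\tfrac{u-m_j}{\pi}-i\pi$, one finds that the sum in (A4) for $\gamma_m$ equals $Z_{\gamma_m}-\tfrac14\sum_j(u-m_j)=Z_{\gamma_m}-\tfrac N4u$ (using $\sum m_j=0$), so that $\widetilde Z_{\gamma_m}=\tfrac N4u$ is holomorphic; in particular $\tau_{11}=\tfrac1{2\pi i}\sum_j\log\tfrac{u-m_j}{\pi}$ and $\widetilde\tau_{11}=\tfrac N4$. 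For (A5) I would note that $\Theta_{\mathrm{light}}$ consists of the unitary characters of $\widehat\Gamma_{\mathrm{light}}=\langle\gamma_e\rangle\oplus\Gamma_f$ extending $\theta_f$, hence is parametrised by the free angle $\theta_{\gamma_e}\in\mathbb R/2\pi\mathbb Z$, while $\theta_{\gamma_j}=2\pi\sum_{k<j}y_k$ is determined by $\theta_f$. For $u'=m_i\in\mathcal B''$ one gets $\widetilde S_{m_i,\theta}=\{\pm(\gamma_e+\gamma_j):j\in J_i,\ \theta_{\gamma_e}=-\theta_{\gamma_j}\}$, every element of which has image $\bar\gamma_e$ in $\Gamma_{\mathrm{light}}=\langle\bar\gamma_e\rangle$; so the requirement that $S_{m_i,\theta}\hookrightarrow\Gamma_{\mathrm{light}}$ have image a basis of a primitive sublattice amounts to $\widetilde S_{m_i,\theta}$ containing at most one $\pm$-orbit, i.e.\ to $\{\theta_{\gamma_j}\}_{j\in J_i}$ being pairwise distinct --- which is exactly the hypothesis that the partial sums $\{\sum_{k=1}^{j-1}2\pi y_k\}_{j\in J_i}$ be distinct in $\mathbb R/2\pi\mathbb Z$. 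Finally $\Omega\equiv1$ on $\widetilde\Gamma_{\mathrm{light}}$.

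The heart of the matter is (A6). By Lemma~\ref{lem:A6} (here $p_1=1$, $c_{\gamma,1}=\pm1$, $\Omega\equiv1$, $|Z_{\pm(\gamma_e+\gamma_j)}|=|u-m_j|$) together with $\operatorname{Im}\tau_{11}=\tfrac1{2\pi}\sum_j\log\tfrac{\pi}{|u-m_j|}$, the form $\omega_{U'}$ is a Kähler form precisely on the open subset of $U\cap\mathcal B'$ where $\sum_{j=1}^N f(|u-m_j|)>0$, with
\[
f(t)=\frac1{2\pi}\log\frac{\pi}{t}-\frac{1}{2\sqrt\pi\,\sqrt t\,(e^{2t}-1)} .
\]
One checks that $f$ is negative near $t=0$ and for $t\ge\pi$ but positive on a maximal interval $(t_-,t_+)$ with $0<t_-<1<t_+<\pi$ (e.g.\ $f(1)>0$). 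I would then take $U'=\{R_1<|u|<R_0\}$, an annular shell with $\max_i|m_i|<R_1<R_0<\pi$, and verify $\sum_jf(|u-m_j|)>0$ on $U'$: this is exactly where the quantitative hypotheses $|m_i|<\pi/2$ and $\sum_im_i=0$ are used --- to control how far $|u-m_j|$ may stray from $|u|$ across the shell (so that the dominant terms have $|u-m_j|\in(t_-,t_+)$ and the remaining ones cannot be too negative) and to keep $\operatorname{Im}\tau_{11}>0$ out to the outer radius $R_0$. Granting this, $\omega_{U'}$ is Kähler on $U'\subset\mathcal B'$.

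The rest of (A6) is then immediate. The unique corank-$1$ sublattice of $\widehat\Gamma_{\mathrm{light}}$ containing $\widehat\Gamma_{\mathrm{light}}\cap\Gamma_f=\Gamma_f$ is $\Gamma_f$ itself, on which $Z$ is the constant homomorphism $\gamma_j\mapsto-m_j$, so that condition holds on every $D$; and for any $u'\in U$ the closed disk $D=\{|u|\le R\}$ with $R\in(\max(R_1,|u'|),R_0)$ satisfies $u'\in D\subset U$, $\partial D=\{|u|=R\}\subset U'$, and $Z|_{\Gamma_f}$ constant on $D$. Thus the only genuine difficulty is the Kähler estimate on the shell $U'$ --- analysing the explicit scalar function $f$ above, and the quantities $|u-m_j|$ and $\prod_j|u-m_j|$ over $U'$, under the hypothesis $|m_i|<\pi/2$ (and $\sum_i m_i=0$); everything else is bookkeeping with the explicit charge lattice, central charges, and monodromies.
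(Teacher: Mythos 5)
Your proposal follows the paper's proof essentially step for step on \ref{it:A1}--\ref{it:smooth}: the same identification $\hat\Gamma_{\rm light}=\hat\Gamma_{\rm local}=\langle\gamma_e,\Gamma_f\rangle$ with trivial quotient $\langle[\gamma_m]\rangle$, the same monodromy check, the same pairing of $\pm\gamma'$ contributions to exhibit $\tilde Z_{\gamma_m}$ as holomorphic, and the same reduction of \ref{it:smooth} to $|S_{u',\theta}|\le 1$, i.e.\ to distinctness of the partial sums of the $2\pi y_k$. The only substantive divergence is at \ref{it:vPosAss}, where you correctly reduce to the scalar inequality $\sum_j f(|u-m_j|)>0$ on an annular shell (with the correct sign of $f$, which the paper's displayed definition garbles) but then write ``granting this.'' The paper does commit to a resolution: it takes $U'=\{\pi-1<|u|<\pi\}$ and argues \emph{termwise}, using $|u-m_i|\ge|u|-|m_i|>\tfrac{\pi}{2}-1\approx 0.57>r_0\approx 0.42$ so that each summand is individually positive per the graph of $f$; in particular it never invokes $\sum_i m_i=0$ in this step, whereas you anticipate a cancellation argument balancing negative far-side terms against positive near-side ones. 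Your instinct is not baseless --- $|u-m_j|$ can reach $\tfrac{3\pi}{2}$, beyond the upper zero of $f$, so the paper's one-line termwise claim is itself slightly glib --- but the intended argument is termwise positivity on $(r_0,\pi)$, not the more delicate estimate you defer. As written, your proof of \ref{it:vPosAss} is incomplete at exactly the step you yourself identify as the heart of the matter; everything else is correct and matches the paper.
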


Intuitively,  if the complex parameters in $Z|_{\Gamma_f}$ are tuned so that there is a non-generic singular fiber over $u'$, then a generic choice---i.e. satisfying \eqref{eq:OVsums}---of real parameters in $\theta|_{\Gamma_f}$ eliminates the singularities of the hyper-K\"ahler metric.

\begin{proof}\hfill
\begin{enumerate}[({A}1)]
\item 
The lattices are 
\begin{equation*} \begin{tikzcd}
0 \ar[r] & \hat\Gamma_{\rm light}= \left\langle\gamma_e, \Gamma_f \right\rangle\ar[r] \ar[d,"{\rm id}"] & \hat\Gamma \ar[r] \ar[d,"{\rm id}"] & \Gamma_{\rm heavy}=\avg{[\gamma_m]} \ar[r] \ar[d,"{\rm id}"] & 0 \\
0 \ar[r] & \hat \Gamma_{\rm local}=\left\langle \gamma_e, \Gamma_f \right\rangle \ar[r] & \hat\Gamma \ar[r] & \Gamma_{\rm nonlocal}=\left\langle[\gamma_m]\right\rangle 
\ar[r] & 0 \makebox[0pt][l]{\ .}
\end{tikzcd}
\end{equation*}
Note that $\hat{\Gamma}_{\mathrm{light}}$ is characterized by the property of being the smallest \emph{primitive} lattice containing $\gamma_e + \gamma_j$ for $j=1, \cdots, N$.
Since $\gamma_e, \gamma_j$ do not monodromize, $\widehat{\Gamma}_{\mathrm{light}}$ and $\widehat{\Gamma}_{\mathrm{local}}$ are both trivial.
Note that $\Gamma_{\mathrm{light}}$, the image of $\widehat{\Gamma}_{\mathrm{light}}$ in the gauge lattice $\Gamma$, is generated by $[\gamma_e]$.
To see that the lattices $\Gamma_{\mathrm{heavy}}$ and $\Gamma_{\mathrm{nonlocal}}$ are trivial we simply observe that as one goes around the point $u=m_i$ counterclockwise, 
\[\gamma_m \mapsto \gamma_m + \sum_{j \in J_i}(\gamma_e+\gamma_j),\]
and that $\sum_{j \in J_i}(\gamma_e+\gamma_j)$ is in $\widehat{\Gamma}_{\mathrm{light}}$.
We can similarly check that $\widehat{\Gamma}_{\mathrm{local}}$ is the sublattice that pairs trivially with $\widehat{\Gamma}_{\mathrm{light}}$.
\item Here,
$\widetilde{\Gamma}_{\mathrm{light}}=\{\pm(\gamma_e + \gamma_j)\}_{j=1}^N$,
is of cardinality $2N$.
We can see that $\Omega$ has constant, integral, non-negative value $1$ on $\widetilde{\Gamma}_{\mathrm{light}}$.
\item When the base $\mathcal{B}$ is parameterized by coordinate $u=Z_{\gamma_e}$, then $\mathcal{B}''=\{m_i: i=1, \cdots, N\}$. Indeed, $Z_{\gamma_e + \gamma_i}(u)=u-m_i$ vanishes at $u=m_i$.
\item Charges $\gamma_e, \gamma_m,\{\gamma_j\}_{j =1}^N$ generate $\widehat{\Gamma}$, and we note that, of these, only the central charge $Z_{\gamma_m}$ monodromizes. We write
\[Z_{\gamma_m} = \underbrace{\frac{1}{2 \pi i} \sum_{j \in \{1, \cdots, N\}-J_i} (u-m_j) \left( \log \frac{u-m_j}{\Lambda} - 1 \right)}_{\widetilde{Z}'_{\gamma_m}} + \frac{1}{2 \pi i} \sum_{j \in J_i} (u-m_j) \left( \log \frac{u-m_j}{\pi} - 1 \right). 
\]
The function $\widetilde{Z}'_{\gamma_m}$ is holomorphic on a neighborhood of $m_i$, but it not quite the function appearing in Assumption \ref{it:charge}. The remainder $Z_{\gamma_m} -\widetilde{Z}'_{\gamma_m}$ is double the sum
\[\sum_{j\in J_i} \frac{1}{4 \pi i} \avg{\gamma_m, \gamma_e +\gamma_j} \Omega(\gamma_e + \gamma_j) Z_{\gamma_e + \gamma_j} (\log(Z_{\gamma_e + \gamma_j}/\pi) -1)\]
Since $\widetilde{\Gamma}_{\mathrm{light}}$ also contains the elements $\{-(\gamma_e + \gamma_j)\}_{j \in J_i}$, we note that the difference between the contribution from $+(\gamma_e + \gamma_j)$ and $-(\gamma_e +\gamma_j)$ is a holomorphic function on $U$.
\item We already stated that $\Omega(\gamma)=1$ for all $\gamma \in \widetilde{\Gamma}_{\mathrm{light}}$. 

For $u'=m_i \in \mathcal{B}''$, the subset of $\widetilde{\Gamma}_{\mathrm{light}}$ for which $Z_{\gamma}(u')=0$ is $\{\pm(\gamma_e + \gamma_j)\}_{j \in J_i}$. Parameterize $\Theta_{\mathrm{light}}$ by the value $\theta=\theta_{\gamma_e + \gamma_i} \in \RR/2 \pi \ZZ$. For each element $\gamma \in \{\pm(\gamma_e + \gamma_j)\}_{j \in J_i}$, $\gamma \in \widetilde{S}_{u', \theta}$ for one value of $\theta$. Namely, 
\be0=\theta_{\pm(\gamma_e + \gamma_j)}=\pm \left(\theta_{\gamma_e +\gamma_i} + \sum_{k=i}^{j-1}(\theta_{k+1}-\theta_k) \right) = \pm\left(\theta +\sum_{k=i}^{j-1} 2\pi y_k, \right)\ee
implies that 
\begin{equation}
    \theta = -\sum_{k=i}^{j-1} 2 \pi y_k \in \RR/2 \pi \ZZ,
\end{equation}
where the sum is taken cyclically. 
Since $[\gamma_e + \gamma_j]$ for $j \in J_i$ all represent the same class in $\Gamma$, the condition that the image of $S_{u', \theta}$ is a basis for a primitive sublattice of the rank one lattice $\Gamma_{\mathrm{light}}$ is equivalent here to the condition that $|S_{u', \theta}|=0 \mbox{ or }1,$
i.e. for any distinct $j_1, j_2 \in J_i$ 
\begin{equation}
     \sum_{k=j_1}^{j_2-1} 2 \pi y_k \notin \RR/2 \pi \ZZ.
\end{equation}
This is precisely the condition we imposed.

\begin{figure}[h!]
\begin{centering}
\includegraphics[height=1.0in]{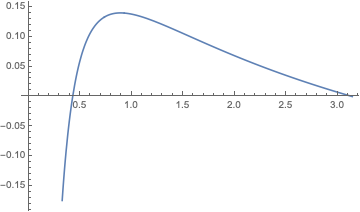}
\caption{\label{fig:function}The proof of \ref{it:vPosAss} in part (b) refers to the function $f(r) = -\frac{1}{2\pi} \log \frac{r}{\pi} + \frac{1}{2\sqrt{\pi}} \frac{1}{\sqrt{|z|} (\exp(2|z|)-1)}$ which we here graph on $r \in (0, \pi)$. There is a root $r_0$ of $f$ near $.42$.}
\end{centering}
\end{figure}

\item Following Lemma \ref{lem:A6} and setting $p_1=1$, we compute: 
\begin{align*}
&\Imag\tau_{11} - \frac{1}{4\sqrt{\pi}} \sum_{\gamma\in\tilde\Gamma_{\rm light}} \frac{\Omega(\gamma) c_{\gamma,1} c_{\gamma,1}}{\sqrt{|Z_\gamma|}(e^{2|Z_\gamma|}-1)}\\
&\qquad =
\sum_{i=1}^{N} \left(-\frac{1}{2 \pi}  \log \frac{|u-m_i|}{\pi}
- \frac{1}{2\sqrt{\pi}} \frac{1}{\sqrt{|u-m_i|}(e^{2|u-m_i|}-1)}\right)
\end{align*}
We observe that the plot of $f(r) = -\frac{1}{2\pi} \log \frac{r}{\pi} + \frac{1}{2\sqrt{\pi}} \frac{1}{\sqrt{|z|} (\exp(2|z|)-1)}$ is given in Figure \ref{fig:function}.

Let $r_0$ be the location of the root of $f(r)$ near $r=.42$ Then $\omega_1$ is clearly positive on the annulus $U'=\{|u| \in (R_1,\pi)\}$ if $|m_i|+ r_0< R_1$ for all $i$. In particular, if $|m_i|< \frac{\pi}{2}$, then $\omega_{U'}$ is positive on $U'=\{|u| \in (\pi -1, \pi)\}.$
\end{enumerate}

\end{proof}

\begin{rem}[$p \neq 1$]\label{rem:pnot1}
Note that one can make a multi-Ooguri-Vafa model with $\avg{\gamma'_m, \gamma_e}=p$ an arbitrary non-zero integer from the above multi-Ooguri-Vafa models with $\avg{\gamma_m, \gamma_e}=1$ as follows by taking $\gamma'_m = p \gamma_m$. Using the same trivialization of $\widehat{\Gamma}$ on $\B'_{\rm triv}$, we are effectively taking $Z_{\gamma'_m}=p Z_{\gamma_m}$ and lattice that now monodromizes as $\gamma'_m \mapsto \gamma'_m + \sum_{j:m_j=m_i}(\gamma_e-\gamma_j)$ around $u=m_i$. Now since our new lattice $\widehat{\Gamma}_{(p)}$ is just a sublattice of the unimodular lattice (see Remark \ref{rem:untwisted}), it is unsurprising that the semi-flat ``prepotential'' $\mathcal{F}$ and $\tau$ are unchanged. One should view this new semi-flat manifold with semi-flat metric as a $\ZZ_p$ quotient of the original semi-flat metric coming from the unimodular superlattice, since $\theta_{\gamma_m} \in \frac{1}{p}\vartheta + \frac{2 \pi \ZZ}{p}$ all correspond to $\theta_{\gamma'_m} = \vartheta$. As we discuss in Example \ref{ex:OVagain}, the smooth model geometry on $\M$ will just be the ordinary multi-Ooguri-Vafa model geometry with relabelling of magnatic coordinate. 
\end{rem}

\section{Smooth manifold structure}\label{sec:smooth}
In this section, we extend $\M' \to \B'$ over the singular locus to a manifold $\M \to \B$. We do this in two steps, first as $\widetilde{\M'}$---which we define in \S\ref{sec:smooth1}---and then as $\M$ in \S\ref{sec:smooth2}. For Ooguri-Vafa, these two steps are shown in \S\ref{sec:introduction} in Figure \ref{fig:manifolds}.

\subsection{Smooth manifold structure, I}\label{sec:smooth1}

Since the lattice $\widehat{\Gamma}$ monodromizes (see Remark \ref{rem:monodromy}), the twisted character $\theta_\gamma$ typically monodromizes. (It doesn't monodromize for $\gamma \in \widehat{\Gamma}_{\mathrm{local}}$.) In this next lemma we define a new coordinate $\theta'_{\gamma}$ which doesn't monodromize, hence is a candidate for a smooth local coordinate on $\tilde \M'$.

\begin{lemma}[Definition of $\theta'$] \label{lem:thetaprime}Let $u$ be a point of $\mathcal{B}''$ and let $U \subset \mathcal{B}$ be an open neighborhood as in \ref{it:A1}. Choose some $u'\in U\cap\B'$; for each $\gamma' \in \widetilde{\Gamma}_{\mathrm{light}}$, choose representatives for the equivalence classes $\theta_{\gamma'}(u'), \mathrm{arg}(Z_{\gamma'}(u')) \in \mathbb{R}/2 \pi \mathbb{Z}$ such that $\theta_{\gamma'} = 2 \pi - \theta_{- \gamma'}$. Then, for each $\gamma\in \hat\Gamma_{u'}$,
\be \theta'_{\gamma} := \theta_\gamma - \sum_{\gamma'\in\tilde\Gamma_{\rm light}} \frac{\avg{\gamma,\gamma'} \Omega(\gamma') \arg(Z_{\gamma'})}{2\pi} (\theta_{\gamma'} - \pi) \label{eq:thetaPrime} \ee
defines a function from $\pi^{-1}(U\cap\B')\setminus\cup_{\gamma'\in \tilde\Gamma_{\rm light}} \theta_{\gamma'}^{-1}(0)$ to $\RR/2\pi\ZZ$. Observe that $\theta'_\gamma = \theta_\gamma$ if $\gamma\in \hat\Gamma_{\rm local}$. 
\end{lemma}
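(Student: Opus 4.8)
The goal is to verify that the right-hand side of \eqref{eq:thetaPrime} is \emph{single-valued}, i.e. unchanged under analytic continuation around every loop in $\pi^{-1}(U\cap\B')\setminus\bigcup_{\gamma'\in\tilde\Gamma_{\rm light}}\theta_{\gamma'}^{-1}(0)$; smoothness of the resulting $\RR/2\pi\ZZ$-valued function is then automatic, as each ingredient on the right is smooth there. First I would settle the pointwise content and isolate the multivalued pieces. By Assumption \ref{it:A1} we have $\hat\Gamma_{\rm light}\subseteq\hat\Gamma_{\rm local}$ and $\hat\Gamma_{\rm local}=\{\gamma\mid\avg{\gamma,\gamma'}=0\ \forall\gamma'\in\hat\Gamma_{\rm light}\}$, so $\hat\Gamma_{\rm light}$ is isotropic; moreover $\hat\Gamma_{\rm local}$ is a trivial local system, so by \eqref{eq:gammaMono} (whose right side fixes $\hat\Gamma_{\rm light}$ by that isotropy) none of the coordinates $\theta_{\gamma'}$, $\gamma'\in\tilde\Gamma_{\rm light}$, monodromizes. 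On the punctured domain each such $\theta_{\gamma'}$ avoids $0\in\RR/2\pi\ZZ$, hence lifts uniquely and continuously to $(0,2\pi)\subset\RR$; the normalization $\theta_{\gamma'}=2\pi-\theta_{-\gamma'}$ just says we use this lift for all of $\tilde\Gamma_{\rm light}$, and it is consistent with $\theta_{-\gamma'}=-\theta_{\gamma'}$ in $\RR/2\pi\ZZ$. So $\theta_{\gamma'}-\pi$ is a single-valued smooth real function, and once the representatives of $\arg(Z_{\gamma'}(u'))$ at $u'$ are fixed as in the statement, every summand of \eqref{eq:thetaPrime} is an honest real number; the only genuinely multivalued ingredients left are $\theta_\gamma$ (via \eqref{eq:gammaMono}) and $\arg(Z_{\gamma'})$ (which increases by $2\pi$ around the divisor $\{Z_{\gamma'}=0\}$).

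Next I would carry out the monodromy check. Because $\hat\Gamma_{\rm local}$ is trivial, it suffices to treat loops $\lambda$ lying over a loop in $U\setminus\B''$ that winds $n_{\gamma'}$ times around $\{Z_{\gamma'}=0\}$ for each $\gamma'\in\tilde\Gamma_{\rm light}$ (with $n_{\gamma'}=n_{-\gamma'}$). Along $\lambda$ the $\theta_{\gamma'}$ ($\gamma'\in\tilde\Gamma_{\rm light}$) are unchanged, $\arg Z_{\gamma'}\mapsto\arg Z_{\gamma'}+2\pi n_{\gamma'}$, and $\theta_\gamma\mapsto\theta_{\rho\gamma}$, where $\rho$ is the monodromy of $\hat\Gamma$ along $\lambda$ given by \eqref{eq:gammaMono}. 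Expanding $\theta_{\rho\gamma}-\theta_\gamma$ by the twisted-character cocycle $\theta_{a+b}=\theta_a+\theta_b+\pi\avg{a,b}$ and using isotropy of $\hat\Gamma_{\rm light}$ so that the pairings between light charges drop out, one finds that the part of $\theta_{\rho\gamma}-\theta_\gamma$ linear in the $\theta_{\gamma'}$ is, up to an additive constant in $\pi\ZZ$, exactly the change of the correction sum in \eqref{eq:thetaPrime} produced by $\arg Z_{\gamma'}\mapsto\arg Z_{\gamma'}+2\pi n_{\gamma'}$ — here the $\gamma'$ and $-\gamma'$ terms assemble via the normalizations $\theta_{\gamma'}=2\pi-\theta_{-\gamma'}$ and $\arg Z_{-\gamma'}=\arg Z_{\gamma'}+\pi$, and the coefficient matching is precisely the statement that both the lattice monodromy \eqref{eq:gammaMono} and the shift of $\arg Z_{\gamma'}$ are governed by the same quantity $n_{\gamma'}\avg{\gamma,\gamma'}\Omega(\gamma')$. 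Consequently the change of $\theta'_\gamma$ around $\lambda$ is a constant, and collecting the $\pi\avg{\,,\,}$ cocycle contributions together with those from the $\pm\gamma'$ pairing, this constant is a $\ZZ$-combination of terms of the form $\pi\,n_{\gamma'}\Omega(\gamma')\avg{\gamma,\gamma'}\big(\avg{\gamma,\gamma'}\pm1\big)$; since a product of consecutive integers is even, it lies in $2\pi\ZZ$. Hence $\theta'_\gamma$ is unchanged in $\RR/2\pi\ZZ$, i.e. it descends to a function on the stated domain. The final assertion is immediate: if $\gamma\in\hat\Gamma_{\rm local}$ then $\avg{\gamma,\gamma'}=0$ for all $\gamma'\in\hat\Gamma_{\rm light}$, so the correction sum is empty and $\theta'_\gamma=\theta_\gamma$; consistently, such a $\theta_\gamma$ does not monodromize.

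The sole genuine difficulty is the bookkeeping in the middle step: one must carefully track the half-integer coefficients in \eqref{eq:gammaMono}, the $\pi\avg{\,,\,}$ corrections forced by the twisted-character relation, the pairing of $\gamma'$ with $-\gamma'$ inside $\tilde\Gamma_{\rm light}$ under the chosen representative conventions, and finally the parity of the residual constant, and check that they conspire to make the net change of $\theta'_\gamma$ around $\lambda$ an element of $2\pi\ZZ$. Smoothness and the pointwise meaningfulness of \eqref{eq:thetaPrime} are routine.
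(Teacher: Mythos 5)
Your proposal is correct and follows essentially the same route as the paper's proof: derive the induced monodromy of $\theta_\gamma$ from \eqref{eq:gammaMono} via the twisted-character cocycle, pair $\gamma'$ with $-\gamma'$ using the normalization $\theta_{\gamma'}=2\pi-\theta_{-\gamma'}$ to resolve the half-integer coefficients, and observe that the shift $\arg Z_{\gamma'}\mapsto \arg Z_{\gamma'}+2\pi n_{\gamma'}$ changes the correction sum by the same amount modulo $2\pi\ZZ$ (the paper's parity step $n^2\equiv n \pmod 2$ plays the role of your ``product of consecutive integers is even''). The only differences are presentational: you make explicit the reduction to loops in the base and the lift of $\theta_{\gamma'}$ to $(0,2\pi)$ on the punctured domain, points the paper places in the surrounding remark rather than in the proof itself.
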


\begin{rem}
    (This function depends on the choices of the representatives for the equivalence classes $\theta_{\gamma'},\arg(Z_{\gamma'})\in\RR/2\pi\ZZ$. As above, we must have $\theta_{\gamma'}=2\pi-\theta_{-\gamma'}$ mod $4\pi$; we make the precise choice $\theta_{\gamma'}\in (0,2\pi)$ for all $\gamma'\in\tilde\Gamma_{\rm light}$.  In contrast, the choice of representative of $\arg(Z_{\gamma'})$ does not matter for the present purposes --- i.e., it does not affect the manifold $\tilde\M'$ which we are about to define --- but for later purposes should, as mentioned above, be chosen consistently with \eqref{eq:realTildeTau} in Remark \ref{rem:tau}.
     We excise $\cup_{\gamma'\in \tilde\Gamma_{\rm light}} \theta_{\gamma'}^{-1}(0)$ so that we can smoothly lift $\theta_{\gamma'}$ to $\RR$ --- i.e., these functions $\theta'_\gamma$ are invariant under monodromies in $U$, but not in the torus fibers.
     
\end{rem}
\begin{proof} The monodromy of $\widehat{\Gamma}$ in 
\eqref{eq:gammaMono} implies that the twisted character $\theta$ monodromizes as
\be \theta_\gamma \mapsto \theta_\gamma + \sum_{\gamma'\in \tilde\Gamma_{\rm light}} \frac{n_{\gamma'} \avg{\gamma,\gamma'}\Omega(\gamma')}{2} (\theta_{\gamma'}+\pi \avg{\gamma,\gamma'}) = \theta_\gamma + \sum_{\gamma'\in \tilde\Gamma_{\rm light}} \frac{n_{\gamma'} \avg{\gamma,\gamma'}\Omega(\gamma')}{2} (\theta_{\gamma'}\pm \pi) \ , \label{eq:thetaMono} \ee
where the cycle winds $n_{\gamma'}$ times around the divisor $Z_{\gamma'}=0$ in the direction of increasing $\mathrm{arg}\, Z_{\gamma'}$.
Here, the half-integrality issue requires that our lifts from $\RR/2\pi\ZZ$ to $\RR$ satisfy $\theta_{\gamma'} = -\theta_{-\gamma'}$ mod $4\pi$, not just mod $2\pi$. The equality in \eqref{eq:thetaMono} is mod $2\pi$, and uses the fact that $n^2\equiv n$ (mod 2) for all $n\in \ZZ$. The meaning of the $\pm$ sign is that for each pair $\pm\gamma'$, one choice should be made for $\gamma'$ and the other for $-\gamma'$; the choice here is irrelevant mod $2\pi$. We introduce this new notation in order to stress that the second term in \eqref{eq:thetaMono} is linear in $\gamma$. However, we can now eliminate the `$\pm$' notation by instead assuming that \begin{equation}\theta_{\gamma'}=2\pi-\theta_{-\gamma'} \;\mathrm{mod} \; 4\pi, \label{eq:removepm}\end{equation} in which case we have
\be \theta_\gamma \mapsto \theta_\gamma + \sum_{\gamma'\in\tilde\Gamma_{\rm light}} \frac{n_{\gamma'} \avg{\gamma,\gamma'} \Omega(\gamma')}{2}(\theta_{\gamma'}-\pi) \ . \ee
So, if we choose some $u'\in U\cap\B'$, then for each $\gamma\in \hat\Gamma_{u'}$,
\be \theta'_{\gamma} := \theta_\gamma - \sum_{\gamma'\in\tilde\Gamma_{\rm light}} \frac{\avg{\gamma,\gamma'} \Omega(\gamma') \arg(Z_{\gamma'})}{2\pi} (\theta_{\gamma'} - \pi)  \ee
defines a function from $\pi^{-1}(U\cap\B')\setminus\cup_{\gamma'\in \tilde\Gamma_{\rm light}} \theta_{\gamma'}^{-1}(0)$ to $\RR/2\pi\ZZ$, and $\theta'_\gamma = \theta_\gamma$ if $\gamma\in \hat\Gamma_{\rm local}$.

\end{proof}

\begin{constr}[$\widetilde{\M'}$]\label{constr:Mtilde}
    We glue the manifold $U\times \M'_{u'}\setminus \cup_{\gamma'\in\tilde\Gamma_{\rm light}} \theta_{\gamma'}^{-1}(0)$ onto $\pi^{-1}(U\cap\B')$ via the diffeomorphism which identifies the fiber coordinate $\theta$ in the former with the function $\theta'$ in the latter, on $\pi^{-1}(U\cap\B')\setminus \cup_{\gamma'\in\tilde\Gamma_{\rm light}} \theta_{\gamma'}^{-1}(0)$. In this way, we construct a new fibered manifold $\pi: \tilde\M'\to \B$ which agrees with $\pi: \M'\to \B'$ over $\B'$. 
\end{constr}

\bigskip

\noindent \underline{Notation:} For later use, we introduce the notation $\tilde\theta'$ for an untwisted  unitary character on $\hat\Gamma_{u'}$ which is obtained from $\theta'$ by choosing a quadratic refinement at $u'$, as in Remark \ref{rem:untwisted}.

\bigskip

Lastly, we include a final lemma about these new primed coordinates:
\begin{lem}\label{lem:primedJacobian}
    The Jacobian determinant
    \be \abs{\frac{\partial(\tilde\theta_{\gamma_{e_1}},\ldots,\tilde\theta_{\gamma_{e_r}},\tilde\theta_{\gamma_{m_1}},\ldots,\tilde\theta_{\gamma_{m_r}},a_1,\ldots,a_r,\bar a_1,\ldots,\bar a_r)}{\partial(\tilde\theta_{\gamma_{e_1}},\ldots,\tilde\theta_{\gamma_{e_r}},\tilde\theta'_{\gamma_{m_1}},\ldots,\tilde\theta'_{\gamma_{m_r}},a_1,\ldots,a_r,\bar a_1,\ldots,\bar a_r)}}= 1. \label{eq:primedJacobian}\ee
\end{lem}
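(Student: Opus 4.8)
The plan is to show that the passage from the unprimed coordinates to the primed coordinates is, after a harmless common reordering of the variables, block lower-triangular with identity blocks on the diagonal, so that its Jacobian determinant is identically $1$. To record the transformation: the base coordinates $a_i,\bar a_i$ are common to both systems; by Lemma~\ref{lem:thetaprime} one has $\theta'_\gamma=\theta_\gamma$ for $\gamma\in\hat\Gamma_{\rm local}$, and by Lemma~\ref{lem:basisnearsingular} each $\gamma_{e_i}$ lies in $\hat\Gamma_{\rm local}$, so $\theta'_{\gamma_{e_i}}=\theta_{\gamma_{e_i}}$ and hence $\tilde\theta'_{\gamma_{e_i}}=\tilde\theta_{\gamma_{e_i}}$. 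The correction term in \eqref{eq:thetaPrime}, namely $\gamma\mapsto \sum_{\gamma'\in\tilde\Gamma_{\rm light}}\frac{\avg{\gamma,\gamma'}\,\Omega(\gamma')\,\arg(Z_{\gamma'})}{2\pi}(\theta_{\gamma'}-\pi)$, is additive in $\gamma$, so $\theta'$ has the same twisting cocycle as $\theta$; passing to untwisted characters via a common quadratic refinement one gets
\[
\tilde\theta_{\gamma_{m_i}}=\tilde\theta'_{\gamma_{m_i}}+C_i,\qquad
C_i:=\sum_{\gamma'\in\tilde\Gamma_{\rm light}}\frac{c_{\gamma',i}\,\Omega(\gamma')\,\arg(Z_{\gamma'})}{2\pi}\,(\theta_{\gamma'}-\pi),\qquad c_{\gamma',i}=\avg{\gamma_{m_i},\gamma'}.
\]

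The key point is that each $C_i$ is a function of $\tilde\theta_{\gamma_{e_1}},\ldots,\tilde\theta_{\gamma_{e_r}}$ and $a_1,\ldots,a_r,\bar a_1,\ldots,\bar a_r$ only; in particular it involves none of the magnetic angles $\tilde\theta'_{\gamma_{m_j}}$. Indeed, for $\gamma'\in\tilde\Gamma_{\rm light}$ the central charge $Z_{\gamma'}=\sum_k p_k^{-1}c_{\gamma',k}\,a_k$ extends holomorphically over $U$ (Assumption~\ref{it:coords}, Definition~\ref{def:tau}), so $\arg(Z_{\gamma'})$ is a function of the $a_k,\bar a_k$ alone; and the image of $\gamma'$ in $\Gamma$ lies in $\Gamma_{\rm light}$, which by Definition~\ref{def:tau} is contained in the span of the images of $\gamma_{e_1},\ldots,\gamma_{e_r}$, so (since $\tilde\theta$ is a character and $\theta_{\gamma'}$ differs from $\tilde\theta_{\gamma'}$ by a constant) $\theta_{\gamma'}$ is an affine-linear function of $\tilde\theta_{\gamma_{e_1}},\ldots,\tilde\theta_{\gamma_{e_r}}$, with no magnetic dependence.

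Finally I would assemble the Jacobian of the old coordinates with respect to the new ones, grouping the old ones into the blocks $(\tilde\theta_{\gamma_{e_i}})_i$, $(\tilde\theta_{\gamma_{m_i}})_i$, $(a_i)_i$, $(\bar a_i)_i$ and the new ones into $(\tilde\theta_{\gamma_{e_i}})_i$, $(\tilde\theta'_{\gamma_{m_i}})_i$, $(a_i)_i$, $(\bar a_i)_i$. The rows indexed by $\tilde\theta_{\gamma_{e_i}}$, $a_i$, $\bar a_i$ are standard basis covectors, because those functions occur verbatim among the new coordinates; the row indexed by $\tilde\theta_{\gamma_{m_i}}$ has a $1$ in the $\tilde\theta'_{\gamma_{m_i}}$ slot, the quantities $\partial C_i/\partial\tilde\theta_{\gamma_{e_j}}$, $\partial C_i/\partial a_j$, $\partial C_i/\partial\bar a_j$ in the remaining slots, and $0$ in each $\tilde\theta'_{\gamma_{m_j}}$ slot with $j\ne i$, by the previous paragraph. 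Applying the same permutation to rows and columns so that the magnetic block is moved to the last position leaves the determinant unchanged and exhibits the matrix as block lower-triangular with the identity in every diagonal block; its determinant is therefore $1$, which is \eqref{eq:primedJacobian} after taking the absolute value. I expect no real difficulty here; the only point deserving care is the twisted-versus-untwisted bookkeeping in the definition of $C_i$, which is taken care of by the additivity of that correction in the charge.
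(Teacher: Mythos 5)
Your proposal is correct and is essentially the paper's own argument: both exhibit the change of variables as unipotent block-triangular, the key point in each case being that the correction $\tilde\theta_{\gamma_{m_i}}-\tilde\theta'_{\gamma_{m_i}}$ depends only on the electric angles and the base coordinates (via $\theta_{\gamma'}$ and $\arg Z_{\gamma'}$ for $\gamma'\in\tilde\Gamma_{\rm light}$), never on the magnetic angles. The paper phrases this through the chain rule on coordinate vector fields (i.e.\ the inverse Jacobian) while you write the forward Jacobian directly, but the content is identical.
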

\begin{proof}The coordinate vector fields are related by 
    \begin{align}
    \frac{\partial}{\partial \theta_{m_i}} &=\frac{\partial}{\partial \theta'_{m_i}} \nonumber \\
      \frac{\partial}{\partial \theta_{e_i}} &=\frac{\partial}{\partial \theta_{e_i}} + \sum_j \frac{\partial \theta'_{m_j}}{\partial \theta_{e_i}} \frac{\partial}{\partial \theta'_{m_j}}  \nonumber \\
      &= \frac{\partial}{\partial \theta_{e_i}} + \sum_j \sum_{\gamma' \in \widetilde{\Gamma}_{\mathrm{light}}} \frac{\avg{\gamma_{m_j}, \gamma'} \Omega(\gamma')\mathrm{arg}(Z_\gamma')}{2\pi}  \frac{\partial \theta_{\gamma'}}{\partial \theta_{e_i}} \frac{\partial}{\partial \theta'_{m_j}}\\\
           \frac{\partial}{\partial a_i} &=\frac{\partial}{\partial a_i} + \sum_j \frac{\partial \theta'_{m_j}}{\partial a_i} \frac{\partial}{\partial \theta'_{m_j}}  \nonumber \\
      &= \frac{\partial}{\partial a_i} + \sum_j \sum_{\gamma' \in \widetilde{\Gamma}_{\mathrm{light}}} \frac{\avg{\gamma_{m_j}, \gamma'} \Omega(\gamma')(\theta_{\gamma'} - \pi)}{2 \pi} \frac{\partial \mathrm{arg}(Z_\gamma')}{\partial a_i} \frac{\partial}{\partial \theta'_{m_j}}.
       \end{align}
        and likewise for $\frac{\partial}{\partial \bar a_i}$.
  Consequently, the inverse of the Jacobian determinant in \eqref{eq:primedJacobian} is 
\begin{align}
  \abs{\frac{\partial(\tilde\theta_{\gamma_{e_1}},\ldots,\tilde\theta_{\gamma_{e_r}},\tilde\theta'_{\gamma_{m_1}},\ldots,\tilde\theta'_{\gamma_{m_r}},a_1,\ldots,a_r,\bar a_1,\ldots,\bar a_r)}{\partial(\tilde\theta_{\gamma_{e_1}},\ldots,\tilde\theta_{\gamma_{e_r}},\tilde\theta_{\gamma_{m_1}},\ldots,\tilde\theta_{\gamma_{m_r}},a_1,\ldots,a_r,\bar a_1,\ldots,\bar a_r)}}
  &= \abs{\begin{pmatrix} I_r & 0_r& 0_r& 0_r \\ *& I_r & *  &*  \\0 &0 & I_r &0 \\ 0& 0& 0& I_r \end{pmatrix}}\nonumber\\
  &=1.
\end{align}
\end{proof}

\subsection{Generalized Gibbons--Hawking Ansatz}\label{sec:GH}

In order to make use of Assumption \ref{it:vPosAss}, i.e. (see Lemma \ref{lem:D4}) the assumption that 
\be \Imag\tau_{ij} - \frac{1}{4\sqrt{\pi}} \sum_{\gamma\in\tilde\Gamma_{\rm light}} \frac{\Omega(\gamma) p_i^{-1} p_j^{-1}c_{\gamma,i} c_{\gamma,j}}{\sqrt{|Z_\gamma|}(e^{2|Z_\gamma|}-1)} \ee
is positive definite on some open set $U'$ (roughly obtained from removing neighborhood of $\B''$ from $U$, as shown in Figure \ref{fig:setup}), in Lemma \ref{lem:gwOV}, we restate and generalize  
\cite[Lemma 3.1]{gross:OV}.
Gross--Wilson's motivation in \cite[\S3]{gross:OV} is to describe the Ooguri-Vafa metric via the Gibbons--Hawking ansatz, as a $\ZZ$-quotient of a $U(1)$-fiber bundle over an open subset of $\RR^3$
\footnote{We follow and adapt the conventions in \cite[\S4.1]{GMN:walls} from Ooguri-Vafa to multi-Ooguri-Vafa, setting $R=\frac{1}{\pi}$, though we write $\Theta= i d \theta + i A$ rather than $\Theta^{GMN}=i d \theta + 2 \pi i A$.}.

We will state  Lemma \ref{lem:gwOV} in a more quantitative form than \cite[Lemma 3.1]{gross:OV}, since the original lemma is of the `for sufficiently large $R$' variety described in the last subsection of \S\ref{sec:introduction}. This creates new possibilities, with new complications, since in this setting with fixed $R$ we can consider singular fibers in $\M$ which are close (relative to the length scale set by the size of the torus fibers), but not coincident (as in Lemma 4.1 of \cite{chen:k3sing}). 
\bigskip

Before launching into the technical details, we recall the 4d Gibbons-Hawking ansatz which produces 4d hyper-K\"ahler manifolds with a triholomorphic $U(1)$-action . The data  is (1) an open set $\widetilde{Y} \subset \CC \times \RR$ with coordinates $a, \theta_{\gamma_e}$ and flat metric $g=4\de a \de \overline{a} +  \de \theta_e^2$, 
(2) a positive harmonic function $V: \widetilde{Y} \to \RR$ such that if we define $F=2 \pi i \star \de V \in \Omega(\widetilde{Y}, i\RR)$, the associated cohomology class $[\frac{i}{2\pi} F]$ lies in the image of $H^2(\widetilde{Y},\ZZ)$ inside $H^2(\widetilde{Y}, \RR)$ and (3) a principle $U(1)$-bundle $\pi:\widehat{\M}' \to \widetilde{Y}$ with connection $\Theta \in \Omega^1(\widetilde{\M}', i \RR)$ such that $\pi^*F = \de \Theta$, i.e.
\be \Theta =i \de \theta + i A, \ee 
where $d A = \star d V$.
From this data, we get a hyper-K\"ahler structure on $\widehat{\M}'$. The hyper-K\"ahler metric is \begin{equation}\label{eq:gGH}
g= 4 \left( V \pi^*\left(\de u \de \overline{u}+\frac{1}{4} \de \theta_e^2\right)+ V^{-1}\left(\frac{1}{2 \pi i} \Theta\right)^2\right). \end{equation}
The family of holomorphic symplectic forms is 
\be \varpi(\zeta) = -\frac{1}{4 \pi} \xi_e \wedge \xi_m,\ee
where 
\begin{align}
\xi_e&=i \de \theta_e + \zeta^{-1} \de u + \zeta \de \overline{u} \nonumber \\
\xi_m &= \Theta + \pi i V( \zeta^{-1} \de u - \zeta \de \overline{u}).
\end{align}
We can quotient by the $\ZZ$ action $\theta_e \mapsto \theta_e + 2 \pi R \ZZ$ for $R=\frac{1}{\pi}$ to get a hyper-K\"ahler manifold $\widetilde{\M}'$.

\bigskip

This extends to higher-dimensions.
If a hyper-K\"ahler manifold $\mathcal{M}$ has a triholomorphic $U(1)^r$ action, then there is an associated hyper-K\"ahler moment map $\mu: \mathcal{M} \to \Imag \HH \otimes \mathfrak{u}(1)^r \simeq \RR^{3r}.$
A fundamental idea in Lindstr\"om--Rocek \ref{rocek:tensorDuality} is such a hyper-K\"ahler manifold can be locally constructed from a real-valued function $\Psi$ on an open subset $\widetilde{Y} \subset \RR^{3n} \simeq \CC^n_{a_i} \times \RR^n_{\theta_{e_i}}$. The Legendre transform of this function gives a K\"ahler potential for the hyperk\"ahler metric.
Pedersen and Poon \cite{poon:hk} then observe\footnote{Here, we closely follow \cite{Bielawski}.
} that the hyper-K\"ahler metric has the form 
\begin{equation}
g=4\sum_{i,j=1}^r V_{ij}(\de a_i \de \bar a_j +\frac{1}{4}\de \theta_{e_i} \de \theta_{e_j})
+  \frac{1}{4}(V^{-1})_{ij}(\de \theta_{\gamma_{m_i}} + A_i )(\de \theta_{\gamma_{m_j}} + A_j),
\end{equation}
where
where $V_{ij}$ are polyharmonic functions on $\widetilde{Y}$, meaning they are harmonic on each affine subspace $a + \Imag \HH \otimes \RR \alpha$ for $\alpha \in (\mathfrak{u}(1)^r)^* -\{0\}$, 
and $d \theta_{\gamma_{m_i}} + A_i$'s, $i=1, \cdots, r$ describe a connection on the $U(1)^r$ bundle.
Moreover, in terms of the original function $\Psi$, $V_{ij} = \frac{1}{4} \partial_{\theta_{e_i}} \partial_{\theta_{e_j}} \Psi$ and $A_i = \frac{i}{2} \sum_k \left(\partial_{\theta_{e_i}} \partial_{\bar a_k} \Psi \de \bar a_k - \partial_{\theta_{e_i}} \partial_{ a_k} \Psi \de a_k\right)$.

\bigskip
We now outline the results in Lemma \ref{lem:gwOV}.
In (a-c), we closely mirror \cite[Lemma 3.1(a-c)]{gross:OV}, though our bounds are stronger.
In (d-h) we describe $Y, V_{ij}$ and a connection $A_i$ (appearing in \eqref{eq:ai}) as in the higher-dimensional Gibbons--Hawking ansatz in terms of our data. 
Note that we will not actually use the machinery of the higher-dimensional Gibbons--Hawking ansatz to prove that our model geometries are smooth hyper-K\"ahler manifolds; instead, we will again use holomorphic Darboux coordinates via Corollary \ref{cor:twist}.

\begin{lemma} \hfill
\label{lem:gwOV}
\begin{enumerate}[(a)]
\item The series\footnote{Note that Gross-Wilson's $T$ is $\frac{1}{4\pi}$ times our $T$.}
\be T(w,\theta) = \sum_{m=-\infty}^\infty \parens{\frac{\pi}{\sqrt{4|w|^2+(2\pi m+\theta)^2}} - \kappa_m} \ , \quad \kappa_m = \piecewise{\half \log \frac{2|m|+1}{2|m|-1}}{m\not=0}{0}{m=0} \label{eq:tj} \ee
converges absolutely and uniformly on compact subsets of $(\CC\times S^1)\setminus \{(0,0)\}$ (where $S^1=\RR/2\pi\ZZ$, so $0\in S^1$ is $[0]\in \RR/2\pi\ZZ$) to a harmonic function $T$, where $\CC\times S^1$ is endowed with the flat metric $g=4 dw\, d\bar w+d\theta^2$. The termwise partial derivatives of this series of all orders also converge absolutely and uniformly on compact sets to the corresponding partial derivative of $T$.
\item When $|w|\not=0$, we have
\be T(w,\theta) = -\log(|w|/\pi) + \sum_{n\not=0} e^{in\theta} K_0(2|n w|) \ , \label{eq:poisson} \ee
where $K_0$ is the modified Bessel function which satisfies \cite[\S10.32]{NIST}
\be K_0(|y|)=\half\int_{-\infty}^\infty e^{iy\sinh u}\, du = \int_{0}^\infty \cos(y\sinh u)\, du = \int_0^\infty \frac{\cos(y t)}{\sqrt{1+t^2}}\, dt \ee
for all $y\in \RR\setminus\{0\}$. The series in \eqref{eq:poisson} converges absolutely and uniformly on compact subsets of $(\CC\times S^1)\setminus(\{0\}\times S^1)$.
\item When $|w|\not=0$,
\be |T(w,\theta)+\log(|w|/\pi)| \le \frac{\sqrt{\frac{\pi}{|w|}}}{e^{2|w|}-1} \ . \ee
\end{enumerate}
Now make assumptions \ref{it:A1}-\ref{it:charge}, \ref{it:vPosAss}. 
\begin{enumerate}[resume*]
\item \label{it:d}Given $u' \in U \cap \mathcal{B'}$. Fix $\Lambda_e$ a maximal isotropic subspace of $\Gamma_u$ such that $\Gamma_{\mathrm{light}} \subset \Lambda_e \subset \Gamma_{\mathrm{local}}$.  Choose a Frobenius basis $\gamma_{e_1},\ldots,\gamma_{e_r},\gamma_{m_1},\ldots,\gamma_{m_r}$ of $\Gamma_u$.
Define
\be Y = U\times\Theta\setminus \cup_{\gamma\in\tilde\Gamma_{\rm light}} (Z_\gamma^{-1}(0) \times \theta_\gamma^{-1}(0)) \ , \ee
where $\Theta$ is the $r$-torus consisting of unitary characters on $\ZZ[\gamma_{e_1},\ldots,\gamma_{e_r}]$. Endow this with the flat metric $g=\sum_{i=1}^r 4 da_i d\bar a_i + \sum_{i=1}^r d\theta_{\gamma_{e_i}}^2$. Then, each entry of the real symmetric matrix
\be \label{eq:Vog} V_{ij}(u,\theta) = \Imag\tilde\tau_{ij}(u) + \frac{1}{4\pi} \sum_{\gamma\in\tilde\Gamma_{\rm light}} \Omega(\gamma)p_i^{-1} p_j^{-1}c_{\gamma,i} c_{\gamma,j} T(Z_\gamma(u),\theta_\gamma) \ee
is a harmonic function on $Y$ which, moreover, is harmonic on any submanifold defined by fixing the restriction of both $Z$ and $\theta$ to some sublattice of $\hat\Gamma_{\rm light}$ which contains $\hat\Gamma_{\rm light}\cap \Gamma_f$.
\item $V$ is positive-definite on $Y$.
\item \label{it:Vij} If $u\in U\cap\B'$ and $\theta\in \Theta$, then 
\begin{align}
V_{ij}(u,\theta) &= \Imag \tau_{ij}(u) + \frac{1}{4\pi} \sum_{\gamma\in\tilde\Gamma_{\rm light}} \Omega(\gamma) p_i^{-1} p_j^{-1} c_{\gamma,i} c_{\gamma,j} \sum_{n\not=0} e^{in\theta_\gamma} K_0(2|n Z_\gamma(u)|) \nonumber \\
&= \Imag\tau_{ij}(u) + \frac{1}{4\pi} \sum_{\gamma\in\tilde\Gamma_{\rm light}} \Omega(\gamma)p_i^{-1} p_j^{-1} c_{\gamma,i} c_{\gamma,j} \int_{\ell_\gamma(u)} \frac{d\zeta}{\zeta} \frac{\X^{\rm sf}_\gamma(\zeta)}{1-\X^{\rm sf}_\gamma(\zeta)} \ , \label{eq:newV}
\end{align}
where $\ell_\gamma(u)$ is the contour from 0 to $\infty$ along which $\arg(-Z_\gamma(u)/\zeta)=0$.
\item \label{it:bundle} $\pi^{-1}(U)\subset \tilde\M'$ admits the structure of a principal $U(1)^r$-bundle $p: \pi^{-1}(U)\to Y$ with a connection whose curvature is given by the $r$ real 2-forms
\item 
\be F_i = \sum_{j,k=1}^r \brackets{i d\theta_{\gamma_{e_j}}\wedge (\partial_{a_k} V_{ij} \, da_k - \partial_{\bar a_k} V_{ij} \, d\bar a_k) + 2i \partial_{\theta_{\gamma_{e_i}}} V_{jk} \, da_j\wedge d\bar a_k} \ . \label{eq:curvature} \ee
\end{enumerate}
\end{lemma}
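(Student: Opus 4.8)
Parts (a)--(c) involve only the explicit series \eqref{eq:tj} and are, as advertised, a sharpened rerun of \cite[Lemma 3.1]{gross:OV}; parts (d)--(h) feed in the data and Assumptions \ref{it:A1}--\ref{it:charge}, \ref{it:vPosAss}. I would organize the proof accordingly. For (a), after the substitution $\xi=(2\Real w,\ 2\Imag w,\ \theta)$ the metric $4\,dw\,d\bar w+d\theta^2$ becomes the Euclidean metric on $\RR^3$ and the $m$-th summand of $T$ becomes $\pi/|\xi-(0,0,2\pi m)|$, harmonic away from its single pole, and on $\CC\times S^1$ all the poles sit at $(0,0)$. A Taylor expansion in $1/m$ shows that, uniformly on compact subsets of $(\CC\times S^1)\setminus\{(0,0)\}$, the $m$-th summand of $T$ and each of its partial derivatives is $O(1/m^2)$ --- the role of $\kappa_m$ being exactly to kill the $O(1/m)$ part of the value, and indeed $\kappa_m=\frac{1}{2|m|}+O(1/m^3)$ with no $1/m^2$ term. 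The Weierstrass $M$-test then gives the claimed convergence of the series and of all its formally differentiated series, so $T\in C^\infty$, termwise differentiation is legal, and $\Delta T=\sum_m\Delta(\text{summand})=0$. For (b), Poisson summation applied to $\theta\mapsto\sum_m f(\theta+2\pi m)$ with $f(t)=\pi/\sqrt{4|w|^2+t^2}$, together with $\int_{-\infty}^\infty e^{-int}/\sqrt{a^2+t^2}\,dt=2K_0(|n|a)$, produces the terms $e^{in\theta}K_0(2|nw|)$ for $n\neq0$ while the $\kappa_m$ regularize the divergent zero mode, leaving the constant $-\log(|w|/\pi)$ (equivalently one checks both sides of \eqref{eq:poisson} are $S^1$-periodic harmonic functions with matching behavior at $w=0$ and as $|w|\to\infty$). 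For (c), \eqref{eq:poisson} gives $|T+\log(|w|/\pi)|\le 2\sum_{n\ge1}K_0(2n|w|)$; inserting $K_0(x)\le\int_0^\infty e^{-x(1+t^2/2)}\,dt=\sqrt{\pi/(2x)}\,e^{-x}$ (from $\cosh t\ge1+t^2/2$ in $K_0(x)=\int_0^\infty e^{-x\cosh t}\,dt$) and summing the geometric series in $e^{-2|w|}$ yields exactly the stated bound.

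\textbf{Parts (d) and (f).} The structural fact to exploit is that for $\gamma\in\hat\Gamma_{\rm light}$ both $Z_\gamma$ and $\theta_\gamma$ are affine-linear in $(a_1,\dots,a_r,\theta_{\gamma_{e_1}},\dots,\theta_{\gamma_{e_r}})$ with the \emph{same} rational coefficients: since $\hat\Gamma_{\rm light}$ is isotropic (it lies in $\hat\Gamma_{\rm local}$) and is generated by $\gamma_{e_1},\dots,\gamma_{e_\ell}$ together with $\hat\Gamma_{\rm light}\cap\Gamma_f$, one has $\gamma=\sum_i p_i^{-1}c_{\gamma,i}\gamma_{e_i}+(\text{flavor})$, whence $Z_\gamma=\sum_i p_i^{-1}c_{\gamma,i}a_i+\text{const}$ and $\theta_\gamma=\sum_i p_i^{-1}c_{\gamma,i}\theta_{\gamma_{e_i}}+\text{const}$. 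In the $\RR^3$ coordinates above the map $(a,\theta_{\gamma_e})\mapsto(Z_\gamma,\theta_\gamma)$ is therefore linear, and a one-line chain-rule computation gives $\Delta_Y\big(T\circ(\cdot)\big)=\big(\sum_i(p_i^{-1}c_{\gamma,i})^2\big)(\Delta_{\RR^3}T)\circ(\cdot)=0$ on $Y$ (where, by the very definition of $Y$, the image never meets the singular lattice), using (a). Since $\Imag\tilde\tau_{ij}$ is $\theta$-independent and pluriharmonic (the imaginary part of the holomorphic $\tilde\tau_{ij}$, Remark \ref{rem:tau}), this proves $\Delta_Y V_{ij}=0$; the ``moreover'' clause follows by restriction, since these affine-linear maps and pluriharmonic functions restrict to ones of the same type on any submanifold defined by fixing $Z$ and $\theta$ on a sublattice of $\hat\Gamma_{\rm light}$. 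For (f), over $U\cap\B'$ every $Z_\gamma$, $\gamma\in\tilde\Gamma_{\rm light}$, is nonzero, so plugging \eqref{eq:poisson} into \eqref{eq:Vog} and absorbing the $-\log(|Z_\gamma|/\pi)$ terms into $\Imag\tau_{ij}$ via \eqref{eq:imTildeTau} gives the Bessel-sum formula; then on the ray $\ell_\gamma(u)$ one writes $\zeta=-Z_\gamma/s$ with $s>0$, so $|\X^{\rm sf}_\gamma(\zeta)|=e^{-s-|Z_\gamma|^2/s}<1$, expands $\X^{\rm sf}_\gamma/(1-\X^{\rm sf}_\gamma)=\sum_{n\ge1}(\X^{\rm sf}_\gamma)^n$, invokes $\int_0^\infty\frac{ds}{s}e^{-ns-n|Z_\gamma|^2/s}=2K_0(2n|Z_\gamma|)$, and pairs the $\gamma$ and $-\gamma$ contributions (noting $c_{-\gamma,i}c_{-\gamma,j}=c_{\gamma,i}c_{\gamma,j}$, $\theta_{-\gamma}=-\theta_\gamma$, $Z_{-\gamma}=-Z_\gamma$) to recover $\sum_{n\neq0}e^{in\theta_\gamma}K_0(2|nZ_\gamma|)$, hence \eqref{eq:newV}.

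\textbf{Part (e), the crux.} This is the step that uses Assumption \ref{it:vPosAss}, and it is the main obstacle. Using \eqref{eq:imTildeTau}, wherever all $Z_\gamma\neq0$ (i.e.\ over $U\cap\B'$) one may rewrite
\[
V_{ij}(u,\theta)=\Imag\tau_{ij}(u)+\frac{1}{4\pi}\sum_{\gamma\in\tilde\Gamma_{\rm light}}\Omega(\gamma)\,p_i^{-1}p_j^{-1}c_{\gamma,i}c_{\gamma,j}\big(T(Z_\gamma(u),\theta_\gamma)+\log(|Z_\gamma(u)|/\pi)\big).
\]
Fix $v\neq0$. Each matrix $(c_{\gamma,i}c_{\gamma,j})_{ij}$ is rank-one positive semidefinite and $\Omega(\gamma)\ge0$ (Assumption \ref{it:omega}), so the pointwise lower bound of part (c) gives $v^{T}Vv\ge v^{T}Mv$ on $U'$, where $M$ is precisely the matrix of Lemma \ref{lem:A6}, positive definite on $U'$ by \ref{it:vPosAss}; hence $V$ is positive definite on $(U'\times\Theta)\cap Y$. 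To propagate this to all of $Y$ I would use that $v^{T}Vv$ is harmonic on $Y$ by part (d), and also harmonic on the submanifold $\widehat D\subset Y$ lying over the $D$ of \ref{it:vPosAss} (with the restriction of $Z,\theta$ to the relevant corank-one sublattice of $\hat\Gamma_{\rm light}$ held fixed), whose boundary lies in $U'\times\Theta$; moreover $v^{T}Vv\to+\infty$ on approach to the deleted locus $\cup_\gamma(Z_\gamma^{-1}(0)\times\theta_\gamma^{-1}(0))$, since there $T\to+\infty$ while the other coefficients stay $\ge0$. The minimum principle on the closure of $\widehat D$ then forces $v^{T}Vv\ge\min_{\partial\widehat D}v^{T}Vv>0$, and since every point of $U$ lies on such a $D$ this gives positive definiteness of $V$ throughout $Y$. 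The delicate points here are the blow-up of $V$ along the deleted locus and the connectedness bookkeeping for $\widehat D$.

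\textbf{Parts (g) and (h).} By Construction \ref{constr:Mtilde} the functions $\theta'_{\gamma_{m_i}}$ are globally defined on $\pi^{-1}(U)\subset\widetilde{\M'}$ (they do not monodromize, Lemma \ref{lem:thetaprime}) and $\theta'_{\gamma_{e_i}}=\theta_{\gamma_{e_i}}$, so projecting out the $\theta'_{\gamma_{m_i}}$ exhibits $p:\pi^{-1}(U)\to Y$ as a principal $U(1)^r$-bundle, the torus acting by translation in these coordinates. It remains to produce a connection with curvature \eqref{eq:curvature}. I would first check that each $F_i$ is closed: expanding $dF_i$ and using that $\partial_{\theta_{\gamma_{e_i}}}V_{jk}=\frac{1}{4\pi}\sum_\gamma\Omega(\gamma)p_i^{-1}p_j^{-1}p_k^{-1}c_{\gamma,i}c_{\gamma,j}c_{\gamma,k}(\partial_\theta T)(Z_\gamma,\theta_\gamma)$ and $\partial_{a_l}V_{jk}$ are each totally symmetric in their indices --- the first from the explicit $T$-term, the second also from the symmetry and holomorphy of $\tilde\tau$ (Remark \ref{rem:tau}) --- together with $\Delta_Y V_{ij}=0$ from part (d), all contributions cancel. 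Next, $[F_i/(2\pi)]\in H^2(Y;\RR)$ is integral: it equals the class classifying the $i$-th circle factor of the already-constructed bundle $\pi^{-1}(U)\to Y$, which one verifies by integrating $F_i$ over the generating $2$-cycles and comparing with the semi-flat connection over $U\cap\B'$. A connection with curvature $F_i$ therefore exists; concretely it is $\Theta_i=d\theta'_{\gamma_{m_i}}+A_i$, where $A_i$ is the $1$-form on $Y$ with $dA_i=F_i$ underlying the Gibbons--Hawking presentation of $\varpi(\zeta)$ with $V$ in place of $V^{\rm sf}$ (compare Lemma \ref{lem:GHsf}).
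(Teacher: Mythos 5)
Your treatment of (a)--(f) is essentially the paper's: (a)--(b) are deferred to Gross--Wilson (you fill in their Poisson-resummation argument, which is fine), (c) is the same Bessel bound plus geometric series, (d) is the same chain-rule factorization through $(\partial_{Z_\gamma}\partial_{\bar Z_\gamma}+\partial_{\theta_\gamma}^2)T=0$, (e) is the same two-step argument (positivity on $U'$ from \ref{it:vPosAss} via Lemma \ref{lem:A6}, then the minimum principle for $v^TVv$ on the submanifold over the disc $D$ of \ref{it:vPosAss}, using the blow-up of $T$ at the deleted locus), and (f) is the same geometric-series-plus-$K_0$-integral computation. One caution on (d): harmonic functions do \emph{not} in general restrict to harmonic functions on submanifolds, so the ``follows by restriction'' phrasing should be replaced by the observation that on any such submanifold the induced flat Laplacian applied to $T(Z_\gamma,\theta_\gamma)$ is still a nonnegative multiple of $(\partial_{Z_\gamma}\partial_{\bar Z_\gamma}+\partial_{\theta_\gamma}^2)T$ — this is exactly the $h^{IJ}c'_{\gamma,I}c'_{\gamma,J}$ computation the paper does, and note also that what (g)--(h) really need is the stronger componentwise statement $(\partial_{a_k}\partial_{\bar a_\ell}+\partial_{\theta_{e_k}}\partial_{\theta_{e_\ell}})V_{ij}=0$ for all $k,\ell$, not just the trace.

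Where you genuinely diverge is (g)--(h). The paper's primary proof writes down the explicit connection forms $A_i$ (equation \eqref{eq:ai}, with the regulating constants $\lambda_{\gamma,m}$) and $A'_i$ on the two coordinate patches and verifies $dA_i=dA'_i=F_i$ directly; you instead promote to primary status what the paper only sketches as an alternative, namely $dF_i=0$ (from symmetry of the third derivatives of the potential plus polyharmonicity) together with integrality of $[F_i/2\pi]$ and a Chern-class comparison with the bundle already in hand. Both routes prove the lemma as stated, but the explicit route buys the formulas \eqref{eq:ai}--\eqref{eq:Aiprime} that the paper reuses in the proof of Proposition \ref{prop:extendssmoothly} (where the cancellation between the singularities of $A_i$ and of $d\theta_{\gamma_{m_i}}$ is the whole point), so if you take the abstract route you will have to produce those formulas later anyway. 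Two small corrections to your write-up of this step: your closing sentence asserts a global $1$-form $A_i$ on $Y$ with $dA_i=F_i$, which would force $[F_i]=0$ in $H^2(Y;\RR)$ and is not what you mean — $A_i$ is only a local connection form in a trivialization; and the Chern-class comparison you gesture at is precisely the loose end the paper itself flags (``If we wanted to complete this argument...''), so it deserves an actual computation rather than a parenthetical.
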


\begin{figure}[h!]
\begin{centering} 
\includegraphics[height=1.2in]{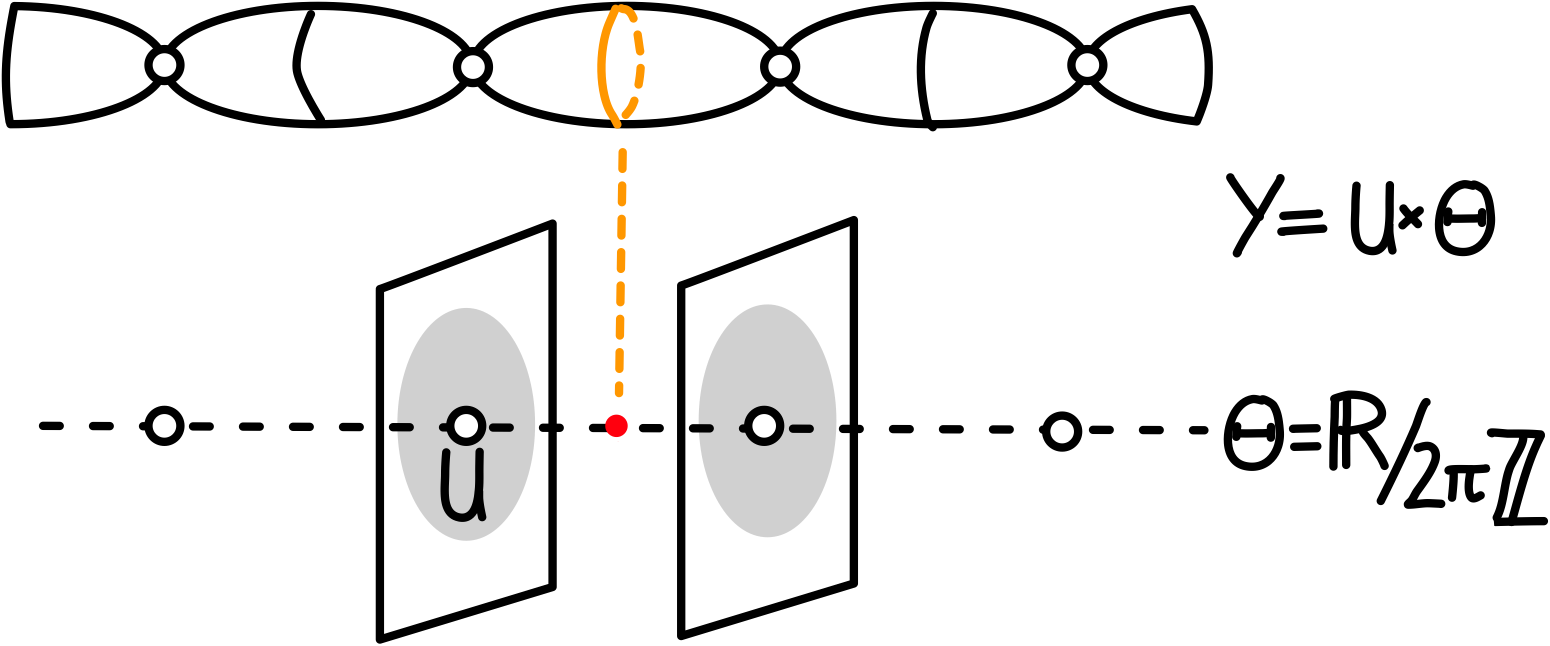} 
\caption{\label{fig:Y} The map $p: \widetilde{M'} \to Y$ described in Lemma \ref{lem:gwOV}\ref{it:d}-\ref{it:bundle}is shown for Ooguri-Vafa. More precisely, note that picture is of this is the $\ZZ^r$ universal cover of $\Theta$, with implicit $\ZZ^r$ action.}
\end{centering}
\end{figure}

\begin{proof}\hfill
\begin{enumerate}[(a)]
\item See \cite{gross:OV} for a proof of (a), except the last part of (a) which follows from the proof of (c) by the Weierstrass M-test. (Absolute convergence in (a) is not proven in this reference, but this is easily verified, since $\kappa_m = \frac{1}{2|m|}+\Oo(1/m^2)$. The constants $\kappa_m$ in \eqref{eq:tj} differ from those in \cite{gross:OV}, but one easily verifies that the limit $T$ is the same with either choice of constants. See also  \cite[pp 36-37]{mz:K3HK} for a concise discussion of the Poisson resummation that yields \eqref{eq:poisson}.)
\item  See \cite{gross:OV} for a proof of (b), except for the uniform and absolute convergence in (b) which similarly follows from the proof of (c) by the Weierstrass M-test.
\item  When $y>0$ we have \cite[\S\S10.37,10.40]{NIST}
\be 0 \le K_0(y) \le \sqrt{\frac{\pi}{2y}} e^{-y} \ . \ee
So, when $|w|\not=0$ we have $|K_0(2|nw|)|\le \sqrt{\frac{\pi}{4|w|}} e^{-2|nw|}$, and so
\be
\abs{T(w,\theta)+\log(|w|/\pi)} \le \sqrt{\frac{\pi}{|w|}} \sum_{n>0} e^{-2|nw|}
= \frac{\sqrt{\frac{\pi}{|w|}}}{e^{2|w|}-1} \ .
\ee
\item  Let $\tilde Y$ be a submanifold defined by fixing the restriction of both $Z$ and $\theta$ to a corank $k$ sublattice $\Gamma_{\rm intermediate}$ of $\Gamma_{\rm local}$ which contains $\Gamma_f$ (where $\tilde Y=Y$ if $k=r$), and let $\tilde g$ be the induced metric. We take $a_{\iota(1)},\ldots,a_{\iota(k)},\theta_{\gamma_{e_{\iota(1)}}},\ldots,\theta_{\gamma_{e_{\iota(k)}}}$ to be our coordinates on $\tilde Y$, where $\iota$ is a permutation of $\{1,\ldots,r\}$. The metric then takes the form
\be \tilde g = 4\sum_{I,J=1}^{k} h_{IJ} ( da_{\iota(I)} d\bar a_{\iota(J)} + \frac{1}{4} d\theta_{\gamma_{e_{\iota(I)}}} d\theta_{\gamma_{e_{\iota(J)}}}) \ , \ee
where $h_{IJ}$ is a real positive-definite matrix with inverse $h^{IJ}$. (Explicitly, if for each $i>k$ we have $\gamma_{e_{\iota(i)}}=\sum_{I=1}^{k} d_{i,I} \gamma_{e_{\iota(I)}}$, modulo $\Gamma_{\rm intermediate}$, then $h_{IJ} = \delta_{IJ}+\sum_{i=k+1}^r d_{i,I} d_{i,J}$. However, the precise choice of $h$ does not affect the following argument.) If $\gamma=\sum_{I=1}^{k} c'_{\gamma,I} \gamma_{e_{\iota(I)}}$, modulo $\Gamma_{\rm intermediate}$, then
\begin{align} &h^{IJ} \left(\half\partial_{a_{\iota(I)}} \partial_{\bar a_{\iota(J)}}+\half \partial_{a_{\iota(J)}} \partial_{\bar a_{\iota(I)}}+\partial_{\theta_{\gamma_{e_{\iota(I)}}}} \partial_{\theta_{\gamma_{e_{\iota(J)}}}}\right) T(Z_\gamma(u),\theta_\gamma) \nonumber \\
&= h^{IJ} c'_{\gamma,I} c'_{\gamma,J} (\partial_{Z_\gamma} \partial_{\bar Z_\gamma} + \partial_{\theta_\gamma}^2) T(Z_\gamma,\theta_\gamma) \nonumber \\
&= 0 \ . 
\end{align}
Similarly,
\be h^{IJ} \left(\half\partial_{a_{\iota(I)}} \partial_{\bar a_{\iota(J)}}+\half \partial_{a_{\iota(J)}} \partial_{\bar a_{\iota(I)}}+\partial_{\theta_{\gamma_{e_{\iota(I)}}}} \partial_{\theta_{\gamma_{e_{\iota(J)}}}}\right) \Imag\tilde\tau_{ij} = 0 \ . \ee
It follows that $V_{ij}$ is harmonic on $\tilde Y$.

\item  For any constant non-zero vector $v$, $u\in U'$, and $\theta\in T^r$, then using \eqref{eq:tauLog}, 
\begin{align}
v^T V(u,\theta) v &= v^T \Imag\tau \, v + \frac{1}{4\pi} \sum_{\gamma\in \tilde\Gamma_{\rm light}} \Omega(\gamma) (v\cdot (p^{-1} c)_\gamma)^2 (T(Z_\gamma(u),\theta_\gamma)+\log(|Z_\gamma(u)|/\pi)) \nonumber \\
&\ge v^T\parens{\Imag\tau - \frac{1}{4\sqrt{\pi}} \sum_{\gamma\in\tilde\Gamma_{\rm light}} \frac{\Omega(\gamma) (p^{-1} c)_\gamma \cdot (p^{-1}c)_\gamma}{\sqrt{|Z_\gamma(u)|}(e^{2|Z_\gamma(u)|}-1)}} v \nonumber \\
&> 0 \ ,
\end{align}
where $(p^{-1} c)_\gamma$ is the vector with component $(p^{-1} c_\gamma)_i= p^{-1}_i c_{\gamma, i}$.
If $(u,\theta)\in Y$ with $u\in U\setminus U'$, then we find a disc $\tilde D$ in $U$ which contains $u$, has $\partial \tilde D\subset U'$, and on which $Z$ is fixed on a corank 1 sublattice $\Gamma_{\rm intermediate}$ of $\Gamma_{\rm local}$ which contains $\Gamma_f$. We now let $\tilde Y$ be the 3-submanifold of $Y$ consisting of those $(u',\theta')\in Y$ with $u'\in \tilde D$ and $\theta'|_{\Gamma_{\rm intermediate}}=\theta|_{\Gamma_{\rm intermediate}}$. The maximum principle for the harmonic function $v^T V v$ on $\tilde Y$ then implies that $v^T V(u,\theta) v > 0$. (Note that any non-compactness of $\tilde Y$ due to loci of the form $\cup_{\gamma\in\tilde\Gamma_{\rm light}}(Z_\gamma^{-1}(0)\times \theta_\gamma^{-1}(0))$ does not cause problems for this argument, as $v^T V v$ diverges to $+\infty$ there. E.g., one can excise neighbhorhoods of these loci in $\tilde Y$ such that $v^T V v>0$ on the new interior boundaries of $\tilde Y$.)

\item  The first equality in \eqref{eq:newV} is trivial. To evaluate the integral in the second line, we first note that $|\X^{\rm sf}_\gamma(\zeta)|$ is bounded above on $\ell_\gamma(u)$ by $e^{-2|Z_\gamma(u)|}<1$. So, the series $\sum_{n>0} (\X_\gamma^{\rm sf}(\zeta))^n$ converges absolutely and uniformly on $\ell_\gamma(u)$ to $\frac{\X^{\rm sf}_\gamma(\zeta)}{1-\X^{\rm sf}_\gamma(\zeta)}$. Since the partial sums of this series, divided by $|\zeta|$, are dominated by the integrable function $\frac{|X^{\rm sf}_\gamma(\zeta)|}{|\zeta|(1-|\X^{\rm sf}_\gamma(\zeta)|)}$, the dominated convergence theorem gives
\be \int_{\ell_\gamma(u)} \frac{d\zeta}{\zeta} \frac{\X^{\rm sf}_\gamma(\zeta)}{1-\X^{\rm sf}_\gamma(\zeta)} = \sum_{n>0} \int_{\ell_\gamma(u)} \frac{d\zeta}{\zeta} (\X^{\rm sf}_\gamma(\zeta))^n = 2 \sum_{n>0} e^{in\theta_\gamma} K_0(2|n Z_\gamma(u)|) \ . \ee
To arrive at the second equality in \eqref{eq:newV} from here, we combine the $\gamma$ and $-\gamma$ sums, using $\Omega(-\gamma) c_{-\gamma,i}c_{-\gamma,j}=\Omega(\gamma) c_{\gamma,i} c_{\gamma,j}$.

\item Valid local coordinates on the fibers of this $U(1)^r$-bundle are given by $\tilde\theta_{\gamma_m}$ on $\pi^{-1}(U\cap\B')$ and by $\tilde\theta'_{\gamma_m}$ on $\pi^{-1}(U)\setminus \cup_{\gamma\in\tilde\Gamma_{\rm light}} \theta_\gamma^{-1}(0)$. (
The `local' part is important: 
an arbitrary choice of lift from $\RR/2\pi\ZZ$ to $\RR$ is necessary.) On $\pi^{-1}(U\cap\B')$, we take our connection to be 
$ p_i(p_i^{-1}d \theta_{\gamma_{m_i}}+A_i)$, where
\begin{align}
A_i =& - \sum_{j=1}^r \Real\tau_{ij} d\theta_{\gamma_{e_j}}\nonumber\\ 
& + \frac{1}{4} \sum_{\gamma'\in\tilde\Gamma_{\rm light}} \Omega(\gamma') p_i^{-1} c_{\gamma',i} d\arg Z_{\gamma'} \sum_{m\in\ZZ} \parens{\frac{2\pi m+\theta_{\gamma'}}{\sqrt{4|Z_{\gamma'}|^2+(2\pi m+\theta_{\gamma'})^2}} - \lambda_{\gamma',m}} \ , \label{eq:ai}
\end{align}
and
\be \lambda_{\gamma,m} = \sgn(\theta_\gamma+2\pi m) + \piecewise{2\parens{m+\frac{\theta_\gamma-\pi}{2\pi}}}{0<m+\frac{\theta_\gamma}{2\pi}<1}{0}{\rm else} \ , \ee
where $\sgn(0)=0$. (One easily verifies that near $2\pi m+\theta_{\gamma'}=0$, $\lambda_{\gamma',m}=2m+\theta_{\gamma'}/\pi$, so $A_i$ is smooth there.) 

On the patch where $\theta'_{\gamma_{m_i}}$ are good coordinates, the connection is $p_i(p_i^{-1}d\theta'_{\gamma_{m_i}}+A'_i)$, where
\begin{align}\label{eq:Aiprime} A'_i =& - \sum_{j=1}^r \Real\tilde\tau_{ij} d\theta_{\gamma_{e_j}}\\ \nonumber
&  + \frac{1}{4}\sum_{\gamma'\in\tilde\Gamma_{\rm light}} \Omega(\gamma') p_i^{-1}c_{\gamma',i} d\arg Z_{\gamma'} \sum_{m\in\ZZ} \parens{\frac{2\pi m+\theta_{\gamma'}}{\sqrt{4|Z_{\gamma'}|^2+(2\pi m +\theta_{\gamma'})^2}} - \sgn(2\pi m+\theta_{\gamma'})} \ .  \end{align}
This is smooth even at $\B'\cap U$ --- the singularities in $A_i$ precisely cancel those in $d\theta_{\gamma_{m_i}}$! (In particular, our earlier assumption that $\theta_{\gamma'}\in (0,2\pi)$ in \eqref{eq:thetaPrime} is crucial here.) These series are uniformly and absolutely convergent on compact subsets of their respective coordinate patches, and may be differentiated arbitrarily many times term-by-term, by the M-test. 

A straightforward computation shows that $dA_i=dA'_i=F_i$.

It is also worthwhile to sketch an alternative, less explicit, proof which illuminates the salient features possessed by $V_{ij}$. We prove that some principal $U(1)^r$-bundle over $Y$ with a connection with curvature $F_i$ exists by showing that $dF_i=0$ on $Y$ and that $[F_i/2\pi]\in H^2(Y,\RR)$ is in the image of $H^2(Y,\ZZ)\to H^2(Y,\RR)$ for all $i$. To do so, we observe that $V_{ij}$ extends as a distribution to $U\times\Theta$ which satisfies the two equations:
\be 
\partial_{\theta_{\gamma_{e_{\ell}}}} V_{ij} = \partial_{\theta_{\gamma_{e_{i}}}} V_{\ell j} \qquad \forall  i, j, \ell
\ee 
and 
\begin{align}& (\partial_{a_k} \partial_{\bar a_\ell}+\partial_{\theta_{\gamma_{e_k}}}\partial_{\theta_{\gamma_{e_\ell}}}) V_{ij} \nonumber \\
&= -\sum_{\gamma\in\tilde\Gamma_{\rm light}} \frac{\Omega(\gamma) p_i^{-1} c_{\gamma,i}p_j^{-1}c_{\gamma,j}p^{-1}_kc_{\gamma,k}p^{-1}_{\ell} c_{\gamma,\ell}}{8|Z_\gamma|} \delta(|Z_\gamma|) \sum_{m\in \ZZ} \delta(\theta_\gamma+2\pi m) \qquad \forall i, j, k, \ell \ . \end{align}
It follows that the current $F_i$ on $U\times \Theta$ satisfies 
\begin{align}
dF_i &= \sum_{j,k,\ell=1}^r 2i (\partial_{a_k} \partial_{\bar a_\ell} + \partial_{\theta_{\gamma_{e_k}}} \partial_{\theta_{\gamma_{e_\ell}}}) V_{ij} \, d\theta_{\gamma_{e_j}} \wedge da_k\wedge d\bar a_\ell \nonumber \\
&= \sum_{\gamma\in\tilde\Gamma_{\rm light}} \frac{\Omega(\gamma) p_i^{-1} c_{\gamma,i} d\theta_\gamma\wedge dZ_\gamma\wedge d\bar Z_\gamma}{4i |Z_\gamma|} \delta(|Z_\gamma|) \sum_{m\in \ZZ} \delta(\theta_\gamma+2\pi m) \nonumber \\
&= \sum_{\gamma\in\tilde\Gamma_{\rm light}} \frac{-\Omega(\gamma) p_i^{-1}c_{\gamma,i} d\theta_\gamma \wedge d|Z_\gamma| \wedge d\arg Z_\gamma}{2} \delta(|Z_\gamma|) \sum_{m\in\ZZ} \delta(\theta_\gamma+2\pi m) \ ,
\end{align}
and the result follows once we observe that the contributions from $\gamma$ and $-\gamma$ are equal. If we wanted to complete this argument, we would show that the first Chern class of this bundle agrees with that of $p: \pi^{-1}(U)\to Y$.
\end{enumerate}
\end{proof}

\subsection{Smooth manifold structure, II}\label{sec:smooth2}
We finish our construction of a smooth manifold $\M$ in Proposition \ref{lem:SmoothII}, making use of  Assumption \ref{it:smooth}. 

\bigskip

As motivation for Proposition \ref{lem:SmoothII}, recall that the Taub-NUT model is a hyper-K\"ahler metric on $\RR^4$ of type ALF; in the Gibbons--Hawking ansatz, it corresponds to the positive harmonic function $V= \frac{1}{4 \pi |y|} + e$ on $(\Imag \HH)_y$ for some choice of $e>0$. In \cite[Example 2.5]{gross:OV}, Gross--Wilson describe the smooth coordinates on Taub-NUT near the preimage of  $y=0 \in \Imag \HH$. An alternate proof of the smoothness is to realize Taub-NUT as a finite-dimensional hyper-K\"ahler quotient at a regular value. This is explained in complete detail in \cite[\S3.1]{gibbons:hkMonopoles}\footnote{We modify their construction slightly. In \cite{gibbons:hkMonopoles}, Taub-NUT is obtained as a hyperk\"ahler quotient of $\HH_q \times \HH_w$ with standard flat Euclidean metric; we identify $w = x + y$ for $x \in \RR$ and $y \in \Imag \HH \simeq \RR^3$, and then instead take the hyperk\"ahler quotient of the $\ZZ$-quotient $\HH_q \times (\RR^3_y \times S^1_x)$.}.  Taub-NUT is obtained as a hyper-K\"ahler quotient of $\HH_q\times (\RR^3_y\times S^1_x)\hqnoxi U(1)$ with $U(1)$ action given by $q\mapsto q e^{i\alpha}$, $y\mapsto y$, $x\mapsto x+\alpha$. The moment map of this $U(1)$ action is $\mu = \half q i \bar q + y$, valued in $\Imag \HH$. 
The hyperk\"ahler quotient is $\HH_q \times (\RR^3_y \times S^1_x) \hqnoxi U(1)=\mu^{-1}(0)/U(1)$, and one can compute (see \cite[(25)]{gibbons:hkMonopoles}) that the induced hyperk\"ahler metric on the quotient is Taub-NUT. As in the usual Gibbons--Hawking presentation, it is apparent that Taub-NUT is fibered over $(\Imag \HH)_y$ with $S^1$ fibers except at $y=0$. However, this is merely a coordinate singularity, and in fact 
$q$ is a good global coordinate; we can solve for $y$ in terms of $q$ as $y=-\half q i \bar q$. 
Writing $q=w_1+w_2 j$, where $w_i\in \CC$, then $q$ is in the fiber over $y$ where
\begin{align}y=
-\half q i \bar q &= -\half (w_1+w_2 j)i(\bar w_1-j\bar w_2) \nonumber \\
&= \underbrace{-\half(|w_1|^2-|w_2|^2)}_\theta i  + \underbrace{i w_1 w_2}_Z j.
\end{align}
Analogues of $\theta, Z$ will appear in the following proposition.

\bigskip

\begin{proposition}\label{lem:SmoothII}
Assume \ref{it:smooth}.
The fibered manifold $\pi: \tilde\M'\to \B$ in Construction \ref{constr:Mtilde} may be extended to another fibered manifold $\pi: \M\to \B$. Over $U$, $\M$ has the structure of a fibered manifold $p: \pi^{-1}(U)\to U\times\Theta$ which extends the  $U(1)^r$ fiber bundle defined in Lemma \ref{lem:gwOV}\ref{it:bundle}.
In particular, the fiber over $(u'', \theta) \in (U, \Theta)$ is $U(1)^{r-|S_{u'', \theta}|}$.
\end{proposition}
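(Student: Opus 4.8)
The plan is to construct $\M$ over $U$ chart by chart, filling in the degenerate fibers of the bundle $p\colon\pi^{-1}(U)\to Y$ of Lemma \ref{lem:gwOV}\ref{it:bundle} one Taub--NUT center at a time, modeled on the hyperk\"ahler--quotient picture of Taub--NUT recalled just before the proposition. Over $\B\setminus\bigcup_{\gamma\in\tilde\Gamma_{\rm light}}\big(Z_\gamma^{-1}(0)\times\theta_\gamma^{-1}(0)\big)$ we set $\M:=\tilde\M'$, so it remains to produce, for each $(u'',\theta_0)\in U\times\Theta$ with $S:=S_{u'',\theta_0}$ nonempty, say $|S|=s\ge1$, a fibered chart around the fiber that is to sit over $(u'',\theta_0)$, and then to check that these charts are compatible with each other and with $p$.

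First I would pick a Frobenius basis $\gamma_{e_1},\dots,\gamma_{e_r},\gamma_{m_1},\dots,\gamma_{m_r}$ of $\Gamma_u$, with lifts to $\hat\Gamma$, such that $\gamma_{e_1},\dots,\gamma_{e_s}$ represent the elements of $S$ and $a_i:=Z_{\gamma_{e_i}}$ are holomorphic coordinates near $u''$. This is possible: by Assumption \ref{it:smooth}\ref{it:light} the images of $S$ span a primitive sublattice of $\Gamma_u$, which is isotropic since $\Gamma_{\rm light}$ is; by Lemma \ref{lem:latBasis} it extends to a Frobenius basis with ``electric'' Lagrangian $\Lambda_e\subset\Gamma_{\rm local}$, and Lemma \ref{lem:basisnearsingular} gives the coordinate property. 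Because $c_{\gamma_{e_k},i}=\avg{\gamma_{m_i},\gamma_{e_k}}=p_i\delta_{ik}$, one has $p_i^{-1}p_j^{-1}c_{\gamma_{e_k},i}c_{\gamma_{e_k},j}=\delta_{ik}\delta_{jk}$, and by \ref{it:smooth}\ref{it:Om1} $\Omega\equiv1$ on $\tilde\Gamma_{\rm light}$; so, combining the (equal) $\pm\gamma_{e_k}$ terms in \eqref{eq:Vog}, near the fiber over $(u'',\theta_0)$
\be V_{ij}=\frac{1}{2\pi}\sum_{k=1}^{s}\delta_{ik}\delta_{jk}\,T\big(a_k,\theta_{\gamma_{e_k}}\big)+R_{ij},\ee
with $R_{ij}$ smooth: it gathers $\Imag\tilde\tau_{ij}$ and the terms $T(Z_\gamma,\theta_\gamma)$ for the finitely many (by \ref{it:omega}) $\gamma\in\tilde\Gamma_{\rm light}\setminus\tilde S_{u'',\theta_0}$, each of which has $Z_\gamma(u'')\neq0$ or $\theta_{0,\gamma}\neq0$ --- an open condition. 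The $s$ pairs $(a_k,\theta_{\gamma_{e_k}})$ are independent real functions on $U\times\Theta$ whose common zero loci are mutually transverse of real codimension $3$; together with the corresponding statement for the connection of Lemma \ref{lem:gwOV}\ref{it:bundle} and the smoothness near these loci of the curvatures $F_i$ with $i>s$ (so that the $U(1)^{r-s}$--bundle in the directions $\gamma_{m_{s+1}},\dots,\gamma_{m_r}$ is trivial there), this exhibits the Gibbons--Hawking data near $(u'',\theta_0)$ as $s$ mutually independent unit--charge Taub--NUT patterns --- in the $\RR^3$ variables $y_k:=\big(\tfrac12\theta_{\gamma_{e_k}},a_k\big)$, with Taub--NUT circle dual to $\gamma_{m_k}$ --- times a trivial $U(1)^{r-s}$ bundle, up to the smooth perturbation $R_{ij}$.

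Following the Taub--NUT computation recalled before the proposition (and \cite[Example~2.5]{gross:OV}), I would then introduce, for $k=1,\dots,s$, complex coordinates $(w_1^{(k)},w_2^{(k)})$ near $0\in\CC^2$ characterized by $i\,w_1^{(k)}w_2^{(k)}=a_k$ and $-\tfrac12\big(|w_1^{(k)}|^2-|w_2^{(k)}|^2\big)=\theta_{\gamma_{e_k}}$ (lifted to $\RR$), with the fibre coordinate $\theta_{\gamma_{m_k}}$ --- or $\theta'_{\gamma_{m_k}}$ where that is the smooth one, cf.\ Lemma \ref{lem:thetaprime} --- identified up to the connection with $\arg w_1^{(k)}-\arg w_2^{(k)}$. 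Appending the spectator coordinates $a_{s+1},\dots,a_r$, $\theta_{\gamma_{e_{s+1}}},\dots,\theta_{\gamma_{e_r}}$, $\theta_{\gamma_{m_{s+1}}},\dots,\theta_{\gamma_{m_r}}$, these give a chart of the sought $\M$ that agrees with $\pi^{-1}(U)$ over $Y$; in it $p$ sends $(w_1^{(k)},w_2^{(k)})_{k\le s}$ to $(a_k,\theta_{\gamma_{e_k}})_{k\le s}$ and is the identity on the rest. Hence the fiber over a nearby $(u',\theta)$ is $\{(0,0)\}$ in the $k$th $\CC^2$ exactly when $a_k(u')=\theta_{\gamma_{e_k}}=0$, and a circle otherwise; choosing representatives compatibly with those at $(u'',\theta_0)$, the set $\{k\le s:\ a_k(u')=\theta_{\gamma_{e_k}}=0\}$ is precisely $S_{u',\theta}$ (no charge outside $\tilde S_{u'',\theta_0}$ can degenerate near $(u'',\theta_0)$, by finiteness of $\tilde\Gamma_{\rm light}$ and openness), so the fiber is $U(1)^{r-|S_{u',\theta}|}$. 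Gluing these charts to $\tilde\M'$ over $Y$, and to one another --- on overlaps the transition maps are determined over $Y$ by the (already consistent) bundle structure of Lemma \ref{lem:gwOV}\ref{it:bundle} together with the linear change between adapted Frobenius bases, and extend across the inserted centers because each center is covered by the $w$--chart uniquely up to the gauge $w_1^{(k)}\mapsto e^{i\alpha_k}w_1^{(k)},\ w_2^{(k)}\mapsto e^{-i\alpha_k}w_2^{(k)}$ --- produces a Hausdorff, second--countable fibered manifold $\pi\colon\M\to\B$ with the stated fibers.

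The main obstacle is to justify that the $w$--coordinates are genuine \emph{smooth} coordinates, i.e.\ that inserting the centers into a Gibbons--Hawking space whose potential is (unit multi--Taub--NUT singular part) plus (smooth, bounded, positive--definite matrix $R_{ij}$), with the matching connection, yields a smooth manifold. I would not invoke a black--box theorem but redo, with $a_{s+1},\dots,a_r,\theta_{\gamma_{e_{s+1}}},\dots$ as inert parameters, the explicit smoothness check for Taub--NUT from its hyperk\"ahler--quotient presentation, carrying along the $O(|w|^2)$ corrections produced by $R_{ij}$ and verifying that they change the chart only by smooth terms; this is exactly where $\Omega\equiv1$ and the primitivity in Assumption \ref{it:smooth} enter, ensuring weight--one centers (no orbifold singularities) and that the filled circles are genuine coordinate circles.
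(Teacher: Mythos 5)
Your overall strategy --- realize the degenerate fibers by hyper-K\"ahler-quotient-style Taub--NUT charts glued to $\tilde\M'$ over $Y$ --- is the same as the paper's. But there is a genuine gap at the very first step, and it is exactly the non-unimodular headache the paper is built to handle. You assert that the elements of $S$ can be taken to be the first $s$ electric vectors $\gamma_{e_1},\ldots,\gamma_{e_s}$ of a Frobenius basis, citing Lemma \ref{lem:latBasis}. That lemma only produces \emph{some} Frobenius basis whose $\alpha$-part lies in a prescribed primitive Lagrangian; it does not let you prescribe the basis vectors themselves. The Example in Appendix \ref{sec:lattices} ($\ZZ^4$ with elementary divisors $1$ and $6$, and $\alpha_1=(1,0,0,0)^T$, $\alpha_2=(0,0,1,0)^T$) shows that a basis of a primitive isotropic sublattice need \emph{not} extend to a Frobenius basis, nor even to a ``quasi-Frobenius'' one. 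Everything downstream of this step depends on it: the identity $c_{\gamma_{e_k},i}=p_i\delta_{ik}$, the diagonalization $V_{ij}=\tfrac{1}{2\pi}\sum_k\delta_{ik}\delta_{jk}T(a_k,\theta_{\gamma_{e_k}})+R_{ij}$, the decomposition into $s$ mutually independent unit-charge $4$d Taub--NUT patterns in coordinate directions, and the claim that the collapsing circle at the $k$th center is the coordinate circle dual to $\gamma_{m_k}$. None of these survive when $\gamma\in S$ is a genuine integer combination $\sum_i p_i^{-1}c_{\gamma,i}\tilde\gamma_{e_i}$ with a non-diagonal charge matrix. (In the unimodular case your reduction is valid and coincides with the paper's construction, since there the quotient below factors into standard Taub--NUTs times a trivial piece.)

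The repair is not cosmetic: you must work with the full $s\times r$ charge matrix $\tilde M=\Omega(\gamma_\sigma)p_i^{-1}c_{\sigma,i}$ and perform a \emph{single} hyper-K\"ahler quotient
$H_S=\bigl(\prod_{\gamma\in S}\HH\bigr)\times(\RR^3\times\RR/2\pi\ZZ)^r\hqnoxi\prod_{\gamma\in S}U(1)$,
where the $U(1)$ attached to $\gamma$ rotates $q_\gamma$ and shifts $x_i$ by $\Omega(\gamma)p_i^{-1}c_{\gamma,i}\alpha_\gamma$. Primitivity of $\mathrm{Span}(S)$ is used only to arrange, after permuting the $\gamma_{e_i}$, that the leading $s\times s$ block $M$ of $\tilde M$ has determinant $\pm 1$; and the smoothness of $H_S$ is \emph{not} automatic from ``weight-one centers'' --- one must check directly that the $\prod_{\gamma\in S}U(1)$-action on the zero level set is free, ruling out both continuous and finite cyclic stabilizers. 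That stabilizer computation is precisely where Assumption \ref{it:smooth} (both $\Omega\equiv 1$ and primitivity) enters, and it is the mathematical core of the proposition; your sketch defers it to ``redoing the Taub--NUT check with inert parameters,'' which does not address the non-diagonal charge matrix or the possible finite quotient singularities.
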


\begin{rem}[Orbifolds]
If one dispenses with assumption \ref{it:smooth}, 
then the preceding lemma may be upgraded to define an orbifold $\M$ instead of a manifold. 
The relevant hyper-K\"ahler quotient $H_S$ is the same as in the proof, but now $S$ and $\tilde S$ should be regarded as multisets where the multiplicity of $\gamma$ is $\Omega(\gamma)$, and the action of the copy of $U(1)$ associated to $\gamma\in S$ maps $x_i$ to $x_i+\Omega(\gamma) p_i^{-1}c_{\gamma,i}\alpha_\gamma$.
\end{rem}

\begin{proof}
Fix some $u''\in U'\cap\B''$ and $\theta\in \Theta_{\rm light}$ such that $S_{u'', \theta} \neq \emptyset$. We will describe a smooth structure on an open neighborhood
$Y'$ of $(u'',\theta)$ in $U\times\Theta$.

First, let $U'$ be a neighborhood of $u''$ such that 
\be U' \cap \mathcal{B''} = \{Z_{\gamma}^{-1}(0): \gamma \in \{ \gamma' \in \widetilde{\Gamma}_{\mathrm{light}}: Z_{\gamma'}(u'')=0 \}\}.\ee
Choose $\Lambda_e$ a primitive maximal isotropic sublattice contained in $\widehat{\Gamma}_{\mathrm{local}}$
containing $\widehat{\Gamma}_{\mathrm{light}}$.
Choose some $u' \in U' \cap \mathcal{B}'$.
Let $\gamma_{e_1}, \cdots, \gamma_{e_r}, \gamma_{m_1}, \cdots, \gamma_{m_r}$ be a Frobenius basis of $\Gamma_u$ with $\gamma_{e_i} \in \Lambda_e$ and $d_i = \avg{\gamma_{m_i}, \gamma_{e_i}}$. Then define $Y' = U' \times \Theta$ as in Lemma \ref{lem:gwOV}. 
By Lemma \ref{lem:thetaprime}, for each $\gamma \in \widehat{\Gamma}_{u'}$, $\theta'_{\gamma}$ defines a function from $\pi^{-1}(U \cap \mathcal{B}')\backslash \cup_{\gamma' \in \widetilde{\Gamma}_{\mathrm{light}}} \theta_{\gamma'}^{-1}(0)$ to $\RR/ 2 \pi \ZZ$.

Let $\tilde S_{u'',\theta}$ and $S_{u'',\theta} \subset \widehat{\Gamma}_{\mathrm{light}}$ be as in Assumption \ref{it:smooth}; 
we abbreviate these as $\tilde S=\tilde S_{u'',\theta}$ and $S=S_{u'',\theta}$ and let $s=|S|$. 
By Assumption \ref{it:smooth}, these generate a primitive sublattice of $\Gamma_{\mathrm{light}}$.
By construction, for each $\gamma \in S$, 
\be \gamma = \sum_{i=1}^r p_i^{-1} c_{\gamma,i} \gamma_{e_i},\ee
where $c_{\gamma, i}= \avg{\gamma_{m_i}, \gamma}$ following Definition \ref{def:tau}.

\bigskip

Then, a neighborhood of $p^{-1}(Y')$ will be shown to be diffeomorphic to a neighborhood in the hyper-K\"ahler quotient 
\be H_S = (\prod_{\gamma\in S} \HH) \times (\RR^3\times \RR/2\pi\ZZ)^r\hqnoxi G_S, \qquad \mbox{where $G_S = \prod_{\gamma\in S} U(1)$}.\ee
We parameterize the copy of $\HH$ associated to $\gamma\in S$ by a quaternion $q_\gamma$ and the $i$-th copy of $\RR^3\times \RR/2\pi\ZZ$ by a pair $(y_i,x_i)$, where $y_i$ is an imaginary quaternion and $x_i\in \RR/2\pi\ZZ$.
We take the action of the copy of $U(1)$ associated to $\gamma\in S$ to be  \begin{align} e^{i \alpha_{\gamma}}: q_\gamma &\mapsto q_\gamma e^{i\alpha_\gamma}\nonumber \\ q_{\gamma'} &\mapsto q_{\gamma'} \mbox{ if $\gamma' \neq \gamma$} \nonumber\\
y_i &\mapsto y_i \nonumber \\
x_i&\mapsto x_i+
 \Omega(\gamma) p_i^{-1}c_{\gamma,i} \alpha_\gamma.\end{align}

Note that this action is well-defined, i.e. $\Omega(\gamma) p_i^{-1} c_{\gamma, i}$ is an integer, precisely because our original basis is a Frobenius basis and $\avg{\gamma_{m_i}, }: \Gamma \to p_i \ZZ$ and because $\Omega$ is constant, positive, and integral on $\widetilde{\Gamma}_{\mathrm{light}} \supset \widetilde{S} \supset S$, by Assumption \ref{it:omega}.
This action is triholomorphic and has moment map \be\mu_\gamma = \half q_\gamma i \bar q_\gamma + \sum_i \Omega(\gamma) p_i^{-1} c_{\gamma,i} y_i.\ee Thus, $H_S = \cap_{\gamma\in S} \mu_\gamma^{-1}(0)/U(1)^{|S|}.$\footnote{ 
Physicists may appreciate the following explanation of where this hyper-K\"ahler quotient comes from. Locally, our manifold is the Coulomb branch of a 3d $\N=4$ $U(1)^r$ gauge theory at finite coupling with $|S|$ hypermultiplets such that the $\gamma$-th hyper has charge $c_{\gamma,i}$ under the $i$-th $U(1)$. The UV 3d mirror symmetry of \cite{kapustin:mirror3} shows that this is also the Higgs branch of a 3d $\N=4$ gauge theory with action
\be \sum_{\gamma\in S} \brackets{ S_H(\Q_\gamma, \tilde\V_\gamma) + \sum_i p_i^{-1} c_{\gamma,i} S_{BF}(\V_i, \tilde\V_\gamma) } + \frac{1}{g^2} \sum_i S_V(\V_i) \ , \ee
in the notation of \cite{kapustin:mirror3}. Dualizing the vector multiplets $\V_i$ to hypermultiplets $\Q'_i$ with periodic real parts (if we regard their scalars as quaternions) --- which really means dualizing the $\N=2$ vector multiplets $V_i$ inside of $\V_i$ to chiral multiplets $\chi_i$ with periodic real parts --- yields the hyper-K\"ahler quotient described in the text. It is satisfying to verify that the condition needed for this theory to have no Coulomb branch, namely that the vectors $(p^{-1}c)_\gamma$ be linearly independent, or equivalently that the matrix $p_i^{-1}c_{\gamma,i}$ have rank $|S|$, coincides with the condition for the original theory to have no Higgs branch. For, a $U(1)^{\rank c}$ subgroup of $U(1)^r$ acts faithfully on the hypermultiplets, so that the dimension of the Higgs branch is $4(|S|-\rank c)$.}
 The hyperk\"ahler quotient $H_S$ is independent of the choice of representatives in $S\subset \tilde S$, since if we replace $q_\gamma$ by $q_{-\gamma} = q_\gamma j$ and $\alpha_\gamma$ by $-\alpha_{-\gamma}$ then $q_\gamma\mapsto q_\gamma e^{i\alpha_\gamma}$ becomes $q_{-\gamma} \mapsto q_{-\gamma} e^{i\alpha_{-\gamma}}$, while $x_i\mapsto x_i+\Omega(\gamma)p_i^{-1}c_{\gamma,i}\alpha_\gamma$ becomes $x_i \mapsto x_i + \Omega(-\gamma)p_i^{-1} c_{-\gamma,i} \alpha_{-\gamma}$, noting $\Omega(\gamma)=\Omega(-\gamma)$.

In total, enumerating $S=\{\gamma_\sigma\}_{\sigma=1}^s$ the $G_S$ action is 
\begin{align} e^{i \sum_{\sigma=1}^s\alpha_\sigma}: q_\varsigma &\mapsto q_\varsigma e^{i\alpha_\varsigma}\nonumber \\ 
y_i &\mapsto y_i \nonumber \\
x_i&\mapsto x_i+\sum_{\sigma=1}^{s} \Omega(\gamma_\sigma) p_i^{-1}
\avg{\gamma_{m_i}, \gamma_\sigma} \alpha_{\sigma}.\end{align}
Then $G_S$ acts freely on $\cap_{\gamma \in S} \mu_\gamma^{-1}(0)/U(1)^{|S|}$, thanks to Assumption \ref{it:smooth}. 
In particular, suppose that $(q_{\gamma'}, y_i, x_i) \in \cap_{\gamma \in S} \mu_\gamma^{-1}(0)$ has a non-trivial stabilizer, i.e. there is some element $e^{i \sum_{\sigma=1}^s \alpha_\sigma}$ fixing the point. We will rule out the existence of such an element. Looking at the action on $x_i$, for each $i=1, \cdots, r$
\be\sum_{\sigma=1}^s \Omega(\gamma_\sigma) p_i^{-1} \avg{\gamma_{m_i},\gamma_\sigma} \alpha_\sigma  \in 2 \pi \ZZ.\ee
First, we  
take
\be a_\sigma = \frac{ 2 \pi \prod_{\varsigma=1}^s \Omega(\gamma_\varsigma)}{\Omega(\gamma_\sigma)}.\ee Then this element generates a non-trivial stabilizer, if and only if, there is some element $\varsigma \in \{1, \cdots, s\}$ such that $\Omega(\gamma_\varsigma) \neq 1$. Our Assumption \ref{it:smooth} rules this out; in the rest of this proof, we keep track of the $\Omega(\gamma_\sigma)$, but remind the reader that they are all equal $1$. 
Since the elements of $S$ are linearly independent, the rank of the matrix $\Omega(\gamma_\sigma) p_i^{-1} \avg{\gamma_{m_i}, \gamma_\sigma}$ is $r$ and consequently, the stabilizer of a point doesn't contain a continuous $U(1)$ subgroup. 
Now, suppose 
\be \alpha_\sigma = \frac{2 \pi n_\sigma}{p} \qquad (n_1, \cdots, n_s) \not\equiv (0, \cdots, 0) \in (\ZZ/ p \ZZ)^{s},\ee
i.e. 
$e^{i \sum_{\sigma=1}^s \alpha_\sigma}$ generates a cyclic subgroup of prime order $p>1$. Then for each $i=1, \cdots, r$
\be\sum_{\sigma=1}^s \Omega(\gamma_\sigma) p_i^{-1} \avg{\gamma_{m_i}, \gamma_\sigma} n_\sigma \in  p\ZZ.\ee
Consequently, consider \begin{align}\gamma = \sum_{\sigma=1}^s\sum_{i=1}^{r} \Omega(\gamma_\sigma) p_i^{-1} \avg{\gamma_{m_i}, \gamma_\sigma} \frac{n_\sigma}{p} \gamma_{e_i}.\end{align}
Inside of $\mathrm{Span}(\gamma_1, \cdots, \gamma_s) \otimes_{\ZZ} \QQ,$ $\gamma = \sum_{\sigma=1}^s \Omega(\gamma_\sigma) \frac{n_\sigma}{p} \gamma_\sigma$, but this is not inside of 
$\mathrm{Span}(\gamma_1, \cdots, \gamma_s)$
since $\Omega(\gamma_\sigma) =1$ and the integers $n_\sigma$ are not all divible by $p$.
However, we check that 
\begin{align}
    p \gamma &= \sum_{\sigma=1}^s \sum_{i=1}^r \Omega(\gamma_\sigma) p_i^{-1} \avg{\gamma_{m_i}, \gamma_\sigma} n_\sigma  \gamma_{e_i}\nonumber \\
    &=\sum_{\sigma=1}^s \Omega(\gamma_\sigma)n_\sigma \gamma_s  \in \mathrm{Span}(\gamma_1,\cdots, \gamma_s).
\end{align}
Since the lattice $\mathrm{Span}(\gamma_1, \cdots, \gamma_s)$ is primitive, we have a contradiction.

Thus, $H_S$ is a smooth manifold. 
Indeed, we claim that $H_S$ is diffeomorphic to $\RR^{4|S|}\times (\RR^3\times \RR/2\pi\ZZ)^{r-|S|}$.
We permute the basis elements $\{\gamma_{e_i}\}$ (and correspondingly permute the basis elements $\{\gamma_{m_i}\}$)  
so that $M$, the first $s$ columns of the $s \times r$ matrix $\widetilde{M}=\Omega(\gamma_\sigma) p_i^{-1} c_{\sigma, i}$,
is invertible (hence $\det M = \pm 1$). This is possible since $\mathrm{Span}(\gamma_1, \cdots, \gamma_s)$ is primitive.  
Write $\tilde{M} = (M |N)$, where $N$ is $s \times r-s$ matrix of the last $r-s$ columns of $\tilde{M}$.

It will be convenient to introduce coordinates $y_{\gamma} =-\frac{1}{2} q_\gamma i \bar q_\gamma$ for $\gamma \in S$, since these coordinates naturally appear in the moment map.
Then, the moment maps  $-\sum_{i=1}^s \Omega(\gamma_\sigma) p_i^{-1}c_{\gamma_\sigma,i} y_{i} = y_{\gamma_\sigma} + \sum_{i=s+1}^r \Omega(\gamma_\sigma) p_i^{-1} c_{\gamma_\sigma,i} y_i$ allow us to solve for $y_1, \cdots, y_s$ in terms of the other variables $q_{\gamma_1}, \cdots, q_{\gamma_s}$ (via $y_{\gamma_1}, \cdots y_{\gamma_s})$) and $ y_{s+1}, \cdots, y_r$ as 
\be \begin{pmatrix} y_1 \\ \vdots \\ y_s \end{pmatrix} = M^{-1} \left( \begin{pmatrix}y_{\gamma_1} \\ \vdots \\ y_{\gamma_s} \end{pmatrix} - N \begin{pmatrix} y_{s+1} \\ \vdots \\ y_{r} \end{pmatrix} \right). \label{eq:y} \ee
Furthermore, we can use the $U(1)^{|S|}$ freedom to set $x_{1}, \cdots, x_s=0$.

\bigskip
On the other hand, $H_S$ admits a triholomorphic $U(1)^r$ action.  The $i$-th factor implements $x_i\mapsto x_i+\beta_i$ while leaving all other coordinates invariant and has moment map $y_i$. (One can check that $e^{i \beta_i}$ preserves $\mu_{\gamma}^{-1}(0)$ and that this action commutes with the residual $G_S$ action on $\mu_{\gamma}^{-1}(0)$, hence descends to a $U(1)$ action on $H_S$. Note that on $H_S$ in the coordinates $q_{\gamma_1}, \cdots, q_{\gamma_s}, y_{s+1}, \cdots, y_r, x_{s+1}, \cdots, x_r$ this $U(1)^r$ action will be non-trivial on $q_{\gamma_\sigma}$.)  It follows (see, e.g., \cite{gibbons:hkMonopoles}) that $H_S$ admits the structure of a fibered manifold $H_S\to  \Imag(\HH)^r$, where the base is parametrized by the moment maps $y_1, \cdots, y_r$, which is a principal $U(1)^r$-bundle on the complement of the locus with a non-trivial stabilizer in $U(1)^r$. Explicitly, this locus is where one or more of the coordinates $q_{\gamma}$ vanishes, i.e.  one of more of the functions $\sum_i \Omega(\gamma) p_i^{-1} c_{\gamma,i} y_i$ vanishes. 
\bigskip

Finally, having studied the geometry of $H_S$, we return to the manifold of interest, and show that $\M$ extends $\widetilde{\M'}$. Near the locus in $H_S$ where the principal bundle structure degenerates, we identify the coordinates \be y_1=\theta_{\gamma_{e_1}}i+2Z_{\gamma_{e_1}}j, \qquad \ldots, \qquad y_r=\theta_{\gamma_{e_r}}i+2Z_{\gamma_{e_r}}j\ee
and the coordinates \be x_i=\tilde\theta_{\gamma_{m_i}}'.\ee
(The factor of $2=2 \pi R$ appears due to our choice of $R= \frac{1}{\pi}$.)
\end{proof}

\begin{lemma}\label{lem:Jacobians}
  The $1$-forms $\{(da_i,  d \bar a_i, d\theta_{\gamma_{e_i}})\}_{i=1, \cdots r}$ are smooth on $\M$. 
Moreover, the absolute value of the Jacobian determinant is 
\be
\abs{\frac{\partial( \{ a_{i}\}_{i=1}^r,\{ \bar a_{i}\}_{i=1}^r,  \{ \theta_{e_i}\}_{i=1}^r, \{ \tilde\theta'_{m_i}\}_{i=1}^r)}{\partial(\{(w_{\gamma_i,1},\bar w_{\gamma_i, 1} w_{\gamma_i, 2}, \bar w_{\gamma_i, 2})\}_{i=1}^s, \{ a_{i}\}_{i=s+1}^r,\{ \bar a_{i}\}_{i=s+1}^r,  \{ \theta_{e_i}\}_{i=s+1}^r, \{ \tilde\theta'_{m_i}\}_{i=s+1}^r)}}= \prod_{i=1}^s\frac{1}{8} |q_{\gamma_i}|^2 
\ee
for $q_\gamma = w_{\gamma, 1} + w_{\gamma, 2} j$.
\end{lemma}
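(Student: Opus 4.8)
The plan is to reduce everything to the single coordinate chart $p^{-1}(Y')$ modelled on the hyper-K\"ahler quotient $H_S$ constructed in the proof of Proposition \ref{lem:SmoothII}, and there to compute the Jacobian by factoring the change of coordinates through one ``Taub--NUT'' block per element of $S$. First, on $\widetilde{\M'}$ the $1$-forms $da_i,d\bar a_i,d\theta_{\gamma_{e_i}}$ are manifestly smooth: $a_i=Z_{\gamma_{e_i}}$ is pulled back from $U$, and since $\gamma_{e_i}\in\hat\Gamma_{\rm local}$ the twisted character $\theta_{\gamma_{e_i}}$ does not monodromize (Lemma \ref{lem:thetaprime}) and is a smooth fiber coordinate. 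Hence by Construction \ref{constr:Mtilde} and Proposition \ref{lem:SmoothII} both claims need only be checked on the $H_S$-chart, with the Jacobian computed on the overlap of that chart with $\widetilde{\M'}$ (away from the degeneration locus, where both coordinate systems are legitimate). On $H_S$, using the identification $y_i=\theta_{\gamma_{e_i}}i+2Z_{\gamma_{e_i}}j$ together with \eqref{eq:y}, each $y_i$ --- hence each $a_i$ and $\theta_{\gamma_{e_i}}$ --- is a polynomial in $q_{\gamma_1},\ldots,q_{\gamma_s}$ and an affine function of $y_{s+1},\ldots,y_r$, so in particular a smooth function of the coordinates $(q_{\gamma_\sigma},y_k,x_k)$ on $H_S$; this gives the first sentence.

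For the Jacobian I would first extract a block-triangular structure. For $i>s$ the ``old'' coordinates $a_i,\bar a_i,\theta_{\gamma_{e_i}},\tilde\theta'_{\gamma_{m_i}}$ literally coincide with ``new'' ones (components of $y_i$, resp.\ $x_i$) and are independent of the $q_{\gamma_\sigma}$; for $i\le s$ the old coordinates $a_i,\bar a_i,\theta_{\gamma_{e_i}},\tilde\theta'_{\gamma_{m_i}}$ depend only on the $q_{\gamma_\sigma}$ and on $y_{s+1},\ldots,y_r$, not on $x_{s+1},\ldots,x_r$ (by Lemma \ref{lem:gwOV}\ref{it:bundle}, translating $x_k$ moves only $\tilde\theta'_{\gamma_{m_k}}$). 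Hence the full $4r\times4r$ Jacobian matrix is block triangular, and up to sign its determinant equals the $4s\times4s$ determinant of $\partial(\{a_i,\bar a_i,\theta_{\gamma_{e_i}},\tilde\theta'_{\gamma_{m_i}}\}_{i=1}^s)/\partial(\{w_{\gamma_i,1},\bar w_{\gamma_i,1},w_{\gamma_i,2},\bar w_{\gamma_i,2}\}_{i=1}^s)$, taken with $y_{s+1},\ldots,y_r$ held fixed.

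Next I would factor this $4s\times4s$ block through intermediate coordinates $\{y_{\gamma_\sigma},\psi_\sigma\}_{\sigma=1}^s$, where $y_{\gamma_\sigma}=-\half q_{\gamma_\sigma}i\bar q_{\gamma_\sigma}$ (as in the Taub--NUT discussion and on $\cap\mu_\gamma^{-1}(0)$) and $\psi_\sigma$ is a fiber angle of $q_{\gamma_\sigma}$ (e.g.\ $\arg w_{\gamma_\sigma,1}$), chosen compatibly with the gauge-fixing $x_1=\cdots=x_s=0$ of Proposition \ref{lem:SmoothII} so that $\tilde\theta'_{\gamma_{m_i}}=-\sum_\sigma M_{i\sigma}\psi_\sigma$ modulo a function of the base coordinates, with $M_{i\sigma}=\Omega(\gamma_\sigma)p_i^{-1}c_{\gamma_\sigma,i}$ the unimodular matrix of that proof. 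Two points make the bookkeeping go through: (i) by \eqref{eq:y} one has $(y_i)_{i\le s}=M^{-1}(y_{\gamma_\sigma})_\sigma+\mathrm{const}$, and the differentials of the base-coordinate corrections to $\tilde\theta'_{\gamma_{m_i}}$ lie in the span of the $da_j,d\bar a_j,d\theta_{\gamma_{e_j}}$ ($j\le s$) and hence drop out of the top wedge, so this change of variables contributes only $|\det M^{-1}|^3\,|\det M|=1$; (ii) the remaining change $\{y_{\gamma_\sigma},\psi_\sigma\}_\sigma\to\{w_{\gamma_\sigma,1},\bar w_{\gamma_\sigma,1},w_{\gamma_\sigma,2},\bar w_{\gamma_\sigma,2}\}_\sigma$ is block-diagonal in $\sigma$, each $4\times4$ block being the real Jacobian of the Hopf-type map $q\mapsto(-\half q i\bar q,\ \text{fiber angle})$ for $q=w_{\gamma_\sigma,1}+w_{\gamma_\sigma,2}j$, which a direct computation (e.g.\ in polar coordinates on the two $\CC$-factors of $q$) evaluates to absolute value $\tfrac14|q_{\gamma_\sigma}|^2=\tfrac14(|w_{\gamma_\sigma,1}|^2+|w_{\gamma_\sigma,2}|^2)$.

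Finally I would assemble the constant, keeping track of the complex/real conventions implicit in the statement: the numerator of the quoted Jacobian contains $r$ conjugate pairs $(a_i,\bar a_i)$ (a factor $(-2i)^r$ relative to $\Real/\Imag$ coordinates) whereas the denominator contains $r+s$ conjugate pairs --- $2s$ among the $w_{\gamma_i,j}$ plus $r-s$ among $a_{s+1},\ldots,a_r$ --- contributing $(-2i)^{r+s}$; hence the quoted Jacobian is $(-2i)^{-s}$ times the purely real Jacobian just computed, so its absolute value is $2^{-s}\prod_{\sigma=1}^s\tfrac14|q_{\gamma_\sigma}|^2=\prod_{i=1}^s\tfrac18|q_{\gamma_i}|^2$, as claimed. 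The main obstacle here is not any single calculation but precisely this bookkeeping in the third step --- pinning down how $\tilde\theta'_{\gamma_{m_i}}$ for $i\le s$ is expressed through the $H_S$-coordinates $q_{\gamma_\sigma}$, i.e.\ matching the residual $U(1)^r$ action on $H_S$ with the $U(1)^r$-bundle structure of Lemma \ref{lem:gwOV}\ref{it:bundle} --- since that is what produces the matrix $M$ and, together with the $(-2i)$-factors, the exact constant $\tfrac18$.
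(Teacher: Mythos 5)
Your proposal is correct and takes essentially the same route as the paper: both factor the coordinate change through per-charge moment-map and fiber-angle coordinates (the paper uses $(Z_{\gamma_\sigma},\bar Z_{\gamma_\sigma},\theta_{\gamma_\sigma},\chi_{\gamma_\sigma})$ with $\chi_\gamma=-\arg w_{\gamma,2}$), use unimodularity of $M$ to show the affine relation to $(a_i,\theta_{\gamma_{e_i}},\tilde\theta'_{\gamma_{m_i}})$ has Jacobian of absolute value $1$, and reduce to a product of $4\times 4$ Hopf-type blocks. The only difference is presentational: the paper evaluates each block as the complex matrix $P_\gamma$ with $\det P_\gamma=-\frac{i}{8}|q_\gamma|^2$, while you compute the real Jacobian $\frac{1}{4}|q_\gamma|^2$ and restore the factors of $-2i$ at the end; both yield $\prod_{\sigma=1}^{s}\frac{1}{8}|q_{\gamma_\sigma}|^2$.
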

\begin{proof}
    Fix $u'' \subset U \cap \B'', \theta \in \Theta_{\rm light}$ such that $S_{u'', \theta} \neq \emptyset$. Let 
    $Y'$ be the open neighborhood of $(u'', \theta)$ in $U \times \Theta$, as in the previous proposition, and consider $p^{-1}(Y')$.
We relate the smooth coordinates on $p^{-1}(Y') \subset \M$ to the smooth coordinates on $\tilde\M'$.
The usual smooth coordinates on $\tilde \M'$
\be \label{eq:coords1}\{(a_i,  \theta_{\gamma_{e_i}}, \tilde\theta'_{\gamma_{m_i}})\}_{i=1, \cdots r}\ee
are related to Gibbons-Hawking-Ansatz-like coordinates $\{(x_i, y_i)\}_{i=1, \cdots, r}$ by 
$y_i = \theta_{\gamma_{e_i}}i + 2a_i j$ and $x_i=\tilde \theta'_{\gamma_{m_i}}$.
The hyper-K\"ahler quotient in Propositon \ref{lem:SmoothII} lets us write the coordinates $y_1, \cdots, y_s, x_1, \cdots, x_s$ in terms of terms of $\{q_{\gamma_1}, \cdots, q_{\gamma_s},y_{s+1}, \ldots, y_r, x_{s+1}, \ldots, x_r\}.$
Our preferred smooth coordinates on $p^{-1}(Y') \subset \M$ will be
\be \{w_{\gamma_1, 1}, \cdots, w_{\gamma_s,1},w_{\gamma_1, 2}, \cdots, w_{\gamma_s,2}, a_{s+1}, \ldots, a_r, \theta_{\gamma_{e_{s+1}}}, \ldots, \theta_{\gamma_{e_r}}, \tilde \theta'_{\gamma_{m_{s+1}}}, \ldots, \tilde \theta'_{\gamma_{m_r}}\},\ee
using the following identification of $\HH_{q_\gamma} \simeq \CC_{w_{\gamma,1}} \times \CC_{w_{\gamma_2}}$ via 
$q_\gamma = w_{\gamma, 1} + w_{\gamma, 2}j$.

\medskip

It is convenient to introduce intermediate coordinates \begin{align} y_{\gamma} &= \theta_{\gamma} i + 2Z_{\gamma} j = -\frac{1}{2} q_{\gamma} i \bar q_{\gamma} \nonumber \\
\chi_\gamma &= - \mathrm{arg}(w_{\gamma,2})=-\frac{1}{2i} \log \left( \frac{w_{\gamma_\sigma,2}}{\bar w_{\gamma_\sigma, 2}}\right)
\label{eq:gammacoords}
\end{align}
 In particular, $\theta_\gamma, Z_\gamma$ are related to $w_{\gamma,1}, w_{\gamma, 2}$ by 
\begin{equation} \label{eq:gammacoords}
\theta_{\gamma} = - \frac{1}{2} \left(|w_{\gamma,1}|^2 - |w_{\gamma,2}|^2 \right), \qquad  \qquad Z_{\gamma} = \frac{i}{2} w_{\gamma,1} w_{\gamma,2}.
\end{equation}
These coordinates $y_\gamma, \chi_\gamma$ are related to coordinates on $\tilde \M'$ via \eqref{eq:y}, while
\begin{equation}\label{eq:triv1totriv2}
    \begin{pmatrix}\chi_{\gamma_1} \\ \vdots \\ \chi_{\gamma_s} \end{pmatrix}  = -(M^T)^{-1} \begin{pmatrix} x_1 \\ \vdots \\ x_s \end{pmatrix}.
    \end{equation}
    To see \eqref{eq:triv1totriv2}, consider the coordinates of the $U(1)^r$ bundle. 
Trivialization \#1 is given by setting $\tilde \theta_{m_i}'=0$ for $i=1, \cdots, s$ and 
varying $\tilde \theta_{m_i}'$ for $i=s+1, \cdots, r$ and $\chi_{\gamma_\sigma}$ for $\sigma=1, \cdots, s$;
trivialization \#2 is given by setting $\chi_{\gamma_\sigma} =0$ for $\sigma=1, \cdots, s$, and 
varying $\tilde \theta_{m_i}'$ for $i=1, \cdots, r$.
To go back and forth between these two trivializations we observe that the $U(1)^{|S|}$ action on $w_{\gamma_\sigma, 2}$ is $w_{\gamma_\sigma, 2} \mapsto w_{\gamma_\sigma, 2} e^{-i \alpha_{\gamma_\sigma}}$, i.e. $\chi_{\gamma_\sigma} \mapsto \chi_{\gamma_\sigma} + \alpha_{\gamma_\sigma}$. The following $s$ quantities are preserved by the $U(1)^{|S|}$ hence give coordinates on $H_S$:
\begin{equation}
\begin{pmatrix}\chi_{\gamma_1} \\ \vdots \\ \chi_{\gamma_s} \end{pmatrix} -(M^T)^{-1} \begin{pmatrix} x_1 \\ \vdots \\ x_s \end{pmatrix}.
\end{equation}
Thus, to translate between the two trivializations, we take \eqref{eq:triv1totriv2}.

\medskip

With all this, we can compute the relationship between the $1$-forms and the Jacobian determinants.

Differentiating \eqref{eq:gammacoords}, we have 
\begin{equation}
    \begin{pmatrix} dZ_{\gamma} \\ d\bar Z_{\gamma} \\ d\theta_{\gamma} \\d \chi_{\gamma} \end{pmatrix}= \underbrace{\begin{pmatrix}  \frac{i}{2} w_{\gamma, 2} & 0 & \frac{i}{2} w_{\gamma,1} & 0 \\
    0 & -\frac{i}{2} \bar w_{\gamma,2}  & 0 & -\frac{i}{2} \bar w_{\gamma, 1} \\ - \frac{1}{2} \bar w_{\gamma, 1} & -\frac{1}{2} w_{\gamma, 1} & \frac{1}{2} \bar w_{\gamma, 2} &\frac{1}{2} w_{\gamma,2}\\ 0 & 0  & - \frac{1}{2i} \frac{1}{w_{\gamma,2}} & \frac{1}{2i} \frac{1}{\bar w_{\gamma,2}} \end{pmatrix}}_{P_\gamma} \begin{pmatrix}dw_{\gamma,1}\\ d\bar w_{\gamma,1}\\dw_{\gamma,2}\\d\bar w_{\gamma,2} \end{pmatrix}
\end{equation}
We observe that $\det P_\gamma =-\frac{i}{8}(|w_{\gamma,1}|^2 + |w_{\gamma, 2}|^2)=-\frac{i}{8}|q_\gamma|^2$. Thus, 
\be
\abs{\frac{\partial(\{(Z_{\gamma_i}, \bar Z_{\gamma_i}, \theta_{\gamma_i}, \chi_{\gamma_i})\}_{i=1}^s, \{ a_{i}\}_{i=s+1}^r,\{ \bar a_{i}\}_{i=s+1}^r,  \{ \theta_{e_i}\}_{i=s+1}^r, \{ \tilde\theta'_{m_i}\}_{i=s+1}^r)}{\partial(\{(w_{\gamma_i,1},\bar w_{\gamma_i, 1} w_{\gamma_i, 2}, \bar w_{\gamma_i, 2})\}_{i=1}^s, \{ a_{i}\}_{i=s+1}^r,\{ \bar a_{i}\}_{i=s+1}^r,  \{ \theta_{e_i}\}_{i=s+1}^r, \{ \tilde\theta'_{m_i}\}_{i=s+1}^r)}}= \prod_{i=1}^s \det P_{\gamma_i}.
\ee

Differentiating \eqref{eq:y} and \eqref{eq:triv1totriv2}, we see that the coordinates 
\be \{\{(\theta_{\gamma_i}, Z_{\gamma_i}, \chi_{\gamma_i})\}_{i=1}^s, a_{s+1}, \ldots, a_r, \theta_{\gamma_{e_{s+1}}}, \ldots, \theta_{\gamma_{e_r}}, \tilde \theta'_{\gamma_{m_{s+1}}}, \ldots, \tilde \theta'_{\gamma_{m_r}}\},\ee
are related to the coordinates \eqref{eq:coords1} by an affine transformation. Moreover, since
\begin{align} \abs{\frac{\partial(a_1, \cdots, a_r)}{\partial(Z_{\gamma_1}, \cdots Z_{\gamma_s}, a_{s+1}, \cdots a_r)}} &= \det \begin{pmatrix}M^{-1} & -M^{-1} N \\ 0 & I_{r-s} \end{pmatrix}= \det(M^{-1}),\\ \nonumber
   \abs{\frac{\partial( \theta_{e_1}, \cdots, \theta_{e_r})}{\partial(\theta_{\gamma_1}, \cdots, \theta_{\gamma_s}, \theta_{e_{s+1}}, \cdots, \theta_{e_r})}} &= \det \begin{pmatrix}M^{-1} & -M^{-1} N \\ 0 & I_{r-s} \end{pmatrix}= \det(M^{-1}),\\ \nonumber 
    \abs{\frac{\partial( \tilde \theta'_{m_1}, \cdots, \tilde \theta'_{m_r})}{\partial(\chi_{\gamma_1}, \cdots, \chi_{\gamma_s}, \tilde \theta'_{m_{s+1}}, \cdots, \tilde \theta'_{m_r})}} &= \det\begin{pmatrix}-M^T & 0 \\ 0 & I_{r-s} \end{pmatrix}= \det(M^{-1}),
\end{align}
and $\det M = \pm 1$, the Jacobian determinant is simply
\be
\abs{\frac{\partial(\{a_i\}_{i=1}^r, \{\bar a_i\}_{i=1}^r, \{\theta_{e_i}\}_{i=1}^r, \{\tilde \theta'_{m_i}\}_{i=1}^r)}{\partial(\{Z_{\gamma_i}\}_{i=1}^s, \{ a_{i}\}_{i=s+1}^r, \{\bar Z_{\gamma_i}\}_{i=1}^s, \{ \bar a_{i}\}_{i=s+1}^r,  \{\theta_{\gamma_i }\}_{i=1}^s, \{ \theta_{e_i}\}_{i=s+1}^r,  \{\chi_{\gamma_i }\}_{i=1}^s, \{ \tilde\theta'_{m_i}\}_{i=s+1}^r)}}= 1.
\ee
\end{proof}

\begin{rem}[Smoothness of Vector Fields]\label{rem:vectorfields}
Note that the vector fields are related by the transpose of the inverse so that 
\begin{equation}
    \begin{pmatrix}\partial_{w_{\gamma,1}}\\ \partial_{\bar w_{\gamma,1}}\\\partial_{ w_{\gamma,2}}\\ \partial_{\bar w_{\gamma,2}} \end{pmatrix}
    = \begin{pmatrix} \frac{i}{2} w_{\gamma, 2} & 0 & - \frac{1}{2} \bar w_{\gamma, 1} \\ 0 & -\frac{i}{2} \bar w_{\gamma, 2} & - \frac{1}{2} w_{\gamma,1} & 0 \\ \frac{i}{2} w_{\gamma, 1} & 0 & \frac{1}{2} \bar w_{\gamma,2} & - \frac{1}{2} \frac{1}{w_{\gamma,2}} \\ 0 & -\frac{i}{2} \bar w_{\gamma,1} & \frac{1}{2} w_{\gamma, 2} & \frac{1}{2} \frac{1}{\bar w_{\gamma,2}} \end{pmatrix}\begin{pmatrix} \partial_{Z_{\gamma}}  \\ \partial_{\bar Z_{\gamma}} \\ \partial_{\theta_{\gamma}} \\\partial_{ \chi_{\gamma}} .\end{pmatrix}
\end{equation}
\end{rem}

\begin{proposition} \label{prop:TN} Fix some $u''\in U'\cap\B''$ and $\theta\in \Theta_{\rm light}$ such that $S_{u'', \theta} \neq \emptyset$.  Take the  standard hyper-K\"ahler metric on  $\HH^s \times (\RR^3 \times \RR/ 2 \pi \ZZ)^r$.
In the coordinates, $\theta_{\gamma_{e_i}}, a_i=Z_{\gamma_{e_i}}, \widetilde{\theta}'_{\gamma_{m_i}}$ on an open neighborhood
$Y'$ of $(u'',\theta)$ in $U\times\Theta$, the induced hyper-K\"ahler structure has associated holomorphic symplectic forms 
\begin{align}
    \varpi^{\mathrm{TN}}(\zeta) &= - \frac{i}{2 \zeta} \omega_+^{\mathrm{TN}} + \omega_3^{\mathrm{TN}} - \frac{i}{2}\zeta \omega_-^{\mathrm{TN}}
    \end{align}
where\footnote{\label{footnote:TNholcoords}
It is straightforward to check that 
\[\omega_+^{\rm TN} = \sum_{i=1}^r da_i \wedge dz_i^{\rm TN}\]
for 
\[z^{\rm TN}_i = \frac{1}{2 \pi} \left( \theta'_{\gamma_{m_i}} + \theta_{\gamma_{e_i}}+ \sum_{\gamma \in S} \left(\frac{p_i^{-1} c_{\gamma, i}}{2i} \mathrm{arctanh} \left(\frac{\theta_\gamma}{\sqrt{4 Z_\gamma \bar Z_\gamma + \theta_\gamma^2}}\right) + \frac{1}{4i} d \log \bar Z_\gamma \right)\right).\]
In this computation, we have chosen $z^{\rm TN}_i$ so that the $dz_i^{\rm TN}$ has the correct coefficients for the $1$-forms $d\theta_{\gamma_{e_j}}$ and $d \bar Z_{\gamma_{e_j}}$; however, we do not match the coefficients for $d Z_{\gamma_{e_j}}$; instead we use that $\sum_{i,j=1}^r f_{ij} da_i \wedge da_j=0$ if $f_{ij}=f_{ji}$.
}
\begin{align}
\omega^{\rm TN}_+ &= \frac{1}{2\pi} \sum_{i=1}^r da_i\wedge (p_i^{-1} 
d\theta'_{\gamma_{m_i}}+{A'_i}^{\rm TN}-i\sum_j V^{\rm TN}_{ij} d\theta_{\gamma_{e_j}}) \ , \nonumber \\
\omega_3^{\rm TN} &= \frac{i}{2\pi} \sum_{i,j} V^{\rm TN}_{ij} da_i\wedge d\bar a_j + \frac{1}{4\pi} \sum_i d\theta_{\gamma_{e_i}}\wedge (p_i^{-1}
d\theta'_{\gamma_{m_i}}+{A'_i}^{\rm TN}) \ , \nonumber \\
\omega_-^{\rm TN} &= \overline{\omega_+^{\rm TN}} \ ,
\end{align}
where 
\begin{align}
    V^{\mathrm{TN}}_{ij} &=  
    \delta_{ij} + \sum_{\gamma \in S} \frac{1}{|q_\gamma|^2} p^{-1}_i p^{-1}_j c_{\gamma, i} c_{\gamma, j} \\ \nonumber 
   {A'_i}^{\mathrm{TN}}&=  
    \sum_{\gamma' \in S} p_i^{-1} c_{\gamma', i} \frac{1}{2} d\arg Z_{\gamma'}\left( \frac{\theta_{\gamma'}}{\sqrt{4|Z_{\gamma'}|^2 + \theta_{\gamma'}^2}} -1 \right).
\end{align}
(Here, $\mathrm{TN}$ is used since this generalizes the Taub-NUT geometry.)
\end{proposition}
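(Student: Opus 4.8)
The plan is to exploit the fact that, by Proposition \ref{lem:SmoothII}, the relevant piece of $\M$ already \emph{is} (a neighborhood in) the hyper-K\"ahler quotient $H_S=\bigl(\prod_{\gamma\in S}\HH\bigr)\times(\RR^3\times\RR/2\pi\ZZ)^r\hqnoxi G_S$, so that the hyper-K\"ahler structure exists for free; the content of the proposition is just the explicit shape of its holomorphic symplectic forms in the coordinates $\theta_{\gamma_{e_i}},a_i=Z_{\gamma_{e_i}},\tilde\theta'_{\gamma_{m_i}}$. I would recognize the ambient flat manifold as a higher-dimensional Gibbons--Hawking space, perform the quotient at the level of Gibbons--Hawking data, and then translate coordinates.

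First I would record the Gibbons--Hawking description of the factors. Each $\HH_{q_\gamma}$ with the triholomorphic action $q_\gamma\mapsto q_\gamma e^{i\alpha_\gamma}$ is the flat (zero-mass Taub--NUT) Gibbons--Hawking space over $\RR^3_{y_\gamma}$, $y_\gamma=-\tfrac12 q_\gamma i\bar q_\gamma$, with harmonic potential $V_\gamma=1/(2|y_\gamma|)=1/|q_\gamma|^2$ and a connection whose curvature is $\star\,dV_\gamma$; in the coordinates $\theta_\gamma,Z_\gamma$ of \eqref{eq:gammacoords} (for which $|q_\gamma|^2=2\sqrt{4|Z_\gamma|^2+\theta_\gamma^2}$) a convenient representative is the Dirac monopole potential $\tfrac12\,d\arg Z_\gamma\bigl(\tfrac{\theta_\gamma}{\sqrt{4|Z_\gamma|^2+\theta_\gamma^2}}-1\bigr)$, whose string singularity is exactly the one absorbed by the primed fiber coordinate of Lemma \ref{lem:thetaprime}. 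Each factor $\RR^3_{y_i}\times(\RR/2\pi\ZZ)_{x_i}$ is the trivial Gibbons--Hawking space with $V=1$ and zero connection. Hence the ambient flat manifold is the Gibbons--Hawking space over $\RR^{3(s+r)}$ with block-diagonal matrix potential $\mathrm{diag}(|q_{\gamma_1}|^{-2},\dots,|q_{\gamma_s}|^{-2},1,\dots,1)$.

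Next I would carry out the quotient. The torus $G_S=U(1)^s$ acts triholomorphically, and in Gibbons--Hawking terms it acts on the fiber directions through the integer matrix $(\Omega(\gamma_\sigma)p_i^{-1}c_{\gamma_\sigma,i})$, which has rank $s$ with a unimodular $s\times s$ minor by Assumption \ref{it:smooth}; moreover $\Omega\equiv 1$ on $\widetilde\Gamma_{\rm light}$, so the $\Omega$'s disappear. Restricting to $\bigcap_\gamma\mu_\gamma^{-1}(0)$ solves for $y_{\gamma_1},\dots,y_{\gamma_s}$ in terms of $y_1,\dots,y_r$ via \eqref{eq:y}, and the residual triholomorphic $U(1)^r$ action produced in the proof of Proposition \ref{lem:SmoothII} exhibits $H_S$ as a Gibbons--Hawking space over $\RR^{3r}_{y_1,\dots,y_r}$. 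The standard Pedersen--Poon formula for the matrix potential of a hyper-K\"ahler quotient of Gibbons--Hawking spaces (\cite{poon:hk}; see also \cite{Bielawski}, \cite{gibbons:hkMonopoles}) --- pull the ambient potential back along the charge matrix --- then gives precisely $V^{\rm TN}_{ij}=\delta_{ij}+\sum_{\gamma\in S}|q_\gamma|^{-2}p_i^{-1}p_j^{-1}c_{\gamma,i}c_{\gamma,j}$ (the $\delta_{ij}$ from the trivial factors, the sum from the $\HH_{q_\gamma}$'s) together with ${A'_i}^{\rm TN}=\sum_{\gamma'\in S}p_i^{-1}c_{\gamma',i}\cdot\tfrac12\,d\arg Z_{\gamma'}\bigl(\tfrac{\theta_{\gamma'}}{\sqrt{4|Z_{\gamma'}|^2+\theta_{\gamma'}^2}}-1\bigr)$. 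Alternatively, one can avoid quoting this formula: restrict the three flat K\"ahler forms to $\bigcap_\gamma\mu_\gamma^{-1}(0)$, gauge-fix with $\chi_{\gamma_\sigma}=0$ as in the proof of Proposition \ref{lem:SmoothII}, push down, and substitute $y_i=\theta_{\gamma_{e_i}}i+2a_ij$, $x_i=\tilde\theta'_{\gamma_{m_i}}$, $q_\gamma=w_{\gamma,1}+w_{\gamma,2}j$ using the identities of \eqref{eq:gammacoords}. With $V^{\rm TN}$ and ${A'}^{\rm TN}$ in hand I would feed them into the higher-dimensional Gibbons--Hawking expression $\varpi(\zeta)=\tfrac{1}{4\pi i}\sum_i\xi_{e_i}(\zeta)\wedge\xi_{m_i}(\zeta)$ with $\xi_{e_i}=i\,d\theta_{\gamma_{e_i}}+\zeta^{-1}da_i+\zeta\,d\bar a_i$ and $\xi_{m_i}=p_i^{-1}d\theta'_{\gamma_{m_i}}+{A'_i}^{\rm TN}+\sum_j V^{\rm TN}_{ij}(\zeta^{-1}da_j-\zeta\,d\bar a_j)$ --- the literal analogue of Lemma \ref{lem:GHsf} --- and read off the coefficients of $\zeta^{-1},\zeta^0,\zeta$ to obtain $\omega_+^{\rm TN},\omega_3^{\rm TN},\omega_-^{\rm TN}$, with $\omega_-^{\rm TN}=\overline{\omega_+^{\rm TN}}$ by reality of the Gibbons--Hawking data; the holomorphic Darboux primitive $z_i^{\rm TN}$ of the footnote is then produced by integrating the $d\theta_{\gamma_{e_j}}$ and $d\bar Z_{\gamma_{e_j}}$ coefficients of $\xi_{m_i}$ (discarding $dZ_{\gamma_{e_j}}$ terms via $\sum_{ij}f_{ij}\,da_i\wedge da_j=0$ for symmetric $f$), exactly as $z_i$ was built in Lemma \ref{lem:omega+sf}, the $\mathrm{arctanh}$ and $d\log\bar Z_\gamma$ pieces being antiderivatives of the $|q_\gamma|^{-2}$-type terms.

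The main obstacle is bookkeeping rather than ideas: tracking normalization constants --- the choice $R=1/\pi$ (responsible for the $2=2\pi R$ in $y_i=\theta_{\gamma_{e_i}}i+2a_ij$), the $\tfrac1{2\pi}$ and $\tfrac1{4\pi}$ prefactors inherited from the conventions of \S\ref{sec:GH} and Lemma \ref{lem:gwOV}, and the $p_i^{-1}$ factors arising because the Gibbons--Hawking picture is really that of the unimodular superlattice (Remark \ref{rem:unimodular}) --- and correctly handling the rectangular $s\times r$ charge matrix, whose ``missing'' $r-s$ directions are supplied by the trivial $\RR^3\times\RR/2\pi\ZZ$ factors and whose relevant minors are unimodular by Assumption \ref{it:smooth}, so that no spurious orbifold points or fractional shifts intrude.
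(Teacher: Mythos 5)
Your proposal is correct, and your fallback option (``restrict the three flat K\"ahler forms to $\bigcap_\gamma\mu_\gamma^{-1}(0)$, gauge-fix with $\chi_{\gamma_\sigma}=0$, push down, and substitute'') is essentially word-for-word the computation the paper actually carries out: the paper restricts the flat quaternionic 2-form of $\HH^s\times(\RR^3\times\RR/2\pi\ZZ)^r$ to the zero level set (using that, unlike the metric, the induced symplectic forms on a hyper-K\"ahler quotient are obtained by mere restriction), rewrites each $\HH_{q_\gamma}$ factor in Gibbons--Hawking form with $V_\gamma=|q_\gamma|^{-2}$ and connection $\Theta_\gamma$, and then sums over factors using $dZ_{\gamma}=\sum_i p_i^{-1}c_{\gamma,i}\,da_i$ and $d\theta_\gamma=\sum_i p_i^{-1}c_{\gamma,i}\,d\theta_{\gamma_{e_i}}$ to read off $V^{\rm TN}$ and ${A'}^{\rm TN}$. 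Your primary route differs only in that it front-loads the Pedersen--Poon/hypertoric recipe (pull the block-diagonal ambient potential back along the charge matrix) to produce $V^{\rm TN}$ and ${A'}^{\rm TN}$ before plugging them into the Gibbons--Hawking expression $\frac{1}{4\pi i}\sum_i\xi_{e_i}\wedge\xi_{m_i}$; this is shorter but imports machinery the paper deliberately avoids relying on (it uses the Gibbons--Hawking ansatz only descriptively, proving everything through explicit 2-form computations), so if you take that route you should either cite the quotient-potential formula with its normalization pinned down or verify it by exactly the direct restriction you describe as the alternative. The one place to be careful, which you correctly flag as bookkeeping, is the sign/gauge matching between the Dirac-string representative $\frac12\,d\arg Z_{\gamma'}\bigl(\frac{\theta_{\gamma'}}{\sqrt{4|Z_{\gamma'}|^2+\theta_{\gamma'}^2}}-1\bigr)$ and the primed fiber coordinate $\tilde\theta'_{\gamma_{m_i}}$, since that is precisely what makes ${A'_i}^{\rm TN}$ (as opposed to $A_i^{\rm TN}$) the smooth representative on the chart where $\theta_{\gamma'}\in(0,2\pi)$.
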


\begin{proof}

   Take the standard hyper-K\"ahler metric on  $\HH^s \times (\RR^3 \times \RR/ 2 \pi \ZZ)^r$, corresponding to the 
 the standard K\"ahler forms
\be\frac{1}{4\pi} \left(\sum_{\gamma \in S} -\frac{1}{2} dq_\gamma \wedge d \bar q_{\gamma} + \sum_{i=1}^r  -\frac{1}{2} d(x_i + y_i) \wedge d \overline{(x_i + y_i)} \right)= i \omega_I + j \omega_J + j \omega_K.\ee
The induced K\"ahler forms on $H_s$ at $(q_\gamma, y_i, x_i)$ are simply obtained by restriction\footnote{See the extended discussion around \cite[(3.35)]{mz:K3HK}. Since $d \mu_{\gamma} = \iota_{X_{\gamma}} (i \omega_I + j \omega_J + k \omega_K)$, on $\cap^{-1}\mu^{-1}_{\gamma}(0)$, any vector $X_{\gamma}$ is in the kernel of $\omega_I, \omega_J, \omega_K$. Consequently, while one does need to find representatives of the tangent space that are orthogonal to the $G$-orbit when computing the induced metric in a hyper-K\"ahler quotient; the same caution is unneccessary for the induced symplectic forms.}. 
We note a few preliminary facts:
First, we observe that 
\be -\frac{1}{2} d(x_i + y_i) \wedge d(x_i-y_i) =dx_i \wedge (dy_{i,1} i + dy_{i,2} j + dy_{i,3} k) + \star dy_{i, 1} i + \star dy_{i, 2} j +\star dy_{i, 3} k,\ee
hence
\begin{align*}
    - \frac{1}{2} d(x_i + y_i) \wedge d(x_i-y_i) &=  \underbrace{\left(dx_i \wedge d \theta_{e_i} + 4 \cdot \frac{i}{2} da_i \wedge d\bar a_i\right)}_{\omega_{I, i}} i +  \underbrace{\left(2dx_i \wedge da_i  + 2i d \theta_{e_i} \wedge da_i \right)}_{\Omega_{I, i}} j;
\end{align*}
also
\[-\frac{1}{2} dq_\gamma \wedge d \bar q_{\gamma} = \underbrace{\frac{i}{2}  \left(dw_{\gamma, 1} \wedge d \bar w_{\gamma, 1} +  dw_{\gamma, 2} \wedge d \bar w_{\gamma, 2}\right)}_{\omega_{I, \gamma}} i +  \underbrace{dw_{\gamma, 1} \wedge dw_{\gamma, 2}}_{\Omega_{I, \gamma}} j 
\]
where 
a tedious computation gives the expected shape 
\begin{align*}
\frac{i}{2} \left( dw_{\gamma, 1} \wedge d \bar w_{\gamma, 1} + d w_{\gamma, 2} \wedge d \bar w_{\gamma,2} \right) &=   
V_\gamma \underbrace{\frac{i}{2} dZ_{\gamma} \wedge d\bar Z_{\gamma}}_{\star d \theta_{\gamma}} + 
\Theta_\gamma \wedge d \theta_\gamma\\
    dw_{\gamma, 1} \wedge d w_{\gamma, 2}  
    &= \left(i \frac{d \theta_\gamma}{|q_\gamma|^2} + \Theta_\gamma \right) \wedge dZ_\gamma
\end{align*}
in terms of the 4d Gibbons-Hawking potential and connection:
\begin{align}
    V_\gamma&=\frac{1}{|q_\gamma|^2}=\frac{1}{2 |y_\gamma|} \nonumber\\
    \Theta_\gamma&=  
    d \chi_\gamma + \frac{|w_{\gamma, 1}|^2}{|q_\gamma|^2} d \mathrm{arg} Z_{\gamma} = d \chi_\gamma + \frac{1}{2}\left(-\frac{\theta}{|y_{\gamma}|} +1 \right)d \mathrm{arg} Z_{\gamma}.
\end{align}
Having done that, we see that 
\begin{align*}
    \omega_I &= \frac{1}{4 \pi} \left( \sum_{\gamma \in S} \frac{1}{|q_\gamma|^2} 4 \cdot \frac{i}{2} dZ_{\gamma} \wedge d\bar Z_{\gamma} +\Theta_{\gamma_i}  \wedge d \theta_\gamma  + \sum_{i=1}^r dx_i \wedge d \theta_{e_i} + 4 \cdot \frac{i}{2} da_i \wedge d \bar a_i  \right)\\
    &= \frac{1}{4\pi} \left(\sum_{\gamma \in S} \frac{1}{|q_\gamma|^2} \sum_{ij} p^{-1}_i p^{-1}_j c_{\gamma,i} c_{\gamma,j} \frac{i}{2} \cdot 4 da_i \wedge d \bar a_j + \Theta_\gamma \wedge \left(\sum_{i=1}^r p_i^{-1} c_{\gamma, i} d \theta_{e_i} \right) \right.\\
    & \left.\qquad + \sum_{i=1}^r dx_i \wedge d \theta_{e_i} + \frac{i}{2} \cdot 4 da_i \wedge d \bar a_i \right)\\
    & = \frac{i}{2 \pi} \sum_{ij} V_{ij}da_i \wedge d \bar a_j + \frac{1}{4\pi} \sum_i d \theta_{e_i} \wedge (d \theta'_{\gamma_{m_i}} + A'_i) \\
     \Omega_I &= \frac{1}{4 \pi} \left( \sum_{\gamma \in S} \left(i \frac{d \theta_\gamma}{|q_\gamma|^2} + \Theta \right) \wedge 2 dZ_\gamma+ \sum_{i=1}^r (dx_i + i d \theta_{e_i})\wedge 2 da_i \right)  \\
     &= \frac{1}{4 \pi} \left( \sum_{\gamma \in S} \left(i \frac{d \theta_\gamma}{|q_\gamma|^2} + \Theta \right) \wedge  \left(\sum_{i=1}^r p_i^{-1} c_{\gamma, i} 2 da_i \right)+ \sum_{i=1}^r (dx_i + i d \theta_{e_i})\wedge 2  da_i  \right) \\
     &= \frac{1}{2 \pi} \sum_{i=1}^r  da_i \wedge \left(d\widetilde{\theta}'_{m_i} + {A_i'}^{\rm TN} -i \sum_{j=1}^r V^{\rm TN}_{ij} d \theta_{e_i} \right). 
\end{align*}
where the potential and connections $d \theta_{m_i} + A_i = d \tilde{\theta}_{m_i}'$ are
\begin{align*}
    V^{\rm TN}_{ij} &= \delta_{ij} + \sum_{\gamma \in S} \frac{1}{2\sqrt{4|Z_{\gamma}^2 + \theta_\gamma^2}} p^{-1}_i p^{-1}_j c_{\gamma, i} c_{\gamma, j} \\
   {A'_i}^{\rm TN}&=   \sum_{\gamma' \in S} p_i^{-1} c_{\gamma', i} \frac{1}{2} d\arg Z_{\gamma'}\left( \frac{\theta_{\gamma'}}{\sqrt{4|Z_{\gamma'}|^2 + \theta_{\gamma'}^2}} -1 \right).
\end{align*}
as claimed.
\end{proof}

\section{Model geometry}\label{sec:modelgeom}
We now proceed to describe the relevant model geometry associated to all of the above data. This was first constructed by Seiberg and Shenker in \cite{seiberg:mirrorT}; a special case was determined from its extra symmetries by Ooguri and Vafa in \cite{vafa:spacetimeInsts} and was studied mathematically rigorously in 
\cite[\S3]{gross:OV}. We will first describe these Seiberg-Shenker geometries using the technology of \cite{GMN:walls}, as opposed to the Gibbons-Hawking ansatz \cite{hawking:multiAn,rocek:tensorDuality,poon:hk,anderson:gh,goto:hk,gibbons:hkMonopoles} as in \cite{gross:OV}.

We begin with a definition:

\begin{figure}[h!]
\begin{centering} 
\includegraphics[height=1.5in]{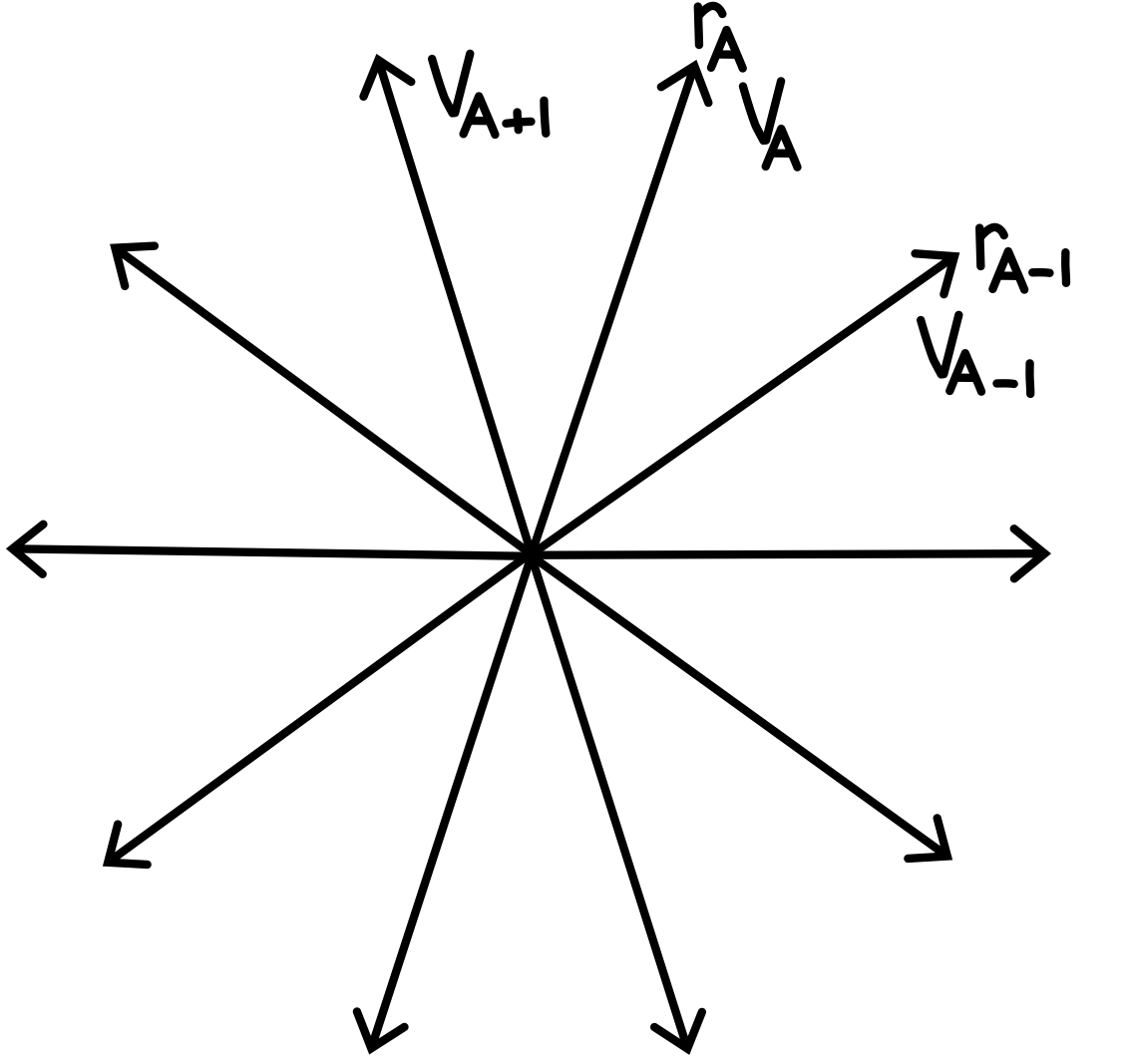} \qquad \qquad  \qquad 
\includegraphics[height=1.5in]{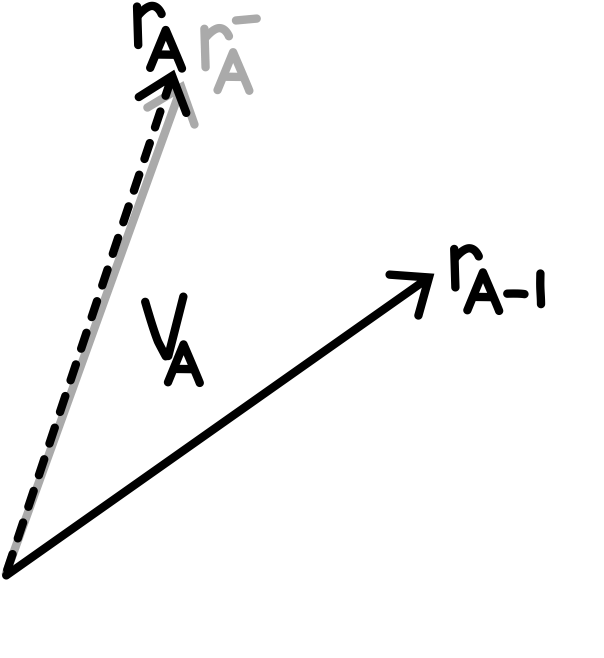}
\caption{\label{fig:sectors}\textsc{(Left)} Sectorial decomposition $\mathcal{V}=\{V_A\}_{A=1}^{2K}$ of $\CC_\zeta^\times$ with rays $r_A$ labelled \textsc{(Right)} $V_A$ contains $r_{A-1}$ but not $r_A$}
\end{centering}
\end{figure}

\begin{defn}
A \emph{sectorial decomposition} (see Figure \ref{fig:sectors}) $\mathcal{V} = \{V_A\}_{A=1}^{2K}$ of $\mb{C}^{\times}_{\zeta}$ is a partition of $\mb{C}^{\times}_{\zeta}$ into $2K$ disjoint half-open sectors ordered counterclockwise and all of opening angle $\phi = \pi/K$, with $K\ge 5$. 
\end{defn}

\noindent\underline{Notation:} Let $r_A$ be the ray separating $V_A$ from $V_{A+1}$, oriented from $0$ to $\infty$; we take $V_A$ to contain $r_{A-1}$ but not $r_{A}$ (see Figure \ref{fig:sectors}).
We also introduce the notation $r_A^-$ to denote a ray infinitesimally clockwise of $r_A$, which we shall use as a contour of integration (see Figure \ref{fig:sectors}). We consider the indices modulo $2K$ so that, e.g., $V_{A+K} = -V_A$. Given $\zeta\in\CC^\times$, $A(\zeta)$ is the index $A$ for which $\zeta\in V_A$.
Similarly, given $\gamma\in\tilde\Gamma$, $A(\gamma)$ is the index $A$ for which $\ell_{\gamma}(u) \subset V_A$. Finally, define
\be \label{eq:Vopen}\mc{V}^{\circ} := \Big(\mb{C}^{\times} \setminus \bigcup_A r_A\Big) \ . \ee

\begin{defn}
Next, given a point $u' \in \mc{B}'$ and a subset $\tilde\Gamma\subset \hat\Gamma_u$, we say that a sectorial decomposition is $(u',\tilde\Gamma)$-\emph{good} if for no $\gamma\in\tilde\Gamma$ does some ray $r_A$ coincide with the ray $\ell_{\gamma}(u)=-Z_{\gamma}(u)\RR^{+}$ defined in Lemma \ref{lem:gwOV}.\end{defn}

\bigskip

Fix $u \in \B''$ and an open set $U$ as in \ref{it:A1}. 
We now cover $U\cap\B'$ by finitely many\footnote{We need at most $(2 \times 2K)^{|\widetilde{\Gamma}_{\mathrm{light}}|/2}$ open sets. Fix a sectorial decomposition $\mathcal{V}_1$ and another sectorial decomposition $\mathcal{V}_2$ obtained by a small rotation of $\mathcal{V}_1$ so that $\mathcal{V}_1^{\circ} \cup \mathcal{V}_2^\circ$ cover $\CC^\times_\zeta$. Track the finitely many $\gamma \in \widetilde{\Gamma}_{\mathrm{light}}$ (up to $\pm$) with respect to these sectors.  
}
contractible open neighborhoods $U_a$ for which there exists a sectorial decomposition $\V_a=\{V_{a,A}\}$ with boundary rays $\{r_{a,A}\}$ which is $(u,\tilde\Gamma_{\rm light})$-good for all $u\in U_a$. 
On $\pi^{-1}(U_a)$, we define the following twisted character:
\begin{defn}[$\X^{{\rm model},a}$]\label{defn:Xmodel} For each $\zeta \in \CC^\times$ and $m \in \M'$, define 
\be \X^{{\rm model},a}_\gamma(\zeta) = \X^{\rm sf}_\gamma(\zeta) \exp\parens{-\frac{1}{4\pi i} \sum_{\gamma'\in \tilde\Gamma_{\rm light}} \Omega(\gamma') \avg{\gamma,\gamma'} \int_{r^-_{a,A(\gamma')}} \frac{d\zeta'}{\zeta'} \frac{\zeta'+\zeta}{\zeta'-\zeta} \log(1-\X^{\rm sf}_{\gamma'}(\zeta')) } \ . \ee
The minus superscript handles the possibility that $\zeta\in r_{a,A(\gamma')}$.
\end{defn}

\begin{rem}
Note that  $\X^{{\rm model},a}_\gamma(\zeta) = \X^{\rm sf}_\gamma(\zeta)$ if $\gamma\in \Gamma_{\rm local}$.
\end{rem}

\begin{rem}[Riemann-Hilbert Problem]
As discussed in \S\ref{sec:strategy}, the goal of this integral relation is to produce a map  $\mathcal{X}: \M'_{U_a} \times \CC^\times \to \mathcal{T}'_{U_a}$ with the following properties:
\begin{enumerate}[(1)]
\item $\X$ depends piecewise holomorphically on $\zeta \in \CC^\times$, with discontinuities at a subset of the boundary rays $\cup_{A=1}^{2K} r_{a,A}$  of the sectorial decomposition $\mathcal{V}_a$. In particular note that the value of $\X$ at $r_A$ is the limit from the clockwise adjacent $V_{A+1}$ sector. 
\item The limits $\X^\pm$ of $\X$ as $\zeta$ approaches $r_A$ from both sides exist and are related by $\X^{+} = S_A^{-1} \circ \X^{-}$ where $S_A = \prod_{\gamma \in \widetilde{\Gamma}_{\mathrm{light}}: \ell_{\gamma}(u) \in V_A} \mathcal{K}_{\gamma}^{\Omega(\gamma; u)}$. (We note that because the bracket is trivial on $\widehat{\Gamma}_{\mathrm{light}}$, there is no subtlety about order. Moreover, $\widetilde{\Gamma}_{\mathrm{light}}$ is finite, so there are no concerns about infinite sums and products.)
\item $\X$ obeys the reality condition $\X(-1/\bar \zeta) = \rho^* \X(\zeta)$ where $\rho: \T'_u \to \T'_u$ is an antiholomorphic involution of $\T'_u$ defined by $\rho^* X_\gamma = \bar \X_{-\gamma}$.
\item For any $\gamma$, $\lim_{\zeta \to 0} \X_{\gamma}(\zeta)/ \X_{\gamma}^{\semif}(\zeta)$ exists.
\end{enumerate}
\end{rem}
\begin{rem}[Comments on Integral]\label{rem:commentsonintegral}
    Integrals like 
    \begin{equation}I_\Psi(\zeta)=\int_{r^-} \frac{d\zeta'}{\zeta'} \frac{\zeta'+\zeta}{\zeta'-\zeta} \Psi(\zeta'), \end{equation}
     where $\Psi(\zeta')$ is a holomorphic function of $\zeta'$, appear throughout this series of papers. Since this is the first appeance, we make a few initial comments. These will be discussed in much greater detail in follow up papers in their series.
     
     First, we observe that the integrand potentially has first order poles at $\zeta'=0, \zeta, \infty$. Supposing that the $\Psi(\zeta')$ vanishes at $\zeta'=0, \infty$ in order to cancel the poles, then the only remaining possible simple pole is at $\zeta'=\zeta$. Supposing that the $\Psi(\zeta')$ vanishes to second order at $\zeta'=0, \infty$, 
    and $\zeta \notin r$, then this is absolutely integrable. 
     The function $I_\Psi(\zeta)$ is a holomorphic function of $\zeta$ away from $r \backslash \{0, \infty\}$. 
     
     The residue theorem tells us that the jump at $\zeta \in r$ is $4 \pi i \Psi(\zeta)$. The integral along the ray $r$ makes sense in a regularized or distributional sense, albeit not in a absolutely integrable sense. 

We note that if $\Psi(\zeta')$ 
has the form $\log(1-\mathcal{X}^{\mathrm{sf}}_\gamma(\zeta'))$, then $\Psi(\zeta')$ has a zero of order $m$ at $0, \infty$ if $\mathcal{X}^{\semif}_\gamma(\zeta')$ has a zero of order $m$ at $0, \infty$. 

Given $r$ making an acute angle with $\ell_{\gamma}(u)$, define a decomposition of $Z_{\gamma}(u) = Z^\parallel_{\gamma}(u) + Z^\perp_{\gamma}(u)$, where $Z^\parallel_{\gamma}(u)$ is parallel to $r$. (See Figure \ref{fig:moveray}.)
 \begin{figure}[h!]
  \begin{centering}
 \includegraphics[width=2.5in]{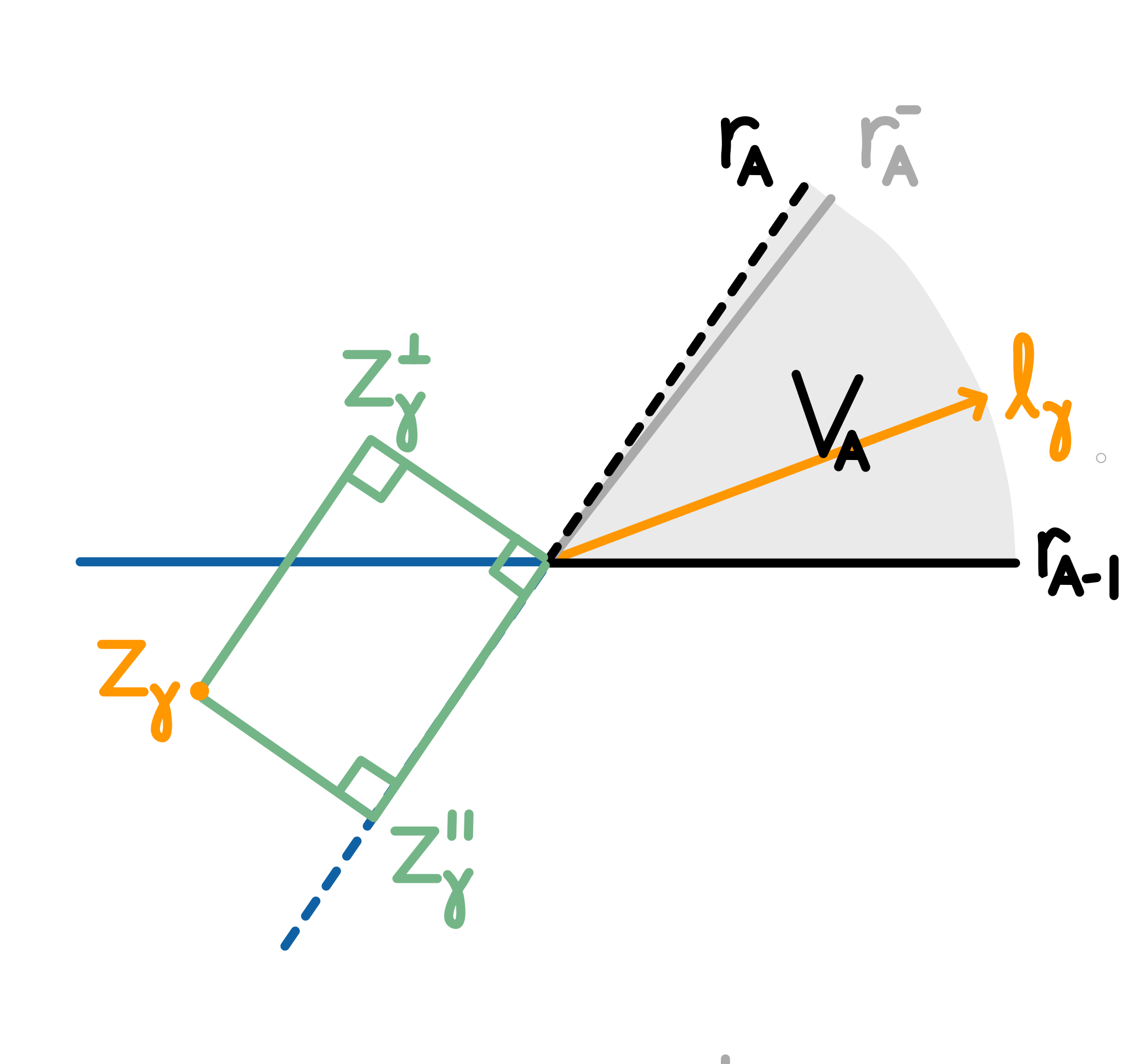}
 \caption{\label{fig:moveray}$Z_{\gamma}(u) = Z^\parallel_{\gamma}(u) + Z^\perp_{\gamma}(u)$} 
 \end{centering}
 \end{figure}
We note that if $\Psi(\zeta')$ has the form $\mathcal{X}^{\semif}_\gamma(\zeta')= \exp(\zeta'^{-1} Z_{\gamma'}(u) + i \theta_{\gamma'} + \zeta \overline{Z_{\gamma'}(u)}),$ then $\Psi(\zeta')$ vanishes to any order at $0, \infty$ along $r$ if $Z^\parallel_{\gamma'}(u)>0$.

Moreover, we note that these integrals are typically highly oscillatory. The polar decomposition of the function is
\be \mathcal{X}^{\semif}_\gamma(\zeta') = \exp \left( - (|\zeta'| + |\zeta'|^{-1}) |Z^\parallel_{\gamma}(u)| - \mathrm{Im}(\theta_\gamma')\right)  \cdot \exp\left( \zeta'^{-1} Z^\perp_{\gamma}(u) + \zeta' \overline{Z^\perp_{\gamma}(u)} +i \mathrm{Re}(\theta_\gamma')   \right),
\ee
where we note that the second term has unit norm. In particular, if $Z^\perp_{\gamma}(u)$ is non-zero, as is the case if $r \neq \ell_{\gamma}(u)$ then the oscillation near $\zeta'=0$ is dominated by $\exp(\zeta'^{-1} Z_{\gamma}^\perp (u))$ up to a phase, and similarly at infinity.  The integral defining $I_\Psi(\zeta')$ is only converging because the real part of $\Psi$ vanishes sufficiently quickly at $0, \infty$.

Lastly, we note that one can analytically continue $I_\Psi(\zeta)$ counterclockwise or clockwise past the jump discontinity along $r$ simply by rotating the ray that we integrate along, provided that the new ray continues to make an acute angle with $\ell_{\gamma}(u)$.
\end{rem}

\bigskip

We analytically continue these functions clockwise and counterclockwise. Note that here we are using that the angle between $\ell_{\gamma'}(u)$ and $r_{a, A(\gamma') \pm 1}$ is strictly acute since it is bounded above by $2 \phi$ and we took $K \geq 5$.
\begin{defn}[$\X^{{\rm model}, \pm, a}$]\label{defn:Xmodelpm} For each $\zeta \in \CC^\times$ and $m \in \M'$, define 
\be \X^{{\rm model}, \pm,a}_\gamma(\zeta) = \X^{\rm sf}_\gamma(\zeta) \exp\parens{-\frac{1}{4\pi i} \sum_{\gamma'\in \tilde\Gamma_{\rm light}} \Omega(\gamma') \avg{\gamma,\gamma'} \int_{r^-_{a,A(\gamma') \pm 1}} \frac{d\zeta'}{\zeta'} \frac{\zeta'+\zeta}{\zeta'-\zeta} \log(1-\X^{\rm sf}_{\gamma'}(\zeta')) } \ . \ee
The minus superscript handles the possibility that $\zeta\in r_{a,A(\gamma') \pm 1}$.
\end{defn}

\begin{rem}[Reality Condition]
The functions $\mathcal{X}^{\mathrm{model}, (\pm), a}$ satisfy the reality condition (analogous to \eqref{eq:xSFReality}):
    \be \overline{\X^{{\rm model},a}_\gamma(\zeta)} = 1/\X^{{\rm model},a}_\gamma(-1/\bar \zeta) \ , \label{eq:xModelReality} \ee
\end{rem}

\begin{proposition}\label{prop:dominatedconvergence}
    The integrals appearing  in the definition, Definition \ref{def:Xmodel} of $\X^{{\rm model},a}_\gamma(\zeta)$ converge. The integral $\X^{{\rm model},a}_\gamma(\zeta)$ is equal to the sum of the following absolutely convergent series
    \be \label{eq:modelvssf2}\X^{{\rm model},a}_\gamma(\zeta) = \X^{\rm sf}_\gamma(\zeta) \exp\parens{\frac{1}{4\pi i} \sum_{\gamma'\in \Gamma_{\rm light}} \bar\Omega(\gamma') \avg{\gamma,\gamma'} \int_{r^-_{a,A(\gamma')}} \frac{d\zeta'}{\zeta'} \frac{\zeta'+\zeta}{\zeta'-\zeta} \X^{\rm sf}_{\gamma'}(\zeta') } \ . \ee
Define $\X^{{\rm model},a}_{\CC,\gamma}$ using the formula in Definition \ref{defn:Xmodel}, but allow $\theta_\gamma$ to be complex-valued. The above result holds provided we assume  $-2|Z_{\gamma}|\cos\phi < \mathrm{Im}(\theta_{\gamma}(u))$.
A similar result is true of the analytic continuations $\X^{{\rm model},\pm, a}_{\gamma_i}(\zeta)$.
\end{proposition}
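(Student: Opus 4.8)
The plan is to follow the structure of the argument sketched in Remark \ref{rem:commentsonintegral}, turning the heuristic observations there into a rigorous chain of estimates. The key point is that along the ray $r^-_{a,A(\gamma')}$, which makes a strictly acute angle with $\ell_{\gamma'}(u)$ (this is exactly where $K\ge 5$ is used), the semi-flat coordinate $\X^{\rm sf}_{\gamma'}(\zeta')$ decays doubly-exponentially at both $\zeta'\to 0$ and $\zeta'\to\infty$.

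\medskip

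\noindent\textbf{Step 1: Pointwise decay of the integrand.} First I would write, following the polar decomposition in Remark \ref{rem:commentsonintegral},
\be |\X^{\rm sf}_{\gamma'}(\zeta')| = \exp\parens{-(|\zeta'|+|\zeta'|^{-1})|Z^\parallel_{\gamma'}(u)| - \Imag(\theta_{\gamma'})} \ ,\ee
where $Z^\parallel_{\gamma'}(u)$ is the component of $Z_{\gamma'}(u)$ along the ray $r_{a,A(\gamma')}$ (or $r_{a,A(\gamma')\pm 1}$ in the $\pm$ cases). Since the angle between $\ell_{\gamma'}(u)$ and the relevant ray is acute and bounded away from $\pi/2$ --- at most $2\phi = 2\pi/K \le 2\pi/5 < \pi/2$ --- we have $|Z^\parallel_{\gamma'}(u)| \ge |Z_{\gamma'}(u)|\cos(2\phi) > 0$, uniformly on the (contractible, hence bounded-away-from-$\B''$ after shrinking) open set $U_a$. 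Thus $|\X^{\rm sf}_{\gamma'}(\zeta')| \le C e^{-c(|\zeta'|+|\zeta'|^{-1})}$ for constants $c>0$, $C>0$ locally uniform in $m$. In the complexified case one replaces $\Imag(\theta_{\gamma'})\mapsto \Imag(\theta_{\gamma'}(u))$ and the bound $-2|Z_{\gamma'}|\cos\phi < \Imag(\theta_{\gamma'}(u))$ is precisely what guarantees $|\X^{\rm sf}_{\gamma',\CC}(\zeta')| < 1$ on the ray together with the required decay; I would track the constant carefully here since the hypothesis uses $\cos\phi$ rather than $\cos(2\phi)$, reflecting that at $|\zeta'|=1$ one has $|\zeta'|+|\zeta'|^{-1}=2$.

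\medskip

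\noindent\textbf{Step 2: Convergence of the $\log$ integral.} Since $|\X^{\rm sf}_{\gamma'}(\zeta')| \le Ce^{-c(|\zeta'|+|\zeta'|^{-1})} \to 0$ at both ends, and in particular is $<1$ along the ray, $\log(1-\X^{\rm sf}_{\gamma'}(\zeta'))$ is well-defined (principal branch) and satisfies $|\log(1-\X^{\rm sf}_{\gamma'}(\zeta'))| \le \frac{|\X^{\rm sf}_{\gamma'}(\zeta')|}{1-|\X^{\rm sf}_{\gamma'}(\zeta')|} \le C' e^{-c(|\zeta'|+|\zeta'|^{-1})}$. The Cauchy kernel factor $\frac{\zeta'+\zeta}{\zeta'-\zeta}\cdot\frac{1}{\zeta'}$ is bounded by $\frac{\mathrm{const}}{|\zeta'|}$ for $|\zeta'|$ small and by $\mathrm{const}$ for $|\zeta'|$ large, provided $\zeta\notin r_{a,A(\gamma')}$ (or one uses the $r^-$ contour so that $\zeta$ is never on it); the doubly-exponential decay dominates this, so the integrand is absolutely integrable. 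This gives the first claim. The $\pm$ versions are identical with $A(\gamma')$ replaced by $A(\gamma')\pm 1$, still an acute angle.

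\medskip

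\noindent\textbf{Step 3: Series expansion.} Along the contour, $|\X^{\rm sf}_{\gamma'}(\zeta')| < 1$, so $-\log(1-\X^{\rm sf}_{\gamma'}(\zeta')) = \sum_{n\ge 1}\frac{1}{n}(\X^{\rm sf}_{\gamma'}(\zeta'))^n$, and since $\X^{\rm sf}_{n\gamma'}(\zeta') = (\X^{\rm sf}_{\gamma'}(\zeta'))^n$ up to the twisting sign and $\avg{\gamma, n\gamma'} = n\avg{\gamma,\gamma'}$, the double sum over $\gamma'\in\tilde\Gamma_{\rm light}$ and $n$ reorganizes (using $\bar\Omega(\gamma') := \sum_{n\ge1}\frac{1}{n}\Omega(\gamma'/n)$ if $\gamma'/n\in\tilde\Gamma_{\rm light}$, interpreted as in the BPS-spectrum-to-DT conventions, or simply $\Omega(\gamma')$ under Assumption \ref{it:smooth}\ref{it:Om1}) into a sum over $\gamma'\in\Gamma_{\rm light}$ weighted by $\bar\Omega$ of the first-order quantity $\X^{\rm sf}_{\gamma'}(\zeta')$, yielding \eqref{eq:modelvssf2}. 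The interchange of the sum over $n$ (and over the finite set $\tilde\Gamma_{\rm light}$) with the integral is justified by dominated convergence: the partial sums are dominated by $\sum_{n\ge1}\frac{|\X^{\rm sf}_{\gamma'}(\zeta')|^n}{n}\cdot|\text{kernel}| \le \frac{-\log(1-|\X^{\rm sf}_{\gamma'}(\zeta')|)}{\cdots}$, which is integrable by Step 2. I would note that exponentiating the whole expression and expanding turns the resulting series into an absolutely convergent one by the same domination.

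\medskip

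\noindent\textbf{Main obstacle.} The routine parts are the estimates; the genuinely delicate point is getting the \emph{uniformity} of the constants $c, C$ in $m\in\pi^{-1}(U_a)$ (and in $u$, after possibly shrinking $U_a$), since $|Z^\parallel_{\gamma'}(u)|$ can degenerate as $u\to\B''$ --- but this is controlled precisely because $U_a$ is a contractible neighborhood whose closure, after the shrinking implicit in the finite cover construction preceding Definition \ref{defn:Xmodel}, stays a definite distance from the locus $Z_{\gamma'}=0$ for each $\gamma'$ with $\ell_{\gamma'}$ not $(u,\tilde\Gamma_{\rm light})$-good; and for those $\gamma'$ whose $Z_{\gamma'}$ \emph{does} vanish inside $U_a$, one has instead that $\theta_{\gamma'}$ stays away from $0$ on the fiber (by excising $\theta_{\gamma'}^{-1}(0)$), or in the complexified case the hypothesis on $\Imag(\theta_{\gamma'}(u))$ provides the needed slack. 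Handling the complexified statement's sharp constant ($\cos\phi$ not $\cos(2\phi)$) also requires care: one checks that the worst case of $|\zeta'|+|\zeta'|^{-1}$ relative to the oscillation $e^{\zeta'^{-1}Z^\perp + \zeta'\overline{Z^\perp}}$ occurs at $|\zeta'|=1$, where the bound $-2|Z_{\gamma'}|\cos\phi < \Imag(\theta_{\gamma'}(u))$ is exactly the threshold for the modulus of the integrand to stay below $1$.
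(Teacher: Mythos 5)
Your proposal is correct and follows essentially the same route as the paper's proof: bound $|\X^{\rm sf}_{\gamma'}(\zeta')|$ by $e^{-2|Z_{\gamma'}|\cos\phi}e^{-\Imag\theta_{\gamma'}}<1$ on the relevant sector using the acute-angle condition, expand $\log(1-\X^{\rm sf}_{\gamma'})$ as the absolutely convergent series $-\sum_{n>0}\frac{1}{n}(\X^{\rm sf}_{\gamma'})^n$, and interchange sum and integral by dominated convergence. The paper's version is terser (it leaves the decay at $\zeta'\to 0,\infty$ and the $\phi$-versus-$2\phi$ bookkeeping for the $\pm$ continuations implicit), and note that the uniformity in $m$ you worry about at the end is not needed here, since the proposition is a pointwise statement.
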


 \begin{proof}[Proof of Proposition \ref{prop:dominatedconvergence}] 
 Since $|\X_{(\CC),\gamma'}^{\rm sf}(\zeta')|\le e^{-2|Z_{\gamma'}|\cos\phi} e^{-\mathrm{Im}(\theta_{\gamma'}(u))} < 1$ 
 on $V_{a,A(\gamma')}$, $\log(1-\X^{\rm sf}_{(\CC), \gamma'}(\zeta'))$ is unambiguously defined on $V_{a,A(\gamma')}$ 
 as the uniformly and absolutely convergent series $-\sum_{n>0} \frac{1}{n}(\X^{\rm sf}_{(\CC),\gamma'}(\zeta'))^n$. The dominated convergence theorem gives
 \be \label{eq:modelvssf2}\X^{{\rm model},a}_{(\CC),\gamma}(\zeta) = \X^{\rm sf}_{(\CC),\gamma}(\zeta) \exp\parens{\frac{1}{4\pi i} \sum_{\gamma'\in \Gamma_{\rm light}} \bar\Omega(\gamma') \avg{\gamma,\gamma'} \int_{r^-_{a,A(\gamma')}} \frac{d\zeta'}{\zeta'} \frac{\zeta'+\zeta}{\zeta'-\zeta} \X^{\rm sf}_{(\CC),\gamma'}(\zeta') } \ , \ee
 where this series converges absolutely. In particular, $\X^{{\rm model},a}_{(\CC),\gamma}(\zeta) = \X^{\rm sf}_{(\CC),\gamma}(\zeta)$ if $\gamma\in \Gamma_{\rm local}$. We also have
 \be \overline{\X^{{\rm model},a}_{(\CC),\gamma}(\zeta)} = 1/\X^{{\rm model},a}_{(\CC),\gamma}(-1/\bar \zeta) \ , \label{eq:xModelReality} \ee
  which is analogous to \eqref{eq:xSFReality}.
 \end{proof}

\bigskip

We now provide an analogue of Lemma \ref{lem:nondegen}:
\begin{lemma}[Nondegeneracy] \label{lem:nondegenModel} 
Let $\{\gamma_{e_i}, \gamma_{m_i}\}$ be a lift to $\hat\Gamma$ of a Frobenius basis of sections of $\Gamma$ over $U_a$.  
Then, for every $\zeta\in \CC^\times$, $\{\X^{{\rm model},a}_{\gamma_i}(\zeta)\}$ provide valid coordinates on $\pi^{-1}(U_a) \subset \M'$.
A similar result is true of the analytic continuations $\{\X^{{\rm model},\pm, a}_{\gamma_i}(\zeta)\}$.
\end{lemma}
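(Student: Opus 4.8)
\textbf{Proof proposal for Lemma \ref{lem:nondegenModel}.}

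The plan is to imitate the proof of Lemma \ref{lem:nondegen} closely, treating $\X^{{\rm model},a}_{\gamma_i}(\zeta)$ as a small deformation of $\X^{\rm sf}_{\gamma_i}(\zeta)$ whose perturbation is holomorphic in $\zeta$ near $\zeta$ fixed and whose correction to the relevant Jacobian does not spoil its invertibility. First I would set $\Y^{{\rm model},a}_{\gamma_i}(\zeta) = \log\X^{{\rm model},a}_{\gamma_i}(\zeta)$ and, exactly as in Lemma \ref{lem:nondegen}, pass to a contractible $U_a$, trivialize the local systems, pick a quadratic refinement to work with untwisted characters $\tilde\X^{{\rm model},a}$, and lift $\tilde\theta$ locally to a homomorphism $\hat\Gamma\otimes\RR\to\RR$; it then suffices to check nondegeneracy for a lift of a Frobenius basis $\gamma_{m_i},\gamma_{e_i}$. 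Writing $a_i = Z_{\gamma_{e_i}}$, $a_i^D = p_i^{-1}Z_{\gamma_{m_i}}$, $\tau_{ij}=\partial_{a_j}a_i^D$ as before, I would compute the Jacobian matrix of $(\{\Imag\tilde\Y^{{\rm model},a}_{\gamma_{e_i}}\},\{p_i^{-1}\Imag\tilde\Y^{{\rm model},a}_{\gamma_{m_i}}\},\{\Real\tilde\Y^{{\rm model},a}_{\gamma_{e_i}}\},\{p_i^{-1}\Real\tilde\Y^{{\rm model},a}_{\gamma_{m_i}}\})$ with respect to $(\tilde\theta_{\gamma_{e_i}}, p_i^{-1}\tilde\theta_{\gamma_{m_i}}, a_i, \bar a_i)$.

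The key observation is the structural one recorded in the remarks: $\X^{{\rm model},a}_\gamma = \X^{\rm sf}_\gamma$ when $\gamma\in\Gamma_{\rm local}$, and in particular for the $\gamma_{e_i}$ chosen in $\Lambda_e\subset\Gamma_{\rm local}$ (Lemma \ref{lem:basisnearsingular}), so the first and third block-rows of the Jacobian are \emph{identical} to those in \eqref{eq:sfJacobianMat}. Only the rows coming from $\tilde\Y^{{\rm model},a}_{\gamma_{m_i}}$ are deformed, and by Definition \ref{defn:Xmodel} the deformation is $\tilde\Y^{{\rm model},a}_{\gamma_{m_i}} = \tilde\Y^{\rm sf}_{\gamma_{m_i}} - \frac{1}{4\pi i}\sum_{\gamma'}\Omega(\gamma')\avg{\gamma_{m_i},\gamma'}I_{\gamma'}(\zeta)$, where $I_{\gamma'}(\zeta)=\int_{r^-_{a,A(\gamma')}}\frac{d\zeta'}{\zeta'}\frac{\zeta'+\zeta}{\zeta'-\zeta}\log(1-\X^{\rm sf}_{\gamma'}(\zeta'))$ depends on the fiber coordinates only through $\theta_{\gamma'}$ (with $\gamma'\in\hat\Gamma_{\rm light}$, so $\theta_{\gamma'}$ is a $\ZZ$-linear combination of the $\tilde\theta_{\gamma_{e_j}}$) and on $u$ only through $\{Z_{\gamma'}\}$, which are functions of the $a_i$ alone (not $\bar a_i$) by Assumption \ref{it:coords}/Lemma \ref{lem:basisnearsingular}. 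Differentiating $I_{\gamma'}$ under the integral sign — justified by the dominated-convergence estimates in Proposition \ref{prop:dominatedconvergence} and the convergence in Lemma \ref{lem:gwOV}(a) — the derivatives with respect to $\tilde\theta_{\gamma_{e_j}}$ and $a_j$ are finite, and crucially the derivatives with respect to $\bar a_j$ vanish. Hence the modified Jacobian matrix has the block form
\be
\begin{pmatrix}
I_r & 0_r & \frac{1}{2i}(\zeta^{-1}-\bar\zeta)I_r & \frac{1}{2i}(\zeta-\bar\zeta^{-1})I_r \\
* & I_r & * & \frac{1}{2i}(\zeta-\bar\zeta^{-1})\bar\tau \\
0_r & 0_r & \half(\zeta^{-1}+\bar\zeta)I_r & \half(\zeta+\bar\zeta^{-1})I_r \\
* & 0_r & * & \half(\zeta+\bar\zeta^{-1})\bar\tau
\end{pmatrix},
\ee
i.e. the same as \eqref{eq:sfJacobianMat} except that three of the blocks in the $\gamma_m$-rows acquire corrections and one block in the last row also does, while the two $\bar a$-columns in the $\Real\tilde\Y^{\rm sf}_{\gamma_{m_i}}$ row are \emph{unchanged} because that row has no logarithmic correction of the dangerous type — more precisely, I will check that the only entries that change in the last block-row are those in the $a$-columns. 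I would then apply the block-determinant identity \eqref{eq:blockDet} twice exactly as in Lemma \ref{lem:nondegen}: row/column operations using the two identity blocks (first and third block rows, which are undeformed) eliminate the starred entries, reducing the determinant to $\det$ of the $2\times2$ block built from the $\Real\tilde\Y^{\rm sf}_{\gamma_{e_i}}$ and $\Real\tilde\Y^{{\rm model},a}_{\gamma_{m_i}}$ rows against the $a,\bar a$ columns; since the $\bar a$-columns there are the unmodified $\half(\zeta+\bar\zeta^{-1})I_r$ and $\half(\zeta+\bar\zeta^{-1})\bar\tau$, the Schur complement against the first (identity) block again collapses the $a$-column corrections, and the determinant comes out to $|\frac{\zeta^{-1}+\bar\zeta}{2}|^{2r}(-2i)^r\det\Imag\tau \neq 0$ — the \emph{same} nonzero value as in the semi-flat case. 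Finally, since the analytic continuations $\X^{{\rm model},\pm,a}_{\gamma_i}(\zeta)$ are defined by the identical formula with $r^-_{a,A(\gamma')}$ replaced by $r^-_{a,A(\gamma')\pm1}$ (and the relevant acute-angle hypothesis holds because $K\geq 5$), the identical computation applies verbatim.

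The main obstacle I anticipate is not the linear algebra — which is essentially forced once one sees that the $\gamma_e$-rows and the $\bar a$-columns of the $\gamma_m$-rows survive intact — but rather the careful bookkeeping needed to justify differentiation under the integral sign and to verify that the perturbation genuinely lands only in the "harmless" blocks. Concretely, one must confirm that $\partial_{\bar a_j}I_{\gamma'}(\zeta)=0$ (this is where holomorphicity of $Z_{\gamma'}$ in $a$ from Assumption \ref{it:coords} is essential) and that $\partial/\partial\tilde\theta_{\gamma_{m_i}}$ annihilates $I_{\gamma'}$ (since $\gamma'\in\hat\Gamma_{\rm light}$ pairs trivially with $\hat\Gamma_{\rm light}$, but here more simply $\theta_{\gamma'}$ involves only $\tilde\theta_{\gamma_{e_j}}$), and that the resulting finite corrections $*$ really do sit only in the columns indicated above; the convergence of the differentiated integrals is then handled by the same dominated-convergence/Weierstrass M-test arguments already invoked in Proposition \ref{prop:dominatedconvergence} and Lemma \ref{lem:gwOV}.
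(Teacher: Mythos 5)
Your overall strategy --- compute the Jacobian of the $\tilde\Y^{{\rm model},a}$-coordinates in the same block form as Lemma \ref{lem:nondegen}, using that the $\gamma_{e_i}$-rows are untouched because $\X^{{\rm model},a}_{\gamma}=\X^{\rm sf}_{\gamma}$ for $\gamma\in\Gamma_{\rm local}$, and that the integral corrections are independent of $\tilde\theta_{\gamma_{m_j}}$ --- is the right one and is the paper's. But two steps are wrong, and the second one hides the entire content of the lemma. First, the claim that $\partial_{\bar a_j}I_{\gamma'}(\zeta)=0$ is false: the integrand contains $\log(1-\X^{\rm sf}_{\gamma'}(\zeta'))$, and $\X^{\rm sf}_{\gamma'}(\zeta')=\exp(\zeta'^{-1}Z_{\gamma'}+i\theta_{\gamma'}+\zeta'\overline{Z_{\gamma'}})$ depends on $\bar a_j$ through the $\zeta'\overline{Z_{\gamma'}}$ term; holomorphicity of $Z_{\gamma'}$ in the $a_i$ is irrelevant because it is the \emph{conjugate} that appears. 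So the $\bar a$-block of the $\Real\tilde\Y^{{\rm model},a}_{\gamma_{m_i}}$ row is $\half(\zeta+\bar\zeta^{-1})(\bar\tau+\bar M)$ for a nonzero $\bar M$, not the unmodified $\half(\zeta+\bar\zeta^{-1})\bar\tau$.

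Second, even granting your block matrix, the determinant does not collapse to the semi-flat value: in the Schur complement $D-CA^{-1}B$ the corrections in $C$ (your starred $a$-column and $\theta_e$-column entries, i.e.\ $M$ and $N$) survive, so the answer is not $\det\Imag\tau$ times a nonzero constant. The paper's computation yields
\[\abs{\tfrac{\zeta^{-1}+\bar\zeta}{2}}^{2r}(-2i)^{r}\det\parens{\Imag(\tau+M)+\tfrac{1-|\zeta|^{2}}{1+|\zeta|^{2}}\,N}\ ,\]
and the heart of the proof is the identification of the matrix inside the determinant with the ($\zeta$-independent) Gibbons--Hawking potential $V$ of Lemma \ref{lem:gwOV}\ref{it:Vij}, whose positive-definiteness is a separate result (part (e) of Lemma \ref{lem:gwOV}) resting on Assumption \ref{it:vPosAss} together with a harmonicity/maximum-principle argument. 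This step cannot be bypassed by treating the corrections as harmless: near $\B''$ they behave like $K_0(2|nZ_{\gamma}|)\sim-\log|Z_{\gamma}|$ and are not small, so nondegeneracy is not a perturbation of the semi-flat statement --- it holds only because $M$, $\bar M$, and $N$ assemble into $V$ and $V>0$ is guaranteed by (A\ref{it:vPosAss}). Your proposal never invokes that assumption, which is a sign the argument has gone off track.
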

\begin{proof}
We follow the proof of Lemma \ref{lem:nondegen}.  Define $\tilde \Y^{\rm sf} = \log \tilde\X^{\rm sf}$ and 
$\tilde \Y^{\rm model} = \log \tilde\X^{\rm model}$ so 
\begin{align*}
\Imag \tilde \Y^{\rm sf}_{\gamma_{e_i}} &= \frac{1}{2i}(\zeta^{-1} - \bar \zeta) Z_{\gamma_{e_i}} + \tilde \theta_{\gamma_{e_i}} + \frac{1}{2i}(\zeta - \bar \zeta^{-1}) \bar Z_{\gamma_{e_i}} \\
\Real \tilde \Y^{\rm sf}_{\gamma_{e_i}} &= \frac{1}{2}(\zeta^{-1} +\bar \zeta) Z_{\gamma_{e_i}}  + \frac{1}{2}(\zeta + \bar \zeta^{-1}) \bar Z_{\gamma_{e_i}} \\
\frac{1}{p_i} \Imag \tilde \Y^{\rm model}_{\gamma_{m_i}} &= \frac{1}{2i}(\zeta^{-1} - \bar \zeta) a_i^D + \frac{1}{p_i} \tilde \theta_{\gamma_{m_i}} + \frac{1}{2i}(\zeta - \bar \zeta^{-1})  \bar a_i^D+ \\
& \qquad  \frac{1}{p_i} \Imag \parens{\frac{1}{4\pi i} \sum_{\gamma'\in \Gamma_{\rm light}} \bar\Omega(\gamma') c_{\gamma', i} \int_{r^-_{a,A(\gamma')}} \frac{d\zeta'}{\zeta'} \frac{\zeta'+\zeta}{\zeta'-\zeta} \X^{\rm sf}_{\gamma'}(\zeta') } \\
\frac{1}{p_i} \Real \tilde\Y^{\rm model}_{\gamma_{m_i}} &= \frac{1}{2}(\zeta^{-1} +\bar \zeta) a_i^D  + \frac{1}{2}(\zeta + \bar \zeta^{-1})  \bar a_i^D  +\\
& \qquad \frac{1}{p_i}  \Real \parens{\frac{1}{4\pi i} \sum_{\gamma'\in \Gamma_{\rm light}} \bar\Omega(\gamma') c_{\gamma', i} \int_{r^-_{a,A(\gamma')}} \frac{d\zeta'}{\zeta'} \frac{\zeta'+\zeta}{\zeta'-\zeta} \X^{\rm sf}_{\gamma'}(\zeta') }
\end{align*}
As before, the proof reduces to the computation of the Jacobian 
\be \abs{\frac{\partial(\{\Imag \tilde\Y^{\rm sf}_{\gamma_{e_i}}\}_{i=1}^r 
, \{\frac{1}{p_i}\Imag \tilde\Y^{{\rm model},a}_{\gamma_{m_i}} \}_{i=1}^r, 
\{\Real \tilde\Y^{\rm sf}_{\gamma_{e_i}}\}_{i=1}^r,
\{\frac{1}{p_i}\Real \tilde\Y^{{\rm model},a}_{\gamma_{m_i}}\}_{i=1}^r
}{\partial(\tilde\theta_{\gamma_{e_1}},\ldots,\tilde\theta_{\gamma_{e_r}},\frac{1}{p_1}\tilde\theta_{\gamma_{m_1}},\ldots,\frac{1}{p_r}\tilde\theta_{\gamma_{m_r}},a_1,\ldots,a_r,\bar a_1,\ldots,\bar a_r)}} \ , \ee
where $\gamma_{e_1},\ldots,\gamma_{e_r},\gamma_{m_1},\ldots,\gamma_{m_r}$ is a lift to $\hat\Gamma\otimes\QQ$ of a symplectic basis of sections of $\Gamma\otimes\QQ$ over $U_a$ such that $\gamma_{e_i}\in \Gamma_{\rm local}$, and the notation otherwise parallels that from the earlier proof. The Jacobian matrix is of the form
\begin{align}
\begin{pmatrix}
I_{r} & 0_r & \frac{1}{2i}(\zeta^{-1}-\bar\zeta) I_{r} & \frac{1}{2i}(\zeta-\bar\zeta^{-1}) I_{r} \\
* & I_r & * & * \\
0_r & 0_r & \half (\zeta^{-1}+\bar\zeta) I_r & \half(\zeta+\bar\zeta^{-1}) I_r \\
N & 0_r & \half (\zeta^{-1}+\bar\zeta) (\tau + M) & \half (\zeta+\bar\zeta^{-1}) (\bar\tau + \bar M)
\end{pmatrix} \ , \label{eq:modelJacobianMat}
\end{align}
where the $*$ terms are irrelevant. As before, the
Jacobian is found to equal
\begin{align}
&\begin{vmatrix}
\half(\zeta^{-1}+\bar\zeta)I_r & \half(\zeta+\bar\zeta^{-1})I_r \\
\half(\zeta^{-1}+\bar\zeta)(\tau+M)-\frac{1}{2i}(\zeta^{-1}-\bar\zeta) N & \half(\zeta+\bar\zeta^{-1})(\bar\tau+\bar M)-\frac{1}{2i}(\zeta-\bar\zeta^{-1}) N
\end{vmatrix} \nonumber \\
&= \abs{\frac{\zeta^{-1}+\bar\zeta}{2}}^{2r} (-2i)^r \det\parens{\Imag(\tau+M) + \frac{1-|\zeta|^2}{1+|\zeta|^2} N} \ .
\end{align}
We will now show that $\Imag (\tau+M)+\frac{1-|\zeta|^2}{1+|\zeta|^2}N$ is precisely the ($\zeta$-independent!) positive-definite matrix $V$ from Lemma \ref{lem:gwOV}\ref{it:Vij}, which we remind the reader is 
\begin{align}
V_{ij}(u,\theta) 
&= \Imag\tau_{ij}(u) + \frac{1}{4\pi} \sum_{\gamma\in\tilde\Gamma_{\rm light}} \Omega(\gamma) p_i^{-1} p_j^{-1} c_{\gamma,i} c_{\gamma,j} \int_{\ell_\gamma(u)} \frac{d\zeta}{\zeta} \frac{\X^{\rm sf}_\gamma(\zeta)}{1-\X^{\rm sf}_\gamma(\zeta)} \label{eq:Vnewagain},
\end{align}
where $\ell_\gamma(u)$ is the contour from 0 to $\infty$ along which $\arg(-Z_\gamma(u)/\zeta)=0$.
To this end, we first write 
\be \frac{1}{p_i}\tilde\Y^{{\rm model},a}_{\gamma_{m_i}}(\zeta) = \frac{a^D_i}{\zeta}+i\frac{1}{p_i}\tilde\theta_{\gamma_{m_i}}+\zeta \overline{a^D_i} - \frac{1}{4\pi i} \frac{1}{p_i}\sum_{\gamma'\in \tilde\Gamma_{\rm light}} \Omega(\gamma') c_{\gamma',i} \int_{r^-_{a,A(\gamma')}} \frac{d\zeta'}{\zeta'} \frac{\zeta'+\zeta}{\zeta'-\zeta} \log(1-\X^{\rm sf}_{\gamma'}(\zeta')) \ . \ee
Using \eqref{eq:xSFReality}, we compute
\begin{align}
    \frac{1}{p_i}\overline{\tilde \Y^{{\rm model},a}_{\gamma_{m_i}}} &= \frac{\overline{a_i^D}}{\bar\zeta} - i \frac{1}{p_i} \tilde\theta_{\gamma_{m_i}} + \bar\zeta a_i^D - \frac{1}{4\pi i} \frac{1}{p_i} \sum_{\gamma'\in\tilde\Gamma_{\rm light}} \Omega(-\gamma') c_{-\gamma',i} \int_{r^-_{a,A(-\gamma')}} \frac{d\zeta'}{\zeta'} \frac{-\frac{1}{\zeta'}+\bar\zeta}{-\frac{1}{\zeta'}-\bar\zeta} \log(1-\X^{\rm sf}_{-\gamma'}(\zeta')) \nonumber \\
&= \frac{\overline{a_i^D}}{\bar\zeta} - i \frac{1}{p_i} \tilde\theta_{\gamma_{m_i}} + \bar\zeta a_i^D - \frac{1}{4\pi i} \frac{1}{p_i}\sum_{\gamma'\in\tilde\Gamma_{\rm light}} \Omega(\gamma') c_{\gamma',i} \int_{r^-_{a,A(\gamma')}} \frac{d\zeta'}{\zeta'} \frac{-\frac{1}{\zeta'}+\bar\zeta}{-\frac{1}{\zeta'}-\bar\zeta} \log(1-\X^{\rm sf}_{\gamma'}(\zeta')) \ .
\end{align}
So,
\begin{align}\frac{1}{p_i} \Real \tilde\Y^{{\rm model},a}_{\gamma_{m_i}}(\zeta) &= \Real(a_i^D (\zeta^{-1}+\bar\zeta)) \nonumber \\
    & \quad - \frac{1}{8\pi i} \frac{1}{p_i} \sum_{\gamma'\in\tilde\Gamma_{\rm light}} \Omega(\gamma') c_{\gamma',i} \int_{r^-_{a,A(\gamma')}} \frac{d\zeta'}{\zeta'} \brackets{\frac{\zeta'+\zeta}{\zeta'-\zeta}+\frac{-\frac{1}{\zeta'}+\bar\zeta}{-\frac{1}{\zeta'}-\bar\zeta}} \log(1-\X^{\rm sf}_{\gamma'}(\zeta')) \ . \end{align}
Differentiating this with respect to $a_j,\bar a_j,\tilde\theta_{\gamma_{e_j}}$ gives
\begin{align}
M_{ij} &= - \frac{1}{4\pi i (\zeta^{-1}+\bar\zeta)} \sum_{\gamma'\in\tilde\Gamma_{\rm light}} \Omega(\gamma') p_i^{-1} p_j^{-1} c_{\gamma',i} c_{\gamma',j} \int_{r^-_{a,A(\gamma')}} \frac{d\zeta'}{\zeta'} \brackets{\frac{\zeta'+\zeta}{\zeta'-\zeta}+\frac{-\frac{1}{\zeta'}+\bar\zeta}{-\frac{1}{\zeta'}-\bar\zeta}} \frac{-\frac{1}{\zeta'} \X_{\gamma'}^{\rm sf}(\zeta')}{1-\X^{\rm sf}_{\gamma'}(\zeta')} \ , \nonumber \\
\overline{M_{ij}} &= -\frac{1}{4\pi i (\zeta+\bar\zeta^{-1})} \sum_{\gamma'\in \tilde\Gamma_{\rm light}} \Omega(\gamma') p_i^{-1} p_j^{-1} c_{\gamma',i} c_{\gamma',j} \int_{r^-_{a,A(\gamma')}} \frac{d\zeta'}{\zeta'} \brackets{\frac{\zeta'+\zeta}{\zeta'-\zeta}+\frac{-\frac{1}{\zeta'}+\bar\zeta}{-\frac{1}{\zeta'}-\bar\zeta}} \frac{-\zeta' \X_{\gamma'}^{\rm sf}(\zeta')}{1-\X^{\rm sf}_{\gamma'}(\zeta')} \ , \nonumber \\
N_{ij} &= - \frac{1}{8\pi i} \sum_{\gamma'\in \tilde\Gamma_{\rm light}} \Omega(\gamma') p_i^{-1} p_j^{-1} c_{\gamma',i} c_{\gamma',j} \int_{r^-_{a,A(\gamma')}} \frac{d\zeta'}{\zeta'} \brackets{\frac{\zeta'+\zeta}{\zeta'-\zeta}+\frac{-\frac{1}{\zeta'}+\bar\zeta}{-\frac{1}{\zeta'}-\bar\zeta}} \frac{-i\X^{\rm sf}_{\gamma'}(\zeta')}{1-\X^{\rm sf}_{\gamma'}(\zeta')} \ .
\end{align}
Putting these together, we find that
\be \Imag(M_{ij})+\frac{1-|\zeta|^2}{1+|\zeta|^2} N_{ij} = \frac{1}{4\pi} \sum_{\gamma\in \tilde\Gamma_{\rm light}} \Omega(\gamma) p_i^{-1} p_j^{-1} c_{\gamma,i} c_{\gamma,j} \int_{r^-_{a,A(\gamma)}} \frac{d\zeta}{\zeta} \frac{\X_{\gamma}^{\rm sf}(\zeta)}{1-\X^{\rm sf}_\gamma(\zeta)} \ . \ee
After rotating the contour to $\ell_\gamma(u)$, this equals the second term in \eqref{eq:newV}, and so we conclude that, indeed
\begin{equation} 
V = \Imag (\tau+M)+\frac{1-|\zeta|^2}{1+|\zeta|^2}N.
\end{equation}
With this, we are done.
\end{proof}

\begin{definition}[model holomorphic symplectic forms]
    For every $\zeta\in\CC^\times$, we now define a closed 2-form on $\pi^{-1}(U_a)$ 
\begin{equation}
    \varpi^{{\rm model},(\pm), a}(\zeta) := \frac{1}{8\pi} \avg{d\log \X^{{\rm model},(\pm), a}(\zeta) \wedge d\log \X^{{\rm model},(\pm) a}(\zeta)}. 
\end{equation}
\end{definition}
By Lemma \ref{lem:nondegenModel}, this is a holomorphic symplectic form on the underlying real manifold $\M'$.

\begin{cor}[Corollary to \ref{lem:nondegenModel}]\label{cor:patch} The family $\varpi^{{\rm model},(\pm), a}(\zeta)$ gives a pseudo-hyper-K\"ahler structure on $\M'_{U_a}$.
\end{cor}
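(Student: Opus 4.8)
The plan is to invoke Theorem \ref{thm:twistor} (Theorem 3.16b of \cite{FZ:twistor}), exactly as was done for the semi-flat structure in the proof of Theorem \ref{prop:semiflathyperkahler}. The two hypotheses to verify are: (i) each $\varpi^{{\rm model},(\pm),a}(\zeta)$, $\zeta\in\CC^\times$, is a holomorphic symplectic form on the real manifold $\M'_{U_a}$ in the sense of Definition \ref{def:holoSympDef}; (ii) the coefficient $\omega_+^{{\rm model},(\pm),a}$ of $-\frac{i}{2\zeta}$ in the expansion $\varpi^{{\rm model},(\pm),a}(\zeta) = -\frac{i}{2\zeta}\omega_+ + \omega_3 - \frac{i}{2}\zeta\,\omega_-$ is itself a holomorphic symplectic form, and the reality relations $\omega_- = \bar\omega_+$, $\omega_3 = \bar\omega_3$ hold. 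The reality relations are immediate from the reality condition \eqref{eq:xModelReality} on $\X^{{\rm model},(\pm),a}$, exactly as in the semi-flat case: writing $\varpi^{{\rm model},(\pm),a}(\zeta) = \frac{1}{8\pi}\avg{d\log\X^{{\rm model},(\pm),a}(\zeta)\wedge d\log\X^{{\rm model},(\pm),a}(\zeta)}$ and substituting $\zeta\mapsto -1/\bar\zeta$ shows the three Laurent coefficients pair up under conjugation, and the vanishing of the $\zeta^{\pm 2}$ terms follows from $\avg{d\log\X\wedge d\log\X}$ contracting against the antisymmetric pairing together with the fact that $\log\X^{{\rm model},a}_\gamma(\zeta)$, like $\log\X^{\rm sf}_\gamma(\zeta)$, has $\zeta^{-1}$-coefficient a holomorphic function of $u$ and $\zeta^{1}$-coefficient its conjugate (the correction integral contributes only to the $\zeta^0$ coefficient via the $\frac{\zeta'+\zeta}{\zeta'-\zeta}$ kernel being $O(1)$ as $\zeta\to 0,\infty$... more precisely the Laurent structure is preserved because the kernel is holomorphic in $\zeta$ on $\CC^\times$ minus the ray).

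For (i): by Corollary \ref{cor:twistor} it suffices to exhibit, on the open dense set $\pi^{-1}(U_a)$, holomorphic Darboux coordinates for $\varpi^{{\rm model},(\pm),a}(\zeta)$ whose $\Real,\Imag$ parts have non-vanishing Jacobian with respect to the smooth coordinates. These are exactly $\log\X^{{\rm model},(\pm),a}_{\gamma_{e_i}}(\zeta)$ and $p_i^{-1}\log\X^{{\rm model},(\pm),a}_{\gamma_{m_i}}(\zeta)$, since by construction $\varpi^{{\rm model},(\pm),a}(\zeta) = \frac{1}{8\pi}\sum_{i=1}^r d\log\X^{{\rm model},(\pm),a}_{\gamma_{e_i}}(\zeta)\wedge d(p_i^{-1}\log\X^{{\rm model},(\pm),a}_{\gamma_{m_i}}(\zeta))$ (using the Frobenius basis and the vanishing of $\avg{d\log\X\wedge d\log\X}$ on the $\zeta^{\pm2}$ pieces as in the semi-flat computation); and Lemma \ref{lem:nondegenModel} is precisely the statement that $\{\X^{{\rm model},(\pm),a}_{\gamma_i}(\zeta)\}$ are valid coordinates on $\pi^{-1}(U_a)$, i.e. the required Jacobian is non-vanishing. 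So (i) is a direct consequence of Lemma \ref{lem:nondegenModel} and the algebraic identity computing $\Omega^r\wedge\bar\Omega^r$ in the proof of Corollary \ref{cor:twistor}.

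For (ii): I extract $\omega_+^{{\rm model},(\pm),a}$ as the $\zeta^{-1}$ coefficient. Since $\X^{{\rm model},a}_\gamma(\zeta) = \X^{\rm sf}_\gamma(\zeta)$ for $\gamma\in\Gamma_{\rm local}$, and in particular for the $\gamma_{e_i}$ (which we may choose in $\Lambda_e\subset\Gamma_{\rm local}$ by Lemma \ref{lem:basisnearsingular}), we have $d\log\X^{{\rm model},a}_{\gamma_{e_i}}(\zeta) = \zeta^{-1}da_i + i\,d\theta_{\gamma_{e_i}} + \zeta\,d\bar a_i$; only the $\gamma_{m_i}$ factor carries a correction, and that correction is a $\zeta^0$ term (holomorphic in $\zeta$ near $0$, as the kernel $\frac{\zeta'+\zeta}{\zeta'-\zeta}\to 1$). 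Hence the $\zeta^{-1}$ part of $\varpi^{{\rm model},(\pm),a}(\zeta)$ is $\omega_+^{{\rm model},(\pm),a} = -\frac{1}{2\pi}\sum_i da_i\wedge d(\text{magnetic phase} + \text{correction})$, which has the shape $\sum_i da_i\wedge dz_i^{{\rm model},(\pm),a}$ with $a_i$ holomorphic coordinates on $U_a$ and $z_i^{{\rm model},(\pm),a}$ a completion to fiber-transverse coordinates. By Corollary \ref{cor:twistor} it then suffices to check the Jacobian of $\{\Real a_i,\Imag a_i,\Real z_i,\Imag z_i\}$ against the smooth coordinates is non-vanishing; this is a triangular-block computation mirroring Lemma \ref{lem:omega+sf}, the new block coming from the correction integral being strictly lower-triangular in the $(a,\bar a)$ directions and hence not affecting the determinant, which again reduces to $\det\Imag\tau \neq 0$ (one must bring the $z_i$-analogue of $\tau_{ij}$ inside $d$, valid because of the prepotential as in Lemma \ref{lem:omega+sf}, noting the correction term's $a$-derivatives are symmetric). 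The main obstacle is this last point: verifying that $\omega_+^{{\rm model},(\pm),a}$ genuinely admits global holomorphic Darboux coordinates on $\pi^{-1}(U_a)$ and that the correction integral does not spoil either the closedness-via-exactness argument or the Jacobian non-degeneracy — i.e. checking that $\partial_{a_k}$ of the correction coefficient is symmetric in the relevant indices and that the contribution to the Jacobian matrix is nilpotent-block. Once that is in hand, Theorem \ref{thm:twistor} delivers the pseudo-hyper-K\"ahler structure on $\M'_{U_a}$.
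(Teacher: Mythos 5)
Your overall strategy is the paper's: the paper's proof of this corollary is literally the one line ``By Corollary \ref{cor:twistor}, this follows from Lemma \ref{lem:nondegenModel},'' and your items (i) and the reality condition are handled exactly as you describe. The problem is in your treatment of $\omega_+^{{\rm model}}$, which contains a concrete error at precisely the point the whole construction is designed to address.

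You claim that the correction integral contributes a ``strictly lower-triangular'' block to the Jacobian of $\{a_i,z_i^{\rm model}\}$ and hence that the determinant ``again reduces to $\det\Imag\tau\neq 0$.'' This is false. The correction to the magnetic Darboux coordinate is (after Poisson resummation) proportional to $\sum_{\gamma\in\tilde\Gamma_{\rm light}}\Omega(\gamma)p_i^{-1}c_{\gamma,i}\sum_{n\neq 0}\tfrac{1}{n}e^{in\theta_\gamma}K_0(2|nZ_\gamma|)$, and $\theta_\gamma$ for $\gamma\in\tilde\Gamma_{\rm light}$ is a linear combination of the electric angles $\theta_{\gamma_{e_j}}$ (since $\tilde\gamma=\sum_j p_j^{-1}c_{\gamma,j}\tilde\gamma_{e_j}$). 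So $\partial z_i^{\rm model}/\partial\theta_{\gamma_{e_j}}$ acquires the extra term $-\tfrac{i}{2\pi}(V_{ij}-\Imag\tau_{ij})$, which sits in exactly the same Jacobian block as $\tau_{ij}$ — not below the diagonal. The determinant then comes out to $(2\pi)^{-2r}(2i)^r\det V$, where $V$ is the Gibbons--Hawking potential of Lemma \ref{lem:gwOV}\ref{it:Vij}, not $\det\Imag\tau$. The non-vanishing of $\det V$ is not automatic: near $\B''$ the $K_0$ sum produces a $+\log(\pi/|Z_\gamma|)$ divergence while $\Imag\tau$ contains a compensating $-\log(|Z_\gamma|/\pi)$ of the opposite sign away from the singular fiber, and positivity of $V$ on all of $Y$ is exactly what Assumption \ref{it:vPosAss} together with the harmonicity and maximum-principle argument of Lemma \ref{lem:gwOV}(e) delivers. (The same matrix $V$ already controls the $\zeta\in\CC^\times$ nondegeneracy in Lemma \ref{lem:nondegenModel}, which you cite correctly; the $\omega_+$ case is not easier.) Your ``main obstacle'' is therefore not a routine verification but the central analytic input of the paper, and the argument you sketch for discharging it does not work.
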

\begin{proof}
By Corollary \ref{cor:twistor}, this follows from Corollary \ref{lem:nondegenModel}.
\end{proof}

\begin{lemma}\label{lem:coordind}The holomorphic symplectic form is equal to
    \begin{align}\label{eq:modelv2}
\varpi&^{{\rm model}, (\pm), a}(\zeta) = \varpi^{\rm sf}(\zeta) + \frac{1}{16\pi^2 i} \sum_{\gamma'\in\tilde\Gamma_{\rm light}} \Omega(\gamma') d\log \X^{\rm sf}_{\gamma'}(\zeta) \nonumber \\
&\qquad \wedge \sum_{n\not=0} e^{in\theta_{\gamma'}} \brackets{K_0(2|nZ_{\gamma'}|)(\zeta^{-1}dZ_{\gamma'}-\zeta d\bar Z_{\gamma'}) - |Z_{\gamma'}| \sgn(n) K_1(2|nZ_{\gamma'}|)d\log(Z_{\gamma'}/\bar Z_{\gamma'})} \ .
\end{align}
\end{lemma}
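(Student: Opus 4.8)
The plan is to compute $\varpi^{{\rm model},(\pm),a}(\zeta)$ directly from its definition $\frac{1}{8\pi}\avg{d\log\X^{{\rm model},(\pm),a}(\zeta)\wedge d\log\X^{{\rm model},(\pm),a}(\zeta)}$ by substituting the series expression for $\X^{{\rm model},(\pm),a}$ obtained in Proposition \ref{prop:dominatedconvergence}, namely
\be \log\X^{{\rm model},(\pm),a}_\gamma(\zeta) = \log\X^{\rm sf}_\gamma(\zeta) + \frac{1}{4\pi i}\sum_{\gamma'\in\tilde\Gamma_{\rm light}}\Omega(\gamma')\avg{\gamma,\gamma'}\int_{r^-_{a,A(\gamma')(\pm 1)}}\frac{d\zeta'}{\zeta'}\frac{\zeta'+\zeta}{\zeta'-\zeta}\X^{\rm sf}_{\gamma'}(\zeta').\ee
Writing $\log\X^{{\rm model}} = \log\X^{\rm sf} + \delta$ where $\delta$ is $\Gamma^*$-valued with $\delta_\gamma = \frac{1}{4\pi i}\sum_{\gamma'}\Omega(\gamma')\avg{\gamma,\gamma'}I_{\gamma'}(\zeta)$ for the appropriate integral $I_{\gamma'}$, the wedge expands as $\avg{d\log\X^{\rm sf}\wedge d\log\X^{\rm sf}} + 2\avg{d\log\X^{\rm sf}\wedge d\delta} + \avg{d\delta\wedge d\delta}$. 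The first term gives $8\pi\,\varpi^{\rm sf}(\zeta)$. The crucial observation is that $\delta_\gamma$ depends on $\gamma$ only through the pairing $\avg{\gamma,\gamma'}$ with elements $\gamma'\in\tilde\Gamma_{\rm light}$, so $d\delta$ is, as a $\Gamma^*$-valued form, a combination of the $d\log\X^{\rm sf}_{\gamma'}$ directions; pairing two such together using $\avg{dZ\wedge dZ}=0$-type relations (more precisely the fact that $\avg{\gamma',\gamma''}$-weighted double sums over $\tilde\Gamma_{\rm light}$ vanish since the bracket is trivial on $\hat\Gamma_{\rm light}$) kills the $\avg{d\delta\wedge d\delta}$ term. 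So only the cross term survives, and $\avg{d\log\X^{\rm sf}\wedge d\delta}$ collapses, via the defining property of the symplectic-pairing contraction, to $\sum_{\gamma'\in\tilde\Gamma_{\rm light}}\Omega(\gamma')\,d\log\X^{\rm sf}_{\gamma'}(\zeta)\wedge(\text{something built from }dI_{\gamma'})$.

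The remaining work is to evaluate $d I_{\gamma'}(\zeta)$ and extract the stated Bessel-function expression. Here I would use the identity from Lemma \ref{lem:gwOV}(f) (the dominated-convergence computation) that $\int_{r^-}\frac{d\zeta'}{\zeta'}\frac{\zeta'+\zeta}{\zeta'-\zeta}\X^{\rm sf}_{\gamma'}(\zeta') $, after moving the contour to $\ell_{\gamma'}(u)$ and expanding $\X^{\rm sf}_{\gamma'}$ in powers, reduces to a sum over $n$ of terms proportional to $e^{in\theta_{\gamma'}}K_0(2|nZ_{\gamma'}|)$ and (for the $\zeta$-dependence and the $dZ_{\gamma'}$, $d\bar Z_{\gamma'}$ directions) $K_1$. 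Concretely, differentiating $e^{in\theta_{\gamma'}}\int_{\ell_{\gamma'}}\frac{d\zeta'}{\zeta'}\cdots e^{n(\zeta'^{-1}Z_{\gamma'}+\zeta'\bar Z_{\gamma'})}$ with respect to $Z_{\gamma'}$, $\bar Z_{\gamma'}$, $\theta_{\gamma'}$ brings down factors of $n\zeta'^{-1}$, $n\zeta'$, $in$ and produces the combination $K_0(2|nZ_{\gamma'}|)(\zeta^{-1}dZ_{\gamma'}-\zeta d\bar Z_{\gamma'})$ together with a $|Z_{\gamma'}|\sgn(n)K_1(2|nZ_{\gamma'}|)\,d\log(Z_{\gamma'}/\bar Z_{\gamma'})$ piece, using the Bessel recursions $K_0' = -K_1$ and $\partial_{|w|}K_0(2|nw|)$-type formulas. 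One must be careful that the contour-rotation from $r^-_{a,A(\gamma')}$ to $\ell_{\gamma'}(u)$ is legitimate (the angle stays acute, by $K\ge 5$, as noted before Definition \ref{defn:Xmodelpm}) so that the result is manifestly independent of the patch index $a$ and of the $\pm$ analytic continuation — this is the content of calling the form $\varpi^{{\rm model},(\pm),a}$ but writing a formula with no $a$.

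The main obstacle I anticipate is bookkeeping rather than conceptual: getting the precise coefficient $\frac{1}{16\pi^2 i}$ and the exact split between the $K_0$ and $K_1$ terms right, including the factor $|Z_{\gamma'}|$ and the sign $\sgn(n)$, while correctly combining the $\gamma'$ and $-\gamma'$ contributions (using $\Omega(\gamma')=\Omega(-\gamma')$, $\avg{\gamma,-\gamma'}=-\avg{\gamma,\gamma'}$, $\theta_{-\gamma'}=-\theta_{\gamma'}$, $Z_{-\gamma'}=-Z_{\gamma'}$) so that the sum over $n\ne 0$ rather than $n>0$ emerges cleanly. I would organize this by first doing the $n>0$ half, then observing that the $-\gamma'$ terms reproduce the $n<0$ half after relabeling, and finally checking that the $\zeta\to -1/\bar\zeta$ reality condition \eqref{eq:xModelReality} is consistent with the symmetry of the final expression as a sanity check. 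Since $K_1(2|nZ_{\gamma'}|)$ blows up like $1/|nZ_{\gamma'}|$ as $Z_{\gamma'}\to 0$ while $d\log(Z_{\gamma'}/\bar Z_{\gamma'})$ is the angular form, one should note (for later use, though not needed here) that the product is the expected mild singularity; but for the purposes of this lemma everything is on $\pi^{-1}(U_a)\subset\M'$, away from $\B''$, so no delicate estimate is required — it is a formal, if lengthy, computation.
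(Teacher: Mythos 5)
Your overall strategy coincides with the paper's: write $\log\X^{{\rm model},(\pm),a}=\log\X^{\rm sf}+\delta$, note that $\avg{d\delta\wedge d\delta}=0$ because the pairing is trivial on $\hat\Gamma_{\rm light}$ (equivalently, the correction vanishes on the electric charges $\gamma_{e_i}\in\hat\Gamma_{\rm local}$), collapse the surviving cross term through the symplectic pairing to $\sum_{\gamma'\in\tilde\Gamma_{\rm light}}\Omega(\gamma')\,d\log\X^{\rm sf}_{\gamma'}(\zeta)\wedge(\cdots)$, and finally combine the $\gamma'$ and $-\gamma'$ contributions to get the sum over $n\neq 0$. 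All of that is exactly what the paper does (phrased there via a Frobenius basis and the resummation $\sum_i p_i^{-1}c_{\gamma',i}\,d\log\X^{\rm sf}_{\gamma_{e_i}}(\zeta)=d\log\X^{\rm sf}_{\gamma'}(\zeta)$), and your justifications for those steps are correct.

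The gap is in how the Cauchy kernel disappears. Differentiating $I_{\gamma'}(\zeta)=\int_{r^-}\frac{d\zeta'}{\zeta'}\frac{\zeta'+\zeta}{\zeta'-\zeta}\log(1-\X^{\rm sf}_{\gamma'}(\zeta'))$ in $Z_{\gamma'},\bar Z_{\gamma'},\theta_{\gamma'}$ produces $-\int_{r^-}\frac{d\zeta'}{\zeta'}\frac{\zeta'+\zeta}{\zeta'-\zeta}\frac{\X^{\rm sf}_{\gamma'}(\zeta')}{1-\X^{\rm sf}_{\gamma'}(\zeta')}\,d\log\X^{\rm sf}_{\gamma'}(\zeta')$, and the individual moments $\int\frac{d\zeta'}{\zeta'^{\nu}}\frac{\zeta'+\zeta}{\zeta'-\zeta}(\X^{\rm sf}_{\gamma'}(\zeta'))^n$ are \emph{not} Bessel functions: they retain a genuine $\zeta$-dependence through the kernel, so ``expanding in powers and moving the contour'' does not by itself yield the stated $K_0$ and $K_1$ sums, nor the advertised Laurent structure in $\zeta$. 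The missing idea is that, because this $1$-form is wedged against $d\log\X^{\rm sf}_{\gamma'}(\zeta)$, one may freely replace $d\log\X^{\rm sf}_{\gamma'}(\zeta')$ by
\be d\log\X^{\rm sf}_{\gamma'}(\zeta')-d\log\X^{\rm sf}_{\gamma'}(\zeta)=(\zeta'^{-1}-\zeta^{-1})\,dZ_{\gamma'}+(\zeta'-\zeta)\,d\bar Z_{\gamma'} \ , \ee
whereupon the factors $(\zeta'^{-1}-\zeta^{-1})$ and $(\zeta'-\zeta)$ cancel the pole of $\frac{\zeta'+\zeta}{\zeta'-\zeta}$ and leave $(\zeta^{-1}+\zeta'^{-1})\,dZ_{\gamma'}-(\zeta+\zeta')\,d\bar Z_{\gamma'}$ in the integrand. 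Only then do the remaining $\zeta'$-integrals reduce to the kernel-free moments $\I_{\nu,\gamma'}=\int\frac{d\zeta'}{\zeta'^{\nu}}\frac{\X^{\rm sf}_{\gamma'}(\zeta')}{1-\X^{\rm sf}_{\gamma'}(\zeta')}$ with $\nu=0,1,2$, which evaluate (as in Lemma \ref{lem:gwOV}\ref{it:Vij}) to $2\sum_{n>0}e^{in\theta_{\gamma'}}K_0(2|nZ_{\gamma'}|)$ and the analogous $K_1$ sums, producing precisely the split between the $K_0\cdot(\zeta^{-1}dZ_{\gamma'}-\zeta d\bar Z_{\gamma'})$ term and the $\zeta$-independent $|Z_{\gamma'}|\sgn(n)K_1\,d\log(Z_{\gamma'}/\bar Z_{\gamma'})$ term in \eqref{eq:modelv2}. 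This cancellation is the heart of the lemma, not bookkeeping; supply it and the rest of your plan goes through as written.
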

\begin{proof}
We give the proof for $\varpi^{{\rm model},a}(\zeta)$.
We compute
\begin{align}
\varpi&^{{\rm model},a}(\zeta) := \frac{1}{8\pi} \avg{d\log \X^{{\rm model},a}(\zeta) \wedge d\log \X^{{\rm model},a}(\zeta)} \nonumber \\
&= \varpi^{\rm sf}(\zeta)-\frac{1}{4\pi} \sum_{i=1}^r d\log \X^{\rm sf}_{\gamma_{e_i}}(\zeta) \wedge \frac{1}{p_i} d\log (\X^{{\rm model},a}_{\gamma_{m_i}}(\zeta)/\X^{\rm sf}_{\gamma_{m_i}}(\zeta)) \nonumber \\
&= \varpi^{\rm sf}(\zeta) - \frac{1}{16\pi^2 i} \sum_{i=1}^r d\log \X^{\rm sf}_{\gamma_{e_i}}(\zeta)\wedge \sum_{\gamma'\in\tilde\Gamma_{\rm light}} \Omega(\gamma') p_i^{-1} c_{\gamma',i} \int_{r^-_{a,A(\gamma')}} \frac{d\zeta'}{\zeta'} \frac{\zeta'+\zeta}{\zeta'-\zeta} d\log \X^{\rm sf}_{\gamma'}(\zeta') \frac{\X^{\rm sf}_{\gamma'}(\zeta')}{1-\X^{\rm sf}_{\gamma'}(\zeta')} \nonumber \\
&= \varpi^{\rm sf}(\zeta) - \frac{1}{16\pi^2 i} \sum_{\gamma'\in\tilde\Gamma_{\rm light}} \Omega(\gamma') \int_{r^-_{a,A(\gamma')}} \frac{d\zeta'}{\zeta'} \frac{\zeta'+\zeta}{\zeta'-\zeta} \frac{\X^{\rm sf}_{\gamma'}(\zeta')}{1-\X^{\rm sf}_{\gamma'}(\zeta')} d\log \X^{\rm sf}_{\gamma'}(\zeta)\wedge d\log \X^{\rm sf}_{\gamma'}(\zeta') \nonumber \\
&= \varpi^{\rm sf}(\zeta) - \frac{1}{16\pi^2 i} \sum_{\gamma'\in\tilde\Gamma_{\rm light}} \Omega(\gamma') \int_{r^-_{a,A(\gamma')}} \frac{d\zeta'}{\zeta'} \frac{\zeta'+\zeta}{\zeta'-\zeta} \frac{\X^{\rm sf}_{\gamma'}(\zeta')}{1-\X^{\rm sf}_{\gamma'}(\zeta')} \nonumber \\
&\qquad \times d\log \X^{\rm sf}_{\gamma'}(\zeta)\wedge (d\log \X^{\rm sf}_{\gamma'}(\zeta')-d\log\X^{\rm sf}_{\gamma'}(\zeta)) \nonumber \\
&= \varpi^{\rm sf}(\zeta) + \frac{1}{16\pi^2 i} \sum_{\gamma'\in\tilde\Gamma_{\rm light}} \Omega(\gamma') d\log \X^{\rm sf}_{\gamma'}(\zeta)\wedge \int_{r^-_{a,A(\gamma')}} \frac{d\zeta'}{\zeta'} \frac{\X^{\rm sf}_{\gamma'}(\zeta')}{1-\X^{\rm sf}_{\gamma'}(\zeta')}  \nonumber \\
&\qquad\times \brackets{ (\zeta^{-1}+\zeta'^{-1}) dZ_{\gamma'} - (\zeta+\zeta') d\bar Z_{\gamma'} } \nonumber \\
&= \varpi^{\rm sf}(\zeta) + \frac{1}{16\pi^2 i} \sum_{\gamma'\in\tilde\Gamma_{\rm light}} \Omega(\gamma') d\log\X^{\rm sf}_{\gamma'}(\zeta) \wedge \brackets{\I_{1,\gamma'} (\zeta^{-1} dZ_{\gamma'} - \zeta d\bar Z_{\gamma'}) + \I_{2,\gamma'}dZ_{\gamma'}-\I_{0,\gamma'} d\bar Z_{\gamma'} } \ ,
\end{align}
where
\be \I_{\nu,\gamma'} = \int_{r^-_{a,A(\gamma')}} \frac{d\zeta'}{\zeta'^{\nu}} \frac{\X^{\rm sf}_{\gamma'}(\zeta')}{1-\X^{\rm sf}_{\gamma'}(\zeta')} \ . \ee
We have already computed $\I_{1,\gamma'}$, and the other integrals may be computed identically:
\be -\frac{\bar Z_{\gamma'}}{|Z_{\gamma'}|} \I_{0,\gamma'} = -\frac{Z_{\gamma'}}{|Z_{\gamma'}|} \I_{2,\gamma'} = 2 \sum_{n>0} e^{in\theta_{\gamma'}} K_1(2|nZ_{\gamma'}|) \ , \ee
where as before this converges absolutely and uniformly on compact subsets of $\pi^{-1}(U_a)$. Combining terms from $\pm\gamma'$ gives equation \eqref{eq:modelv2}
\end{proof}

\begin{cor}[Corollary to Lemma \ref{lem:coordind}]\label{cor:modind}
 On $U_a\cap U_b$, $\varpi^{{\rm model},a}=\varpi^{{\rm model},b}$! Moreover, $\varpi^{{\rm model},a} = \varpi^{{\rm model}, +, a}=\varpi^{{\rm model}, -, a}$ on $U_a$.
\end{cor}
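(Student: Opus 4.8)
The plan is to exploit Lemma \ref{lem:coordind}, which gives a closed-form expression for $\varpi^{{\rm model},(\pm),a}(\zeta)$ that is manifestly independent of the patch label $a$ and of the choice of sign $\pm$. Indeed, the right-hand side of \eqref{eq:modelv2} is
\be \varpi^{\rm sf}(\zeta) + \frac{1}{16\pi^2 i} \sum_{\gamma'\in\tilde\Gamma_{\rm light}} \Omega(\gamma') d\log \X^{\rm sf}_{\gamma'}(\zeta) \wedge \sum_{n\not=0} e^{in\theta_{\gamma'}} \brackets{K_0(2|nZ_{\gamma'}|)(\zeta^{-1}dZ_{\gamma'}-\zeta d\bar Z_{\gamma'}) - |Z_{\gamma'}| \sgn(n) K_1(2|nZ_{\gamma'}|)d\log(Z_{\gamma'}/\bar Z_{\gamma'})} \ , \ee
and every ingredient here---$\varpi^{\rm sf}(\zeta)$, the central charges $Z_{\gamma'}$, the twisted characters $\theta_{\gamma'}$, and the Bessel functions $K_0, K_1$---is intrinsically defined, with no reference to a sectorial decomposition $\V_a$ or to the ray $r_{a,A(\gamma')}^-$ along which the defining integral was taken. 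So the first and essentially only step is to observe that Lemma \ref{lem:coordind} has been proved for $\varpi^{{\rm model},a}(\zeta)$, then note that the proof given there goes through verbatim for $\varpi^{{\rm model},\pm,a}(\zeta)$, since the only property of the contour that was used is that it makes an acute angle with $\ell_{\gamma'}(u)$ (so that the series manipulations and the Bessel-function integral evaluations are valid)---and this holds for $r_{a,A(\gamma')\pm 1}^-$ by the $K\ge 5$ condition invoked just before Definition \ref{defn:Xmodelpm}.

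Concretely, I would argue as follows. By Lemma \ref{lem:coordind} applied on $U_a$, $\varpi^{{\rm model},a}(\zeta)$, $\varpi^{{\rm model},+,a}(\zeta)$, and $\varpi^{{\rm model},-,a}(\zeta)$ are all equal to the common expression \eqref{eq:modelv2}; hence $\varpi^{{\rm model},a}=\varpi^{{\rm model},+,a}=\varpi^{{\rm model},-,a}$ on $U_a$, which is the second assertion. For the first assertion, on the overlap $U_a\cap U_b$ both $\varpi^{{\rm model},a}(\zeta)$ and $\varpi^{{\rm model},b}(\zeta)$ equal the same expression \eqref{eq:modelv2}---which depends only on the restriction of the data to $U_a\cap U_b$ and not on $a$ or $b$---so they agree there. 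One should also check that the sums over $\tilde\Gamma_{\rm light}$ are over the same set from either patch's perspective: $\tilde\Gamma_{\rm light}$ is a globally defined finite set (the lattice $\hat\Gamma_{\rm light}$ is trivial on $U\supset U_a\cup U_b$ by \ref{it:A1}, and $\Omega$ restricted to it is constant by \ref{it:omega}), so this is automatic.

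The only genuine obstacle---and it is a mild one---is to confirm that the proof of Lemma \ref{lem:coordind} really is insensitive to which acute-angle contour is used, i.e.\ that rotating from $r_{a,A(\gamma')}^-$ to $r_{b,A(\gamma')}^-$ or to $r_{a,A(\gamma')\pm 1}^-$ does not change any of the integrals $\I_{\nu,\gamma'}$. This follows from the discussion in Remark \ref{rem:commentsonintegral}: the integrand $\frac{\X^{\rm sf}_{\gamma'}(\zeta')}{1-\X^{\rm sf}_{\gamma'}(\zeta')}\zeta'^{-\nu}$ (times $d\zeta'$) is holomorphic in $\zeta'$ in the sector swept out between any two such rays, decays to any order at $0$ and $\infty$ within that sector because each ray makes an acute angle with $\ell_{\gamma'}(u)$ (so $Z^\parallel_{\gamma'}(u)>0$ along it), and has no poles there once the factor $\frac{\zeta'+\zeta}{\zeta'-\zeta}$ is restored and $\zeta$ is kept off the relevant rays; so Cauchy's theorem lets us rotate the contour freely within the acute-angle range. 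Since both the evaluation of $\I_{1,\gamma'}$ used in Lemma \ref{lem:nondegenModel} and of $\I_{0,\gamma'},\I_{2,\gamma'}$ in Lemma \ref{lem:coordind} produce the same Bessel-function answers regardless of which such contour is chosen, the common expression \eqref{eq:modelv2} is obtained in every case, and the corollary follows.
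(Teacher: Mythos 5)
Your proposal is correct and matches the paper's intended argument: the result is stated as a corollary to Lemma \ref{lem:coordind} precisely because the expression \eqref{eq:modelv2} makes no reference to the sectorial decomposition or the $\pm$ shift, and the contour-rotation justification you supply (acute angle with $\ell_{\gamma'}(u)$, guaranteed by $K\ge 5$, plus holomorphy and decay of the integrand between the rays) is the same mechanism the paper relies on when it rotates contours to $\ell_\gamma(u)$ in Lemma \ref{lem:nondegenModel}. The paper's accompanying remark merely rephrases this as the statement that $\X^a$ and $\X^b$ differ by a symplectomorphism of the relevant torus, which adds no new content beyond your argument.
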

\begin{rem}[Explanation of Corollary \ref{cor:modind}]
First, for any $u\in U_a\cap U_b$ and $\zeta\in\CC^\times$, let $\T'_{\zeta,u,{\rm light},a,b}$ be the open subset of $\T'_{\zeta,u}$ with $|X_\gamma|<1$ for all $\gamma\in \tilde\Gamma_{\rm light}$ such that $\ell_\gamma$ and $\zeta$ are contained in the same sector in either the decomposition $\V_a$ or $\V_b$. Note that $\X(\zeta,u)$ is valued in this space on $\pi^{-1}(U_a\cap U_b)$. Then $\mathcal{X}^a_\gamma(\zeta)$ and $\mathcal{X}^b_\gamma(\zeta)$ are related by some particular symplectomorphism of $\T'_{\zeta,u,{\rm light},a,b}$.
\end{rem}

We exploit this by dropping the $a$ superscript, and $\pm$ superscripts for the analytic continuations.

\begin{proposition}\label{prop:extendssmoothly} The $\mathbb{P}^1$-family of closed $2$-forms 
 $\varpi^{\rm model}(\zeta)$ on $\M'_U$ extends to a $\mathbb{P}^1$-family of smooth closed $2$-forms on all of $\M_U$! 
\end{proposition}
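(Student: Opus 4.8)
The starting point is the identity \eqref{eq:modelv2} of Lemma \ref{lem:coordind}, which together with Corollary \ref{cor:modind} presents $\varpi^{\rm model}(\zeta)$ on $\M'_U$ as a globally well‑defined, patch‑ and continuation‑independent $2$‑form equal to $\varpi^{\rm sf}(\zeta)$ plus a correction built from the Bessel functions $K_0(2|nZ_{\gamma'}|),K_1(2|nZ_{\gamma'}|)$. The first step is to recast this in generalized Gibbons--Hawking form. Feeding Lemma \ref{lem:GHsf} into $\varpi^{\rm sf}(\zeta)$, using the Poisson resummation $T(w,\theta)+\log(|w|/\pi)=\sum_{n\neq 0}e^{in\theta}K_0(2|nw|)$ of part (b) of Lemma \ref{lem:gwOV} to absorb the $K_0$‑terms into $\Imag\tilde\tau$, using its differentiated version to absorb the $K_1$‑terms, and invoking the identity $p_i^{-1}d\theta_{\gamma_{m_i}}+A_i=p_i^{-1}d\theta'_{\gamma_{m_i}}+A'_i$ established in the proof of Lemma \ref{lem:gwOV}\ref{it:bundle}, one obtains on $\M'_U$
\begin{equation*}
\varpi^{\rm model}(\zeta)=\frac{1}{4\pi i}\sum_{i=1}^r\bigl(\zeta^{-1}da_i+i\,d\theta_{\gamma_{e_i}}+\zeta\,d\bar a_i\bigr)\wedge\Bigl[\,p_i^{-1}d\theta'_{\gamma_{m_i}}+A'_i+\sum_{j=1}^r V_{ij}\bigl(\zeta^{-1}da_j-\zeta\,d\bar a_j\bigr)\Bigr],
\end{equation*}
with $V_{ij}$ the potential \eqref{eq:Vog} and $A'_i$ the connection \eqref{eq:Aiprime} of Lemma \ref{lem:gwOV}.

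The extension over $\widetilde{\M'}$ is then immediate. On $\widetilde{\M'}|_U$ the functions $\theta'_{\gamma_{m_i}}$ are smooth fiber coordinates (Construction \ref{constr:Mtilde}), $da_i,d\bar a_i,d\theta_{\gamma_{e_i}}$ are smooth, $A'_i$ is a smooth $1$‑form there (the singularities of $A_i$ at $\B'\cap U$ cancel those of $d\theta_{\gamma_{m_i}}$, as in the proof of Lemma \ref{lem:gwOV}\ref{it:bundle}), $\Imag\tilde\tau_{ij}$ is real‑analytic on $U$ by Assumption \ref{it:charge} and Remark \ref{rem:tau}, and each $T(Z_\gamma(u),\theta_\gamma)$ is smooth on $\widetilde{\M'}|_U$ by part (a) of Lemma \ref{lem:gwOV}, since the only non‑smooth locus $\{Z_\gamma=\theta_\gamma=0\}$ has been excised in Construction \ref{constr:Mtilde}. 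Hence the displayed $2$‑form is smooth on $\widetilde{\M'}|_U$, extending $\varpi^{\rm model}(\zeta)$ there.

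The main work is the extension over the remainder of $\M_U$: a neighborhood of $p^{-1}(u'',\theta)$ for each $u''\in U'\cap\B''$ and $\theta\in\Theta_{\rm light}$ with $S=S_{u'',\theta}\neq\emptyset$, where $\M$ is modelled on the hyper-K\"ahler quotient $H_S$ of Proposition \ref{lem:SmoothII} in the quaternionic coordinates $q_\gamma=w_{\gamma,1}+w_{\gamma,2}j$ ($\gamma\in S$), $Z_\gamma=\frac{i}{2}w_{\gamma,1}w_{\gamma,2}$, $\theta_\gamma=-\frac12(|w_{\gamma,1}|^2-|w_{\gamma,2}|^2)$. I would split $V_{ij}=V^{\rm TN}_{ij}+\hat V_{ij}$ and $A'_i={A'_i}^{\rm TN}+\hat A_i$, where $V^{\rm TN}_{ij},{A'_i}^{\rm TN}$ are the Taub--NUT potential and connection of Proposition \ref{prop:TN}: these collect exactly the $m=0$ summand coming from charges $\gamma\in\tilde\Gamma_{\rm light}$ with $\pm\gamma$ projecting into $S$, which are the only summands degenerating near $(u'',\theta)$. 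One checks that $\hat V_{ij},\hat A_i$ are smooth near $p^{-1}(u'',\theta)$: for $m\neq 0$, or for $\gamma$ with $\pm\gamma\notin S$, the radicals $\sqrt{4|Z_\gamma|^2+(2\pi m+\theta_\gamma)^2}$ stay bounded away from $0$ (using the convention $\theta_\gamma\in(0,2\pi)$ of \eqref{eq:removepm} together with $Z_\gamma(u'')\neq 0$), so the corresponding functions are smooth in $(Z_\gamma,\theta_\gamma)$, hence in the $w$'s, and the $m$‑ and $\gamma$‑sums converge with all derivatives by the M‑test; also $\tilde\tau_{ij}$ is real‑analytic. Here the $\gamma\leftrightarrow-\gamma$ pairing and Assumption \ref{it:smooth}\ref{it:Om1} ($\Omega=1$ on $\tilde\Gamma_{\rm light}$) are used to match the singular parts of $V_{ij},A'_i$ with $V^{\rm TN}_{ij},{A'_i}^{\rm TN}$ term by term — after combining the $\pm\gamma$ contributions the regulator $\sgn(2\pi m+\theta_\gamma)$ in \eqref{eq:Aiprime} becomes the constant $1$ of ${A'_i}^{\rm TN}$ on the relevant neighborhood. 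Substituting,
\begin{equation*}
\varpi^{\rm model}(\zeta)=\varpi^{\rm TN}(\zeta)+\frac{1}{4\pi i}\sum_{i=1}^r\bigl(\zeta^{-1}da_i+i\,d\theta_{\gamma_{e_i}}+\zeta\,d\bar a_i\bigr)\wedge\Bigl[\,\hat A_i+\sum_{j=1}^r\hat V_{ij}\bigl(\zeta^{-1}da_j-\zeta\,d\bar a_j\bigr)\Bigr],
\end{equation*}
where $\varpi^{\rm TN}(\zeta)$ is smooth on $\M_U$ by Proposition \ref{prop:TN} (the $1/w_{\gamma,2}$ terms in ${A'_i}^{\rm TN}$ cancel those in $d\theta'_{\gamma_{m_i}}$), and the remainder is smooth near $p^{-1}(u'',\theta)$ because $da_i,d\bar a_i,d\theta_{\gamma_{e_i}}$ are smooth $1$‑forms on $\M$ by Lemma \ref{lem:Jacobians}, $\hat A_i$ is a smooth $1$‑form, and the $\hat V_{ij}$ are smooth functions. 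Thus $\varpi^{\rm model}(\zeta)$ extends to a smooth $2$‑form on all of $\M_U$. This term‑by‑term matching of the singular parts of the Gibbons--Hawking data with the Taub--NUT data is the one genuinely delicate point; everything else is bookkeeping.

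Finally, every term above has the shape $\bigl(\zeta^{-1}(\cdot)+(\cdot)+\zeta(\cdot)\bigr)\wedge\bigl((\cdot)+\zeta^{-1}(\cdot)-\zeta(\cdot)\bigr)$ with $\zeta$‑independent smooth coefficient forms, so (by symmetry of $V_{ij},\hat V_{ij}$, exactly as for $\varpi^{\rm sf}$) the $\zeta^{\pm 2}$ pieces cancel and the coefficients of $-\frac{i}{2\zeta},\,1,\,-\frac{i}{2}\zeta$ — namely $\omega^{\rm model}_+,\omega^{\rm model}_3,\omega^{\rm model}_-$ — are each smooth $2$‑forms on $\M_U$; equivalently $\varpi^{\rm model}(\zeta)$ is a $\mathbb{P}^1_\zeta$‑family of smooth $2$‑forms. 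Closedness passes to the extension: $d\varpi^{\rm model}(\zeta)=0$ on the dense open subset $\M'_U$, where $\varpi^{\rm model}(\zeta)=\frac{1}{8\pi}\avg{d\log\X^{\rm model}(\zeta)\wedge d\log\X^{\rm model}(\zeta)}$ with $d\log\X^{\rm model}(\zeta)$ a closed $\Gamma^*$‑valued $1$‑form, hence $d\varpi^{\rm model}(\zeta)=0$ on all of $\M_U$ by continuity.
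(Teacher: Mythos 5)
Your proposal follows the paper's own two-step route essentially verbatim: rewrite $\varpi^{\rm model}(\zeta)$ in generalized Gibbons--Hawking form with the primed fiber coordinates and connection $A'_i$ to extend over $\widetilde{\M'}$, then subtract the Taub--NUT family $\varpi^{\rm TN}(\zeta)$ of Proposition \ref{prop:TN} and argue that the difference is smooth near the points added in Proposition \ref{lem:SmoothII}; closedness then passes to the extension by density. The decomposition $V=V^{\rm TN}+\hat V$, $A'={A'}^{\rm TN}+\hat A$ is exactly the paper's.

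There is, however, a gap at precisely the step the paper singles out as the delicate one. You assert that $\hat A_i=A'_i-{A'_i}^{\rm TN}$ is a smooth $1$-form near $p^{-1}(u'',\theta)$, justifying this by the fact that the radicals $\sqrt{4|Z_{\gamma'}|^2+(2\pi m+\theta_{\gamma'})^2}$ with $m\neq 0$ stay bounded away from zero. That controls the scalar coefficients, but not the $1$-form $d\arg Z_{\gamma'}=d\arg w_{\gamma',1}+d\arg w_{\gamma',2}$ multiplying them for $\gamma'\in S$: this blows up on $\{w_{\gamma',1}=0\}\cup\{w_{\gamma',2}=0\}$, which passes through the new points $q_{\gamma'}=0$, so $\hat A_i$ is not termwise a product of a smooth function with a smooth form and its smoothness must be argued. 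Two repairs are available. (i) The $m\neq 0$ tail $Q(|Z_{\gamma'}|,\theta_{\gamma'})$ of \eqref{eq:Adiff} is a smooth function of $(|Z_{\gamma'}|^2,\theta_{\gamma'})$ vanishing at $|Z_{\gamma'}|=0$, so writing $Q=|Z_{\gamma'}|^2\,\tilde Q$ with $\tilde Q$ smooth gives
\begin{equation*}
Q\, d\arg Z_{\gamma'}=\frac{\tilde Q}{2i}\bigl(\bar Z_{\gamma'}\,dZ_{\gamma'}-Z_{\gamma'}\,d\bar Z_{\gamma'}\bigr),
\end{equation*}
which is manifestly smooth in the $w$'s since $Z_{\gamma'}=\frac{i}{2}w_{\gamma',1}w_{\gamma',2}$ is polynomial. (ii) Alternatively, as the paper does, keep $d\arg Z_{\gamma'}$ wedged against $\sum_i p_i^{-1}c_{\gamma',i}\,d\log\X^{\rm sf}_{\gamma_{e_i}}(\zeta)=\zeta^{-1}dZ_{\gamma'}+i\,d\theta_{\gamma'}+\zeta\,d\bar Z_{\gamma'}$, which vanishes at $q_{\gamma'}=0$; this gives continuity directly, and one then upgrades to smoothness using the fact that the difference is independent of the fiber coordinates. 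With that point supplied, the rest of your write-up is sound.
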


\begin{rem} Note that we never extended $\mathcal{X}^{\mathrm{model}, a}$ over $\mathcal{B}''$. This was by design. As discussed in Remark \ref{rem:commentsonintegral}, the integrals defining $\mathcal{X}^{\mathrm{model}, a}$ do not converge when $Z_{\gamma'}(u)=0$. The modified Bessel functions of the second kind, $K_0(2|nZ_{\gamma'}|)$ and $K_1(2 |nZ_{\gamma'}|)$, appearing Lemma \ref{lem:coordind} similarly aren't defined.
One can make sense of  the integral defining $\mathcal{X}_{\gamma}^{\mathrm{model}, a}$ over $\mathcal{B}''$ when $r_{a, A(\gamma')}=\ell_{\gamma'}(u),$ by doing a regularized integral that cancels\footnote{This is the key idea in \cite[Lemma 3.1]{garza:singular1}. } the simple pole at $0$ with the simple pole at $\infty$; however, this is not necessary.
\end{rem}
\begin{proof}
The proof proceeds in two parts. First, we check that that the holomorphic symplectic form extends to $\widetilde{\M}'$; then we check that the holomorphic symplectic form extends to $\tilde{\M}$.

We first show that the holomorphic symplectic form extends to $\widetilde{\M}$, starting with the equality in Lemma \ref{lem:coordind}. 
Within $\varpi^{\mathrm{model}}(\zeta)$, there are $\log|Z_{\gamma'}|$ singularities in both $\varpi^{\rm sf}(\zeta)$ (thanks to the presence of $\Imag\tau$ --- recall \eqref{eq:imTildeTau}) and the $K_0$ sum, but miraculously these cancel out! By Lemma \ref{lem:GHsf}, we can write 
\begin{align}
\varpi^{\rm sf}(\zeta)
&= \frac{1}{4\pi i} \sum_{i=1}^r d\log \X^{\rm sf}_{\gamma_{e_i}}(\zeta)\wedge \brackets{ p_i^{-1} d\theta_{\gamma_{m_i}}+A_i^{\mathrm{sf}} + \sum_{j=1}^r V^{\mathrm{sf}}_{ij}(\zeta^{-1} da_j-\zeta d\bar a_j) } \ ,
\end{align}
where
\begin{align*}
A_i^{\mathrm{sf}} &= \sum_{j=1}^r- \Real\tau_{ij} d\theta_{\gamma_{e_j}}\\
V_{ij}^{\semif} &=\Imag\tau_{ij},
\end{align*}
and similarly, we discern that
\begin{align}
\varpi^{{\rm model}}(\zeta) &= \frac{1}{4\pi i} \sum_{i=1}^r d\log \X^{\rm sf}_{\gamma_{e_i}}(\zeta)\wedge \brackets{p_i^{-1}d\theta_{\gamma_{m_i}} + A_i + \sum_{j=1}^r V_{ij}(\zeta^{-1} da_j-\zeta d\bar a_j)} \ , \label{eq:varpiModelExplicit}
\end{align}
where $V_{ij}$ is given in \eqref{eq:newV} and the connection is
\be
A_i = - \sum_{j=1}^r \Real \tau_{ij}d\theta_{\gamma_{e_j}} - \frac{1}{4\pi} \sum_{\gamma'\in\tilde\Gamma_{\rm light}} \Omega(\gamma') p_i^{-1} c_{\gamma',i} |Z_{\gamma'}| d\log(Z_{\gamma'}/\bar Z_{\gamma'}) \sum_{n\not=0} e^{in\theta_{\gamma'}} \sgn(n) K_1(2|nZ_{\gamma'}|) \label{eq:ai2}
\ee
equals \eqref{eq:ai}. (Again, see pages 36-37 of \cite{mz:K3HK} for an explanation of this Poisson resummation identity.) With this, all of the singularities are in the connection $p_i(p_i^{-1}\de \theta_{m_i} + A_i)$. In the proof of Lemma \ref{lem:gwOV}\ref{it:bundle},
we explicitly showed that the possibly problematic connection  $p_i(p_i^{-1}\de \theta_{m_i} + A_i)$ appearing \eqref{eq:varpiModelExplicit} extends smoothly to $\widetilde{\M}'$. 

\bigskip

Now, we show, that the holomorphic symplectic form $\varpi^{\mathrm{model}}$ extends smoothly to all of $\mathcal{M}$. Let $u'' \in U' \cap \mathcal{B}''$ and let $\theta \in \Theta_{\mathrm{light}}$ such that $\widetilde{S}_{u'', \theta} \neq \emptyset$. Then, the structure of the smooth manifold is constructed by Proposition \ref{lem:SmoothII} and Proposition \ref{prop:TN} moreover constructs a smooth hyper-K\"ahler metric in a neighborhood.

Since we know $\varpi^{\mathrm{TN}}(\zeta)$ is smooth on $\M$,
we will show that the difference of $\varpi^{\mathrm{model}}(\zeta)$  and $\varpi^{\mathrm{TN}}(\zeta)$ is smooth on $\M'$.

 We begin with the expression for $\varpi^{\mathrm{model}}(\zeta)$ in \eqref{eq:varpiModelExplicit}, but we take a modified form 
\begin{align}
\varpi^{{\rm model}}(\zeta) &= \frac{1}{4\pi i} \sum_{i=1}^r d\log \X^{\rm sf}_{\gamma_{e_i}}(\zeta)\wedge \brackets{p_i^{-1} d\theta'_{\gamma_{m_i}} + A'_i + \sum_{j=1}^r V_{ij}(\zeta^{-1} da_j-\zeta d\bar a_j)} \ , 
\end{align}
where the connection is 
\begin{align} A'_i =& - \sum_{j=1}^r \Real\tilde\tau_{ij} d\theta_{\gamma_{e_j}}\\ \nonumber
&  + \frac{1}{4}\sum_{\gamma'\in\tilde\Gamma_{\rm light}} \Omega(\gamma') p_i^{-1} c_{\gamma',i} d\arg Z_{\gamma'} \sum_{m\in\ZZ} \parens{\frac{2\pi m+\theta_{\gamma'}}{\sqrt{4|Z_{\gamma'}|^2+(2\pi m +\theta_{\gamma'})^2}} - \sgn(2\pi m+\theta_{\gamma'})} \  \end{align}
and the potential is 
\be V_{ij}(u,\theta) = \Imag\tilde\tau_{ij}(u) + \frac{1}{4\pi} \sum_{\gamma\in\tilde\Gamma_{\rm light}} \Omega(\gamma)p_i^{-1} p_j^{-1} c_{\gamma,i} c_{\gamma,j} T(Z_\gamma(u),\theta_\gamma) \ee
where 
\be T(w,\theta) = \sum_{m=-\infty}^\infty \parens{\frac{\pi}{\sqrt{4|w|^2+(2\pi m+\theta)^2}} - \kappa_m} \ , \quad \kappa_m = \piecewise{\half \log \frac{2|m|+1}{2|m|-1}}{m\not=0}{0}{m=0}.
\ee
As we observed in Proposition \ref{prop:TN}, the Taub-NUT metric is given by 
\begin{align}
\varpi^{{\rm TN}}(\zeta) &= \frac{1}{4\pi i} \sum_{i=1}^r d\log \X^{\rm sf}_{\gamma_{e_i}}(\zeta)\wedge \brackets{p_i^{-1}d\theta'_{\gamma_{m_i}} + A'^{\mathrm{TN}} _i + \sum_{j=1}^r V^{\mathrm{TN}}_{ij}(\zeta^{-1} da_j-\zeta d\bar a_j)} \ . 
\end{align}
Here,
one can obtain the expression for ${A'_i}^{\rm TN}$ in the following predicted way (1) summing over $\gamma' \in \widetilde{S}_{u'', \theta},$ (2) taking only the $m=0$ piece, and (3) setting $\widetilde{\tau}=0$. Note that we will use our assumption $\theta_{\gamma'} \in (0, 2 \pi)$.  I.e.,   
we write the connection and potential as a sum over the singleton $m \in \{0\}$: 
\begin{align} A'^{\mathrm{TN}}_i =& \frac{1}{4}\sum_{\gamma'\in\tilde S_{u'', \theta}} \Omega(\gamma') p_i^{-1} c_{\gamma',i} d\arg Z_{\gamma'}  \sum_{m \in 
\{0\}}\parens{\frac{2\pi m+\theta_{\gamma'}}{\sqrt{4|Z_{\gamma'}|^2+(2\pi m +\theta_{\gamma'})^2}} - \sgn(2\pi m+\theta_{\gamma'})} \ . \end{align}
and the potential is 
\be V^{\mathrm{TN}}_{ij}(u,\theta) = \frac{1}{4\pi} \sum_{\gamma\in\tilde\Gamma_{\rm light}} \Omega(\gamma)p_i^{-1} p_j^{-1} c_{\gamma,i} c_{\gamma,j} T_0(Z_\gamma(u),\theta_\gamma) \ee
where 
\be T_0(w,\theta) = \sum_{m \in \{0\}}\parens{\frac{\pi}{\sqrt{4|w|^2+(2\pi m+\theta)^2}} - \kappa_m} \ , \quad \kappa_m = \piecewise{\half \log \frac{2|m|+1}{2|m|-1}}{m\not=0}{0}{m=0}
\ee

The difference is 
\begin{align}
&\varpi^{\mathrm{model}}(\zeta)-\varpi^{\mathrm{TN}}(\zeta) :=\nonumber \\ & \qquad \frac{1}{4 \pi i} \sum_{i=1}^r d \log \X^{\semif}_{\gamma_{e_i}}(\zeta) \wedge \left[(A'_i - {A'_i}^{\mathrm{TN}}) + (V_{ij}-V_{ij}^{\mathrm{TN}}) (\zeta^{-1} d a_j - \zeta d \bar a_j)  \right].
\end{align}
Now, we refer the reader to the change of coordinate formulas appearing in Lemma \ref{lem:Jacobians}. In smooth coordinates on $\M$, the $1$-form $d\theta'_{m_i}$ is singular; however, it does not appear in the above difference. 

The second most problematic term is the $ d \mathrm{arg} Z_{\gamma'}=d \mathrm{arg} w_{\gamma',1} + d \mathrm{arg} w_{\gamma',2}$ term which appears in $A_i'- {A_i'}^{\rm TN}$; this is not defined when $w_{\gamma',1}=0$ or $w_{\gamma', 2}=0$. However, writing  
\begin{align}
A_i'- {A_i'}^{\rm TN}&= - \sum_{j=1}^r \mathrm{Re} \widetilde \tau_{ij} d \theta_{\gamma_{e_j}} + \frac{1}{2} \sum_{\gamma' \in S} \Omega(\gamma') p_i^{-1} c_{\gamma', i} d \mathrm{arg} Z_{\gamma'}  \nonumber\\
& \qquad  \times \underbrace{\sum_{m \in \ZZ-\{0\}} \parens{\frac{2\pi m+\theta_{\gamma'}}{\sqrt{4|Z_{\gamma'}|^2+(2\pi m +\theta_{\gamma'})^2}} - \sgn(2\pi m+\theta_{\gamma'})}}_{Q(|Z_{\gamma'}|, \theta_{\gamma'})},
\label{eq:Adiff}
\end{align}
one observes that for each $\gamma' \in S$
 \begin{align*}
 &\sum_{i=1}^r d \log \mathcal{X}_{\gamma_{e_i}}^{\semif}(\zeta) \wedge \frac{1}{2} \Omega(\gamma') p_i^{-1} c_{\gamma', i} d \mathrm{arg} Z_{\gamma'} Q(|Z_{\gamma'}|, \theta_{\gamma'})\\
      &=\sum_{i=1}^r  \Omega(\gamma') p_i^{-1} c_{\gamma', i}  (\zeta^{-1} d a_i + i d \theta_{e_i} + \zeta d \bar a_i) \wedge   \frac{1}{2}  d \mathrm{arg} Z_{\gamma'} Q(|Z_{\gamma'}|, \theta_{\gamma'})\\
            &= (\zeta^{-1} d Z_{\gamma'} + i d \theta_{e_i} + \zeta d \bar Z_{\gamma'}) \wedge   \frac{1}{2} \sum_{\gamma' \in S} d \mathrm{arg} Z_{\gamma'} f(|Z_{\gamma'}|, \theta_{\gamma'}).
      \end{align*}
     The $1$-form $(\zeta^{-1} d Z_{\gamma'} + i d \theta_{e_i} + \zeta d \bar Z_{\gamma'})$ vanishes at $w_{\gamma',1}=0, w_{\gamma', 2}=0$, so the wedge product is continuous on $\M$.
     
     All of the other $1$-forms appearing in the difference are individually continuous, and all of the other functions, like $V_{ij}-V_{ij}^{\rm TN}$ are continuous on $\M$.  

     Lastly, to talk about the smoothness of the difference---not just the continuity of the difference---on $\M$, we need to show that the derivatives with respect to the smooth coordinates on $\M$ are continuous. The possibly problematic derivatives are $\frac{\partial}{\partial w_{\gamma', 1}}, \frac{\partial}{\partial \bar w_{\gamma', 1}},\frac{\partial}{\partial w_{\gamma', 2}}, \frac{\partial}{\partial \bar w_{\gamma', 2}}$, since in the expansion in terms of the  original smooth coordinates on $\M'$, the coefficients of the fiber partial derivatives  $\frac{\partial}{\partial \tilde \theta'_{m_i}}$ are singular (see Remark \ref{rem:vectorfields} and the computations in the proof of Lemma \ref{lem:Jacobians}). However, since the difference is independent of the fiber coordinates, smoothness in the coordinates on $(\Imag \HH)^{r}$ implies smoothness in the coordinates on $\M$.

     \bigskip

Lastly, since $d\varpi^{\mathrm{model}}(\zeta)=0$ on a dense subset $\M'_U$ of $\M_U$, $\varpi^{\mathrm{model}}(\zeta)$ is closed on $\M_U$.
\end{proof}
\begin{rem}[Explicit form] 
We write
\be \varpi^{\rm model}(\zeta) = -\frac{i}{2\zeta} \omega^{\rm model}_+ + \omega^{\rm model}_3 - \frac{i}{2} \zeta \omega^{\rm model}_- \ , \ee
noting in particular that the $\zeta^{\pm 2}$ terms vanish. Explicitly,
\begin{align}
\omega^{\rm model}_+ &= \frac{1}{2\pi} \sum_{i=1}^r da_i\wedge (p_i^{-1} d\theta_{\gamma_{m_i}}+A_i-i\sum_j V_{ij} d\theta_{\gamma_{e_j}}) \ , \nonumber \\
\omega_3^{\rm model} &= \frac{i}{2\pi} \sum_{i,j} V_{ij} da_i\wedge d\bar a_j + \frac{1}{4\pi} \sum_i d\theta_{\gamma_{e_i}}\wedge (p_i^{-1}d\theta_{\gamma_{m_i}}+A_i) \ , \nonumber \\
\omega_-^{\rm model} &= \overline{\omega_+^{\rm model}} \ ,
\end{align}
where $A_i$, $i=1, \cdots, r$ describe the $U(1)^r$ connection described in \eqref{eq:ai}.
\end{rem}

Lastly, we determine holomorphic Darboux coordinates for $\omega_+^{\rm model}$ corresponding to $\zeta=0$. 

\begin{lemma}
The closed $2$-form  $\omega_+$ is equal to \be \omega_+^{\rm model} = \sum_{i=1}^r da_i \wedge d  z^{\rm model}_i, \ee 
for 
\be z^{\rm model}_i = \underbrace{\frac{p_i^{-1}  \theta_{\gamma_{m_i}} - \sum_{j=1}^r \tau_{ij} \theta_{\gamma_{e_j}}}{2 \pi}}_{z_i}+ 
\left( -\frac{1}{8\pi^2} \sum_{\gamma\in\tilde\Gamma_{\rm light}} \Omega(\gamma) p_i^{-1}  c_{\gamma,i}  \sum_{n\not=0} \frac{1}{n} e^{in\theta_\gamma} K_0(2|n Z_\gamma(u)|)
    \right)
\ee 
The determinant of 
\be\label{eq:Jac1} \abs{\frac{\partial(\{a_i\}_{i=1}^r 
, \{\bar a_i \}_{i=1}^r, 
\{z_i^{\rm model}\}_{i=1}^r \{\bar z_i^{\rm model}\}_{i=1}^r 
)}
{\partial(\tilde\theta_{\gamma_{e_1}},\ldots,\tilde\theta_{\gamma_{e_r}},
\frac{1}{p_1}\tilde\theta'_{\gamma_{m_1}},\ldots,\frac{1}{p_r}\tilde\theta'_{\gamma_{m_r}},
a_1,\ldots,a_r,\bar a_1,\ldots,\bar a_r)}} = \ . \ee

\end{lemma}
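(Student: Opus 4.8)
The plan is to mirror the proof of Lemma \ref{lem:omega+sf}, reading off $\omega_+^{\rm model}$ as the $\zeta^{-1}$-coefficient of $\varpi^{\rm model}(\zeta)$ and checking two things: that $\sum_i da_i\wedge dz_i^{\rm model}$ reproduces it, and that the associated real coordinates are nondegenerate. I would start from the explicit expression $\varpi^{\rm model}(\zeta)=\frac{1}{4\pi i}\sum_i d\log\X^{\rm sf}_{\gamma_{e_i}}(\zeta)\wedge\brackets{p_i^{-1}d\theta_{\gamma_{m_i}}+A_i+\sum_j V_{ij}(\zeta^{-1}da_j-\zeta d\bar a_j)}$ in \eqref{eq:varpiModelExplicit}, with $V_{ij}$ as in \eqref{eq:newV} and $A_i$ as in \eqref{eq:ai2} (equivalently, extract it from \eqref{eq:modelv2} using $K_0'=-K_1$ and the Poisson-resummation identity invoked in Proposition \ref{prop:extendssmoothly}); this gives $\omega_+^{\rm model}=\frac{1}{2\pi}\sum_i da_i\wedge T_i$ with $T_i=p_i^{-1}d\theta_{\gamma_{m_i}}+A_i-i\sum_j V_{ij}d\theta_{\gamma_{e_j}}$.

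\textbf{Step 1: exactness in Darboux coordinates.} Set $C_i=-\frac{1}{8\pi^2}\sum_{\gamma\in\tilde\Gamma_{\rm light}}\Omega(\gamma)p_i^{-1}c_{\gamma,i}\sum_{n\neq0}\frac1n e^{in\theta_\gamma}K_0(2|nZ_\gamma|)$, so $z_i^{\rm model}=z_i+C_i$, and expand $2\pi\,dz_i^{\rm model}$ in the coframe $\{d\theta_{\gamma_{m_k}},d\theta_{\gamma_{e_k}},da_k,d\bar a_k\}$, using $\theta_\gamma=\sum_k p_k^{-1}c_{\gamma,k}\theta_{\gamma_{e_k}}+\text{const}$ for $\gamma\in\tilde\Gamma_{\rm light}$, $dZ_\gamma=\sum_k p_k^{-1}c_{\gamma,k}da_k$, $K_0'=-K_1$, and $d|Z_\gamma|=\tfrac{|Z_\gamma|}2(dZ_\gamma/Z_\gamma+d\bar Z_\gamma/\bar Z_\gamma)$. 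I would then match coefficients against $T_i$: the $d\theta_{\gamma_{m_k}}$-coefficient is $p_i^{-1}\delta_{ik}$ on both sides; the $d\theta_{\gamma_{e_j}}$-coefficient of $2\pi\,dz_i^{\rm model}$ equals $-\tau_{ij}-i(V_{ij}-\Imag\tau_{ij})=-\Real\tau_{ij}-iV_{ij}$, using the formula \eqref{eq:newV} for $V_{ij}$, which is precisely the $d\theta_{\gamma_{e_j}}$-coefficient of $T_i$; and the $d\bar a_k$-coefficients agree term by term against the $d\bar Z_{\gamma'}/\bar Z_{\gamma'}$-piece of $A_i$. The $da_k$-coefficients need \emph{not} match, but their difference consists of $-\sum_j\theta_{\gamma_{e_j}}\partial_{a_k}\tau_{ij}$ (symmetric in $(i,k)$ because locally $\tau_{ij}=\partial_{a_i}\partial_{a_j}\F$ by Lemma \ref{lem:D4}(iii), so $\partial_{a_k}\tau_{ij}$ is totally symmetric) plus a multiple of $\sum_{\gamma'}\Omega(\gamma')p_i^{-1}p_k^{-1}c_{\gamma',i}c_{\gamma',k}\frac{|Z_{\gamma'}|}{Z_{\gamma'}}\sum_{n\neq0}\sgn(n)e^{in\theta_{\gamma'}}K_1$ (manifestly symmetric in $(i,k)$); since $\sum_{i,k}f_{ik}\,da_i\wedge da_k=0$ for $f_{ik}=f_{ki}$, this yields $\omega_+^{\rm model}=\sum_i da_i\wedge dz_i^{\rm model}$ on $\pi^{-1}(U)\cap\M'$ (and, by the extension in Proposition \ref{prop:extendssmoothly}, wherever the identity makes sense). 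This is exactly the mechanism flagged in the footnote to Proposition \ref{prop:TN}.

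\textbf{Step 2: the Jacobian.} In the coordinates $(\tilde\theta_{\gamma_{e}},\tfrac1p\tilde\theta'_{\gamma_m},a,\bar a)$ the rows for $a_i,\bar a_i$ are $(0,0,I_r,0)$ and $(0,0,0,I_r)$. Using Lemma \ref{lem:thetaprime} (so $\theta_{\gamma_{m_i}}=\theta'_{\gamma_{m_i}}+$ terms independent of $\theta'_m$, with $\avg{\gamma_{m_i},\gamma'}=c_{\gamma',i}$) and \eqref{eq:realTildeTau}, one finds $\partial z_i^{\rm model}/\partial(\tfrac1{p_j}\tilde\theta'_{\gamma_{m_j}})=\tfrac1{2\pi}\delta_{ij}$ and $\partial z_i^{\rm model}/\partial\tilde\theta_{\gamma_{e_j}}=\tfrac1{2\pi}(R_{ij}-iV_{ij})$ with $R_{ij}=\Real\tau_{ij}-2\Real\tilde\tau_{ij}$ real symmetric, while the $\partial z_i^{\rm model}/\partial a_j$ and $\partial z_i^{\rm model}/\partial\bar a_j$ entries are irrelevant to the determinant. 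Reordering blocks as in Lemma \ref{lem:omega+sf}, the Jacobian determinant reduces to $(2\pi)^{-2r}\det\begin{pmatrix}R-iV & I_r\\ R+iV & I_r\end{pmatrix}=(2\pi)^{-2r}(-2i)^r\det V$ (the real matrix $R$ cancels under the block row reduction, just as $\Real\tau$ cancels in the semi-flat case). Since $V=V_{ij}$ is positive-definite (Lemma \ref{lem:gwOV}, the positivity statement), $\det V\neq0$, which proves the claim; combined with Corollary \ref{cor:twistor} this shows $\omega_+^{\rm model}$ is a holomorphic symplectic form.

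\textbf{Main obstacle.} The only nontrivial part is the bookkeeping in Step 1: verifying that every $\log|Z_\gamma|$-type and $K_1$-type contribution reorganizes so that the mismatch between $2\pi\,dz_i^{\rm model}$ and $T_i$ lives purely in the $da_j$-coefficients and is symmetric in $(i,j)$. This hinges on the $\tau$--$\tilde\tau$ dictionary of Remark \ref{rem:tau} (eqs.\ \eqref{eq:realTildeTau}, \eqref{eq:imTildeTau}), on the definition \eqref{eq:newV} of $V$, and on the Bessel/Poisson-resummation identities; the Darboux coordinate $z_i^{\rm model}$ has been chosen precisely so that this works. Step 2 is then a routine block-determinant computation.
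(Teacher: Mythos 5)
Your proposal is correct and follows essentially the same route as the paper: both read $\omega_+^{\rm model}$ off as $\frac{1}{2\pi}\sum_i da_i\wedge(p_i^{-1}d\theta_{\gamma_{m_i}}+A_i-i\sum_j V_{ij}d\theta_{\gamma_{e_j}})$, identify the correction to $z_i$ via the $K_0$ sum, dispose of the leftover $da_i\wedge da_j$ terms by the symmetry of $\partial_{a_k}\tau_{ij}$ and of the $K_1$ coefficient, and reduce the Jacobian by block elimination to $(2\pi)^{-2r}(\pm 2i)^r\det V\neq 0$ using positive-definiteness of $V$. The only (minor) difference is that you carry out the Jacobian computation directly in the primed coordinates $\tilde\theta'_{\gamma_{m_i}}$, absorbing the extra real contributions into the block cancellation, whereas the paper computes in the unprimed coordinates and implicitly relies on Lemma \ref{lem:primedJacobian}; both are fine.
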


\begin{proof}
 We begin with the expression for $V_{ij}$ in \eqref{eq:newV} of Lemma \ref{lem:gwOV}\ref{it:Vij}:
 \begin{align}
    V_{ij}(u,\theta)&= \Imag \tau_{ij}(u) + \frac{1}{4\pi} \sum_{\gamma\in\tilde\Gamma_{\rm light}} \Omega(\gamma) p_i^{-1} p_j^{-1} c_{\gamma,i} c_{\gamma,j} \sum_{n\not=0} e^{in\theta_\gamma} K_0(2|n Z_\gamma(u)|)
    \end{align}
    and the expression for $A_i$ in \eqref{eq:a12}:
    \be
    A_i = - \sum_{j=1}^r \Real \tau_{ij}d\theta_{\gamma_{e_j}} - \frac{1}{4\pi} \sum_{\gamma'\in\tilde\Gamma_{\rm light}} \Omega(\gamma') p_i^{-1} c_{\gamma',i} |Z_{\gamma'}| d\log(Z_{\gamma'}/\bar Z_{\gamma'}) \sum_{n\not=0} e^{in\theta_{\gamma'}} \sgn(n) K_1(2|nZ_{\gamma'}|).
    \ee
    Then we note that $\omega_+^{\rm model} = \sum_{i=1}^r da_i \wedge \beta_i$ for 
    \begin{align}
    \beta_i &=\frac{1}{2 \pi}\left(p_i^{-1} d \theta_{\gamma_{m_i}} + A_i - i \sum_{j=1}^r V_{ij} d \theta_{\gamma_{e_j}}\right)\nonumber \\
     &= \frac{1}{2 \pi}\left(p_i^{-1} d \theta_{\gamma_{m_i}}  
     - \sum_{j=1}^r \Real \tau_{ij}d\theta_{\gamma_{e_j}} - i \sum_{j=1}^r 
     \Imag \tau_{ij}(u) d \theta_{\gamma_{e_j}}
    \right) \nonumber \\
     & \qquad + \frac{1}{2 \pi}\left( 
     \frac{1}{4\pi} \sum_{\gamma'\in\tilde\Gamma_{\rm light}} \Omega(\gamma') p_i^{-1} c_{\gamma',i} |Z_{\gamma'}| d\log(Z_{\gamma'}/\bar Z_{\gamma'}) \sum_{n\not=0} e^{in\theta_{\gamma'}} \sgn(n) K_1(2|nZ_{\gamma'}|) \right. \nonumber \\
     & \qquad  \left.
      - 
       \frac{i}{4\pi} \sum_{\gamma\in\tilde\Gamma_{\rm light}} \Omega(\gamma) p_i^{-1}  c_{\gamma,i}  \sum_{n\not=0} e^{in\theta_\gamma} K_0(2|n Z_\gamma(u)|)
      d \theta_{\gamma}\right)\nonumber \\
     &= \frac{1}{2 \pi}\left(p_i^{-1} d \theta_{\gamma_{m_i}} 
      - \sum_{j=1}^r \Real \tau_{ij}d\theta_{\gamma_{e_j}} - i \sum_{j=1}^r 
      \Imag \tau_{ij}(u) d \theta_{\gamma_{e_j}}
     \right) \nonumber  \\
     &  \qquad +  d \left( -\frac{1}{8\pi^2} \sum_{\gamma\in\tilde\Gamma_{\rm light}} \Omega(\gamma) p_i^{-1}  c_{\gamma,i}  \sum_{n\not=0} \frac{1}{n} e^{in\theta_\gamma} K_0(2|n Z_\gamma(u)|)
    \right) \nonumber \\
     & \qquad - \frac{1}{4 \pi^2}  
     \sum_{\gamma'\in\tilde\Gamma_{\rm light}} \Omega(\gamma') p_i^{-1} c_{\gamma',i} |Z_{\gamma'}| d\log(Z_{\gamma'}) \sum_{n\not=0} e^{in\theta_{\gamma'}} \sgn(n) K_1(2|nZ_{\gamma'}|). 
    \end{align}
Recalling that for the semi-flat we have holomorphic Darboux coordinates $\omega_+^{\rm sf} = \sum_{i=1}^r da_i \wedge dz_i$ for 
\be z_i = \frac{p_i^{-1} \theta_{\gamma_{m_i}} - \sum_{j=1}^r \tau_{ij} \theta_{\gamma_{e_j}}}{2 \pi},\ee
we take 
\be z^{\rm model}_i = \underbrace{\frac{p_i^{-1}  \theta_{\gamma_{m_i}} - \sum_{j=1}^r \tau_{ij} \theta_{\gamma_{e_j}}}{2 \pi}}_{z_i}+ 
\left( -\frac{1}{8\pi^2} \sum_{\gamma\in\tilde\Gamma_{\rm light}} \Omega(\gamma) p_i^{-1}  c_{\gamma,i}  \sum_{n\not=0} \frac{1}{n} e^{in\theta_\gamma} K_0(2|n Z_\gamma(u)|)
    \right)
\ee 
and check that  $\omega_+^{\rm model} = \sum_{i=1}^r da_i \wedge dz^{\rm model}_i$, as claimed, precisely because 
\begin{align}
&\sum_{i=1}^r d Z_{\gamma_{e_i}} \wedge  -\frac{1}{4 \pi^2}  
\sum_{\gamma'\in\tilde\Gamma_{\rm light}} \Omega(\gamma') p_i^{-1} c_{\gamma',i}  \frac{|Z_{\gamma'}|}{Z_{\gamma'}}  dZ_{\gamma'}\sum_{n\not=0} e^{in\theta_{\gamma'}} \sgn(n) K_1(2|nZ_{\gamma'}|)\nonumber \\
&=  -\frac{1}{4 \pi^2}  
\sum_{\gamma'\in\tilde\Gamma_{\rm light}} \Omega(\gamma') dZ_{\gamma'} \wedge dZ_{\gamma'} \frac{ |Z_{\gamma'}|}{Z_{\gamma'}} \sum_{n\not=0} e^{in\theta_{\gamma'}} \sgn(n) K_1(2|nZ_{\gamma'}|) \nonumber \\
&=0
\end{align}

Lastly, we compute the determinant, making use of our previous computations in the proof of Lemma \ref{lem:omega+sf}.
The relevant Jacobian matrix this time is
\be
\frac{\partial(a_1,\ldots,a_r,\bar a_1,\ldots,\bar a_r,z^{\rm model}_1,\ldots,z^{\rm model}_r,\bar z^{\rm model}_1,\ldots,\bar z^{\rm model}_r)}{\partial(a_1,\ldots,a_r,\bar a_1,\ldots,\bar a_r,\frac{1}{p_1}\tilde\theta_{\gamma_{m_1}},\ldots,\frac{1}{p_r} \tilde\theta_{\gamma_{m_r}},\tilde\theta_{\gamma_{e_1}},\ldots,\tilde\theta_{\gamma_{e_r}})} \ . \ee
Noting that 
\be \frac{\partial z_i^{\rm model}}{\partial \theta_{\gamma_{e_j}}} = \frac{\partial z_i}{\partial \theta_{\gamma_{e_j}}}
-\frac{i}{2 \pi}   (V_{ij}- \Imag \tau_{ij}) 
     \ee 
     and 
    \be \frac{\partial z_i^{\rm model}}{\partial \theta_{\gamma_{m_j}}} = \frac{\partial z_i}{\partial \theta_{\gamma_{m_j}}},    \ee
 the Jacobian is 
\be \begin{pmatrix}
I_r & 0 & 0 & 0 \\
0 & I_r & 0 & 0 \\
* & * & \frac{1}{2\pi} I_r   & -\frac{1}{2\pi} \tau  - \frac{i}{2 \pi}   (V- \Imag \tau) \\
* &*& \frac{1}{2\pi} I_r  & -\frac{1}{2\pi} \bar\tau +  \frac{i}{2 \pi}   (V- \Imag \tau)
\end{pmatrix} \ ,
\ee
where the $*$ terms do not matter in the computation of the determinant.
As before, the determinant is found to be 
\be \begin{vmatrix}
\frac{1}{2\pi} I_r  & -\frac{1}{2\pi} \tau -  \frac{i}{2 \pi}   (V- \Imag \tau)\\
\frac{1}{2\pi} I_r  & -\frac{1}{2\pi} \bar\tau +  \frac{i}{2 \pi}   (V- \Imag \tau)
\end{vmatrix} =
(2\pi)^{-2r} (2i)^r \det V \ ,
\ee
which is nonzero. 

\end{proof}

\begin{theorem}[Model hyper-K\"ahler geometry]\label{thm:modelgeom}
    The $\mathbb{P}^1$-family of closed $2$-forms $\varpi^{\mathrm{model}}(\zeta)$  
    produce a hyper-K\"ahler structure on $\M_U$.
\end{theorem}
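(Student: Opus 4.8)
The plan is to verify the hypotheses of the twistor theorem, Theorem~\ref{thm:twistor}, for the family $\varpi^{\mathrm{model}}(\zeta)$ on $\M_U$. By Proposition~\ref{prop:extendssmoothly} the $\mathbb{P}^1$-family of closed $2$-forms $\varpi^{\mathrm{model}}(\zeta)$ is already known to extend smoothly to $\M_U$, so it remains only to check the reality conditions and the holomorphic symplectic (non-degeneracy) conditions. The reality conditions $\omega^{\mathrm{model}}_- = \overline{\omega^{\mathrm{model}}_+}$ and $\omega^{\mathrm{model}}_3 = \overline{\omega^{\mathrm{model}}_3}$ follow immediately from the reality condition on $\X^{\mathrm{model}}$ in \eqref{eq:xModelReality}, exactly as in the proof of Theorem~\ref{prop:semiflathyperkahler} for the semi-flat case; on $\M'_U$ this is formal, and since $\M'_U$ is dense in $\M_U$ and the forms are smooth, it holds on all of $\M_U$.

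The substantive content is to show that (i) $\varpi^{\mathrm{model}}(\zeta)$ is a holomorphic symplectic form on $\M_U$ for each $\zeta \in \CC^\times$, and (ii) $\omega^{\mathrm{model}}_+$ is a holomorphic symplectic form on $\M_U$. For the open dense set $\M'_U$ this is already done: Lemma~\ref{lem:nondegenModel} gives holomorphic Darboux coordinates $\X^{\mathrm{model}}_{\gamma_{e_i}}(\zeta)$, $p_i^{-1}\X^{\mathrm{model}}_{\gamma_{m_i}}(\zeta)$ with non-vanishing Jacobian (in fact equal to a $\zeta$-dependent prefactor times $\det V$, with $V$ the positive-definite matrix of Lemma~\ref{lem:gwOV}), so Corollary~\ref{cor:twistor} applies; similarly the last lemma of \S\ref{sec:modelgeom} provides Darboux coordinates $a_i, z^{\mathrm{model}}_i$ for $\omega^{\mathrm{model}}_+$ with Jacobian $\propto \det V$. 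To extend this across $\B''$ I would follow the strategy outlined after Corollary~\ref{cor:twistor}: the forms are already known to be smooth on $\M_U$, so it suffices to check that $\varpi^{\mathrm{model}}(\zeta)^r \wedge \overline{\varpi^{\mathrm{model}}(\zeta)}^r$ and $(\omega^{\mathrm{model}}_+)^r \wedge \overline{\omega^{\mathrm{model}}_+}^r$ are non-vanishing at points over $\B''$, computing the relevant top form as a limit of the Jacobian with respect to the smooth coordinates on $\M$. Here I would use the explicit forms \eqref{eq:varpiModelExplicit} for $\varpi^{\mathrm{model}}(\zeta)$ and $\omega^{\mathrm{model}}_+ = \sum_i da_i \wedge dz^{\mathrm{model}}_i$, together with the coordinate-change formulas and Jacobian computation of Lemma~\ref{lem:Jacobians}: passing from the smooth coordinates $\{w_{\gamma_i,1}, w_{\gamma_i,2}\}$ on $\M$ near the singular fiber to the $\M'$-coordinates contributes a factor $\prod_{i=1}^s \tfrac{1}{8}|q_{\gamma_i}|^2$, while the Jacobian of the Darboux coordinates in the $\M'$-coordinates carries a factor $\det V$, and $V = V^{\mathrm{TN}} + (\text{bounded})$ with $V^{\mathrm{TN}}_{ij}$ of Proposition~\ref{prop:TN} having entries $\sim |q_\gamma|^{-2}$; one checks that the products of these factors combine so that the total top form is non-vanishing on all of $\M_U$ (equivalently, one compares with the Taub-NUT model $\varpi^{\mathrm{TN}}(\zeta)$ of Proposition~\ref{prop:TN}, which is manifestly hyper-K\"ahler, noting $\varpi^{\mathrm{model}}(\zeta) - \varpi^{\mathrm{TN}}(\zeta)$ is smooth and the two agree to leading order near the degeneration locus).

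Finally, Theorem~\ref{thm:twistor} then yields a pseudo-hyper-K\"ahler structure on $\M_U$, and I would close by checking the signature: on the dense open set $\M'_U$ the signature is Riemannian because, as in Theorem~\ref{prop:semiflathyperkahler} and Lemma~\ref{lem:gwOV}\ref{it:d}, the relevant matrix $V$ is positive-definite (using Assumption~\ref{it:vPosAss}); since the metric is continuous on $\M_U$ and non-degenerate everywhere, and $\M_U$ is connected, the signature cannot jump, so the structure is genuinely hyper-K\"ahler on all of $\M_U$. The main obstacle I anticipate is the bookkeeping in step (i)--(ii) over $\B''$: one must track how the singular $|q_\gamma|^{-2}$ behavior of $V$ (and the corresponding singularities in the connection $A_i$) conspires with the $|q_\gamma|^2$ Jacobian factor from Lemma~\ref{lem:Jacobians} so that the product of $\varpi^{\mathrm{model}}(\zeta)^r \wedge \overline{\varpi^{\mathrm{model}}(\zeta)}^r$ with the coordinate-change Jacobian is a nonzero smooth function; the cleanest route is probably the comparison with $\varpi^{\mathrm{TN}}(\zeta)$, reducing the non-degeneracy over the singular fiber to the already-established smoothness of $\varpi^{\mathrm{model}}(\zeta) - \varpi^{\mathrm{TN}}(\zeta)$ from Proposition~\ref{prop:extendssmoothly} plus the known non-degeneracy of the Taub-NUT model.
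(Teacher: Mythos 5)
Your proposal is correct and follows essentially the same route as the paper: invoke Corollary \ref{cor:twistor} together with the smoothness from Proposition \ref{prop:extendssmoothly}, reduce non-degeneracy over $\B''$ to the limit of the Jacobian in the smooth coordinates of Lemma \ref{lem:Jacobians}, and observe that the $\prod_{\gamma\in S}|q_\gamma|^2$ coordinate-change factor cancels the $|q_\gamma|^{-2}$ growth of $\det V$ (the paper phrases this as $T(Z_\gamma,\theta_\gamma)\,|q_\gamma|^2$ having a finite non-zero limit), with positive-definiteness of $V$ giving the Riemannian signature. The only cosmetic difference is that the paper checks two separate Jacobian limits (one on $\widetilde{\M'}$ via Lemma \ref{lem:primedJacobian}, one in the $w_{\gamma,1},w_{\gamma,2}$ chart), whereas you fold both into one step, and you offer the Taub--NUT comparison as an alternative that the paper reserves for the smoothness proposition rather than the non-degeneracy argument.
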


\begin{proof} 
    We will use Corollary \ref{cor:twistor}. From Proposition \ref{prop:extendssmootly}, for $\zeta \in \mathbb{P}^1$, $\varpi^{\mathrm{model}}(\zeta)$ is smooth on $\M$. In Lemma \ref{lem:nondegenModel}, we proved that the Jacobian determinant of the holomorphic Darboux coordinates with respect to the smooth coordinates on $\M'$ was non-singular. 

    While, we did not define the coordinates $\X^{\mathrm{model},a}$ over $\B''$, since we already showed that $\varpi^{\mathrm{model}}(\zeta)$,$ \omega^{\rm model}_+$, $\omega^{\rm model}_-$ are smooth on $\M|_U$, the non-vanishing of 
    $\varpi^{\mathrm{model}}(\zeta)^r \wedge \overline{\varpi^{\mathrm{model}}(\zeta)^r }$ ammounts to checking that limit of the following Jacobians as one approaches the a point in $\M_U - \M'_U$ are non-vanishing:
    \be\label{eq:Jac1} \abs{\frac{\partial(\{\Imag \tilde\Y^{\rm sf}_{\gamma_{e_i}}\}_{i=1}^r 
    , \{\frac{1}{p_i}\Imag \tilde\Y^{{\rm model},a}_{\gamma_{m_i}} \}_{i=1}^r, 
    \{\Real \tilde\Y^{\rm sf}_{\gamma_{e_1}}\},
    \{\frac{1}{p_i}\Real \tilde\Y^{{\rm model},a}_{\gamma_{m_i}}\}
    }{\partial(\tilde\theta_{\gamma_{e_1}},\ldots,\tilde\theta_{\gamma_{e_r}},\frac{1}{p_1}\tilde\theta'_{\gamma_{m_1}},\ldots,\frac{1}{p_r}\tilde\theta'_{\gamma_{m_r}},a_1,\ldots,a_r,\bar a_1,\ldots,\bar a_r)}} \ , \ee
    and 
    \be \label{eq:Jac2}\abs{\frac{\partial(\{\Imag \tilde\Y^{\rm sf}_{\gamma_{e_i}}\}_{i=1}^r 
    , \{\frac{1}{p_i}\Imag \tilde\Y^{{\rm model},a}_{\gamma_{m_i}} \}_{i=1}^r, 
    \{\Real \tilde\Y^{\rm sf}_{\gamma_{e_1}}\},
    \{\frac{1}{p_i}\Real \tilde\Y^{{\rm model},a}_{\gamma_{m_i}}\}
    }{\partial(\{w_{\gamma_i, 1}, \bar w_{\gamma_i, 1}, w_{\gamma_i, 2}, \bar w_{\gamma_i, 2}\}_{i=1}^r,  \tilde\theta_{\gamma_{e_{s+1}}},\ldots,\tilde\theta_{\gamma_{e_r}},\tilde\theta'_{\gamma_{m_{s+1}}},\ldots,\tilde\theta'_{\gamma_{m_r}},a_{s+1},\ldots,a_r,\bar a_{s+1},\ldots,\bar a_r)}} \ . \ee

  The non-vanishing of \eqref{eq:Jac1} follows from the non-vanishing with respect to the coordinates on $\M'$ in Lemma \ref{lem:nondegenModel} and the relation between local smooth coordinates on $\M_U'$ and $\tilde \M_U'$ in  Lemma \ref{lem:primedJacobian}
  since 
  \be \abs{\frac{\partial(\tilde\theta_{\gamma_{e_1}},\ldots,\tilde\theta_{\gamma_{e_r}},\frac{1}{p_1}\tilde\theta_{\gamma_{m_1}},\ldots,\frac{1}{p_r}\tilde\theta_{\gamma_{m_r}},a_1,\ldots,a_r,\bar a_1,\ldots,\bar a_r)}{\partial(\tilde\theta_{\gamma_{e_1}},\ldots,\tilde\theta_{\gamma_{e_r}},\frac{1}{p_1}\tilde\theta'_{\gamma_{m_1}},\ldots,\frac{1}{p_r}\tilde\theta'_{\gamma_{m_r}},a_1,\ldots,a_r,\bar a_1,\ldots,\bar a_r)}} =1. \ee

  To see the non-vanishing of \eqref{eq:Jac2} we again use the relation betwen local smooth coordinates in 
  on $\M_U'$ and $\M_U$ in Lemma \ref{lem:Jacobians}.
From our expression for Jacobian in new coordiantes with respect to old, we see that for   
\[ d \mu = \bigwedge_{i=1}^r \left( d w_{\gamma_i, 1} \wedge d \bar w_{\gamma_i, 1}  \wedge d w_{\gamma_i, 2} \wedge d \bar w_{\gamma_i, 2} \right) \wedge \bigwedge_{i=r+1}^s \left(d\tilde\theta_{\gamma_{e_i}}  \wedge p_i^{-1} d\tilde\theta'_{\gamma_{m_i}}\wedge da_i \wedge d\bar a_i \right) .\]
and then we compute 
\begin{align*}
\left|\iota_{d \mu}\varpi^{\mathrm{model}}(\zeta)\right| = \abs{\frac{\zeta^{-1}+\bar\zeta}{2}}^{2r} \det\parens{V}  \prod_{\gamma \in S}\frac{|q_\gamma|^2}{4}
\end{align*}
Consequently, we just need to examine the limit of this as $|q_\gamma| \to 0$.
 We see then that $T(Z_\gamma(u), \theta_\gamma) |q_\gamma|^2$ is non-zero and finite when $|q_\gamma| \to 0$. This verifies the non-degeneracy.  From Corollary \ref{cor:twistor}, 
 $\dim_{\CC}\ker \varpi^{\mathrm{model}}(\zeta) \geq 2r$ on $\M_U$. Hence for $\zeta \in \CC^\times$, $\varpi(\zeta)$ is a holomorphic symplectic form on the underlying real manifold $\M$.
 
 The proof that $\omega_+$ is a holomorphic symplectic form follows similarly from Lemma \ref{lem:omega+model}.
 
 Thus, by Theorem \ref{thm:twistor}, we have a pseudo-hyper-K\"ahler structure on $\M_U$. It is a hyper-K\"ahler structure since $V$ is positive definite.
\end{proof}

\begin{ex}[Multi-Ooguri-Vafa models with $p \neq 1$]\label{ex:OVagain}
As promised in Remark \ref{rem:pnot1}, we briefly discuss the model hyper-K\"ahler geometries constructed from the index $p$ sublattice $\mathrm{Span}\left(\gamma_1, \cdots, \gamma_N, \gamma_e, p \gamma_m\right)$ of the usual multi-Ooguri-Vafa lattice.  As Lemma \ref{lem:coordind} makes clear, the model geometry constructed from the sublattice is is the same as the model geometry constructed from the lattice itself; this is because the expression for $\varpi^{\rm model}(\zeta)$ includes $\varpi^{\rm sf}$ (which is unchanged) and a sum over the trivial $\widetilde{\Gamma}_{\rm light}$ lattice (which is unchanged); it makes no reference to the symplectic pairing. 

We briefly discuss this from the perspective of the 4d Gibbons-Hawking Ansatz. In the 4d Gibbons-Hawking Ansatz, the harmonic function $V=\frac{1}{4 \pi |y|}$ on $\RR^3_y$ with associated connection $d\theta_m + A$ corresponds to flat quaternion space; the harmonic function $V=\frac{p}{4 \pi |y|}$ with associated connection $d\theta_m + p A$ corresponds to $\RR^4/\ZZ_p$; dividing $V$ by $p$ corresponds to a cover of flat $\RR^4$ ramified at $q=0$. By analogy,  we take $V_{(p)}=p^{-1}\frac{1}{4 \pi |y|}$ and connection $\Theta_{(p)}=d\theta_m + A_{(p)}=d \theta_m + \frac{1}{p} A_{(1)}$, where $V_{(1)}, A_{(1)}$ are the Gibbons-Hawking data of the usual multi-Ooguri-Vafa metric. Then one notes that 
\begin{align}
pg_{(p)} &= p \left(V_{(p)} \norm{dy}^2 + V_{(p)}^{-1} \Theta_{(p)}^2 \right) \nonumber \\
&=V_{(1)}  \norm{dy}^2 + V_{(1)}^{-1} (pd \theta_m +  A_{(1)})^2 \nonumber \\
&= V_{(1)}  \norm{dy}^2 + V_{(1)}^{-1} (d \theta_{p \gamma_m} +A_{(1)})^2
\end{align}
is simply the usual Ooguri-Vafa model metric but with coordinate $\theta_{\gamma_m}$ replaced by $\theta_{p \gamma_m}$.
\end{ex}

\appendix

\section{Lattices and symplectic pairings} \label{sec:lattices}
We refer the reader to \cite[\S II.6]{GriffithsHarris} for more details.

\begin{definition}
A \emph{lattice} is a finitely-generated free $\ZZ$-module. 
\end{definition}

\begin{rem} In parts of the literature, a lattice is often equipped with an inner product. We do not require this. \end{rem}

\begin{definition}
A sublattice $M$ of a lattice $L$ is \emph{primitive} if $L/M$ is also a lattice.
\end{definition}

\begin{lemma} \label{lem:primitive}
For a sublattice $M$ of a lattice $L$, the following are equivalent:
\begin{enumerate}[(i)]
\item $M$ is primitive;
\item there is a sublattice $N$ of $L$ such that $L\cong M\oplus N$, where this isomorphism maps $M$ to $M\oplus 0$ and $N$ to $0\oplus N$;
\item $M=(M\otimes\RR)\cap L$.
\end{enumerate}
\end{lemma}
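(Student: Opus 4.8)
\textbf{Proof proposal for Lemma~\ref{lem:primitive}.}

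The plan is to prove the cyclic chain of implications $(i)\Rightarrow(ii)\Rightarrow(iii)\Rightarrow(i)$, which is the cleanest route and avoids any redundant work.

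First I would handle $(i)\Rightarrow(ii)$. Assume $M$ is primitive, so $L/M$ is a lattice, i.e.\ a finitely-generated free $\ZZ$-module. The key input here is that a short exact sequence of $\ZZ$-modules $0\to M\to L\to L/M\to 0$ splits whenever $L/M$ is free (projectivity of free modules). Concretely, choosing a basis of $L/M$ and lifting each basis element arbitrarily to $L$ produces a section $s\colon L/M\to L$; setting $N=s(L/M)$ gives $L\cong M\oplus N$ with the stated compatibility, since every $x\in L$ can be written uniquely as $(x-s(\bar x))+s(\bar x)$ with $x-s(\bar x)\in M$ and $s(\bar x)\in N$, where $\bar x$ is the image of $x$ in $L/M$.

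Next, $(ii)\Rightarrow(iii)$. Given $L\cong M\oplus N$, the inclusion $M\subseteq (M\otimes\RR)\cap L$ is immediate. For the reverse inclusion, take $x\in L$ with $x\in M\otimes\RR$; writing $x=m+n$ with $m\in M$, $n\in N$, and noting that $M\otimes\RR$ and $N\otimes\RR$ are complementary subspaces of $L\otimes\RR$ (since $M\oplus N=L$ implies $(M\otimes\RR)\oplus(N\otimes\RR)=L\otimes\RR$), the condition $x\in M\otimes\RR$ forces $n=0$, so $x=m\in M$. Finally, $(iii)\Rightarrow(i)$: suppose $M=(M\otimes\RR)\cap L$. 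I would show $L/M$ is torsion-free, which for a finitely-generated $\ZZ$-module is equivalent to being free. If $k\bar x=0$ in $L/M$ for some nonzero $k\in\ZZ$ and $x\in L$, then $kx\in M$, so $x=\tfrac1k(kx)\in M\otimes\RR$; combined with $x\in L$ and hypothesis $(iii)$, we get $x\in M$, i.e.\ $\bar x=0$. Hence $L/M$ is torsion-free and finitely generated, therefore free, so $M$ is primitive.

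I do not expect any genuine obstacle here — the statement is standard structure theory for finitely-generated abelian groups. The only point requiring a little care is making sure the splitting in $(i)\Rightarrow(ii)$ respects the inclusions as stated (mapping $M$ to $M\oplus 0$ and $N$ to $0\oplus N$), which is automatic from the construction via a section, and the tensoring-with-$\RR$ step in $(ii)\Rightarrow(iii)$, where one uses exactness of $-\otimes_\ZZ\RR$ to transport the direct-sum decomposition to the real vector spaces.
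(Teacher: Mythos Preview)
Your proposal is correct and follows essentially the same approach as the paper: the same cyclic chain $(i)\Rightarrow(ii)\Rightarrow(iii)\Rightarrow(i)$, with $(i)\Rightarrow(ii)$ via lifting a basis of $L/M$, and $(iii)\Rightarrow(i)$ via the torsion-freeness argument. The paper's version is simply terser (e.g.\ it declares $(ii)\Rightarrow(iii)$ ``clear''), while you spell out the complementary-subspace step and the splitting explicitly.
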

\begin{proof}\hfill

\noindent(i)$\Rightarrow$(ii): choosing a basis for $L/M$ and lifting it to $L$ gives the desired isomorphism.
\smallskip

\noindent(ii)$\Rightarrow$(iii) is clear.
\smallskip

\noindent(iii)$\Rightarrow$(i): if $k[\gamma]=0$ for some $\gamma\in L$ and associated $[\gamma]\in L/M$ and some nonzero $k\in \ZZ$, then $k\gamma\in M$ implies that $\gamma\in (M\otimes\RR)\cap L=M$, so that $[\gamma]=0$.
\end{proof}

\begin{definition}
Let $L$ be a rank $2r$ lattice with an antisymmetric $\ZZ$-valued pairing $\avg{,}$. This pairing is \emph{nondegenerate} (resp. \emph{unimodular}) if it defines an injection (resp. bijection) from $L$ to the dual lattice $\Hom(L,\ZZ)$. A \emph{symplectic pairing} on $L$ is a nondegenerate antisymmetric integral pairing.
\end{definition}

The following definitions will characterize the best bases of symplectic pairings on $L$:
\begin{definition}
\label{def:Frobeniusbasis}
Let $L$ be a rank $2r$ lattice with a symplectic pairing $\avg{ , }$.  
A \emph{Frobenius basis} is a basis 
$\alpha_1,\ldots,\alpha_r,\beta_1,\ldots,\beta_r\in L$ such that  $\avg{\alpha_i,\alpha_j}=\avg{\beta_i,\beta_j}=0$ and $\avg{\beta_i, \alpha_i}= \delta_{ij}p_i$ where the non-zero integers $p_i$ satisfy $p_1 | p_2| \cdots |p_r$. 
A \emph{symplectic basis} is a basis $\alpha_1,\ldots,\alpha_r,\beta_1,\ldots,\beta_r\in L$ such that $\avg{\alpha_i,\alpha_j}=\avg{\beta_i,\beta_j}=0$ and $\avg{\beta_i,\alpha_j}=\delta_{ij}$. 
\end{definition}

\begin{lemma}[Lemma 1 in VI.\S3 of \cite{LangAlgAbel}, Lemma on p. 304 \cite{GriffithsHarris}]\label{lem:Frobeniusbasis}
 A lattice $L$ with symplectic pairing $\avg{,}$ admits a Frobenius basis and the ideals $\ZZ p_i$ are uniquely determined. 
\end{lemma}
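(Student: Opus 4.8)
The plan is to recognize Lemma~\ref{lem:Frobeniusbasis} as the symplectic version of Smith normal form over $\ZZ$ and to prove it in two independent halves: existence by an explicit congruence reduction of the Gram matrix, and uniqueness by pinning the $p_i$ to an intrinsic finite abelian group.

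\emph{Existence.} First I would fix an arbitrary basis of $L$ and pass to the Gram matrix $A=(\avg{e_i,e_j})$, which is skew-symmetric, integral, and nonsingular (by nondegeneracy). A change of basis by $E\in GL_{2r}(\ZZ)$ sends $A$ to $E^{T}AE$, so the task becomes: every such $A$ is congruent over $\ZZ$ to a block-diagonal matrix with $2\times 2$ blocks $\twoMatrix{0}{p_i}{-p_i}{0}$ and $p_1\mid\cdots\mid p_r$. I would induct on $r$. Let $p_1>0$ generate the ideal of $\ZZ$ spanned by all entries of $A$; this ideal is congruence-invariant (the entries of $E^{T}AE$ are $\ZZ$-combinations of the entries of $A$, and symmetrically), so $p_1$ is well defined on the congruence class. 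Since all diagonal entries vanish, a permutation of the basis followed by the usual $\gcd$ manipulation in a fixed pair of basis vectors --- performed with each column operation accompanied by its transposed row operation, so as never to leave the congruence orbit --- places $p_1$ in the $(1,2)$ entry. Because $p_1$ now divides every entry of the first two rows and columns, I would clear those (again in transposed pairs), arriving at $A$ congruent to $\begin{pmatrix}0 & p_1 & 0\\ -p_1 & 0 & 0\\ 0 & 0 & A'\end{pmatrix}$ with $A'$ skew-symmetric, nonsingular, of size $2r-2$, and with all entries divisible by $p_1$ (they are $\ZZ$-combinations of entries of $A$). The inductive hypothesis reduces $A'$ to blocks with $p_2\mid\cdots\mid p_r$, and $p_1\mid p_2$ because $p_2$ is the $\gcd$ of the entries of $A'$, each a multiple of $p_1$. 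Labelling the resulting basis vectors $\alpha_1,\ldots,\alpha_r,\beta_1,\ldots,\beta_r$ --- swapping the two members of a block or flipping a sign so that $\avg{\beta_i,\alpha_i}=p_i>0$ --- produces a Frobenius basis.

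\emph{Uniqueness.} I would package the $p_i$ into the cokernel of the injection $\phi\colon L\to\Hom(L,\ZZ)$, $\phi(\gamma)=\avg{\gamma,\cdot}$. The group $C:=\coker\phi$ is finite and manifestly depends only on $(L,\avg{,})$. In a Frobenius basis the matrix of $\phi$ is the block-diagonal matrix above, whose Smith normal form is $\mathrm{diag}(p_1,p_1,p_2,p_2,\ldots,p_r,p_r)$; hence $C\cong\bigoplus_{i=1}^{r}(\ZZ/p_i\ZZ)^{2}$. Since $p_1\mid p_1\mid p_2\mid p_2\mid\cdots\mid p_r\mid p_r$, this is exactly the invariant-factor decomposition of $C$, so by the uniqueness clause of the structure theorem for finite abelian groups the multiset $\{p_1,p_1,\ldots,p_r,p_r\}$ is determined by $C$; reading off the $(2i)$-th smallest term recovers $p_i$, and hence the ideal $\ZZ p_i$. (Alternatively one can argue that the $\gcd$ $d_{2k}$ of all $2k\times2k$ minors of the Gram matrix is invariant under $A\mapsto U^{T}AV$ and equals $(p_1\cdots p_k)^{2}$, so $p_k^{2}=d_{2k}/d_{2k-2}$ is intrinsic, with the convention $d_0:=1$.)

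\emph{Expected obstacle.} There is no real obstacle: this is the classical statement recorded in \cite[VI.\S3, Lemma~1]{LangAlgAbel} and \cite[p.~304]{GriffithsHarris}, and the proposal is essentially to reproduce that argument. The only delicate points are procedural --- keeping every elementary operation in a transposed pair so one stays within the congruence action of $GL_{2r}(\ZZ)$, and the short bookkeeping that yields $p_1\mid p_2$, which rests on the congruence-invariance of the ideal generated by the matrix entries.
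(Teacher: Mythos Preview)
Your existence argument is essentially the paper's: where the paper picks the least positive value $p_1$ of the pairing, finds $\alpha_1,\beta_1$ realizing it, and splits $L=L_1\oplus L_1^{\perp}$ by a Gram--Schmidt step (the key point being that $p_1$ generates the ideal of all values, so the projection coefficients are integers), you run the same induction in matrix language, with row/column clearing in place of the orthogonal splitting. The only substantive difference is that you make the divisibility $p_1\mid p_2$ explicit, while the paper leaves it implicit (it follows since the values on $L_1^{\perp}$ are a subset of the values on $L$). Your uniqueness argument via $\coker(L\to\Hom(L,\ZZ))\cong\bigoplus_i(\ZZ/p_i\ZZ)^2$ is correct and in fact supplies something the paper's proof omits entirely --- the paper only argues existence and cites the references for the rest.
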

Following \cite[p. 306]{GriffithsHarris}, we will call $p_i$ \emph{elementary divisors}.
\begin{proof}
    The lemma is proved by induction. Among the set of all valued $\avg{\gamma, \gamma'}$ for $\gamma, \gamma' \in L$, let $d_1$ be the least positive one and take a pair $\alpha_1, \beta_1$ such that $\avg{\alpha_1, \beta_1}=p_1$. Let $L_1=\mathrm{Span}(\alpha_1, \beta_1)$ and let \[L_1^\perp=\{\gamma: \avg{\gamma, \alpha_1}=0, \avg{\gamma, \beta_1}=0 \}. \]
    The intersection $L_1 \cap L_1^\perp =\{0\}$. To see that $L=L_1 +L_1^\perp$, we use the standard Gram-Schmidt orthogonalization procees. Given $\gamma \in L$, we can find $a, b\in \ZZ$ and $\gamma^\perp \in L_1^\perp$ such that 
    \[\gamma = \gamma^\perp + a \alpha_1 + b \beta_1.\]
    In particular, letting 
    \[ 0=\avg{\gamma^\perp, \alpha_1}=\avg{\gamma - a \alpha_1 - b \beta_1, \alpha_1} = \avg{\gamma, \alpha_1} + b p_1,\]
    observe that since $\ZZ$ is a principle integral domain, $p_1$ is the positive generator of the ideal of all values of the bilinear form, hence $p_1$ divides $\avg{\gamma, \alpha_1}$, hence we can solve for $b$. We can similarly solve for $a$.

    We now repeat the process on the subbundle $L_1^\perp$.
\end{proof}

\begin{rem}
    Let $A$ be a non-principally-polarized abelian variety and let $\widehat{A}$ be its dual.
    Then, given a line bundle $\mathcal{L}$ on $A$,
    there is a map
    \begin{align}
    A &\to \widehat{A} \nonumber\\
    a &\mapsto t_a^* \mathcal{L} \otimes \mathcal{L}^*,
    \end{align}
    where $t_a$ is translation by $a$. In particular, this map is surjective, if and only if, $\mathcal{L}$ is ample. Taking $\mathcal{L}$ ample, the kernel of this map is $(\ZZ/p_1)^2 \times \cdots \times (\ZZ/p_r)^2$
    for some integers $p_1|p_2|\cdots |p_r$. 

    Given a symplectic lattice $(L, \avg{,})$, the space of twisted unitary characters on $L$ defined by
    \[ \]
    is an abelian variety $A$. 
\end{rem}

\begin{cor}
       Every non-unimodular lattice $(L, \avg{, })$ is a submodule of some unimodular lattice $(L', \avg{, })$. 
\end{cor}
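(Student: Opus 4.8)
The plan is to construct the unimodular superlattice $(L',\avg{,})$ directly from a Frobenius basis of $(L,\avg{,})$, mimicking the construction already used in Remark \ref{rem:unimodular}. First I would invoke Lemma \ref{lem:Frobeniusbasis} to obtain a Frobenius basis $\alpha_1,\ldots,\alpha_r,\beta_1,\ldots,\beta_r$ of $L$ with elementary divisors $p_1\mid p_2\mid\cdots\mid p_r$, so that $\avg{\beta_i,\alpha_j}=\delta_{ij}p_i$ and all other pairings among basis vectors vanish. Then I would define $L'$ to be the lattice spanned inside $L\otimes\QQ$ by $\alpha_1,\ldots,\alpha_r$ together with $\beta_i' := p_i^{-1}\beta_i$ for $i=1,\ldots,r$. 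Since each $\beta_i$ is an integer multiple of $\beta_i'$, we have $L\subseteq L'$, and $L'$ is visibly a lattice (free $\ZZ$-module of rank $2r$) containing $L$ as a sublattice.

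Next I would check that the pairing $\avg{,}$, extended $\QQ$-bilinearly to $L\otimes\QQ$, restricts to a $\ZZ$-valued pairing on $L'$: the only nonzero pairings among the generators are $\avg{\beta_i',\alpha_j} = p_i^{-1}\avg{\beta_i,\alpha_j} = \delta_{ij}$, which are integers, and all pairings extend bilinearly, so $\avg{,}$ is indeed integral on $L'$. Moreover $\alpha_1,\ldots,\alpha_r,\beta_1',\ldots,\beta_r'$ is then by construction a symplectic basis of $L'$ in the sense of Definition \ref{def:Frobeniusbasis}, so the Gram matrix of the pairing in this basis is the standard symplectic matrix, which has determinant $1$; hence the pairing on $L'$ is unimodular. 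This gives the desired embedding $(L,\avg{,})\hookrightarrow (L',\avg{,})$ of lattices-with-pairing.

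There is essentially no serious obstacle here — the result is a direct corollary of the existence of a Frobenius basis. The only point requiring a word of care is that the statement as phrased (``is a submodule of some unimodular lattice $(L',\avg{,})$'') should be read as: the inclusion $L\hookrightarrow L'$ is compatible with the pairings, i.e. the pairing on $L$ is the restriction of the pairing on $L'$. This is automatic from the construction since we extended the single pairing $\QQ$-bilinearly and then restricted. If one wanted a purely intrinsic (basis-free) statement, one could instead note that $L^* = \Hom(L,\ZZ)$ receives an injection $L\hookrightarrow L^*$ via $\gamma\mapsto\avg{\gamma,-}$, and then take $L'$ to be the saturation of the image of $L$ inside the lattice $\{\phi\in L\otimes\QQ : \avg{\phi,\gamma}\in\ZZ\ \forall\gamma\in L\}$; but the Frobenius-basis argument above is the cleanest and is exactly parallel to Remark \ref{rem:unimodular}, so I would present that one.
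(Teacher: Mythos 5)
Your proof is correct and follows exactly the paper's own argument: take a Frobenius basis via Lemma \ref{lem:Frobeniusbasis} and adjoin $\beta_i' = p_i^{-1}\beta_i$ to obtain the unimodular superlattice. The extra verifications you include (integrality of the extended pairing and the check that the new basis is symplectic) are correct and only make the argument more complete than the paper's one-line version.
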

\begin{proof}
Take a Frobenius basis $\alpha_1, \cdots, \alpha_r, \beta_1, \cdots, \beta_r$ of $(L, \avg{, })$. Define a superlattice $L'$ generated by $\alpha_1, \cdots, \alpha_r, \beta_1', \cdots, \beta_r'$ where $\beta_i' = \frac{\beta_i}{p_i}$. 
\end{proof}

\begin{defn}
We say that a sublattice $\Lambda$ of a rank $2r$ symplectic lattice $(L, \avg{,})$ is \emph{isotropic} if $\avg{\Lambda, \Lambda}=0$.
We say $\Lambda$ is \emph{maximal isotropic} if $\Lambda$ is of rank $r$.
\end{defn}

In the unimodular case, one can naturally extend certain sets to a symplectic basis. 

\begin{lemma}
Let $(L, \avg{,})$ a lattice with a symplectic unimodular pairing. Given $\alpha_1, \cdots, \alpha_r$ with $r<s$ generating a primitive isotropic sublattice, $\alpha_1, \cdots, \alpha_r$ can be extended to a symplectic basis.
\end{lemma}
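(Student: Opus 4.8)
The plan is to build the extended symplectic basis one conjugate pair at a time, peeling off a symplectic plane at each stage and inducting on $2r - s$. The starting observation is that since $\langle\alpha_1,\ldots,\alpha_s\rangle$ is primitive, it is a direct summand of $L$ (Lemma \ref{lem:primitive}), and since the pairing is unimodular, the quotient $L^* \to \langle\alpha_1,\ldots,\alpha_s\rangle^*$ is surjective; composing with the isomorphism $L \cong L^*$ shows the map $\phi\colon L \to \ZZ^s$, $\gamma \mapsto (\langle\gamma,\alpha_1\rangle,\ldots,\langle\gamma,\alpha_s\rangle)$, is surjective. First I would produce $\beta_1$: choose $\gamma_1 \in L$ with $\phi(\gamma_1) = (1,0,\ldots,0)$, i.e. $\langle\gamma_1,\alpha_1\rangle = 1$ and $\langle\gamma_1,\alpha_j\rangle = 0$ for $2 \le j \le s$. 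Then correct it to kill the self-pairing: since $\langle\alpha_1,\alpha_1\rangle = 0$, the element $\beta_1 := \gamma_1$ already satisfies $\langle\beta_1,\alpha_1\rangle = 1$, and we need no correction to $\alpha_1$-pairing, though in general one replaces $\gamma_1$ by $\gamma_1 - c\alpha_1$ if needed (here $c=0$ since $\langle\gamma_1,\gamma_1\rangle$ does not obstruct anything — antisymmetry forces $\langle\gamma_1,\gamma_1\rangle=0$ automatically).

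The key step is then the splitting: set $P_1 = \mathrm{Span}(\alpha_1,\beta_1)$ and $P_1^\perp = \{\gamma \in L \mid \langle\gamma,\alpha_1\rangle = \langle\gamma,\beta_1\rangle = 0\}$. I would verify, exactly as in the proof of Lemma \ref{lem:Frobeniusbasis} above, that $L = P_1 \oplus P_1^\perp$ (here the Gram–Schmidt step works with divisor $p_1 = 1$, so no divisibility obstruction arises), that $P_1^\perp$ is again a unimodular symplectic lattice of rank $2r-2$, and — crucially — that $\alpha_2,\ldots,\alpha_s \in P_1^\perp$ (they pair trivially with $\alpha_1$ by isotropy and with $\beta_1$ by our choice of $\gamma_1$) and that $\langle\alpha_2,\ldots,\alpha_s\rangle$ is still primitive \emph{in $P_1^\perp$}. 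Primitivity in $P_1^\perp$ follows because $P_1^\perp$ is a direct summand of $L$ and $\langle\alpha_2,\ldots,\alpha_s\rangle = \langle\alpha_1,\ldots,\alpha_s\rangle \cap P_1^\perp$ is primitive in $L$, hence primitive in the summand. Now induct: $P_1^\perp$ with the primitive isotropic sublattice $\langle\alpha_2,\ldots,\alpha_s\rangle$ satisfies the same hypotheses with $r$ replaced by $r-1$ and $s$ by $s-1$, so it admits a symplectic basis $\alpha_2,\ldots,\alpha_r,\beta_2,\ldots,\beta_r$, and $\alpha_1,\ldots,\alpha_r,\beta_1,\ldots,\beta_r$ is the desired symplectic basis of $L$.

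The base case is $s = r$, where $\langle\alpha_1,\ldots,\alpha_r\rangle$ is maximal isotropic: after the splitting argument produces all the $\beta_i$ one at a time (at each stage the remaining $\alpha$'s stay primitive and isotropic inside the orthogonal complement), we are done; alternatively $s=r$ with the complement of rank $0$ after $r$ splittings. I expect the main obstacle to be bookkeeping rather than conceptual: one must be careful that (a) the surjectivity of $\phi$ genuinely uses unimodularity (for a merely nondegenerate pairing one would only get $\phi(L)$ of finite index, and the argument fails — this is exactly why Frobenius bases, not symplectic bases, are forced in the non-unimodular setting discussed throughout the paper), and (b) primitivity is correctly propagated to the orthogonal complement at each inductive step. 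Everything else — the Gram–Schmidt splitting $L = P_1 \oplus P_1^\perp$ and the fact that $P_1^\perp$ inherits unimodularity — is a direct transcription of the argument already given for Lemma \ref{lem:Frobeniusbasis}, specialized to elementary divisor $1$.
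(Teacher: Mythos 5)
Your proof is correct and follows essentially the same route as the paper, which simply points to the inductive Gram--Schmidt splitting of Lemmata \ref{lem:Frobeniusbasis} and \ref{lem:latBasis}: peel off a hyperbolic plane $\mathrm{Span}(\alpha_1,\beta_1)$ (with $\langle\beta_1,\alpha_1\rangle=1$, available by unimodularity plus primitivity) and induct on the orthogonal complement. Your observation that $\gamma\mapsto(\langle\gamma,\alpha_1\rangle,\ldots,\langle\gamma,\alpha_s\rangle)$ is surjective, so that $\beta_1$ can be chosen orthogonal to $\alpha_2,\ldots,\alpha_s$ in one step, is a clean way to package the same idea.
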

\begin{proof}
See proof of Lemma \ref{lem:latBasis}.
\end{proof}
However, the same is not true in the non-unimodular case.
\begin{ex}
    Given $\alpha_1, \cdots, \alpha_s$ with $s \leq r$ generating a primitive isotropic sublattice. It is not true that, up to reordering, $\alpha_1, \cdots, \alpha_s$ can be extended to a Frobenius basis. 
    
    Consider the counterexample of $\ZZ^4$ with $\alpha_1=(1,0, 0, 0)^T$ and $\alpha_2= (0, 0, 1, 0)^T$ and $\avg{,}$ given by 
    \[\begin{pmatrix} 0  & 2 & 0 & 0 \\ -2 & 0 & 0 & 0 \\ 0 & 0 & 0 & 3\\ 0 & 0 & -3 & 0 \end{pmatrix}.\]
    One can see that constants for the Frobenius basis are $d_1=1$ and $d_2=6$, while $\mathrm{Im} \avg{\alpha_1, \cdot} = 2\ZZ$ and $\mathrm{Im} \avg{\alpha_2, \cdot} = 3\ZZ$. 

It is not even possible to do this if one defines a more general ``quasi-Frobenius basis" with no further assumptions on the integers $d_i$. In particular, note that  the Frobenius basis for the previous example is given by 
\[ \alpha'_1= \begin{pmatrix} 1 \\ 0 \\1\\0 \end{pmatrix} \beta'_1 = \begin{pmatrix} 0 \\ 2 \\ 0 \\-1 \end{pmatrix} \alpha'_2 = \begin{pmatrix} 3 \\ 0 \\ 2 \\0 \end{pmatrix}, \beta'_2 = \begin{pmatrix} 0 \\ 1 \\ 0 \\0 \end{pmatrix},\]
which indeed satisfy $\avg{\beta'_1, \alpha'_1}=1$ and $\avg{\beta'_2, \alpha'_2}=6$. 
Note that $\alpha_1$ and $\alpha_1'$ generate the same primitive isotropic lattice as $\alpha_1$ and $\alpha_2$, but it is not possible to complete $\alpha_1$ and $\alpha_1'$ to such a ``quasi-Frobenius basis'', essentially since if such a basis existed then a matrix of determinant $6^2$ would be similar to a matrix of determinant $2^2 \cdot 1^2$. More explicitly, $\ker \avg{\alpha_1', \cdot}=\ZZ  \cdot (0, 3, 0, -2)^T$, and the image of the pairing with $\alpha_1$ is $6\ZZ$; $\ker \avg{\alpha_1, \cdot} = \ZZ \cdot (0, 0, 0, 1)$, and the image of the pairing with $\alpha_1'$ is $3\ZZ$. So these 
 \[ \alpha_1, \alpha_1', (0, 3, 0, -2)^T, (0, 0, 0, 1)^T\]
are not a basis for the full lattice, but only a basis for the sublattice $\ZZ \oplus 3 \ZZ \oplus \ZZ \oplus \ZZ$.
\end{ex}

However, the following is true:

\begin{lemma}\label{lem:latBasis}
Let $L$ be a lattice with a symplectic pairing $\avg{,}$, and let $\Lambda_\alpha$ be a primitive maximal isotropic sublattice. Then $(L, \avg{, })$ admits a Frobenius basis such that $\alpha_i \in \Lambda_\alpha$.
\end{lemma}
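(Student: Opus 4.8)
\textbf{Proof proposal for Lemma \ref{lem:latBasis}.}

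The plan is to induct on $r$, following the same Gram--Schmidt-style pattern as in the proof of Lemma \ref{lem:Frobeniusbasis}, but being careful to keep the prescribed primitive maximal isotropic sublattice $\Lambda_\alpha$ aligned with the $\alpha$-part of the Frobenius basis at each stage. The base case $r=0$ is vacuous, so suppose the result holds for all lattices of rank $<2r$. The first step is to choose the first element of the basis: let $p_1$ be the positive generator of the ideal $\{\avg{\gamma,\gamma'} : \gamma,\gamma'\in L\}$, and I want to realize it as $\avg{\beta_1,\alpha_1}$ with $\alpha_1\in\Lambda_\alpha$. The key observation making this possible is that $\Lambda_\alpha$ is \emph{maximal} isotropic and $L/\Lambda_\alpha$ is free of rank $r$ (by primitivity), so the pairing descends to a perfect-up-to-torsion pairing between $\Lambda_\alpha$ and $L/\Lambda_\alpha$; hence the image of the homomorphism $\avg{\cdot,\Lambda_\alpha}\colon L \to \Hom(\Lambda_\alpha,\ZZ)$ together with the image of $\avg{\Lambda_\alpha,\cdot}$ actually generates the same ideal $\ZZ p_1$ as the full pairing. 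Concretely: if some $\avg{\gamma,\gamma'}=p_1$ with $\gamma'\notin\Lambda_\alpha\otimes\RR$, one can use isotropy of $\Lambda_\alpha$ and the nondegeneracy of $\avg{,}$ to replace the pair by one with its first slot in $\Lambda_\alpha$ without increasing the gcd. So I may pick $\alpha_1\in\Lambda_\alpha$ primitive and $\beta_1\in L$ with $\avg{\beta_1,\alpha_1}=p_1$.

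The second step is the splitting. Set $L_1=\mathrm{Span}(\alpha_1,\beta_1)$ and $L_1^\perp=\{\gamma\in L : \avg{\gamma,\alpha_1}=\avg{\gamma,\beta_1}=0\}$. Exactly as in the proof of Lemma \ref{lem:Frobeniusbasis}, since $p_1$ divides every value $\avg{\gamma,\alpha_1}$ and $\avg{\gamma,\beta_1}$, one solves for integer coefficients to write any $\gamma=\gamma^\perp+a\alpha_1+b\beta_1$, giving $L=L_1\oplus L_1^\perp$ with $L_1^\perp$ of rank $2r-2$ and $\avg{,}$ restricting to a symplectic pairing on $L_1^\perp$ (nondegeneracy survives since a kernel vector in $L_1^\perp$ would be a kernel vector in $L$). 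Now I need to produce, inside $L_1^\perp$, a primitive maximal isotropic sublattice that is the `$\alpha$-part' I will feed to the inductive hypothesis. The natural candidate is $\Lambda_\alpha \cap L_1^\perp$, equivalently $\{\gamma\in\Lambda_\alpha : \avg{\gamma,\beta_1}=0\}$ (the condition $\avg{\gamma,\alpha_1}=0$ is automatic by isotropy of $\Lambda_\alpha$). I must check three things: (i) this sublattice has rank $r-1$; (ii) it is primitive in $L_1^\perp$; (iii) it is isotropic, which is immediate since it sits inside $\Lambda_\alpha$. For (i), the map $\Lambda_\alpha\to\ZZ$, $\gamma\mapsto\avg{\gamma,\beta_1}$, is surjective onto $p_1\ZZ$ (it hits $p_1$ at $\gamma=\alpha_1$) hence has image a rank-one subgroup, so its kernel has rank $r-1$; and $\alpha_1$ is not in the kernel, so $\Lambda_\alpha=\ZZ\alpha_1\oplus(\Lambda_\alpha\cap L_1^\perp)$ is not quite right on the nose — rather $\Lambda_\alpha/(\Lambda_\alpha\cap L_1^\perp)$ injects into $\ZZ$ hence is free of rank one, giving rank $r-1$ for the kernel. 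For (ii), primitivity of $\Lambda_\alpha\cap L_1^\perp$ in $L_1^\perp$: if $k\gamma\in\Lambda_\alpha\cap L_1^\perp$ for $\gamma\in L_1^\perp$ and $k\neq 0$, then $k\gamma\in\Lambda_\alpha$ forces $\gamma\in\Lambda_\alpha$ (primitivity of $\Lambda_\alpha$ in $L$) and $\gamma\in L_1^\perp$ by assumption, so $\gamma\in\Lambda_\alpha\cap L_1^\perp$. It is also maximal isotropic in $L_1^\perp$ because a rank-$(r-1)$ isotropic sublattice of a rank-$(2r-2)$ symplectic lattice is maximal.

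The third step is to invoke the inductive hypothesis on $(L_1^\perp,\avg{,}|_{L_1^\perp})$ with the primitive maximal isotropic sublattice $\Lambda_\alpha\cap L_1^\perp$, obtaining a Frobenius basis $\alpha_2,\dots,\alpha_r,\beta_2,\dots,\beta_r$ of $L_1^\perp$ with $\alpha_i\in\Lambda_\alpha\cap L_1^\perp\subset\Lambda_\alpha$ and divisibilities $p_2\mid p_3\mid\cdots\mid p_r$. Concatenating with $\alpha_1,\beta_1$ gives a basis of $L$ with $\avg{\beta_i,\alpha_j}=p_i\delta_{ij}$ and all $\alpha_i\in\Lambda_\alpha$; since $\alpha_1,\dots,\alpha_r$ are $r$ linearly independent elements of the rank-$r$ lattice $\Lambda_\alpha$ (and $\Lambda_\alpha\ni\alpha_i$), they span a finite-index sublattice, and in fact by the rank count $\Lambda_\alpha=\mathrm{Span}(\alpha_1,\dots,\alpha_r)$ — this last point needs the observation that $\Lambda_\alpha=\ZZ\alpha_1 + (\Lambda_\alpha\cap L_1^\perp)$, which follows from the surjection $\Lambda_\alpha\twoheadrightarrow p_1\ZZ$ above having a section through $\alpha_1$. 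Finally I must verify the divisibility $p_1\mid p_2$: this is where the choice of $p_1$ as the gcd of \emph{all} pairing values is used — $p_1$ divides $\avg{\beta_2,\alpha_2}=p_2$ by minimality, so $p_1\mid p_2\mid\cdots\mid p_r$ as required. The main obstacle I anticipate is step two, specifically pinning down that $\Lambda_\alpha\cap L_1^\perp$ has the right rank and reconstructing $\Lambda_\alpha$ as $\ZZ\alpha_1\oplus(\Lambda_\alpha\cap L_1^\perp)$; everything else is a routine adaptation of the Frobenius-basis argument already in the text. The reference ``See proof of Lemma \ref{lem:latBasis}'' appearing earlier for the unimodular statement is then subsumed as the special case where all $p_i=1$.
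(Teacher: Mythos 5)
Your overall architecture --- induction on $r$, the Gram--Schmidt splitting $L=L_1\oplus L_1^\perp$, and feeding $\Lambda_\alpha\cap L_1^\perp$ to the inductive hypothesis --- is a careful fleshing-out of the paper's sketch, and your verifications that $\Lambda_\alpha\cap L_1^\perp$ is primitive, maximal isotropic, and of rank $r-1$ in $L_1^\perp$ are fine. The fatal problem is the very first step, which you yourself flag as the ``key observation'' and then support only with the sentence ``one can use isotropy of $\Lambda_\alpha$ and the nondegeneracy of $\avg{,}$ to replace the pair by one with its first slot in $\Lambda_\alpha$ without increasing the gcd.'' No such replacement procedure is exhibited, and the claim it is meant to establish --- that the ideal generated by $\avg{\Lambda_\alpha,L}$ equals $\avg{L,L}=p_1\ZZ$ --- is false in general. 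Take $L=\ZZ^4$ with basis $e_1,e_2,f_1,f_2$, pairing determined by $\avg{e_1,f_1}=\avg{e_2,f_2}=2$, $\avg{f_1,f_2}=1$, and all other basis pairings zero, and let $\Lambda_\alpha=\ZZ e_1\oplus\ZZ e_2$. This is a nondegenerate integral antisymmetric form (the Gram matrix has determinant $16$), and $\Lambda_\alpha$ is primitive and maximal isotropic, yet every element of $\Lambda_\alpha$ pairs with $L$ into $2\ZZ$ while $p_1=\gcd(2,2,1)=1$. Since any Frobenius basis must satisfy $\avg{\beta_1,\alpha_1}=\pm p_1=\pm 1$ (the ideals $\ZZ p_i$ are uniquely determined by Lemma \ref{lem:Frobeniusbasis}), no Frobenius basis can have $\alpha_1\in\Lambda_\alpha$. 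So the gap in your first step is not a matter of missing detail: the step, and indeed the lemma as stated, is false.

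For context, the paper's own argument founders on the same rock: it assumes $L/\Lambda_\alpha$ lifts to an \emph{isotropic} complement $\Lambda_\beta$, but in the example above no such lift exists (if it did, the paper's computation would force $\avg{L,L}\subseteq\avg{\Lambda_\alpha,L}=2\ZZ$, contradicting $\avg{f_1,f_2}=1$). Your induction does go through once one adds the hypothesis that the minimal pairing value is attained with one slot in $\Lambda_\alpha$ at every stage --- for instance when the pairing is unimodular, where primitivity of $\Lambda_\alpha$ makes $L\to\Hom(\Lambda_\alpha,\ZZ)$ surjective and hence supplies $\alpha_1\in\Lambda_\alpha$, $\beta_1\in L$ with $\avg{\beta_1,\alpha_1}=1$ (and unimodularity is inherited by $L_1^\perp$) --- or, more generally, when $\Lambda_\alpha$ admits an isotropic complement. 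The honest fix is to isolate such a hypothesis, not to search for a cleverer replacement argument, because the unconditional statement has no proof.
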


\begin{proof}The proof is a simple modification of the proof in Lemma \ref{lem:Frobeniusbasis}.
    We simply observe that the generator $p_1$ of 
    the image of $\avg{,}: L \times L \to \ZZ$ 
    and the generator $p_1'$ of $\avg{,}: \Lambda_\alpha \times L \to \ZZ$ are related by $p_1=\pm p_1'$.
    Clearly $p_1' \in p_1 \ZZ$ since $\Lambda_\alpha$ is a sublattice of $L$.
    Since $\Lambda_\alpha$ is a primitive maximal coisotropic lattice, the quotient lattice lifts (non-uniquely) to a primitive maximal coisotripic lattice $\Lambda_\beta$ 
    and $L=\Lambda_\alpha \oplus \Lambda _\beta$.  Suppose $d_1=\avg{\gamma, \gamma'}$ for $\gamma, \gamma' \in L$. Then split $\gamma = \gamma_\alpha + \gamma_\beta, \gamma' = \gamma'_\alpha + \gamma'_\beta$. Then 
    \[p_1 = \avg{\gamma, \gamma'} = \avg{\gamma_\alpha, \gamma'_\beta} - \avg{\gamma'_\alpha, \gamma_\beta} \in p_1' \ZZ.\]
    Since $p_1\ZZ=p_1'\ZZ$, $p_1=\pm p_1'$. 
\end{proof}

\begin{definition}
    Given a lattice $L$, the dual lattice $L^*=\mathrm{Hom}(L, \ZZ)$.
\end{definition}
It is straightforward to check the following Lemma: 
\begin{lemma}\label{lem:dualsymp}
    Given a lattice $L$ with a symplectic pairing $\avg{,}_L$, the dual lattice inherits a (not necessarily integral) symplectic pairing $\avg{,}_{L^*}$ defined as follows:
    for every $a \in L^*$, there exists a unique $\alpha=\Psi(a) \in L$ such that
    \[a(\gamma) = \avg{\alpha, \gamma} \qquad \forall \gamma \in L;\]
    define 
    \[\avg{a, b}_{L^*}:= \avg{\Psi(a), \Psi(b)}_{L}\]
\end{lemma}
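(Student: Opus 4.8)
The plan is to deduce everything from the fact that a nondegenerate antisymmetric $\ZZ$-valued pairing becomes a nondegenerate $\QQ$-bilinear form after extension of scalars. Concretely: $\avg{,}_L$ nondegenerate means the map $\iota\colon L\to L^*$, $\iota(\alpha)=\avg{\alpha,\cdot}_L$, is injective; since $L$ and $L^*$ both have rank $2r$, $\coker\iota$ is finite, so $\iota\otimes\QQ\colon L\otimes\QQ\to L^*\otimes\QQ$ is a $\QQ$-linear isomorphism. The key point — and the reason the resulting pairing on $L^*$ is only ``not necessarily integral'' — is that the map $\Psi$ of the statement must be read as the composite $L^*\hookrightarrow L^*\otimes\QQ\xrightarrow{(\iota\otimes\QQ)^{-1}}L\otimes\QQ$: for $a\in L^*$ there is a \emph{unique} $\Psi(a)\in L\otimes\QQ$ with $a(\gamma)=\avg{\Psi(a),\gamma}_L$ for all $\gamma\in L$, uniqueness being immediate from nondegeneracy of the $\QQ$-bilinear extension of $\avg{,}_L$. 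First I would record that $\Psi$ is an injective $\ZZ$-linear map with image the ``dual lattice'' $\{x\in L\otimes\QQ\mid\avg{x,L}_L\subseteq\ZZ\}\supseteq L$, and then set $\avg{a,b}_{L^*}:=\avg{\Psi(a),\Psi(b)}_L\in\QQ$, which is well defined precisely because $\Psi(a)$ is unique.

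The two properties of a (rational) symplectic pairing then transport formally through the bijection $\Psi$. Antisymmetry is one line: $\avg{a,b}_{L^*}=\avg{\Psi(a),\Psi(b)}_L=-\avg{\Psi(b),\Psi(a)}_L=-\avg{b,a}_{L^*}$. For nondegeneracy, if $\avg{a,b}_{L^*}=0$ for all $b\in L^*$ then $\avg{\Psi(a),x}_L=0$ for all $x$ in the full-rank lattice $\Psi(L^*)\supseteq L$, so $\avg{\Psi(a),\cdot}_L$ vanishes on $L$ hence on $L\otimes\QQ$, whence $\Psi(a)=0$ and $a=0$; equivalently, $\avg{,}_{L^*}$ is the pullback of the nondegenerate form $\avg{,}_L$ on $L\otimes\QQ$ along the isomorphism $\Psi\otimes\QQ$. $\ZZ$-bilinearity of $\avg{,}_{L^*}$ on $L^*$ (with values in $\QQ$) is clear since $\Psi$ is $\ZZ$-linear. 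Together these say $\avg{,}_{L^*}$ is a symplectic pairing in the generalized sense used throughout the paper.

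Finally, to reconnect with the explicit formulas used after Lemma \ref{lem:D4} and in Lemma \ref{lem:semiflat}, I would evaluate $\Psi$ on a dual basis: given a Frobenius basis $\gamma_{e_i},\gamma_{m_i}$ of $(L,\avg{,}_L)$ with $\avg{\gamma_{m_i},\gamma_{e_j}}_L=p_i\delta_{ij}$ and dual basis $\gamma^*_{e_i},\gamma^*_{m_i}$ of $L^*$, pairing the ansatz $\Psi(\gamma^*_{e_i})=\sum_j(c_j\gamma_{e_j}+d_j\gamma_{m_j})$ against $\gamma_{e_k}$ and $\gamma_{m_k}$ forces $\Psi(\gamma^*_{e_i})=p_i^{-1}\gamma_{m_i}$, and similarly $\Psi(\gamma^*_{m_i})=-p_i^{-1}\gamma_{e_i}$; substituting gives $\avg{\gamma^*_{m_i},\gamma^*_{e_j}}_{L^*}=p_i^{-1}\delta_{ij}$ and $\avg{\gamma^*_{m_i},\gamma^*_{m_j}}_{L^*}=\avg{\gamma^*_{e_i},\gamma^*_{e_j}}_{L^*}=0$, and rescaling by $p_r$ (using $p_1\mid\cdots\mid p_r$) produces the integral symplectic form with dual elementary divisors $1\mid\cdots\mid p_1^{-1}p_r$. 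There is no genuine obstacle in this lemma; the one point that needs care — and the only thing I would stress in the write-up — is that $\Psi$ lands in $L\otimes\QQ$, not in $L$, whenever $\avg{,}_L$ fails to be unimodular, which is exactly what the phrase ``not necessarily integral'' is encoding.
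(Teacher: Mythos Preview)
Your proposal is correct and follows essentially the same route as the paper: the paper simply declares the lemma ``straightforward to check'' and then, immediately after the statement, carries out exactly your Frobenius-basis computation, writing $\Psi(\beta_i^*)=-p_i^{-1}\alpha_i$, $\Psi(\alpha_i^*)=p_i^{-1}\beta_i$ and reading off $\avg{\beta_i^*,\alpha_j^*}_{L^*}=p_i^{-1}\delta_{ij}$. Your write-up is in fact more careful than the paper's, since you make explicit that $\Psi$ must land in $L\otimes\QQ$ rather than in $L$ (the paper's statement literally says $\Psi(a)\in L$, which is only true in the unimodular case), and you supply the abstract verification of antisymmetry and nondegeneracy that the paper omits.
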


Concretely, given a Frobenius basis $\alpha_i, \beta_i$, let $\alpha_i^*, \beta_i^*$ be the dual basis. Then we note that $\beta_i^* = -\frac{1}{p_i} \alpha_i$ and $\alpha_i^*= \frac{1}{p_i} \beta_i$. Then,
\[\avg{\beta_i^*, \alpha_j^*}_{L^*} = \avg{-\frac{1}{p_i} \alpha_i, \frac{1}{p_j} \beta_j}_{L} = \delta_{ij}\frac{1}{p_i}.\]

\bibliography{refs}

\end{document}